\numberwithin{equation}{section}
 \newtheorem{lemma}{Lemma}[section]
 \newtheorem{theorem}{Theorem}[section]
 \newtheorem{corollary}[lemma]{Corollary}
 \theoremstyle{remark}
 \newtheorem{remark}{Remark}[section]
\begin{document}

\title{\bf Global Spherical Symmetric Flows for a Viscous Radiative and Reactive Gas in an Exterior Domain with Large Initial Data}
\author{{\bf Yongkai Liao}\footnote{Email address: yongkai.liao@whu.edu.cn},\quad {\bf Tao Wang}\thanks{Email address: tao.wang@whu.edu.cn}\quad and \quad {\bf Huijiang Zhao}\thanks{Email address: hhjjzhao@whu.edu.cn. Telephone number: 0086-27-68754675}\\[2mm]
School of Mathematics and Statistics, Wuhan University, Wuhan 430072, China\\
and\\
Computational Science Hubei Key Laboratory, Wuhan University, Wuhan 430072, China}
\date{}

\maketitle

\begin{abstract}
In this paper, we study the global existence, uniqueness and large-time behavior of spherically symmetric solution of a viscous radiative and reactive gas in an unbounded domain exterior to the unit sphere in $\mathbb{R}^{n}$ for $n\geq 2$. The key point in the analysis is to deduce certain uniform estimates on the solutions, especially on the uniform positive lower and upper bounds on the specific volume and the temperature.
\bigbreak
\noindent{\bf Key words and phrases:} Spherically symmetric flow; Viscous radiative and reactive gas; Global existence; Large-time behavior; Large initial data.
\end{abstract}


\section{Introduction}
In this paper, we're concerned with the global existence, uniqueness and large-time behavior of spherically symmetric solution to a model for the combustion of compressible radiative and reactive gas in an unbounded domain $\Omega$ exterior to the unit sphere $B_1(O)=\{\xi\in\mathbb{R}^{n}: |\xi|<1\}\subset\mathbb{R}^{n}$, i.e. $\Omega=\mathbb{R}^n\backslash\overline{B}_1(O)$, for $n\geq 2$. The model consists of equations corresponding to the conservation laws of the mass, the momentum and the energy coupling with the reaction-diffusion equation which, in the Eulerian coordinates, can be written as (cf. \cite{Ducomet-Zlotnik-ARMA-2005}, \cite{Ducomet-Zlotnik-NonliAnal-2005}, \cite{Liao-Zhao-CMS-2017}, \cite{Umehara-Tani-JDE-2007})
\begin{eqnarray}\label{a1}
    \rho_{t}+\textmd{div}\left(\rho \mathbf{u}\right)&=&0,\nonumber\\
    \left(\rho\mathbf{u}\right)_{t}+\textmd{div}\left(\rho\mathbf{u}\otimes\mathbf{u}\right) +\nabla P&=&\textmd{div}\mathbf{S},\\
    \left(\rho e\right)_{t}+\textmd{div}\left(\rho e\mathbf{u}\right)+\textmd{div}\mathbf{q}&=&\mathbf{S}:\nabla\mathbf{u} -P\textmd{div}\mathbf{u}+\lambda\phi \rho z,\nonumber\\
    \left(\rho z\right)_{t}+\textmd{div}\left(\rho\mathbf{u}z\right)-\textmd{div} F&=&-\phi \rho z.\nonumber
\end{eqnarray}
Here the primary dependent variables are the density $\rho(t,\xi)$, the absolute temperature $\theta(t,\xi)$, the reactant mass fraction $z(t,\xi)$ and the velocity $\mathbf{u}(t,\xi)=\left(u_{1}(t,\xi),\ldots, u_{n}(t,\xi)\right)$, respectively, with
$(t,\xi)\in [0,+\infty)\times \Omega$ being the time variable and the Eulerian space variable, respectively. The internal energy $e=e\left(\rho, \theta\right)$, the pressure $P=P\left(\rho, \theta\right)$ and the heat flux $\mathbf{q}=\mathbf{q}\left(\rho, \theta\right)$ are functions of the density $\rho$ and the absolute temperature $\theta$. The viscous stress tensor $\mathbf{S}$ characterizes the measure of resistance of the fluid and $\rho z=\rho(t,\xi) z\left(t,\xi\right)$ represents the density of the reactant. The positive constant $\lambda$ stands for the difference in heat between the reactant and the product. The reaction function $\phi=\phi\left(\theta\right)$ is defined by the first-order Arrhenius law (cf. \cite{Ducomet-Zlotnik-NonliAnal-2005}, \cite{Liao-Zhao-CMS-2017}, \cite{Umehara-Tani-JDE-2007})
\begin{eqnarray}\label{a2}
 \phi\left(\theta\right)=K\theta^{\beta}\exp\left(-\frac{A}{\theta}\right),
\end{eqnarray}
where positive constants $K$ and $A$ are the coefficients of the rate of the reactant and the activation energy, respectively, and $\beta$ is a non-negative number. The species diffusion velocity $F$ is assumed to satisfy the Fickian law (cf. \cite{Donatelli-Trivisa-CMP-2006})
\begin{eqnarray}\label{a3}
F=D\nabla z,\nonumber
\end{eqnarray}
where $D=d\rho$ is the reactant flux diffusion coefficient and the positive constant $d$ stands for the species diffusion in the reaction. In accordance with the general principle on Newtonian fluids, the admissible form of $\mathbf{S}$ reads as
\begin{eqnarray}\label{a4}
\mathbf{S}=\mu \left(\nabla\mathbf{u}+\nabla\mathbf{u}^{T}\right)+\lambda_{1}\textmd{div}\mathbf{u}\ \mathbf{I},\nonumber
\end{eqnarray}
where $\mu$ and $\lambda_{1}$ are called the viscosity coefficients satisfying
\begin{eqnarray}\label{a5}
\mu>0,\quad n\lambda_{1}+2\mu>0.
\end{eqnarray}
Accordingly, the dissipative function $\mathbf{S}:\nabla\mathbf{u}$ stands for a real dissipation of the mechanical energy into heat and can be written as
\begin{eqnarray}\label{a6}
\mathbf{S}:\nabla\mathbf{u}=\frac{\mu}{2}\sum_{i, j=1}^n\left(\frac{\partial{u}_{i}}{\partial \xi_{j}}+\frac{\partial{u}_{j}}{\partial \xi_{i}}\right)^{2}+\lambda_{1}\left|\textmd{div}\mathbf{u}\right|^{2}.\nonumber
\end{eqnarray}
The thermo-radiative flux $\mathbf{q}$ satisfies the Fourier law
\begin{eqnarray}\label{a7}
\mathbf{q}=\mathbf{q}\left(\rho,\theta\right)=-\kappa\left(\rho,\theta\right)\nabla\theta,\quad \kappa\left(\rho,\theta\right)=\kappa_{1}+\kappa_{2}\frac{\theta^{b}}{\rho},
\end{eqnarray}
where $\kappa_{1}$, $\kappa_{2}$ and $b$ are the positive constants (cf. \cite{Umehara-Tani-JDE-2007}).

We treat the radiation as a continuous field and consider both the wave and photonic effect. Assume that the high-temperature radiation is at thermal equilibrium with the fluid. Then the pressure $P$ and the internal energy $e$ consist of a linear term in $\theta$ corresponding to the perfect polytropic contribution and a fourth-order radiative part due to the Stefan-Boltzmann radiative law (cf. \cite{Ducomet-MMAS-1999}, \cite{Mihalas-Mihalas-1984}, \cite{Umehara-Tani-JDE-2007})
\begin{eqnarray}\label{a8}
P=P\left(\rho,\theta\right)=R\rho\theta+\frac{a}{3}\theta^{4},\quad e=e\left(\rho,\theta\right)=c_{v}\theta+a\frac{\theta^{4}}{\rho},
\end{eqnarray}
where the positive constants $R$, $c_{v}$ and $a$ are the perfect gas constant, the specific heat and the Stefan-Boltzmann constant, respectively.

The system \eqref{a1} is supplemented with the initial and boundary conditions
\begin{eqnarray}\label{a9}
\rho\left(0,\xi\right)=\rho_{0}\left(\xi\right),\quad\mathbf{u}\left(0,\xi\right)=\mathbf{u}_{0}\left(\xi\right),
\quad\theta\left(0,\xi\right)=\theta_{0}\left(\xi\right),\quad z\left(0,\xi\right)=z_{0}\left(\xi\right),\quad \forall\xi\in\Omega
\end{eqnarray}
and
\begin{eqnarray}\label{a10}
\mathbf{u}\left(t,\xi\right)=0,\quad \frac{\partial\theta\left(t,\xi\right)}{\partial{\bf n}}=0,
\quad \frac{\partial z\left(t,\xi\right)}{\partial{\bf n}}=0,\quad \forall (t,\xi)\in [0,\infty)\times \partial B_1(O).
\end{eqnarray}
Here for each $\xi\in\partial\Omega$, ${\bf n}(\xi)=-\frac{\xi}{|\xi|}$ denotes the outer normal vector of $\Omega$ at $\xi\in\partial\Omega$.

If the initial data $\left(\rho_{0}\left(\xi\right), \mathbf{u}_{0}\left(\xi\right), \theta_{0}\left(\xi\right), z_{0}\left(\xi\right)\right)$ is assumed to be spherically symmetric, i.e.,
\begin{eqnarray}\label{a11}
\rho_{0}\left(\xi\right)=\hat{\rho}_{0}\left(r\right),\quad \mathbf{u}_{0}\left(\xi\right)=\frac{\xi}{r}\hat{{u}}_{0}\left(r\right),
\quad\theta_{0}\left(\xi\right)=\hat{\theta}_{0}\left(r\right),\quad z_{0}\left(\xi\right)=\hat{z}_{0}\left(r\right),\quad r=|\xi|\geq 1,
\end{eqnarray}
then we can deduce that the corresponding solution $(\rho(t,\xi), {\bf u}(t,\xi), \theta(t,\xi))$ to the initial-boundary value problem \eqref{a1}, \eqref{a9}, \eqref{a10} is also spherically symmetric, i.e.,
\begin{eqnarray*}
\rho\left(t,\xi\right)=\hat{\rho}\left(t,r\right),\quad \mathbf{u}\left(t,\xi\right)=\frac{\xi}{r}\hat{{u}}\left(t,r\right),
\quad\theta\left(t,\xi\right)=\hat{\theta}\left(t,r\right),\quad z\left(t,\xi\right)=\hat{z}\left(t,r\right),\quad r=|\xi|\geq 1.
\end{eqnarray*}
Thus one can obtain from the system \eqref{a1} by direct calculations and by ignoring the symbol ``$\hat{}$ " that
\begin{eqnarray}\label{a12}
    \rho_{t}+\left(\rho u\right)_{r}+\frac{n-1}{r}\rho u&=&0,\nonumber\\
    \left(\rho u\right)_{t}+\left(\rho u^{2}\right)_{r}+\frac{n-1}{r}\rho u^{2}+P_{r}&=&\alpha\left(u_{r}+\frac{n-1}{r}u\right)_{r},\\
    \left(\rho e\right)_{t}+\left(\rho eu\right)_{r}+\frac{n-1}{r}\rho eu-\kappa_{r}\theta_{r}-\kappa\left(\theta_{rr}+\frac{n-1}{r}\theta_{r}\right)&=&2\mu\left(u^{2}_{r}+\frac{n-1}{r^{2}}u^{2}\right)
    +\lambda_{1}\left(u_{r}+\frac{n-1}{r}u\right)^{2}\nonumber\\
    &&-P\left(u_{r}+\frac{n-1}{r}u\right)+\lambda\phi\rho z,\nonumber\\
    \left(\rho z\right)_{t}+\left(\rho uz\right)_{r}+\frac{n-1}{r}\rho uz&=&-\phi \rho z+Dz_{rr}+\frac{n-1}{r}Dz_{r}+D_{r}z_{r},\nonumber
\end{eqnarray}
where $\alpha=2\mu+\lambda_{1}>0$, $r\in \left(1,\infty\right)$, $t>0$, the initial and boundary conditions \eqref{a9}-\eqref{a10} become
\begin{eqnarray}\label{a13}
\rho\left(0,r\right)=\rho_{0}\left(r\right),\quad u\left(0,r\right)=u_{0}\left(r\right),
\quad\theta\left(0,r\right)=\theta_{0}\left(r\right),\quad z\left(0,r\right)=z_{0}\left(r\right),\quad r\geq 1
\end{eqnarray}
and
\begin{eqnarray}\label{a14}
u\left(t, 1\right)=0,\quad \frac{\partial\theta\left(t, 1\right)}{\partial r}=0,\quad \frac{\partial z\left(t, 1\right)}{\partial r}=0,\quad t\geq 0.
\end{eqnarray}

For the convenience of our analysis, we convert the system \eqref{a12} from the Eulerian coordinates $\left(t, r\right)$ into that in Lagrangian coordinates $\left(t, x\right)$. For this purpose, if we define
\begin{eqnarray}\label{a15}
r\left(t,x\right)=r_{0}\left(x\right)+\int_{0}^{t}u\left(s,r\left(s,x\right)\right)ds
\end{eqnarray}
with
\begin{eqnarray}\label{a16}
\int_{1}^{r_{0}(x)}y^{n-1}\rho_{0}(y)dy=x,
\end{eqnarray}
then by using \eqref{a15}, \eqref{a16}, $\eqref{a12}_{1}$ and the boundary condition $u\left(t, 1\right)=0$, we have for $t\geq 0$ that
\begin{eqnarray}\label{a17}
\int_{1}^{r(t,x)}y^{n-1}\rho\left(t,y\right)dy=\int_{1}^{r_{0}(x)}y^{n-1}\rho_{0}(y)dy=x.
\end{eqnarray}

From \eqref{a17}, it is esy to see that $r=1$ if $x=0$ and $r\rightarrow\infty$ if $x\rightarrow\infty$, as long as $\rho>0$ for all $\left(t,y\right)\in [0,\infty)\times[0,\infty)$. Moreover, one can deduce from \eqref{a15} and \eqref{a17} that
\begin{eqnarray}\label{a18}
\frac{\partial r\left(t,x\right)}{\partial t} &=&u\left(t,r\left(t,x\right)\right),\nonumber\\  \frac{\partial r\left(t,x\right)}{\partial x} &=&\frac{1}{r^{n-1}\left(t,x\right)\rho\left(t,r\left(t,x\right)\right)}.
\end{eqnarray}

If we introduce
\begin{eqnarray}\label{a19}
\tilde{v}\left(t,x\right)&=:&\frac{1}{\rho\left(t,r\left(t,x\right)\right)},\nonumber\\ \tilde{u}\left(t,x\right)&=:&u\left(t,r\left(t,x\right)\right),\\
\; \tilde{\theta}\left(t,x\right)&=:&\theta\left(t,r\left(t,x\right)\right),\nonumber\\ \tilde{z}\left(t,x\right)&=:&z\left(t,r\left(t,x\right)\right),\nonumber
\end{eqnarray}
and express \eqref{a12} in terms of $\left(\tilde{v},\tilde{u},\tilde{\theta},\tilde{z}\right)$ (denote still by $\left(v, u, \theta, z\right)$ below) in variables $\left(t, x\right)$
\begin{eqnarray}\label{a20}
    v_t&=&\left(r^{n-1}u\right)_{x},\nonumber\\
    u_t&=&r^{n-1}\left(\frac{\alpha\left(r^{n-1}u\right)_{x}}{v}-P\right)_x,\\
    e_t&=&\left(\frac{r^{2n-2}\kappa\theta_{x}}{v}\right)_x+\left(\frac{\alpha\left(r^{n-1}u\right)_{x}}{v}-P\right)\left(r^{n-1}u\right)_{x}
    -2\mu\left(n-1\right)\left(r^{n-2}u^{2}\right)_{x}+\lambda\phi z,\nonumber\\
    z_{t}&=&\left(\frac{dr^{2n-2}z_x}{v^{2}}\right)_{x}-\phi z,\nonumber
\end{eqnarray}
where $(t,x)\in[0,\infty)\times[0,\infty)$.

The corresponding initial data, the boundary conditions and the far field behavior are
\begin{eqnarray}\label{a21}
 \left(v\left(0,x\right),u\left(0,x\right),\theta\left(0,x\right), z\left(0,x\right)\right)=\left(v_0\left(x\right),u_0\left(x\right),\theta_{0}\left(x\right), z_{0}\left(x\right)\right), \quad x\in[0,\infty),
\end{eqnarray}
and
\begin{eqnarray}\label{a22}
&&u\left(t,0\right)=0,\quad \frac{\partial \theta\left(t,0\right)}{\partial x}=0,\quad  \frac{\partial z\left(t,0\right)}{\partial x}=0,\nonumber\\
&&\lim_{x\rightarrow+\infty}\left(v\left(t, x\right),u\left(t, x\right),\theta\left(t, x\right), z\left(t, x\right)\right)=(1,0,1,0),
\end{eqnarray}
respectively.

In view of \eqref{a19}, we can deduce from \eqref{a15} and \eqref{a18} that
\begin{eqnarray}\label{a23}
r\left(t,x\right)&=&r_{0}\left(x\right)+\int_{0}^{t}u\left(s,x\right)ds,\nonumber\\ \frac{\partial r(t,x)}{\partial t}&=&u(t,x),\\
\frac{\partial r(t,x)}{\partial x}&=&r^{1-n}(t,x)v(t,x).\nonumber
\end{eqnarray}
Integrating the last equality in \eqref{a23} yields
\begin{eqnarray}\label{a24}
r^{n}\left(t,x\right)=1+n\int_{0}^{x}v\left(t,y\right)dy.
 \end{eqnarray}
Furthermore, it follows from \cite{Jiang-CMP-1996} that
\begin{eqnarray}\label{a25}
r\left(t,x\right)\geq r\left(t, 0\right)=1,\quad \left(t, x\right)\in [0,\infty)\times[0,\infty).
 \end{eqnarray}


Before stating our main results, let us review some related results in the literature. The mathematical study of radiation hydrodynamics has attracted a lot of interest recently (a complete literature in this direction is beyond the scope of this paper; however, we want to mention \cite{Donatelli-Trivisa-CMP-2006, Ducomet-MMAS-1999, Ducomet-ARMA-2004, Ducomet-Feireisl-CMP-2006, Ducomet-Zlotnik-ARMA-2005, Ducomet-Zlotnik-NonliAnal-2005, Jiang-ZHeng-JMP-2012, Jiang-ZHeng-ZAMP-2014, Qin-Hu-Wang-Huang-Ma-JMAA-2013, Umehara-Tani-JDE-2007, Umehara-Tani-PJA-2008} and references cited therein). To go directly to the main points of the present paper, in what follows we only review some former results which are closely related to our main results:

For the one-dimensional case, Documet \cite{Ducomet-MMAS-1999} established the global existence and exponential decay in $H^{1}([0,1])$ of solutions to the initial-boundary value problem of the one-dimensional model in the bounded interval $(0,1)$ for $b\geq 4$ with the following initial-boundary conditions
\begin{eqnarray*}
(v(0,x),u(0,x),\theta(0,x),z(0,x))&=&(v_0(x), u_0(x),\theta_0(x),z_0(x)),\quad x\in(0,1),\\
u(t,0)&=&u(t,1)=0,\quad \forall t>0,\\
\frac{\partial\theta(t,0)}{\partial x}&=&\frac{\partial \theta(t,1)}{\partial x}=0,\quad \forall t>0,\\
\frac{\partial z(t,0)}{\partial x}&=&\frac{\partial z(t,1)}{\partial x}=0,\quad \forall t>0.
\end{eqnarray*}
Later on, Jiang and Zheng \cite{Jiang-ZHeng-ZAMP-2014} improved this result to the case of $b\geq 2$ and $0\leq\beta< b+9$.

For the corresponding initial-boundary value problem in the bounded interval $(0,1)$ with the free boundary condition $\sigma(t,0)=\sigma(t,1)=-p_e<0$ with $\sigma=-p(v,\theta)+\frac{\mu u_x}{v}$ and homogeneous Neumann conditions $\left(\theta_x(t,0),z_x(t,0)\right)=\left(\theta_x(t,1),z_x(t,1)\right)=\left(0,0\right)$ on both $\theta(t,x)$ and $z(t,x)$, Umehara and Tani \cite{Umehara-Tani-JDE-2007} established the global existence, uniqueness of a classical solutions under the assumptions $4\leq b\leq 16$ and $0\leq\beta\leq \frac{13}{2}$. Later on, they improved their results in \cite{Umehara-Tani-PJA-2008} to the case of $b\geq 3$ and $0\leq\beta< b+9$. Moreover, Qin \cite{Qin-Hu-Wang-Huang-Ma-JMAA-2013} strengthened the results to the case $\left(b, \beta\right)\in E$, where $E=E_{1}\bigcup E_{2}$ with
\begin{eqnarray*}
E_{1}&&=\left\{\left(b,\beta\right)\in\mathbb{R}^{2}:\quad \frac{9}{4}< b< 3,\ 0\leq \beta< 2b+6\right\},\\
E_{2}&&=\left\{\left(b,\beta\right)\in\mathbb{R}^{2}:\quad 3\leq b,\ 0\leq \beta<b+9\right\}.
 \end{eqnarray*}
Jiang and Zheng \cite{Jiang-ZHeng-JMP-2012} further studied global solvability and asymptotic behavior for the problem for the case $b\geq 2$ and $0 \leq \beta < b+9$. It is worth pointing out that all the results mentioned above are concerned with the case when the space variable $x$ belongs to a bounded interval ($x\in \left[0, 1\right]$). Recently, Liao and Zhao \cite{Liao-Zhao-arXive-2017} obtained global existence and large-time behavior of the solutions to the Cauchy problem of the one-dimensional viscous radiative and reactive gas under the assumption $b>\frac{11}{3}$ and $0\leq\beta< b+9$. We refer also the readers to \cite{Chen-SIMA-1992, Ducomet-M3AS-1996, Ducomet-MMNA-1997, Ducomet-BanachCenterPubl-2000, Ducomet-Zlotnik-CRASP-2004, He-Liao-Wang-Zhao-2017, Liao-Zhao-CMS-2017, Qin-Hu-JMP-2011, Shandarin-Zeldovichi-RMP-1989, Wylen-Sonntag-1985, Zhang-Xie-JDE-2008} for more references and some recent discussions.

For the multidimensional case, there are also some results concerning the spherically symmetric flow of compressible viscous and polytropic ideal fluid (cf. \cite{Jiang-CMP-1996, Liang-arXive-2017, Nakamura-Nishibata-2008, Qin-Zhang-Su-Cao-JMFM-2016, Wan-Wang-JDE-2017}). Among them, Jiang \cite{Jiang-CMP-1996} proved the global existence of spherically symmetric smooth solutions for viscous polytropic ideal gas in an exterior domain with large initial data (in dimension $n=2$ or $n=3$). Later on, Nakamura and Nishibata \cite{Nakamura-Nishibata-2008} established the asymptotic behavior of a spherically symmetric solutions to the above problem ($n\geq 3$). Furthermore, Liang \cite{Liang-arXive-2017} used the techniques developed in Li and Liang \cite{Li-Liang-ARMA-2016} to improve the result obtained in \cite{Nakamura-Nishibata-2008} to include the case $n=2$. Recently, Qin \cite{Qin-Zhang-Su-Cao-JMFM-2016} established the global existence and exponential stability of spherically symmetric solutions in $H^{i}\times H^{i}\times H^{i}\times H^{i}$ ($i=1, 2, 4$) for the system \eqref{a20}. We note, however, that the discussion in Qin \cite{Qin-Zhang-Su-Cao-JMFM-2016} is concerned with the case when the space variable $x$ is in a bounded domain, {\it thus it is a natural and interesting problem to established the global existence and large-time behavior result for the spherically solutions of the system \eqref{a20}, \eqref{a21}, \eqref{a22} and \eqref{a8} with large initial data.}

The aim of the present work is devoted to such a problem and the main result can be stated as follows:
\begin{theorem}\label{Th1.1}
Suppose that
\begin{itemize}
\item  The parameters $b$ and $\beta$ are assumed to satisfy:
\begin{eqnarray}\label{1.9}
 b>\frac{19}{4}, \quad 0\leq\beta< b+9;
 \end{eqnarray}
\item The initial data $ \left(v_{0}(x), u_{0}(x), \theta_{0}(x), z_{0}(x)\right)$ satisfy
\begin{eqnarray}\label{aa9}
  \left(v_{0}(x)-1, u_{0}(x ), \theta_{0}(x)-1, z_{0}(x)\right)\in L^{2}\left([0,\infty)\right),\nonumber\\
   \left(r^{n-1}\partial_{x}v_{0}(x), r^{n-1}\partial_{x}u_{0}(x), r^{n-1}\partial_{x}\theta_{0}(x), r^{n-1}\partial_{x}z_{0}(x)\right)\in L^{2}\left([0,\infty)\right),\\
   r^{n-1}\partial_{xx}v_{0}(x)\in L^{2}\left([0,\infty)\right),\quad \partial_{xx}u_{0}(x)\in L^{2}\left([0,\infty)\right),\quad z_{0}(x)\in L^{1}\left([0,\infty)\right),\nonumber\\
     \inf\limits_{x\in[0,\infty) }v_{0}\left(x\right)>0, \quad\inf\limits_{x\in[0,\infty)} \theta_{0}\left(x\right)>0, \quad 0\leq z_{0}\left(x\right)\leq 1,  \quad \forall x\in[0,\infty),\nonumber
  \end{eqnarray}
\end{itemize}
and are compatible with the boundary conditions \eqref{a22}. Then the system \eqref{a20}, \eqref{a21}, \eqref{a22} and \eqref{a8} admits a unique global solution $\left(v(t,x), u(t,x), \theta(t,x), z(t,x)\right)$ which satisfies
\begin{eqnarray}\label{aa11}
\underline{V}\leq v(t,x)&\leq&\overline{V},\nonumber\\
\underline{\Theta}\leq\theta(t,x)&\leq&\overline{\Theta},\nonumber\\
0\leq z(t,x)&\leq& 1
\end{eqnarray}
for all $\left(t,x\right)\in [0,\infty)\times[0,\infty)$ and
\begin{eqnarray}\label{aa12}
&&\sup\limits_{0\leq t<\infty}\left\{\left\|\left(v-1, u, \theta-1, z\right)(t)\right\|^2_{L^2([0,\infty))}+\left\|r^{n-1}\left(v_{x}, u_{x}, \theta_{x}, z_{x}\right)(t)\right\|^2_{L^2([0,\infty))}\right.\nonumber\\
&&\quad \left.+\left\|u_{xx}(t)\right\|^2_{L^2([0,\infty))}+\left\|r^{n-1}v_{xx}(t)\right\|^2_{L^2([0,\infty))}
+\left\|z(t)\right\|_{L^{1}([0,\infty))}\right\}\\
&&+\int_{0}^{\infty}\left\|\left(r^{n-1}v_{x}, r^{n-1}u_{x}, r^{n-1}\theta_{x}, r^{n-1}z_{x}, r^{n-1}v_{xx}, r^{2n-2}u_{xx}, r^{n-1}\theta_{xx}, r^{n-1}z_{xx}\right)(s)\right\|^{2}_{L^2([0,\infty))}ds\leq C.\nonumber
\end{eqnarray}
Here $\underline{V},$ $\overline{V},$ $\underline{\Theta},$ $\overline{\Theta}$ and $C$ are some positive constants which depend only on the fixed constants $\mu$, $\lambda_{1}$, $\lambda$, $K$, $A$, $d$, $R$, $c_{v}$, $a$, $\kappa_{1}$, $\kappa_{2}$, $n$ and the initial data $(v_{0}(x), u_{0}(x), \theta_{0}(x), z_{0}(x))$.

Moreover, the large time behavior of the global solution $\left(v(t,x), u(t,x), \theta(t,x), z(t,x)\right)$ constructed above can be described by the non-vacuum equilibrium state $(1,0,1,0)$ in the sense that
\begin{eqnarray}\label{aa13}
\lim_{t\rightarrow+\infty}\sup\limits_{x\in[0,\infty)}\left|\left(v\left(t,x\right)-1, u, \theta\left(t,x\right)-1, z\left(t,x\right)\right)\right|=0.
\end{eqnarray}

\end{theorem}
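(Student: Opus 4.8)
The plan is to establish Theorem \ref{Th1.1} via the standard continuation argument: local existence (which follows from a fixed-point scheme on a short time interval, and which we take for granted) combined with a priori estimates that are uniform in time. The heart of the matter is therefore to close the a priori estimates, and the order in which one must do this is dictated by the usual hierarchy for the radiative--reactive gas: first the basic energy identity, then pointwise control on $z$, then the crucial two-sided bounds on $v$ and $\theta$, and finally the higher-order derivative estimates that yield the bound \eqref{aa12} and, by Sobolev embedding, the decay \eqref{aa13}.

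First I would record the conservation/dissipation laws obtained by integrating the system \eqref{a20}: the total mass is conserved by \eqref{a17}, and the entropy-type functional $\int_0^\infty\big(R(v-1-\log v) + c_v(\theta-1-\log\theta) + \tfrac{u^2}{2} + \tfrac{a}{3}\tfrac{\theta^4}{v}+\cdots\big)\,dx$ together with its dissipation $\int_0^t\!\!\int_0^\infty\big(\tfrac{\kappa\theta_x^2}{v\theta^2}r^{2n-2} + \tfrac{\alpha u_x^2}{v\theta}r^{2n-2} + \cdots\big)\,dx\,ds$ is bounded; the reaction term $\lambda\phi z$ is handled because $0\le z\le 1$ (proved first by a maximum principle on the $z$-equation using the Neumann boundary condition and $z_0\ge 0$, together with $\int_0^\infty z\,dx$ decaying since $-\phi z\le 0$). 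The next and decisive step is the uniform positive lower and upper bounds on $v$. Here one uses the representation of $v$ obtained by integrating the momentum equation in the spirit of Kazhikhov--Shelukhin, adapted to the spherical geometry: one writes $v(t,x)$ in terms of $v_0$, the pressure, and an exponential integrating factor built from $\int u\,dx$, and exploits the basic energy bound plus the smallness-free arguments of Jiang and the references \cite{Qin-Zhang-Su-Cao-JMFM-2016, Liao-Zhao-arXive-2017}. The extra geometric weights $r^{n-1}$ must be tracked carefully; the bound \eqref{a25} that $r\ge 1$ is what prevents these weights from degenerating, but one must also show $r(t,x)$ does not blow up too fast, which comes from $r^n = 1 + n\int_0^x v$ once $v$ is bounded — this is a mild circularity that is broken by a bootstrap on bounded $x$-intervals before passing to the whole line.

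With $\underline V\le v\le\overline V$ in hand, the temperature bounds follow the Kawohl/Umehara--Tani technique: one derives $\int_0^t\!\!\int_0^\infty\theta^{b}\theta_x^2 r^{2n-2}\,dx\,ds \le C$ and then an $L^\infty$ bound on $\theta$ by testing the energy equation against suitable powers of $\theta$ (e.g. $\theta^{p}$ or using the equation for $e$ directly), where the restriction $b>\tfrac{19}{4}$ enters precisely to make the nonlinear heat-flux term dominate the quartic radiative contributions and the reaction source $\lambda\phi z$ with $0\le\beta<b+9$; the lower bound $\theta\ge\underline\Theta$ comes from a comparison argument exploiting that $\phi\ge 0$ and the dissipation term is signed favorably. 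Once $v$ and $\theta$ are trapped in fixed intervals, the remaining estimates are the routine but lengthy energy estimates on $u_x, \theta_x, z_x$ in $L^2$ (with the $r^{n-1}$ weights), then on $v_x, v_{xx}, u_{xx}, \theta_{xx}, z_{xx}$, each obtained by differentiating the equations, testing against the appropriate derivative, and absorbing cross terms using the already-established uniform bounds and the dissipation integrals; summing these gives \eqref{aa12}. Finally, \eqref{aa13} follows from \eqref{aa12} by a standard argument: the bound on $\int_0^\infty\|(\,\cdot\,)\|^2\,ds$ together with the bound on the time derivative of those quantities forces $\|(v-1,u,\theta-1,z)(t)\|_{H^1}\to 0$, and $H^1([0,\infty))\hookrightarrow L^\infty([0,\infty))$ upgrades this to uniform convergence.

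**Main obstacle.** The principal difficulty is the two-sided bound on $v$ and the upper bound on $\theta$ \emph{on the unbounded domain with large data}: the exterior geometry introduces the factors $r^{n-1}$ and $r^{2n-2}$ everywhere, and unlike the bounded-domain case of \cite{Qin-Zhang-Su-Cao-JMFM-2016} one cannot simply bound $r$ from above by a constant. Controlling these weights — showing they neither vanish (handled by $r\ge 1$) nor grow uncontrollably — while simultaneously closing the Kazhikhov--Shelukhin representation for $v$ and the Kawohl-type argument for $\theta$, and doing so with the sharp parameter window $b>\tfrac{19}{4}$, is where essentially all the work lies; everything after the pointwise bounds is comparatively mechanical.
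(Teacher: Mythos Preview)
Your high-level strategy matches the paper's, but there is a genuine gap in the mechanism you propose for the upper bound on $\theta$. You say one obtains $\theta\le\overline\Theta$ ``by testing the energy equation against suitable powers of $\theta$ (e.g.\ $\theta^p$)'' once $v$ is trapped. That is the bounded-domain argument of \cite{Qin-Zhang-Su-Cao-JMFM-2016}, and it breaks down here precisely because of the weight you flag: in the course of that argument one must control $\int_0^t\!\!\int_0^\infty r^{4n-4}u_x^4\,dx\,ds$, and on the exterior domain $r$ is \emph{not} bounded above (indeed $r^n=1+n\int_0^x v$ grows like $x$), so one cannot simply drop the weight. The paper's resolution is an additional layer you do not mention: \emph{before} attacking $\|\theta\|_\infty$, one establishes a chain of estimates (Lemmas~3.7--3.11) bounding $\|v_x\|$, $\int_0^t\|r^{n-1}u_{xx}\|^2\,ds$, $\int_0^t\|r^{n-1}u_x\|_{L^\infty}^2\,ds$, $\|r^{n-1}v_x\|$ and $\|r^{n-1}u_x\|$ each by explicit powers $\|\theta\|_\infty^{l_i}$. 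These feed into a Kawohl-type coupled system for the auxiliary quantities $X(t)=\int_0^t\!\!\int(1+\theta^{b+3})\theta_t^2$, $Y(t)=\max_s\int r^{2n-2}(1+\theta^{2b})\theta_x^2$, $Z(t)=\max_s\int u_{xx}^2$, yielding inequalities $X+Y\le C(1+Z^{\lambda_1})$ and $Z\le C(1+X+Y+Z^{\lambda_2})$ with $\lambda_1,\lambda_2<1$ exactly when $b>\tfrac{19}{4}$; this is where the quartic term is absorbed via $\int_0^t\!\!\int r^{4n-4}u_x^4\le \int_0^t\|r^{n-1}u_x\|_{L^\infty}^2\|r^{n-1}u_x\|^2\,ds\le C\|\theta\|_\infty^{2l_2+l_3}$ and the condition $2l_2+l_3<b+9$. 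Without this bootstrap the $\theta^p$-testing route does not close.

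A second, smaller issue: your comparison argument for the lower bound $\theta\ge\underline\Theta$ is too optimistic. The paper obtains only a \emph{local-in-time} lower bound (Lemma~5.1), of the form $\theta(t,x)\ge \min_x\theta(s,x)/\big(C+C(t-s+1)\min_x\theta(s,x)\big)$, via an $L^{2p}$ maximum-principle argument on $(1/\theta-1)_+$ and letting $p\to\infty$; this suffices for the continuation scheme but by itself decays in $t$. The uniform-in-time constant $\underline\Theta$ in the theorem statement ultimately relies on the large-time convergence $\theta\to 1$, not on a direct comparison.
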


\begin{remark} \eqref{aa11} tells us that if the initial data $(v_{0}(x), u_{0}(x), \theta_{0}(x), z_{0}(x))$ is assumed to be without vacuum, mass concentrations, or vanishing temperatures, then the same holds for the unique global solution $\left(v(t,x), u(t,x), \theta(t,x), z(t,x)\right)$ constructed in Theorem \ref{Th1.1}. It follows from Sobolev's imbedding theorem that the unique solution obtained in Theorem \ref{Th1.1} is indeed a globally smooth non-vacuum solution with large initial data. Moreover, this result in Lagrangian coordinates can be easily be converted to an equivalent statement for the corresponding problem in Eulerian coordinates.

\end{remark}

Now we outline the main difficulties of the problem and our strategy to deduce our main result obtained in Theorem \ref{Th1.1}. As pointed out in Liao and Zhao \cite{Liao-Zhao-arXive-2017}, the crucial step to construct the global solutions of the initial-boundary value problem \eqref{a20}, \eqref{a21}, \eqref{a22} and \eqref{a8} with large initial data is to obtain the positive lower and upper bounds of the specific volume $v\left(t,x\right)$ and the temperature $\theta\left(t,x\right)$. To this end, Jiang and Zheng \cite{Jiang-ZHeng-ZAMP-2014} used the techniques developed by Kazhikov and Shelukhin \cite{Kazhikhov-Shelukhin-JAMM-1977} to derive a representation formula of $v\left(t,x\right)$ to deduce the desired lower and upper bounds of the specific volume $v(t,x)$. For our problem, such a method loses its power since $x$ is in an unbounded domain ($x\in[0,\infty)$). Another difficulty lies on the way to derive estimates on some nonlinear terms in order to deduce the upper bound of $\theta\left(t,x\right)$. Among them, the most difficult term we need to control is the high order (about $r\left(t, x\right)$) nonlinear term $\int_{0}^{t}\int_{\Omega}r^{4n-4}u^{4}_{x}dxds$. In Qin \cite{Qin-Zhang-Su-Cao-JMFM-2016}, since $r(t,x)$ is assumed to satisfy $0<r_{1}\leq r\left(t, x\right)\leq r_{2}$ with $r_{1}$ and $r_{2}$ being two positive constants, the region $\Omega$ under their consideration there is bounded, one can thus control the term $\int_{0}^{t}\int_{\Omega}r^{4n-4}u^{4}_{x}dxds$ by the term $\int_{0}^{t}\int_{\Omega}u^{4}_{x}dxds$ (cf. (3.57) in \cite{Qin-Zhang-Su-Cao-JMFM-2016}) since
\begin{eqnarray}\label{aa14}
\int_{0}^{t}\int_{\Omega}r^{4n-4}u^{4}_{x}dxds\leq C\int_{0}^{t}\int_{\Omega}u^{4}_{x}dxds.
\end{eqnarray}
But for the problem considered in this paper, since $r\left(t, x\right)\in [1,\infty)$ lies in an unbounded domain, the estimate \eqref{aa14} does not hold true any longer! To overcome such difficulties, we will first deduce bounds on $\left\|v_{x}(t)\right\|_{L^2([0,\infty))}$, $\int_{0}^{t}\left\|r^{n-1}(s)u_{xx}(s)\right\|^{2}_{L^2([0,\infty))}ds$, $\int_{0}^{t}\left\|r^{n-1}(s)u_{x}(s)\right\|^{2}_{L^{\infty}([0,\infty))}ds$, $\left\|r^{n-1}(t)v_{x}\right\|_{L^2([0,\infty))}$ and $\left\|r^{n-1}(t)u_{x}(t)\right\|_{L^2([0,\infty))}$ in terms of $\left\|\theta\right\|_{L^\infty([0,T]\times[0,\infty))}$. Then the nonlinear term $\int_{0}^{t}\int_{0}^{\infty}r^{4n-4}u^{4}_{x}dxds$ can be estimated as follows (see also \eqref{bb147} and the definition of $l_{1}$, $l_{2}$ and $l_{3}$ can be seen in \eqref{bd58}, \eqref{cc1} and \eqref{cb24}, respectively)
\begin{eqnarray*}
\int_{0}^{t}\int_{0}^{\infty}r^{4n-4}u^{4}_{x}dxds&\leq& C\int_{0}^{t}\left(\left\|r^{n-1}(s)u_{x}(s)\right\|^{2}_{L^{\infty}([0,\infty))}
\int_{0}^{\infty}r^{2(n-1)}(s,x)u^{2}_{x}(s,x)dx\right)ds\\
&\leq& C\left\|\theta\right\|_{L^\infty([0,T]\times[0,\infty))}^{2l_{2}+l_{3}}.
\end{eqnarray*}
The above estimates will play a central role in our analysis.

In summary, the key points in our discussion can be outlined as in the following:
\begin{itemize}
\item [(i).] We first construct a normalized entropy $\widetilde{E}$ (see (\eqref{b3})) to the system \eqref{a20} to derive the basic energy estimates for our problem, which will play a fundamental role in deducing the desired uniform positive lower and upper bounds of the specific volume $v(t,x)$ and temperature $\theta(t,x)$. We emphasize that the method to deduce the basic energy estimates here is different from that used in  \cite{Qin-Zhang-Su-Cao-JMFM-2016} due to the unboundedness of the domain under our consideration;

\item [(ii).] Motivated by the works of Jiang \cite{Jiang-CMP-1996, Jiang-AMPA-1998,  Jiang-CMP-1999, Jiang-PRSE-2002} on the one-dimensional, compressible Navier-Stokes system for a viscous and heat conducting ideal polytropic gas, we use a special cut-off function $\varphi\left(x\right)$ (see \eqref{b23}) as in Liao and Zhao \cite{Liao-Zhao-arXive-2017} to derive a new representation of $v\left(t,x\right)$, that is, \eqref{b24}. Based on such a formula, we can derive the desired uniform upper bound of $v\left(t,x\right)$ for $0\leq t\leq T$ and the desired uniform positive lower bound of $v(t,x)$ for the time range $t_0\leq t\leq T$ with $t_0$ being a suitably chosen sufficiently large positive constant. Then we adopt the method developed by Kazhikhov and Shelukhin in \cite{Kazhikhov-Shelukhin-JAMM-1977} (cf. also \cite{Antontsev-Kazhikov-Monakhov-1990}) for the one-dimensional, compressible Navier-Stokes system for a viscous and heat conducting ideal polytropic gas to yield the desired positive lower bound of $v\left(t,x\right)$ when $0< t<t_{0}$. Noticing that the term $Q\left(t,x\right)$ defined in \eqref{b25} depends on both time variable $t$ and space variable $x$ due to the additional term
    $-\frac{1}{\alpha}\int_{0}^{t}\int_{x}^{\infty}\left(n-1\right)\varphi u^{2}r^{-n}dyds$, which is different from the corresponding term in Liao and Zhao \cite{Liao-Zhao-arXive-2017} (see (2.16) in \cite{Liao-Zhao-arXive-2017}). The important point to note here is that \eqref{bb54} will play a key role in our discussion and all the bounds obtained above are independent of time variable $t$, which is crucial in studying the large-time behavior of our problem;

\item [(iii).] Having obtained the desired uniform positvie lower and upper bound of $v\left(t,x\right)$, we turn to estimate the term $\left\|v_{x}(t)\right\|_{L^2([0,\infty))}$, $\int_{0}^{t}\left\|r^{n-1}(s)u_{xx}(s)\right\|^{2}_{L^2([0,\infty))}ds$, $\int_{0}^{t}\left\|r^{n-1}(s)u_{x}(s)\right\|^{2}_{L^{\infty}([0,\infty))}ds$, $\left\|r^{n-1}(t)v_{x}(t)\right\|_{L^2([0,\infty))}$ and $\left\|r^{n-1}(t)u_{x}(t)\right\|_{L^2([0,\infty))}$ in terms of $\left\|\theta\right\|_{L^\infty([0,T]\times[0,\infty))}$ in Lemma 3.7- Lemma 3.11, which will be useful in deriving the upper bound of $\theta\left(t,x\right)$.

    It should be pointed out that \eqref{c16} holds true only for the case of $n\geq 3$, this is also the main reason why the results obtained in \cite{Nakamura-Nishibata-2008} hold true only for the case of $n\geq 3$. Fortunately, we can deduce \eqref{cc16} for the case $n\geq 2$. With \eqref{cc16} in hand, we can obtain Lemma 3.7- Lemma 3.11 by using Gronwall's inequality to continue our discussion;

\item [(iv).] Motivated by Kawohl \cite{Kawohl-JDE-1985} and Liao and Zhao \cite{Liao-Zhao-arXive-2017}, we introduce some auxiliary functions $X(t)$, $Y(t)$ and $Z(t)$ (see \eqref{b56}) to deduce the upper bound of $\theta\left(t,x\right)$. We will see that the definition of $Y(t)$ is different from that in defined in Liao and Zhao \cite{Liao-Zhao-arXive-2017} (see (2.51) in Liao and Zhao \cite{Liao-Zhao-arXive-2017}) which is due to \eqref{b70} and the fact that $r\left(t,x\right)\geq 1$.

    It is worth to point out that the method used by Liang in \cite{Liang-arXive-2017} to deduce the uniform upper bound of $\theta\left(t,x\right)$ (see also Corollary 4.5 in Liang \cite{Liang-arXive-2017}) relies on the following Sobolev inequality
\begin{eqnarray*}
\left\|\theta(t)-1\right\|^{2}_{L^{\infty}([0,\infty))}\leq C\left\|\theta(t)-1\right\|_{L^2([0,\infty))}\left\|\theta_{x}(t)\right\|_{L^2([0,\infty))}
\leq C\left(1+\left\|\theta\right\|_{L^\infty([0,T]\times[0,\infty))}\right).
\end{eqnarray*}
For our problem $\left\|\theta(t)-1\right\|_{L^2([0,\infty))}$ is bounded due to \eqref{b3}, \eqref{bb13} and \eqref{b43}, but the method employed in \cite{Liang-arXive-2017} to deduce the estimate on $\left\|\theta_{x}(t)\right\|_{L^2([0,\infty))}$ loses its power in our case which is due to the fourth-order radiative part in both $P(v,\theta)$ and $e(v,\theta)$, cf. \eqref{a8};

\item [(v).] Finally, we derive a local in time estimate on the lower bound of $\theta\left(t,x\right)$ (see \eqref{b180}) in Section 5. Although such a bound depends on time variable $t$, it is sufficient to extend the local solution to a global one by combining the above estimates with the continuation argument designed in Liao and Zhao \cite{Liao-Zhao-arXive-2017}, which is motivated by \cite{Wan-Wang-Zhao-JDE-2016, Wan-Wang-Zou-Nonlinearity-2016, Wang-Zhao-M3AS-2016}.
\end{itemize}

Before concluding this section, it is worth pointing out that since the energy producing process inside the medium is taken into account in the equations \eqref{a1}, that is, the gas consists of a reacting mixture and the combustion process is current at the high temperature stage, and the experimental results for gases at high temperatures in \cite{Zeldovich-Raizer-1967} show that the viscosity coefficient $\mu$ may depend on the specific volume $v(t,x)$ and/or temperature $\theta(t,x)$. Thus it would be interesting and necessary to consider the corresponding global wellposedness theory for the case when the viscosity coefficient $\mu$ is a function of $v$ and $\theta$.

For such a problem, if $n=1$ and the viscosity coefficient $\mu$ is a smooth function of the specific volume $v$ for $v>0$ which can be degenerate, some global solvability results are established in \cite{Liao-Zhao-CMS-2017} for the above mentioned two types of initial-boundary value problems of the system \eqref{a1}, \eqref{a2}, \eqref{a5}, \eqref{a7}, \eqref{a8} (cf. also \cite{Chen-Zhao-Zou-PRSE-2017, Jenssen-Karper-SIMA-2010, Kanel-DU-1968, Pan-Zhang-CMS-2015, Tan-Yang-Zhao-Zou-SIMA-2013} for the corresponding results for one-dimensional compressible Navier-Stokes equations for a viscous and heat conducting ideal polytropic gas).

As for the case when the viscosity coefficient $\mu$ depends also on the temperature, note that even for one-dimensional compressible Navier-Stokes equations for a viscous and heat conducting ideal polytropic gas, as pointed out in \cite{Jenssen-Karper-SIMA-2010}, temperature dependence of the viscosity $\mu$ has turned out to be especially problematic. Even so, there are some recent progress in this problem for viscous heat-conducting ideal polytropic gas, cf. \cite{Huang-Liao-M3AS-2017, Huang-Wang-Xiao-KRM-2016, Liu-Yang-Zhao-Zou-SIMA-2014, Wan-Wang-JDE-2017, Wang-Zhao-M3AS-2016} and the references cited therein, and a result similar to that of \cite{Wang-Zhao-M3AS-2016} has been obtained in \cite{He-Liao-Wang-Zhao-2017} for the Cauchy problem of the system modeling one-dimensional viscous radiative and reactive gas when the viscosity coefficient $\mu$ is a smooth function of $v$ and $\theta$. We're convinced that the arguments used in \cite{He-Liao-Wang-Zhao-2017} and this paper can be also adapted to construct spherically symmetric solutions to the system \eqref{a1}, \eqref{a2}, \eqref{a5}, \eqref{a7}, \eqref{a8} in an unbounded domain exterior to the unit sphere $B_1(O)\subset\mathbb{R}^n$ for $n\geq 2$ when the viscosity coefficient $\mu$ is a smooth function of $v$ and $\theta$ and such a problem is under our current research.

The rest of the paper is organized as follows: we first derive some useful energy type estimates in Section 2. Section 3 is devoted to yielding the desired uniform positive lower and upper bound of $v\left(t,x\right)$. Then we will deduce the uniform upper bound of $\theta\left(t,x\right)$ in Section 4. Finally, the local in time lower bound of $\theta\left(t,x\right)$ will be obtained and thus completes the proof of Theorem 1.1 in Section 5.

\bigbreak
\noindent{\bf Notations:}\quad Throughout this paper, $C\geq 1$ is used to denote a generic positive constant which depends only on the fixed constants $\mu$, $\lambda_{1}$, $\lambda$, $K$, $A$, $d$, $R$, $c_{v}$, $a$, $\kappa_{1}$, $\kappa_{2}$, $n$ and the initial data $\left(v_{0}(x), u_{0}(x), \theta_{0}(x), z_{0}(x)\right)$. Note that such a onstant may vary from line to line. $C\left(\cdot,\cdot\right)$ stands for some generic positive constant depending only on the quantities listed in the parenthesis.  $\epsilon$ stands for some small positive constant.

For function spaces, ~$L^q\left([0,\infty)\right)\left(1\leq q\leq \infty\right)$~denotes the usual Lebesgue space on~$[0,\infty)$ with norm $\|\cdot\|_{L^q\left([0,\infty)\right)},$ while $H^q\left([0,\infty)\right)$ represents for the usual Sobolev space in the $L^2$ sense with norm $\|{\cdot}\|_{H^q\left([0,\infty)\right)}$. For simplicity, we use $\|\cdot\|_{\infty}$ to denote the norm in $L^{\infty}\left([0,T]\times[0,\infty)\right)$ for some $T>0$ and use $\|\cdot\|$ to denote the norm ~$\|\cdot\|_{L^2\left([0,\infty)\right)}$.

\section{Basic energy estimates}
The main purpose of this section is to deduce certain energy type estimates on the solutions of the initial-boundary value problem \eqref{a20}, \eqref{a21}, \eqref{a22} and \eqref{a8} in terms of the initial data $(v_0(x), u_0(x), \theta_0(x), z_0(x))$. To this end, for some constants $0<T\leq +\infty$, $0<M_1<M_2,$ $0<N_1<N_2$, we first define the set of functions $X(0,T;M_1,M_2;N_1,N_2)$ for which we seek the solution of the initial-boundary value problem \eqref{a20}, \eqref{a21}, \eqref{a22} and \eqref{a8} as follows:
\begin{eqnarray*}
   &&X(0, T;M_1,M_2;N_1,N_2)\\
&:=&\left\{\left(v, u,\theta,z\right)\left(t,x\right) \left|
   \begin{array}{c}
   0\leq z(t,x)\in  C\left(0,T;L^{2}\left(\Omega\right)\cap L^1([0,\infty))\right),\\
   \left(v\left(t,x\right)-1,u\left(t,x\right),\theta\left(t,x\right)-1\right)\in C\left(0,T;L^{2}\left([0,\infty)\right)\right),\\
   \left( r^{n-1}v_{x},  r^{n-1}u_{x},  r^{n-1}\theta_{x},  r^{n-1}z_{x}\right)\left(t,x\right)\in C\left(0,T;L^{2}\left([0,\infty)\right)\right),\\
  \left( r^{n-1}v_{x},  r^{n-1}u_{x},  r^{n-1}\theta_{x},  r^{n-1}z_{x}\right)\left(t,x\right)\in L^{2}\left(0,T;L^{2}\left([0,\infty)\right)\right),\\
  \left(r^{n-1}v_{xx}, u_{xx}\right)\in C\left(0,T;L^{2}\left([0,\infty)\right)\right),\\
   \left(r^{n-1}v_{xx}, r^{n-1}u_{xx},  r^{n-1}\theta_{xx},  r^{n-1}z_{xx}\right)\left(t,x\right)\in L^{2}\left(0,T;L^{2}\left([0,\infty)\right)\right),\\
   M_1\leq v(t,x)\leq M_2,\ \forall (t,x)\in[0,T]\times[0,\infty),\\
   N_1\leq \theta(t,x)\leq N_2,\ \forall (t,x)\in[0,T]\times[0,\infty)
   \end{array}
   \right.
   \right\}.
  \end{eqnarray*}

The standard local wellposedness result on the initial-boundary value problem of the hyperbolic-parabolic coupled system tells us that there exists a sufficiently small positive constant $t_1>0$, which depends only on $m_0=\inf\limits_{x\in[0,\infty)} v_0(x), n_0=\inf\limits_{x\in[0,\infty)} \theta_0(x)$ and
\begin{eqnarray*}
   \ell_0&=&\left\|\left(v_{0}(x)-1,u_{0}(x),\theta_{0}(x)-1,z_{0}(x)\right)\right\|^{2}+\left\|r^{n-1}\left(\partial_{x}v_{0}(x),\partial_{x}u_{0}(x),
   \partial_{x}\theta_{0}(x),\partial_{x}z_{0}(x)\right)\right\|^{2}\\
   &&+\left\|z_{0}\right\|_{L^{1}(\Omega)}+\left\|r^{n-1}\partial_{xx}v_{0}(x)\right\|^{2}+\left\|\partial_{xx}u_{0}(x)\right\|^{2}
  \end{eqnarray*}
such that the initial-boundary value problem \eqref{a20}, \eqref{a21}, \eqref{a22} and \eqref{a8} admits a unique solution $(v(t,x), u(t,x),\theta(t,x)$, $z(t,x))\in X(0,t_1;m_0/2,2+2\ell_0;n_0/2,2+2\ell_0)$. Now suppose that such a solution has been extended to the time step $t=T\geq t_1$ and $(v(t,x), u(t,x),\theta(t,x),z(t,x))\in X(0,T;M_1,M_2;N_1,N_2)$ for some positive constants $M_2>M_1>0, N_2>N_1>0$, we now try to deduce certain a priori energy type estimates on $(v(t,x), u(t,x),\theta(t,x),z(t,x))$ in terms of the initial data $(v_0(x), u_0(x), \theta_0(x), z_0(x))$.

Our first result is concerned with the basic energy estimates, which will play a fundamental role in deducing the desired positive lower and upper bounds of $v\left(t,x\right)$. To do so, if we use $E(v,\theta)$ to denote the entropy, then the second law of thermodynamics asserts that
\begin{eqnarray}\label{b1}
   \frac{\partial E\left(v,\theta\right)}{\partial v}&=&\frac{\partial P\left(v,\theta\right)}{\partial\theta},\nonumber\\
    \frac{\partial E\left(v,\theta\right)}{\partial\theta}&=&\frac{1}{\theta}\frac{\partial e\left(v,\theta\right)}{\partial\theta},\\
    \frac{\partial e\left(v,\theta\right)}{\partial v}&=&\theta \frac{\partial P(v,\theta)}{\partial\theta}-P\left(v,\theta\right).\nonumber
\end{eqnarray}

From which and the constitutive relations \eqref{a8}, one easily deduce that
\begin{equation*}\label{b2}
E(v,\theta)=c_{v}\ln\theta+\frac{4}{3}av\theta^{3}+R\ln v
 \end{equation*}
and the normalized entropy $\widetilde{E}(v,\theta)$ around $(v,\theta)=(1,1)$ is given by
\begin{eqnarray}\label{b3}
\widetilde{E}(v,\theta)&&=c_{v}\theta+av\theta^{4}-\left(c_{v}+a\right)+\left(R+\frac{a}{3}\right)\left(v-1\right)-\left(E-\frac{4}{3}a\right)\nonumber\\
&&=c_{v}\left(\theta-\ln\theta-1\right)+R\left(v-\ln v-1\right)+\frac{1}{3}av\left(\theta-1\right)^{2}\left(3\theta^{2}+2\theta+1\right).
  \end{eqnarray}
Moreover, one can deduce from \eqref{a20}, \eqref{a8} and \eqref{b3} that
\begin{eqnarray}\label{b4}
&&\left(\widetilde{E}+\frac{u^{2}}{2}\right)_{t}+\frac{\alpha\left|\left(r^{n-1}u\right)_{x}\right|^{2}}{v\theta}
+\frac{\kappa\left(r^{n-1}\theta_{x}\right)^{2}}{v\theta^{2}}
+\frac{\lambda\phi z}{\theta}+2\mu\left(n-1\right)\left(1-\frac{1}{\theta}\right)\left(r^{n-2}u^{2}\right)_{x}\nonumber\\
&=&\left[r^{n-1}u\left(\frac{\alpha\left(r^{n-1}u\right)_{x}}{v}-\frac{R\theta}{v}\right)+\left(R+\frac{a}{3}-\frac{a}{3}\theta^{4}\right)r^{n-1}u
+\left(1-\frac{1}{\theta}\right)\frac{\kappa r^{2n-2}\theta_{x}}{v}\right]_{x}
+\lambda\phi z.
\end{eqnarray}

Now integrating $\eqref{a20}_{4}$ with respect to $t$ and $x$ over $[0,t)\times[0,\infty)$ and by using the boundary conditions \eqref{a22}, we have
\begin{eqnarray}\label{b5}
   \int_{0}^{\infty} z(t,x)dx+\int_{0}^{t}\int_{0}^{\infty} \phi(t,x) z(t,x)dxds=\int_{0}^{\infty} z_{0}(x)dx.
\end{eqnarray}
Then by integrating the identity $\eqref{b4}$ with respect to $t$ and $x$ over $[0,t)\times[0,\infty)$ and by using the identity \eqref{b5} and the boundary conditions \eqref{a22}, we can get that
\begin{eqnarray}\label{b6}
   &&\int_{0}^{\infty}\left(\widetilde{E}(t,x)+\frac 12 u^2(t,x)\right)dx
+\int_{0}^{t}\int_{0}^{\infty}\left(\frac{\alpha\left|\left(r^{n-1}u\right)_{x}\right|^{2}}{v\theta}+\frac{\kappa\left(r^{n-1}\theta_{x}\right)^{2}}{v\theta^{2}}
+\frac{\lambda\phi z}{\theta}\right)(s,x)dxds\nonumber\\
&=&
\int_{0}^{\infty}\left(\widetilde{E}_0(x)+\frac 12 u_0(x)^2\right)dx+\int_{0}^{t}\int_{0}^{\infty}\left(\lambda\phi z+2\mu\left(n-1\right)\frac{\left(r^{n-2}u^{2}\right)_{x}}{\theta}\right)dxds\\
&\leq& C+2\mu\left(n-1\right)\int_{0}^{t} \int_{0}^{\infty}\frac{\left(r^{n-2}u^{2}\right)_{x}}{\theta}dxds.\nonumber
\end{eqnarray}

In view of \eqref{a23} and by simple calculation, we can deduce that
\begin{eqnarray}\label{b7}
&&\frac{\alpha\left|\left(r^{n-1}u\right)_{x}\right|^{2}}{v\theta}-2\mu\left(n-1\right)\frac{\left(r^{n-2}u^{2}\right)_{x}}{\theta}\nonumber\\
&=&\left(\lambda_{1}+\frac{2\mu}{n}\right)\frac{\left|\left(r^{n-1}u\right)_{x}\right|^{2}}{v\theta}\\
&&+\frac{2\mu\left(n-1\right)}{v\theta}\left[\frac{\left(r^{n-1}u\right)_{x}}{\sqrt{n}} -\frac{\sqrt{n}vu}{r}\right]^{2}\geq 0,\nonumber
\end{eqnarray}
and consequently we can compute from \eqref{b6} and \eqref{b7} that
\begin{eqnarray}\label{b8}
&&\int_{0}^{\infty}\left(\widetilde{E}(t,x)+\frac 12 u^2(t,x)\right)dx+\int_{0}^{t}\int_{0}^{\infty}\bigg\{\frac{\kappa\left(r^{n-1}\theta_{x}\right)^{2}}{v\theta^{2}}+
\left(\lambda_{1}+\frac{2\mu}{n}\right)\frac{\left|\left(r^{n-1}u\right)_{x}\right|^{2}}{v\theta}\nonumber\\
&&+\frac{2\mu\left(n-1\right)}{v\theta}\left[\frac{\left(r^{n-1}u\right)_{x}}{\sqrt{n}}-\frac{\sqrt{n}vu}{r}\right]^{2}
+\frac{\lambda\phi z}{\theta}\bigg\}(s,x)dxds\leq C.
\end{eqnarray}

A direct consequence of the inequality \eqref{b8} is
\begin{eqnarray}\label{b9}
&&\int_{0}^{t}\int_{0}^{\infty}\left(\frac{\left|\left(r^{n-1}u\right)_{x}\right|^{2}}{v\theta}+\frac{vu^{2}}{r^{2}\theta}\right)dxds\nonumber\\
&\leq& C+C\int_{0}^{t}\int_{0}^{\infty}\frac{\left|u\left(r^{n-1}u\right)_{x}\right|}{r\theta}\\
&\leq & C+\frac{1}{2}\int_{0}^{t}\int_{0}^{\infty}\frac{vu^{2}}{r^{2}\theta}dxds+C\int_{0}^{t}\int_{\Omega}\frac{\left|\left(r^{n-1}u\right)_{x}\right|^{2}}{v\theta}dxds\nonumber\\
&\leq& C+\frac{1}{2}\int_{0}^{t}\int_{0}^{\infty}\frac{vu^{2}}{r^{2}\theta}dxds,\nonumber
\end{eqnarray}
from which one can infer that
\begin{eqnarray}\label{b10}
\int_{0}^{t}\int_{0}^{\infty}\frac{vu^{2}}{r^{2}\theta}dxds\leq C.
\end{eqnarray}
Moreover, by making use of \eqref{a23} again, we find that
\begin{eqnarray}\label{b11}
&&\int_{0}^{t}\int_{0}^{\infty}\frac{r^{2(n-1)}u^{2}_{x}}{v\theta}dxds\nonumber\\ &=&\int_{0}^{t}\int_{0}^{\infty}\frac{\left[\left(r^{n-1}u\right)_{x}-\left(r^{n-1}\right)_{x}u\right]^{2}}{v\theta}dxds\nonumber\\
&=&\int_{0}^{t}\int_{0}^{\infty}\frac{\left[\left(r^{n-1}u\right)_{x}-\left(n-1\right)uvr^{-1}\right]^{2}}{v\theta}dxds\\
&\leq& C\int_{0}^{t}\int_{0}^{\infty}\left(\frac{\left|\left(r^{n-1}u\right)_{x}\right|^{2}}{v\theta}+\frac{vu^{2}}{r^{2}\theta}\right)dxds\nonumber\\
&\leq& C.\nonumber
\end{eqnarray}

Putting \eqref{b6}-\eqref{b11} together, we arrive at
\begin{eqnarray}\label{b12}
&&\int_{0}^{\infty}\left(\widetilde{E}(t,x)+\frac 12 u^2(t,x)\right)dx\nonumber\\
&&+\int_{0}^{t}\int_{0}^{\infty}\left\{\frac{\kappa\left(r^{n-1}\theta_{x}\right)^{2}}{v\theta^{2}}+
\frac{\left|\left(r^{n-1}u\right)_{x}\right|^{2}}{v\theta}
+\frac{vu^{2}}{r^{2}\theta}+\frac{r^{2(n-1)}u^{2}_{x}}{v\theta}
+\frac{\phi z}{\theta}\right\}(s,x)dxds\\
&\leq& C.\nonumber
\end{eqnarray}

Finally, multiplying $\eqref{a20}_{4}$ by $z(t,x)$ and integrating the result identity with respect to $t$ and $x$ over $[0,t)\times[0,\infty)$ and by using the boundary conditions \eqref{a22}, we can get that
\begin{eqnarray}\label{b13}
 \int_{0}^{\infty} z^{2}(t,x)dx+\int_{0}^{t}\int_{0}^{\infty}\left(\frac{dr^{2n-2}z_{x}^{2}}{v^{2}}+\phi z^{2}\right)(s,x)dxds
  =\int_{0}^{\infty} z_{0}^{2}(x)dx.
\end{eqnarray}

Combining \eqref{b1}-\eqref{b13}, we can obtain the following lemma
\begin{lemma} [Basic energy estimates] Suppose that $(v(t,x), u(t,x),\theta(t,x),z(t,x))\in X(0,T;M_1,M_2;$ $N_1,N_2)$ for some positive constants $T>0, M_2>M_1>0, N_2>N_1>0$, then for all $0\leq t\leq T$, we have
\begin{eqnarray}\label{b14}
   \int_{0}^{\infty} z(t,x)dx+\int_{0}^{t}\int_{0}^{\infty} \phi(s,x) z(s,x)dxds=\int_{0}^{\infty} z_{0}(x)dx,
\end{eqnarray}
\begin{eqnarray}\label{b15}
 \int_{0}^{\infty} z^{2}(t,x)dx +\int_{0}^{t}\int_{0}^{\infty}\left(\frac{dr^{2n-2}z_{x}^{2}}{v^{2}}+\phi z^{2}\right)(s,x)dxds
  =\int_{0}^{\infty} z_{0}^{2}(x)dx,
\end{eqnarray}
and
\begin{eqnarray}\label{bb13}
&&\int_{0}^{\infty}\left(\widetilde{E}(t,x)+\frac 12 u^2(t,x)\right)dx\nonumber\\
&&+\int_{0}^{t}\int_{0}^{\infty}\left\{\frac{\kappa\left(r^{n-1}\theta_{x}\right)^{2}}{v\theta^{2}}+
\frac{\left|\left(r^{n-1}u\right)_{x}\right|^{2}}{v\theta}
+\frac{vu^{2}}{r^{2}\theta}+\frac{r^{2(n-1)}u^{2}_{x}}{v\theta}
+\frac{\phi z}{\theta}\right\}(s,x)dxds\\
&\leq& C.\nonumber
\end{eqnarray}

\end{lemma}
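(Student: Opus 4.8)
The plan is to establish the three identities/inequalities \eqref{b14}, \eqref{b15} and \eqref{bb13} essentially by the chain of computations already sketched in the run-up to the lemma, so the task is to organize them coherently and verify that every boundary term vanishes and every sign works out. First I would record \eqref{b14}: integrate the reaction–diffusion equation $\eqref{a20}_4$ over $[0,t)\times[0,\infty)$, noting that the flux term $\left(dr^{2n-2}v^{-2}z_x\right)_x$ integrates to the boundary values $dr^{2n-2}v^{-2}z_x\big|_{x=0}^{x=\infty}$, which vanish by the Neumann condition $z_x(t,0)=0$ in \eqref{a22} and by the far-field decay $z_x(t,x)\to 0$ as $x\to\infty$ (which is part of membership in $X(0,T;\cdots)$, since $r^{n-1}z_x\in L^2$ together with the regularity forces the trace at infinity to be zero). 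This gives \eqref{b5}=\eqref{b14} directly. For \eqref{b15} I would multiply $\eqref{a20}_4$ by $z$, integrate by parts once in $x$ — the boundary term $dr^{2n-2}v^{-2}z z_x\big|_0^\infty$ again vanishes — and use $z\,z_t=\tfrac12(z^2)_t$ to obtain the conservation identity \eqref{b13}=\eqref{b15}; here I should remark that $0\le z\le 1$ (guaranteed by the definition of $X$) makes the integrals finite.

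The substance is \eqref{bb13}. I would start from the entropy/energy identity \eqref{b4}, whose derivation uses only \eqref{a20}, the constitutive relations \eqref{a8} and the definition \eqref{b3} of $\widetilde E$ together with the thermodynamic relations \eqref{b1} — so I would verify \eqref{b4} by a direct computation, multiplying $\eqref{a20}_2$ by $u$, combining with $\eqref{a20}_3$ divided appropriately by $\theta$, and using $\widetilde E_t = E_v v_t + (\,\cdot\,)$; the role of the Neumann condition $\theta_x(t,0)=0$ enters when the term $\left(1-\theta^{-1}\right)\kappa r^{2n-2}\theta_x/v$ is placed in divergence form. Integrating \eqref{b4} over $[0,t)\times[0,\infty)$, all the bracketed divergence terms produce boundary contributions at $x=0$ (where $u=0$ and $\theta_x=0$, killing every term) and at $x=\infty$ (where $u\to0$, $\theta\to1$, $v\to1$ and the various $x$-derivatives decay, again by membership in $X$), while $\int\!\!\int \lambda\phi z/\theta$ is absorbed and $\int\!\!\int\lambda\phi z$ is bounded by $\lambda\int z_0\,dx<\infty$ via \eqref{b5}. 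This yields \eqref{b6}. The remaining arithmetic is: (i) the pointwise identity \eqref{b7}, which rewrites $\alpha|(r^{n-1}u)_x|^2/(v\theta) - 2\mu(n-1)(r^{n-2}u^2)_x/\theta$ as a sum of manifestly nonnegative terms after expanding $(r^{n-2}u^2)_x = (r^{n-1}u)_x\cdot(\,\cdot\,) + \cdots$ using $r_x = r^{1-n}v$ from \eqref{a23} — I would double-check the algebra that produces the perfect square $\big[(r^{n-1}u)_x/\sqrt n - \sqrt n\,vu/r\big]^2$ and the coefficient $\lambda_1 + 2\mu/n$, which is positive by \eqref{a5}; (ii) substituting \eqref{b7} into \eqref{b6} to get \eqref{b8}; (iii) the Cauchy–Schwarz argument \eqref{b9}–\eqref{b10} that extracts a bound on $\int\!\!\int vu^2/(r^2\theta)$ by absorbing the cross term $|u(r^{n-1}u)_x|/(r\theta)$ into half of itself plus a controlled multiple of $|(r^{n-1}u)_x|^2/(v\theta)$; and (iv) the identity \eqref{b11}, $r^{n-1}u_x = (r^{n-1}u)_x - (n-1)uvr^{-1}$, which converts the bounds on $|(r^{n-1}u)_x|^2/(v\theta)$ and $vu^2/(r^2\theta)$ into a bound on $\int\!\!\int r^{2(n-1)}u_x^2/(v\theta)$. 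Collecting (ii)–(iv) and the positivity of $\widetilde E$ (evident from the second line of \eqref{b3}: $\theta-\ln\theta-1\ge0$, $v-\ln v-1\ge0$, and the last term $\ge0$) gives \eqref{b12}, which is exactly \eqref{bb13}.

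The main obstacle is the treatment of the boundary term at $x=\infty$ throughout — one must be sure that the "integration by parts" steps are legitimate on the half-line, i.e. that all the flux quantities ($r^{n-1}u\cdot(\text{stress}-P)$, $(1-\theta^{-1})\kappa r^{2n-2}\theta_x/v$, $dr^{2n-2}v^{-2}z z_x$, etc.) genuinely tend to $0$ as $x\to\infty$. This is where the far-field conditions \eqref{a22} and the $L^2$-in-$x$ memberships built into $X(0,T;M_1,M_2;N_1,N_2)$ (e.g. $r^{n-1}u_x, r^{n-1}\theta_x, r^{n-1}z_x\in L^2$ jointly with the $H^2$-type control on $v,u$) are used: they force the relevant traces at infinity to vanish, so the only surviving boundary contribution is at $x=0$, which is annihilated by the homogeneous conditions $u(t,0)=0$, $\theta_x(t,0)=0$, $z_x(t,0)=0$. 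A secondary (purely computational) delicate point is getting the constants right in \eqref{b7}: the decomposition of $\alpha|(r^{n-1}u)_x|^2$ into its trace part $\big(\lambda_1 + \tfrac{2\mu}{n}\big)|(r^{n-1}u)_x|^2$ plus the traceless remainder must exactly match the $-2\mu(n-1)(r^{n-2}u^2)_x$ correction, and this hinges on repeatedly using $r_x = r^{1-n}v$ and $v_t = (r^{n-1}u)_x$; I expect this to be the step most prone to sign errors, but once it is checked, everything else is bookkeeping.
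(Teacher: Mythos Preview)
Your proposal is correct and follows essentially the same approach as the paper: the three estimates are obtained exactly by the chain \eqref{b5}, \eqref{b13}, and \eqref{b4}--\eqref{b12} that you describe, with the same algebraic identity \eqref{b7}, the same Cauchy--Schwarz absorption \eqref{b9}--\eqref{b10}, and the same decomposition \eqref{b11}. One minor remark: the bound $0\le z\le 1$ is not part of the definition of $X$ (only $z\ge 0$ is), but this is immaterial since $z\in C(0,T;L^2\cap L^1)$ already makes the integrals in \eqref{b15} finite.
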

The next lemma is concerned with the estimate of $z(t,x)$. To this end, we can deduce by repeating the method used in \cite{Chen-SIMA-1992} that
\begin{lemma} Under the assumptions stated in Lemma 2.1, we have for any $\left(t,x\right)\in [0,T]\times[0,\infty)$ that
  \begin{eqnarray}\label{bb14}
  0\leq z\left(t,x\right)\leq 1.
  \end{eqnarray}
\end{lemma}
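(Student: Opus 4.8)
The plan is to obtain both inequalities in \eqref{bb14} by an elementary $L^{2}$-energy estimate for the Lipschitz truncations $z^{-}:=\max\{-z,0\}$ and $\zeta:=(z-1)^{+}=\max\{z-1,0\}$, exploiting the favourable sign of the reaction term $-\phi z$ in $\eqref{a20}_{4}$, the positivity $\phi(\theta)=K\theta^{\beta}\exp(-A/\theta)\geq 0$, the uniform bounds $M_{1}\leq v\leq M_{2}$ and $N_{1}\leq\theta\leq N_{2}$ coming from the class $X(0,T;M_{1},M_{2};N_{1},N_{2})$ (so that $\phi$ is bounded and smooth), and the Neumann condition $z_{x}(t,0)=0$ from \eqref{a22}. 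This is exactly the comparison/maximum-principle scheme used in \cite{Chen-SIMA-1992}.

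First I would prove $z\geq 0$. On the open set $\{z<0\}$ the function $w:=z^{-}$ satisfies, by $\eqref{a20}_{4}$ and the relations $w=-z$, $w_{x}=-z_{x}$ there,
\[
w_{t}=\Bigl(\frac{dr^{2n-2}w_{x}}{v^{2}}\Bigr)_{x}-\phi w ,
\]
while $w\equiv 0$ elsewhere. Multiplying by $w$, integrating over $x\in[0,\infty)$ and integrating by parts — the boundary term at $x=0$ vanishing because $w_{x}(t,0)=-z_{x}(t,0)=0$ and the one at $x=\infty$ because $w$ decays — gives
\[
\frac12\frac{d}{dt}\int_{0}^{\infty}w^{2}\,dx=-\int_{0}^{\infty}\frac{dr^{2n-2}w_{x}^{2}}{v^{2}}\,dx-\int_{0}^{\infty}\phi w^{2}\,dx\leq 0 .
\]
Since $w(0,x)=\max\{-z_{0}(x),0\}=0$ by the hypothesis $z_{0}\geq 0$, integration in $t$ forces $\int_{0}^{\infty}w^{2}\,dx\equiv 0$, hence $z\geq 0$ on $[0,T]\times[0,\infty)$.

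For the upper bound set $\zeta:=(z-1)^{+}$. On $\{z>1\}$ one has $\zeta=z-1$, $\zeta_{x}=z_{x}$, and $\eqref{a20}_{4}$ yields $\zeta_{t}=\bigl(dr^{2n-2}\zeta_{x}/v^{2}\bigr)_{x}-\phi\zeta-\phi$, while $\zeta\equiv 0$ elsewhere; multiplying by $\zeta$, integrating over $[0,\infty)$ and integrating by parts (the boundary terms again vanishing) gives
\[
\frac12\frac{d}{dt}\int_{0}^{\infty}\zeta^{2}\,dx=-\int_{0}^{\infty}\frac{dr^{2n-2}\zeta_{x}^{2}}{v^{2}}\,dx-\int_{0}^{\infty}\phi\zeta^{2}\,dx-\int_{0}^{\infty}\phi\,\zeta\,dx\leq 0,
\]
the last inequality because $\phi\geq 0$ and $\zeta\geq 0$. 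As $\zeta(0,x)=\max\{z_{0}(x)-1,0\}=0$ by $z_{0}\leq 1$, integration in $t$ forces $\zeta\equiv 0$, i.e. $z\leq 1$; combining the two steps proves \eqref{bb14}.

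I do not expect any real obstacle here: the only points that need care are the justification of differentiation under the integral sign and of the integration by parts at $x=0$ and $x=\infty$ for the truncated functions $z^{-}$ and $\zeta$ (a standard Stampacchia argument), which are licensed by the regularity $z\in C(0,T;L^{2}\cap L^{1})$, $r^{n-1}z_{x}\in C(0,T;L^{2})\cap L^{2}(0,T;L^{2})$, $r^{n-1}z_{xx}\in L^{2}(0,T;L^{2})$ built into $X(0,T;M_{1},M_{2};N_{1},N_{2})$, together with \eqref{a22} and the uniform bounds on $v$ and $\theta$. The lemma is thus a soft consequence of the parabolic structure of $\eqref{a20}_{4}$.
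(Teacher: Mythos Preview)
Your proposal is correct and is precisely the standard Stampacchia truncation argument that the paper defers to by citing \cite{Chen-SIMA-1992}; the paper gives no independent proof of this lemma. Your treatment of both truncations $z^{-}$ and $(z-1)^{+}$, the sign of the reaction term, and the boundary/decay conditions is accurate, so nothing further is needed.
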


\section{Uniform bounds of $v\left(t,x\right)$}
This section is devoted to deducing the unform positive lower and upper bounds of the specific volume $v\left(t, x\right)$ which are independent of the time variable $t$. For this purpose, we first give the following two lemmas, which will be frequently used later on. The first one is concerned with the bounds of $v(t,x)$ and $\theta(t,x)$ at some specially chosen points whose proof can be found in \cite{Jiang-CMP-1999}.
\begin{lemma} Let $a_{1}$, $a_{2}$ be two (positive) roots of the equation $y-\log y-1=\frac{C}{\min \left\{R, c_{v}\right\}}$ with $C$ being given by \eqref{bb13}. Then for each $k\in\mathbb{N}$ and every $0<t\leq T$, there exist $a_{k}\left(t\right), b_{k}\left(t\right)\in \left[k,k+1\right]$ such that
 \begin{eqnarray}\label{b17}
 a_{1}&\leq&\int_{k}^{k+1}v\left(t,x\right)dx\leq a_2,\nonumber\\
 a_1&\leq &\int_{k}^{k+1}\theta\left(t,x\right)dx\leq a_{2}
 \end{eqnarray}
and
 \begin{eqnarray}\label{b18}
 a_{1}\leq v\left(t,a_{k}\left(t\right)\right)\leq a_2,\quad a_1\leq \theta\left(t,b_{k}\left(t\right)\right)\leq a_{2}
  \end{eqnarray}
hold for $0<t\leq T$.
\end{lemma}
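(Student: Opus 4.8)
The plan is to read off the statement directly from the basic energy estimate \eqref{bb13}, after isolating the two nonnegative ``logarithmic'' pieces of the normalized entropy, and then to combine Jensen's inequality with the integral mean value theorem. First I would observe that every summand in the expression \eqref{b3} for $\widetilde{E}(v,\theta)$ is nonnegative: the function $g(y):=y-\ln y-1$ satisfies $g(y)\geq 0$ for all $y>0$, with equality only at $y=1$, and the term $\tfrac{1}{3}av(\theta-1)^{2}(3\theta^{2}+2\theta+1)$ is manifestly nonnegative since $v,\theta>0$. Discarding the radiative term, \eqref{bb13} therefore yields, for every $0\leq t\leq T$,
\[
\min\{R,c_{v}\}\int_{0}^{\infty}\Big[g\big(v(t,x)\big)+g\big(\theta(t,x)\big)\Big]\,dx\leq \int_{0}^{\infty}\Big[c_{v}\,g\big(\theta(t,x)\big)+R\,g\big(v(t,x)\big)\Big]\,dx\leq C,
\]
with $C$ the same constant that appears in \eqref{bb13}.

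Next I would restrict the integral to the cell $[k,k+1]$ for a fixed $k\in\mathbb{N}$. Since the integrand is nonnegative, for every $0<t\leq T$ one gets
\[
\int_{k}^{k+1} g\big(v(t,x)\big)\,dx\leq \frac{C}{\min\{R,c_{v}\}},\qquad \int_{k}^{k+1} g\big(\theta(t,x)\big)\,dx\leq \frac{C}{\min\{R,c_{v}\}}.
\]
Because $g''(y)=1/y^{2}>0$, $g$ is convex on $(0,\infty)$, so Jensen's inequality on the probability space $[k,k+1]$ gives $g\!\left(\int_{k}^{k+1}v(t,x)\,dx\right)\leq \int_{k}^{k+1}g\big(v(t,x)\big)\,dx\leq \frac{C}{\min\{R,c_{v}\}}$, and likewise for $\theta$. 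Since $g$ is strictly decreasing on $(0,1]$, strictly increasing on $[1,\infty)$, and $g(1)=0<\frac{C}{\min\{R,c_{v}\}}$, the sublevel set $\{y>0:\ g(y)\leq \frac{C}{\min\{R,c_{v}\}}\}$ is exactly the closed interval $[a_{1},a_{2}]$ determined by the two roots of $g(y)=\frac{C}{\min\{R,c_{v}\}}$, with $a_{1}<1<a_{2}$. Hence $\int_{k}^{k+1}v(t,x)\,dx\in[a_{1},a_{2}]$ and $\int_{k}^{k+1}\theta(t,x)\,dx\in[a_{1},a_{2}]$, which is precisely \eqref{b17}.

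Finally, \eqref{b18} follows from \eqref{b17} together with the continuity of $v(t,\cdot)$ and $\theta(t,\cdot)$ in $x$, which is built into the solution class $X(0,T;M_{1},M_{2};N_{1},N_{2})$: by the mean value theorem for integrals there exist $a_{k}(t),b_{k}(t)\in[k,k+1]$ with $v\big(t,a_{k}(t)\big)=\int_{k}^{k+1}v(t,x)\,dx$ and $\theta\big(t,b_{k}(t)\big)=\int_{k}^{k+1}\theta(t,x)\,dx$, and the pointwise bounds in \eqref{b18} are then immediate from \eqref{b17}. There is no genuine obstacle in this argument; the only points that need care are checking that all pieces of $\widetilde{E}$ are nonnegative so that the logarithmic terms can be cleanly isolated, and using convexity of $g$ to pass from the cell-averaged bound to a statement about the cell average itself. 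Everything else is a direct consequence of the estimate already established in Lemma 2.1, which is why this lemma is attributed to \cite{Jiang-CMP-1999}.
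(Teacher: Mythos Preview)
Your proof is correct and follows exactly the standard argument from \cite{Jiang-CMP-1999} that the paper defers to rather than reproducing: isolate the nonnegative logarithmic pieces of $\widetilde{E}$ from \eqref{b3}, apply Jensen's inequality to the convex function $g(y)=y-\ln y-1$ on each cell $[k,k+1]$, and finish with the integral mean value theorem. There is nothing to add.
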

The next lemma is concerned with a rough estimate on $\theta(t,x)$ in terms of the entropy dissipation rate functional
$$
V(t)=\int_{0}^{\infty}\left\{\frac{\kappa\left(r^{n-1}\theta_{x}\right)^{2}}{v\theta^{2}}+
\frac{\left|\left(r^{n-1}u\right)_{x}\right|^{2}}{v\theta}+\frac{vu^{2}}{r^{2}\theta}+\frac{r^{2(n-1)}u^{2}_{x}}{v\theta}
+\frac{\phi z}{\theta}\right\}\left(t,x\right)dx.
$$
To this end, to simplify the presentation, we set $\Omega_{k}:=\left(k,k+1\right), k\in \mathbb{N}$ and we can get that
\begin{lemma}For $0\leq m\leq\frac{b+1}{2}$ and each $x\in[0,\infty)$ (without loss of generality, we can assume that $x\in\Omega_k$ for some $k\in\mathbb{N}$), we can deduce that
\begin{eqnarray}\label{b19}
\left |\theta^{m}\left(t,x\right)-\theta^{m}\left(t, b_{k}\left(t\right)\right)\right|\leq CV^{\frac{1}{2}}\left(t\right)
\end{eqnarray}
holds for $0\leq t\leq T$ and consequently
\begin{eqnarray}\label{b20}
 \left|\theta\left(t,x\right)\right|^{2m}\leq C+CV\left(t\right), \quad x\in\overline{\Omega}_{k},\ \  0\leq t\leq T.
\end{eqnarray}
\end{lemma}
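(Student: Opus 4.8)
The plan is to prove the two estimates \eqref{b19} and \eqref{b20} by exploiting the Neumann-type information at the special point $b_k(t)$ together with the control on $\theta_x$ coming from the entropy dissipation functional $V(t)$. First I would fix $x\in\overline{\Omega}_k$ and write, by the fundamental theorem of calculus along the segment from $b_k(t)$ to $x$ (both lying in $[k,k+1]$, so the length is at most $1$),
\begin{eqnarray*}
\theta^{m}(t,x)-\theta^{m}(t,b_k(t))=m\int_{b_k(t)}^{x}\theta^{m-1}(t,y)\,\theta_x(t,y)\,dy.
\end{eqnarray*}
The goal is then to bound the right-hand side by $CV^{1/2}(t)$. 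The natural device is Cauchy--Schwarz: split $\theta^{m-1}\theta_x$ as $\bigl(\theta^{m-1-b/2}v^{1/2}r^{-(n-1)}\bigr)\cdot\bigl(\theta^{b/2-1}r^{n-1}v^{-1/2}\theta_x\bigr)$, so that the square of the second factor, after one more factor of $r^{-(n-1)}$ or not, is exactly the integrand $\dfrac{\kappa(r^{n-1}\theta_x)^2}{v\theta^2}$ up to the $\kappa=\kappa_1+\kappa_2\theta^b/\rho$ weight; since $\kappa\geq \kappa_2\theta^b v$ (recall $\rho=1/v$), the weight $\theta^{b}$ is available, which is what makes the exponent bookkeeping work out. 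This is where the hypothesis $0\le m\le \frac{b+1}{2}$ enters: it is precisely the condition that the leftover power of $\theta$ in the first factor, namely $2(m-1)-(b-2)=2m-b$, can be absorbed, using $r\geq 1$ from \eqref{a25} and the uniform bounds $M_1\le v\le M_2$ from the definition of the class $X(0,T;M_1,M_2;N_1,N_2)$, into a bound of the form $C+C\int_{\Omega_k}\theta^{2m}\,dy$ or directly by $a_2$-type constants when $2m\le b+1$ keeps the exponent tame.

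More concretely, the main step is the chain of inequalities
\begin{eqnarray*}
\left|\theta^{m}(t,x)-\theta^{m}(t,b_k(t))\right|
&\le& C\left(\int_{\Omega_k}\frac{\kappa(r^{n-1}\theta_x)^2}{v\theta^2}\,dy\right)^{1/2}
\left(\int_{\Omega_k}\frac{\theta^{2m-b}v}{\kappa/(\kappa_2 v)}\cdot(\text{bounded})\,dy\right)^{1/2}\\
&\le& C\,V^{1/2}(t)\left(\int_{\Omega_k}\bigl(1+\theta^{2m-b}\bigr)\,dy\right)^{1/2},
\end{eqnarray*}
and then I would argue that $\int_{\Omega_k}(1+\theta^{2m-b})\,dy$ is bounded by a constant: when $2m-b\le 1$ this follows from \eqref{b17} (if $2m-b\le 0$ it is even easier, bounding $\theta^{2m-b}$ by $\theta$ plus a constant or just by a constant on the sublevel set), and the borderline case $m=\frac{b+1}{2}$ gives exponent $1$, again covered by \eqref{b17}. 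Care is needed about the regions where $\theta<1$ versus $\theta\ge1$, but this is handled routinely by splitting and using $\theta^{p}\le 1$ for $p\ge0$, $\theta<1$. This yields \eqref{b19}.

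For \eqref{b20}, I would specialize \eqref{b19} to $m=\frac{b+1}{2}$ if that is $\ge m$, or more simply note that for any admissible $m$ we have from \eqref{b19} and \eqref{b18}
\begin{eqnarray*}
\theta^{m}(t,x)\le \theta^{m}(t,b_k(t))+CV^{1/2}(t)\le a_2^{m}+CV^{1/2}(t)\le C+CV^{1/2}(t),
\end{eqnarray*}
and squaring gives $\theta^{2m}(t,x)\le C+CV(t)$, using $(A+B)^2\le 2A^2+2B^2$. The main obstacle I anticipate is not any single inequality but the exponent accounting in the Cauchy--Schwarz split: one must choose the split so that the "good" factor matches the dissipation integrand $\frac{\kappa(r^{n-1}\theta_x)^2}{v\theta^2}$ exactly — including correctly using $\kappa\ge\kappa_2\theta^b v$ to liberate the $\theta^b$ weight — while the "bad" factor has a power of $\theta$ that is at most $1$ (so \eqref{b17} applies) and a power of $r$ that is nonpositive (so $r\ge1$ helps rather than hurts, since $r$ is unbounded). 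Verifying that $0\le m\le\frac{b+1}{2}$ is exactly the range for which this balancing closes is the crux of the argument; everything else is the uniform bounds on $v$ from the class $X$, the boundedness of $\int_{\Omega_k}\theta\,dx$ from \eqref{b17}, the pointwise bound \eqref{b18}, and $r\ge1$ from \eqref{a25}.
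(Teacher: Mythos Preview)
Your overall strategy---fundamental theorem of calculus followed by Cauchy--Schwarz with the ``good'' factor chosen to match the dissipation integrand $\frac{\kappa(r^{n-1}\theta_x)^2}{v\theta^2}$---is exactly the paper's approach, and your derivation of \eqref{b20} from \eqref{b19} and \eqref{b18} is correct. However, there is a genuine gap in how you handle the ``bad'' factor.

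By using only $\kappa\ge\kappa_2 v\theta^b$, the leftover factor in your Cauchy--Schwarz becomes essentially $\int_{\Omega_k}\theta^{2m-b}\,dy$. Your claim that the case $2m-b\le 0$ ``is even easier'' is false: when $2m<b$ and $\theta<1$, the quantity $\theta^{2m-b}$ is \emph{larger} than $1$ and blows up as $\theta\to 0^+$. You cannot bound it by a constant without invoking $\theta\ge N_1$ from the class $X$, and that would make the constant $C$ depend on $N_1$, which the paper's convention (and the subsequent use of this lemma to obtain \emph{uniform} bounds on $v$) forbids. Also note a minor slip: your written split $\theta^{m-1-b/2}\cdot\theta^{b/2-1}=\theta^{m-2}$ does not reproduce $\theta^{m-1}$.

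The fix---and this is precisely what the paper does---is to retain the \emph{full} heat conductivity $\kappa=\kappa_1+\kappa_2 v\theta^b\ge c(1+v\theta^b)$ in the good factor. Then the bad factor becomes
\[
\int_{\Omega_k}\frac{v\theta^{2m}}{(1+v\theta^b)r^{2(n-1)}}\,dy\le\int_{\Omega_k}\frac{v\theta^{2m}}{1+v\theta^b}\,dy,
\]
and now both regimes are handled cleanly without any a priori bounds on $v$ or $\theta$: for $\theta\le 1$ one has $\theta^{2m}\le 1$ so the integrand is $\le v$; for $\theta\ge 1$ one uses $2m\le b+1$ to get $\frac{v\theta^{2m}}{v\theta^b}=\theta^{2m-b}\le\theta$. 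Hence the integral is bounded by $\int_{\Omega_k}(v+\theta)\,dy\le C$ via \eqref{b17}. The $\kappa_1$ piece is not a technicality---it is what saves you near $\theta=0$.
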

\begin{proof} For $x\in\Omega_k$, it is easy to see from \eqref{a8} that
\begin{eqnarray}\label{b21}
&&\left|\theta^{m}\left(t,x\right)-\theta^{m}\left(t, b_{k}\left(t\right)\right)\right|\nonumber\\
&\leq& C\int_{\Omega_{k}}\left|\theta^{m-1}\theta_{x}\right|dx\\
&\leq& C\left(\int_{\Omega_{k}}\frac{v\theta^{2m}}{\left(1+v\theta^{b}\right)r^{2(n-1)}}dx\right)^{\frac{1}{2}}
 \left(\int_{\Omega_{k}}\frac{\kappa\left(r^{n-1}\theta_{x}\right)^{2}}{v\theta^{2}}dx\right)^{\frac{1}{2}}\nonumber\\
&\leq& C\left(\int_{\Omega_{k}}\frac{v\theta^{2m}}{1+v\theta^{b}}dx\right)^{\frac{1}{2}}V\left(t\right)^{\frac{1}{2}}.\nonumber
\end{eqnarray}
Moreover, since
\begin{eqnarray*}
\theta^{2m}\leq C\left(1+\theta^{b+1}\right),
  \end{eqnarray*}
holds for $0\leq m\leq \frac{b+1}{2}$, one thus gets from \eqref{b17} that
\begin{eqnarray}\label{b22}
\int_{\Omega_{k}}\frac{v\theta^{2m}}{1+v\theta^{b}}dx\leq C\int_{\Omega_{k}}\left(v+\theta\right)dx\leq C.
\end{eqnarray}

Combining (\ref{b21}) and (\ref{b22}), we can deduce the estimates \eqref{b19} and \eqref{b20} immediately from \eqref{b18}. This completes the proof of Lemma 3.2.
\end{proof}

We now turn to obtain the lower bound and upper bound of the specific volume $v\left(t,x\right)$ which are independent of the time variable $t$. To this end, motivated by the work of Jiang \cite{Jiang-CMP-1999} for the one-dimensional viscous and heat-conducting ideal polytropic gas motion, we first give a local representation of $v\left(t,x\right)$ by using the following cut-off function $\varphi\in W^{1,\infty}\left([0,\infty)\right)$:
\begin{eqnarray}\label{b23}
 \varphi\left(x\right)=
 \begin{cases}
 1, & \quad 0\leq x\leq k+1,\\
 k+2-x, & \quad k+1\leq x\leq k+2,\\
 0, & \quad x\geq k+2,
 \end{cases}
\end{eqnarray}
from which one can deduce that
\begin{lemma} Under the assumptions stated in Lemma 2.1, we have for each $0\leq t\leq T$ that
\begin{eqnarray}\label{b24}
v\left(t,x\right)=B\left(t,x\right)Q\left(t, x\right)+\frac{1}{\alpha}\int_{0}^{t}\frac{B\left(t,x\right)Q\left(t,x\right)v\left(s,x\right)P\left(s,x\right)}{B\left(s,x\right)Q\left(s,x\right)}ds,
\quad x\in\overline{\Omega}_k.
\end{eqnarray}
Here
\begin{eqnarray}\label{b25}
B\left(t,x\right)&:=&v_{0}\left(x\right)\exp\left\{\frac{1}{\alpha}\int_{x}^{\infty}\left(u_{0}r_{0}^{1-n}-ur^{1-n}\right)\varphi\left(y\right)dy\right\},\nonumber\\
Q\left(t,x\right)&:=&\exp\left\{\frac{1}{\alpha}\int_{0}^{t}\int_{k+1}^{k+2}\sigma\left(s,y\right)dyds-\frac{1}{\alpha}\int_{0}^{t}\int_{x}^{\infty}\left(n-1\right)
\varphi u^{2}r^{-n}dyds\right\},\\
\sigma &:=&-P(v,\theta)+\frac{\alpha\left(r^{n-1}u\right)_{x}}{v}.\nonumber
\end{eqnarray}
\end{lemma}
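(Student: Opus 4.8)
The plan is to derive the representation \eqref{b24} by rewriting the momentum equation $\eqref{a20}_2$ in a form that isolates $v$ after multiplying by the cut-off function $\varphi$ and integrating in $x$. First I would recall from $\eqref{a20}_1$ that $v_t=(r^{n-1}u)_x$, and from $\eqref{a20}_2$ together with the definition $\sigma=-P+\alpha(r^{n-1}u)_x/v$ that $u_t=r^{n-1}\sigma_x+(n-1)r^{n-2}\cdot(\text{correction})$; more precisely, expanding $r^{n-1}\left(\alpha(r^{n-1}u)_x/v-P\right)_x$ and using $\eqref{a23}$ one gets $u_t = r^{n-1}\sigma_x$ where one must be careful that the extra geometric terms produce exactly the $(n-1)\varphi u^2 r^{-n}$ contribution that appears in $Q$. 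The key algebraic identity to establish is
\begin{equation*}
\frac{\alpha\, v_t}{v} = \alpha\,(\ln v)_t = \sigma + P,
\end{equation*}
which follows directly from $v_t=(r^{n-1}u)_x$ and the definition of $\sigma$; this is the one-dimensional-style relation that makes the whole construction work.

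Next I would multiply the identity $\alpha(\ln v)_t=\sigma+P$ by $\varphi(x)$ and integrate in $x$ over $[x,\infty)$, and simultaneously handle the momentum equation: integrating $u_t=r^{n-1}\sigma_x$ against $\varphi r^{1-n}$ in $x$ over $[x,\infty)$ and integrating by parts moves the derivative off $\sigma$, picking up a boundary-type term at $x$ equal to $\sigma(t,x)$ (since $\varphi\equiv1$ near $x\in\overline\Omega_k$), a term $\int_{k+1}^{k+2}\sigma\,dy$ coming from $\varphi'=-1$ on $(k+1,k+2)$, and the geometric remainder $-\int_x^\infty(n-1)\varphi u^2 r^{-n}\,dy$ coming from differentiating $r^{1-n}$ in time via $r_t=u$ and $\eqref{a23}$. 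Combining the resulting expression for $\int_x^\infty \varphi(u r^{1-n})\,dy$ with the spatial integral of $\sigma+P=\alpha(\ln v)_t$ yields an ODE in $t$ for $\ln v(t,x)$ of the form $\alpha(\ln v)_t = \alpha\frac{d}{dt}\bigl[\text{log of }B\bigr] + \alpha\frac{d}{dt}\bigl[\text{log of }Q\bigr] + v P/(BQ)\cdot(\cdots)$; integrating this linear ODE (an integrating-factor computation, exactly as in Kazhikhov--Shelukhin and Jiang) produces \eqref{b24} with $B$ and $Q$ as defined in \eqref{b25}.

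The main obstacle, and the point requiring the most care, is the bookkeeping of the geometric terms peculiar to the spherically symmetric multidimensional setting: tracking how $r^{n-1}$ and $r^{1-n}$ factors interact with time differentiation (using $r_t=u$, $r_x=r^{1-n}v$) and with the integration by parts in $x$, so that precisely the term $-\frac1\alpha\int_0^t\int_x^\infty(n-1)\varphi u^2 r^{-n}\,dy\,ds$ — absent in the one-dimensional treatment of Liao--Zhao — emerges inside $Q(t,x)$ and makes $Q$ genuinely $x$-dependent. One must also verify that all the integrals over $[x,\infty)$ converge, which uses the far-field conditions $\eqref{a22}$ ($v\to1$, $u\to0$, $\theta\to1$) and the fact that $\varphi$ is compactly supported, together with the basic energy estimate \eqref{bb13} to control the time integrals; since $\varphi$ has support in $[0,k+2]$, every spatial integral is actually over a bounded set, which removes the convergence issue entirely and is the reason the cut-off $\varphi$ is introduced in the first place. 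Once the ODE for $\ln v$ is obtained, the integrating-factor step and hence \eqref{b24} is routine.
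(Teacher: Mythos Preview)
Your approach is essentially the same as the paper's: multiply the momentum equation (written as $r^{1-n}u_t=\sigma_x$) by the cut-off $\varphi$, integrate over $(x,\infty)$, integrate by parts to isolate $\sigma(t,x)$, pick up $\int_{k+1}^{k+2}\sigma\,dy$ from $\varphi'=-1$, and pick up the geometric remainder $(n-1)\int_x^\infty\varphi u^2 r^{-n}\,dy$ from $\partial_t(r^{1-n})=(1-n)r^{-n}u$; then substitute $\sigma=\alpha(\ln v)_t-P$, integrate in time, exponentiate, and run the Kazhikhov--Shelukhin integrating-factor trick. One clarification: the step you describe as ``multiply $\alpha(\ln v)_t=\sigma+P$ by $\varphi$ and integrate over $[x,\infty)$'' is not needed and would only produce a spatially integrated quantity; what you actually want (and what the paper does) is to use this identity \emph{pointwise} at $x$ to replace the boundary term $\sigma(t,x)$ that falls out of the integration by parts, giving the ODE $\alpha(\ln v(t,x))_t = P(t,x) + \int_{k+1}^{k+2}\sigma\,dy - (n-1)\int_x^\infty\varphi u^2 r^{-n}\,dy - \partial_t\int_x^\infty\varphi u r^{1-n}\,dy$ directly.
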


\begin{proof} For each $x\in\overline{\Omega}_{k}$, we can get by multiplying $(\ref{a20})_{2}$ by $\varphi(x)$ and integrating the resulting identity with respect to $x$ over $\left(x,\infty\right)$ that
\begin{eqnarray}\label{b26}
&&-\int_{x}^{\infty}\left[u\left(t,y\right) \varphi\left(y\right)r^{1-n}\left(t,y\right)\right]_{t}dy\nonumber\\
&=&\alpha\left[\log v(t,x)\right]_{t}-P\left(t,x\right)-\int_{k+1}^{k+2}\sigma\left(t,y\right)dy\\
&&+\left(n-1\right)\int_{x}^{\infty}\varphi\left(y\right) u^{2}\left(t,y\right)r^{-n}\left(t,y\right)dy.\nonumber
\end{eqnarray}

Noticing the definition of $B\left(t,x\right)$ and $Q\left(t,x\right)$, we integrate (\ref{b26}) over $\left(0,t\right)$ with respect to $t$ and take the exponential on both sides of the resulting equation to deduce that
\begin{eqnarray}\label{b27}
\frac{1}{B\left(t,x\right)Q\left(t, x\right)}=\frac{1}{v\left(t,x\right)}\exp\left\{\frac{1}{\alpha}\int_{0}^{t}P\left(s,x\right)ds\right\},\quad x\in\overline{\Omega}_{k},\  t\geq 0.
\end{eqnarray}

Multiplying (\ref{b27}) by $\frac{P\left(t,x\right)}{\alpha}$ and integrating over $(0,t)$, we can infer that
\begin{eqnarray}\label{b28}
\exp\left\{\frac{1}{\alpha}\int_{0}^{t}P\left(s,x\right)ds\right\}=1+\frac{1}{\alpha}\int_{0}^{t}\frac{v\left(s,x\right)P\left(s,x\right)}{B\left(s,x\right)Q\left(s,x\right)}ds.
\end{eqnarray}

Combining (\ref{b27}) with (\ref{b28}), we obtain (\ref{b24}). This completes the proof of Lemma 3.3.
\end{proof}

To deduce the desired positive lower and upper bounds of $v\left(t,x\right)$ which are independent of the time variable $t$, one can first prove by repeating the argument used by Jiang in \cite{Jiang-CMP-1999} that the following estimates
\begin{eqnarray}\label{b29}
C\left(k\right)^{-1}\leq B\left(t,x\right)\leq C\left(k\right), \quad\forall x\in\overline{\Omega}_{k}
\end{eqnarray}
and
\begin{eqnarray}\label{b30}
-\int_{s}^{t}\inf\limits_{x\in[k+1,k+2]}\theta\left(\tau,\cdot\right)d\tau\leq
 \begin{cases}
 0, &  0\leq t-s\leq 1 ,\\
  -C(t-s), &  t-s\geq 1
 \end{cases}
\end{eqnarray}
hold true for $0\leq s\leq t\leq T$. Consequently, one can get from H\"{o}lder's inequality and Jenssen's inequality $\left(\int_{k+1}^{k+2}vdx\right)^{-1}\leq \int^{k+2}_{k+2}v^{-1}dx$ that
\begin{eqnarray}\label{b31}
&&\int_{s}^{t}\int_{k+1}^{k+2}\left(\frac{\alpha\left(r^{n-1}u\right)_{x}}{v}-\frac{R\theta}{v}\right)dxd\tau\nonumber\\
&\leq& C\int_{s}^{t}\int_{k+1}^{k+2}\frac{\left|\left(r^{n-1}u\right)_{x}\right|^{2}}{v\theta}dxd\tau
-\frac{R}{2}\int_{s}^{t}\int_{k+1}^{k+2}\frac{\theta}{v}dxd\tau\nonumber\\
&\leq& C-\frac{R}{2}\int_{s}^{t}\inf\limits_{x\in[k+1,k+2]}\theta\left(\tau,\cdot\right)\int_{k+1}^{k+2}\frac{1}{v}dxd\tau\\
&\leq& C-\frac{R}{2}\int_{s}^{t}\inf\limits_{x\in[k+1,k+2]}\theta\left(\tau,\cdot\right)\left(\int_{k+1}^{k+2}vdx\right)^{-1}d\tau\nonumber\\
&\leq& C-\frac{R}{2a_{2}}\int_{s}^{t}\inf\limits_{x\in[k+1,k+2]}\theta\left(\tau,\cdot\right)d\tau\nonumber\\
&\leq& C-C(t-s),\quad 0\leq s\leq t\leq T.\nonumber
\end{eqnarray}

From the definition of $Q\left(t,x\right)$ and (\ref{b31}), we have for $0\leq s\leq t\leq T$ that
\begin{eqnarray}\label{b32}
\frac{Q\left(t,x\right)}{Q\left(s,x\right)}&&=\frac{\exp\left\{\frac{1}{\alpha}{\displaystyle\int_{s}^{t}\int_{k+1}^{k+2}} \left(\frac{\alpha \left(r^{n-1}u\right)_{x}}{v}-\frac{R\theta}{v}\right)dyd\tau\right\}}
{\exp\left\{\frac{1}{\alpha}{\displaystyle\int_{s}^{t}\int_{k+1}^{k+2}}\frac{a}{3}\theta^{4}dyd\tau\right\}\exp\left\{\frac{1}{\alpha}{\displaystyle\int_{s}^{t}\int_{x}^{\infty}}(n-1)\phi u^{2}r^{-n}dyd\tau\right\}}\nonumber\\
&&\leq\exp\left\{\frac{1}{\alpha}\int_{s}^{t}\int_{k+1}^{k+2}\left(\frac{\alpha\left(r^{n-1}u\right)_{x}}{v}- \frac{R\theta}{v}\right)dyd\tau\right\}\\
&&\leq C\exp\left\{-C(t-s)\right\}.\nonumber
 \end{eqnarray}

If we set $s=0$ in (\ref{b32}), we can deduce that
\begin{eqnarray}\label{b33}
0\leq Q\left(t,x\right)\leq C\exp\left\{-Ct\right\}.
\end{eqnarray}

For $x\in\overline{\Omega}_{k}, t\geq 0$, we now turn to deduce a upper bound of $v(t,x)$. To this end, since
\begin{eqnarray}\label{b39}
v\left(t,x\right)&&\leq CQ\left(t,x\right)+C\int_{0}^{t}\frac{Q\left(t,x\right)}{Q\left(s,x\right)}v\left(s, x\right)P\left(s, x\right)ds\nonumber\\
&&\leq C+C\int_{0}^{t}\exp\left\{-C(t-s)\right\}\left(\theta+v\theta^{4}\right)\left(s, x\right)ds,
 \end{eqnarray}
and by noticing that the following two estimates
\begin{eqnarray}\label{b40}
\int_{0}^{t}\exp\left\{-C(t-s)\right\}\theta\left(s, x\right)ds\leq C\int_{0}^{t}\exp\left\{-C(t-s)\right\}(1+V\left(s\right))ds\leq C
\end{eqnarray}
and
\begin{eqnarray}\label{b41}
\int_{0}^{t}\exp\left\{-C(t-s)\right\}v(s,x)\theta^{4}\left(s, x\right)ds\leq C\int_{0}^{t}v\left(s,x\right)\exp\left\{-C(t-s)\right\}(1+V\left(s\right))ds
\end{eqnarray}
hold for $x\in\overline{\Omega}_{k}, t\geq 0$, we can get from (\ref{b39})-(\ref{b41}) and by using Gronwall's inequality that the following estimate
\begin{eqnarray}\label{b42}
v\left(t,x\right)\leq C\left(k\right)
\end{eqnarray}
holds for some positive constant $C(k)$. Here we emphasize that we have used Lemma 3.2 with $m=\frac{1}{2}$ and $m=2$ in deducing \eqref{b41}.

Now we turn to deduce a positive lower bound of $v\left(t,x\right)$ for the case when the time variable $t$ is sufficiently large. For this purpose, by Jensen's inequality, \eqref{a23}, \eqref{a25} and \eqref{b42}, we can deduce that
\begin{eqnarray}\label{bb42}
&&\int_{k}^{k+1}\theta(t,r)dr-\ln\int_{k}^{k+1}\theta(t,r) dr-1\nonumber\\
&\leq&\int_{k}^{k+1}\left(\theta(t,r)-\ln\theta(t,r)-1\right)dr\nonumber\\
&\leq& C\int_{k}^{k+1}\left(\theta(t,r)-\ln\theta(t,r)-1\right)\frac{r^{n-1}}{v}dr\nonumber\\
&\leq& C\int_{\Omega}\left(\theta(t,x)-\ln\theta(t,x)-1\right)dx\nonumber\\
&\leq& C,\nonumber
\end{eqnarray}
thus
\begin{eqnarray}\label{bb43}
C^{-1}\leq\int_{k}^{k+1}\theta(t,r) dr\leq C.
\end{eqnarray}

Noticing \eqref{a23} and
\begin{eqnarray}\label{bb44}
\left|\left(r^{-\frac{n}{2}}u\right)\left(t,\cdot\right)\right|\leq\int_{0}^{\infty}\left(\left|r^{-\frac{n}{2}}u_{x}\right|
+\left|-\frac{n}{2}r^{-\frac{3n}{2}}vu\right|\right)dx,\nonumber
\end{eqnarray}
we can get by H\"{o}lder's inequality that
\begin{eqnarray}\label{bb45}
\left\|\frac{u}{r^{\frac{n}{2}}}\right\|_{L^{\infty}\left([0,\infty)\right)}^{2}&\leq& C\left(\int_{0}^{\infty}\frac{r^{2(n-1)}u^{2}_{x}}{v\theta}dx\right)
\left(\int_{0}^{\infty}\frac{v\theta}{r^{3n-2}}dx\right)\nonumber\\
&&+C\left(\int_{0}^{\infty}\frac{vu^{2}}{r^{2}\theta}dx\right)\left(\int_{0}^{\infty}\frac{v\theta}{r^{3n-2}}dx\right).
\end{eqnarray}

\eqref{a23} and \eqref{bb43} tell us that
\begin{eqnarray}\label{bb46}
\int_{0}^{\infty}\frac{v\theta}{r^{3n-2}}dx=\int_{0}^{\infty}\frac{\theta}{r^{2n-1}}\frac{v}{r^{n-1}}dx=\int_{1}^{\infty}\frac{\theta}{r^{2n-1}}dr
\leq\sum_{k=1}^{\infty}\frac{1}{k^{2n-1}}\int_{k}^{k+1}\theta dr\leq C
\end{eqnarray}
holds for $n\geq 2$, therefore, thanks to \eqref{bb13}, \eqref{bb45} and \eqref{bb46}, we can deduce that
\begin{eqnarray}\label{bbb47}
\int_{0}^{t}\left\|\frac{u}{r^{\frac{n}{2}}}\right\|_{L^{\infty}\left([0,\infty)\right)}^{2}ds\leq C
\end{eqnarray}
holds for $n\geq 2$ and  consequently
\begin{eqnarray}\label{bb47}
\left|\int_{0}^{t}\int_{x}^{\infty}\varphi r^{-n}u^{2}dyds\right|\leq C\int_{0}^{t}\left\|\frac{u}{r^{\frac{n}{2}}}\right\|_{L^{\infty}\left([0,\infty)\right)}^{2}ds\leq C
\end{eqnarray}
holds for $n\geq 2$.

On the other hand, one can get from $\eqref{a20}_{1}$ that
\begin{eqnarray}\label{bb48}
\int_{0}^{t}\int_{k+1}^{k+2}\frac{\left(r^{n-1}u\right)_{x}}{v}dxds=\int_{0}^{t}\int_{k+1}^{k+2}\frac{v_{t}}{v}dxds=\int_{k+1}^{k+2}\ln\frac{v}{v_{0}}dx,
\end{eqnarray}
which together with \eqref{b17} imply
\begin{eqnarray}\label{bb49}
-C_{2}\leq-C_{1}-\int_{k+1}^{k+2}\left(v-\ln v-1\right)dx\leq\int_{k+1}^{k+2}\ln\frac{v}{v_{0}}dx\leq C_{1}+\ln\int_{k+1}^{k+2}vdx\leq C_{2}.
\end{eqnarray}
Thus we can infer from \eqref{bb48} and \eqref{bb49} that
\begin{eqnarray}\label{bb50}
\left|\int_{0}^{t}\int_{k+1}^{k+2}\frac{\left(r^{n-1}u\right)_{x}}{v}dxds\right|\leq C,
\end{eqnarray}
and consequently we can deduce from \eqref{bb47} and \eqref{bb50} that
\begin{eqnarray}\label{bb51}
C^{-1}&\leq&\exp\left[\int_{s}^{t}\int_{k+1}^{k+2}\frac{\left(r^{n-1}u\right)_{x}}{v}dxd\tau\right]\leq C,\nonumber\\ C^{-1}&\leq&\exp\left[\frac{1}{\alpha}\int_{s}^{t}\int_{x}^{\infty}\left(n-1\right)\varphi r^{-n}u^{2}dyd\tau\right]\leq C.
\end{eqnarray}

If we define
\begin{eqnarray}\label{bb52}
\widetilde{Q}\left(t\right)=\exp\left[\frac{1}{\alpha}\int_{0}^{t}\int_{k+1}^{k+2}\left(-\frac{R\theta}{v}-\frac{a}{3}\theta^{4}\right)dxds\right],
\end{eqnarray}
then we can infer from \eqref{b25} that
\begin{eqnarray}\label{bb53}
\frac{Q\left(t,x\right)}{Q\left(s,x\right)}=\frac{\widetilde{Q}\left(t\right)}{\widetilde{Q}\left(s\right)}
\cdot\frac{\exp\left[\displaystyle\int_{s}^{t}\int_{k+1}^{k+2}\frac{\left(r^{n-1}u\right)_{x}}{v}dxd\tau\right]}
{\exp\left[\frac{1}{\alpha}\displaystyle\int_{s}^{t}\int_{x}^{\infty}\left(n-1\right)\varphi r^{-n}u^{2}dyd\tau\right]},
\end{eqnarray}
and we can get from \eqref{bb51} and \eqref{bb53} that
\begin{eqnarray}\label{bb54}
C^{-1}\frac{\widetilde{Q}\left(t\right)}{\widetilde{Q}\left(s\right)}\leq\frac{Q\left(t,x\right)}{Q\left(s,x\right)}\leq C\frac{\widetilde{Q}\left(t\right)}{\widetilde{Q}\left(s\right)}.
\end{eqnarray}

On the other hand, integrating (\ref{b24}) with respect to $x$ over $\Omega_k=(k,k+1)$ and by using \eqref{b3}, \eqref{b17}, (\ref{b29}) and (\ref{b33}), we can conclude that
\begin{eqnarray}\label{b34}
a_{1}&&\leq C\int_{k}^{k+1}Q\left(t,x\right)dx+C\int_{0}^{t}\int_{k}^{k+1}\frac{Q\left(t,x\right)}{Q\left(s,x\right)}P\left(s,x\right)v\left(s,x\right)dxds\nonumber\\
&&\leq C\exp\left\{-Ct\right\}+C\int_{0}^{t}\frac{\widetilde{Q}\left(t\right)}{\widetilde{Q}\left(s\right)}\int_{k}^{k+1} \left(\theta+v\theta^{4}\right)(s,x)dxds\\
&&\leq C\exp\left\{-Ct\right\}+C\int_{0}^{t}\frac{\widetilde{Q}\left(t\right)} {\widetilde{Q}\left(s\right)}ds.\nonumber
\end{eqnarray}

Furthermore, setting $m=\frac{1}{2}$ in (\ref{b19}) and by using (\ref{b18}), we can derive that
\begin{eqnarray}
a^{\frac{1}{2}}_{1}-CV^{\frac{1}{2}}\left(t\right)\leq\theta^{\frac{1}{2}}\left(t,x\right),\nonumber
 \end{eqnarray}
which implies
\begin{eqnarray}\label{b35}
\theta\left(t,x\right)\geq\frac{a_{1}}{2}-CV\left(t\right).
 \end{eqnarray}
Thus we can conclude from \eqref{b24}, (\ref{b32}), \eqref{bb54}, (\ref{b34}) and (\ref{b35}) that for $x\in\Omega_k, 0\leq t\leq T$
\begin{eqnarray}\label{b36}
v\left(t,x\right)&&\geq C\int_{0}^{t}\frac{Q\left(t,x\right)}{Q\left(s,x\right)}\theta\left(s,x\right)ds\nonumber\\
&&\geq C\int_{0}^{t}\frac{Q\left(t,x\right)}{Q\left(s,x\right)}\left(\frac{a_{1}}{2}-CV\left(s\right)\right)ds\\
&&\geq C\int_{0}^{t}\frac{\widetilde{Q}\left(t\right)}{\widetilde{Q}\left(s\right)}ds
-C\int_{0}^{t}\frac{Q\left(t,x\right)}{Q\left(s,x\right)}V\left(s\right)ds,\nonumber\\
&&\geq C-\exp\left(-Ct\right)-C\int_{0}^{t}\frac{Q\left(t,x\right)}{Q\left(s,x\right)}V\left(s\right)ds.\nonumber
\end{eqnarray}
While
\begin{eqnarray}\label{b37}
&&\int_{0}^{t}\frac{Q\left(t,x\right)}{Q\left(s,x\right)}V\left(s\right)ds\nonumber\\
&\leq& C\int_{0}^{t}\exp\left\{-C(t-s)\right\}V\left(s\right)ds\\
&=&C\left(\int_{0}^{\frac{t}{2}}\exp\left\{-C(t-s)\right\}V\left(s\right)ds
+\int_{\frac{t}{2}}^{t}\exp\left\{-C(t-s)\right\}V\left(s\right)ds\right)\nonumber\\
&\leq& C\left(\exp\left\{-\frac{Ct}{2}\right\}\int_{0}^{\frac{t}{2}}V\left(s\right)ds +\int_{\frac{t}{2}}^{t}V\left(s\right)ds\right)\rightarrow 0\quad as\; t\rightarrow +\infty,\nonumber
\end{eqnarray}
then putting (\ref{b36}) and (\ref{b37}) together, we can deduce that there exist positive constants $t_{0}$ and $C$ such that if $t\geq t_{0}$,
\begin{eqnarray}\label{bz38}
v\left(t,x\right)\geq C,\quad \forall t\geq t_0,\ \ x\in\overline{\Omega}_k.
\end{eqnarray}

Having obtained \eqref{bz38}, to deduce the desired uniform positive lower bound on $v(t,x)$, it suffices to deduce the lower bound of $v\left(t,x\right)$ for $0<t\leq t_{0}$. For this purpose, noticing that $(\ref{a1})_{2}$ can be rewritten as
\begin{eqnarray}\label{bzz38}
-\alpha\left[\log v\right]_{xt}+P_{x}=-r^{1-n}u_{t},
 \end{eqnarray}
as in \cite{Kazhikhov-Shelukhin-JAMM-1977}, we can get by integrating (\ref{bzz38}) over $\left(0,t\right)\times\left(a_{k}\left(t\right), x\right)$ with respect to $t$ and $x$ that
\begin{eqnarray}\label{bz39}
&&-\alpha\log v\left(t, x\right)+\int_{0}^{t}P\left(s, x\right)ds\nonumber\\
&=&-\int_{0}^{t}\int_{a_{k}\left(t\right)}^{x}r^{1-n}u_{t}dyds
+\int_{0}^{t}P\left(s, a_{k}\left(t\right)\right)ds\\
&&+\alpha\log\frac{v_{0}\left(a_{k}\left(t\right)\right)}{v_{0} \left(x\right)v\left(t,a_{k}\left(t\right)\right)}.\nonumber
\end{eqnarray}
Taking the exponential on both sides of the resulting equation, we obtain
\begin{eqnarray}\label{bz40}
&&\frac{1}{v\left(t,x\right)}\exp\left\{\frac{1}{\alpha}\int_{0}^{t}P\left(s, x\right)ds\right\}\nonumber\\
&=&\frac{v_{0}\left(a_{k}\left(t\right)\right)\exp\left\{-\frac{1}{\alpha}{\displaystyle\int_{0}^{t}\int_{a_{k}(t)}^{x}}r^{1-n}u_{t}dyds\right\}}
{v\left(t,a_{k}\left(t\right)\right)v_{0}\left(x\right)}
\exp\left\{\frac{1}{\alpha}{\displaystyle\int_{0}^{t}}P\left(s,a_{k}\left(t\right)\right)ds\right\}\\
&:=&B_{k}\left(t,x\right)Q_{k}\left(t\right).\nonumber
\end{eqnarray}
Obviously, we have
\begin{eqnarray}\label{bz41}
C^{-1}\left(k\right)\leq B_{k}\left(t,x\right)\leq C\left(k\right).
\end{eqnarray}
Multiplying (\ref{bz40}) by $\frac{P\left(t,x\right)}{\alpha}$ and integrating over $(0,t)$, gives
\begin{eqnarray}\label{bz42}
\exp\left\{\frac{1}{\alpha}\int_{0}^{t}P\left(s,x\right)ds\right\}=1+\frac{1}{\alpha}\int_{0}^{t}v\left(s,x\right)P\left(s,x\right)B_{k}\left(s,x\right)Q_{k}\left(s\right)ds.
\end{eqnarray}
Combining (\ref{bz40}) with (\ref{bz42}), we arrive at
\begin{eqnarray}\label{bz43}
v\left(t,x\right)Q_{k}\left(t\right)=B^{-1}_{k}\left(t,x\right)\left(1 +\frac{1}{\alpha}\int_{0}^{t}v\left(s,x\right)P\left(s,x\right)B_{k}\left(s,x\right)Q_{k}\left(s\right)ds\right).
\end{eqnarray}
Integrating (\ref{bz43}) with respect to $x$ over $\left(k,k+1\right)$ and by using \eqref{b3}, \eqref{b17} and (\ref{bz41}), one has
\begin{equation}\label{bz44}
Q_{k}\left(t\right)\leq C\left(1+\int_{0}^{t}\left(\int_{k}^{k+1}v\left(s,x\right)P\left(s,x\right)dx\right)Q_{k}\left(s\right)ds\right).\nonumber
\end{equation}
With the help of Gronwall's inequality, we have for $0\leq t\leq t_0$ that
\begin{eqnarray}\label{bz45}
Q_{k}\left(t\right)&&\leq C\exp\left\{\int_{0}^{t}\int_{k}^{k+1}v\left(s,x\right)P\left(s,x\right)dxds\right\}\nonumber\\
&&\leq C\exp\left\{\int_{0}^{t}\int_{k}^{k+1}\left(R\theta+\frac{av\theta^{4}}{3}\right)\left(s, x\right)dxds\right\}\\
&&\leq C\exp\left\{Ct_{0}\right\}.\nonumber
\end{eqnarray}

Combining (\ref{bz40}), (\ref{bz41}) and (\ref{bz45}), we can deduce that there exists a positive constant $C(k,t_0)$ depending only on $k$ and $t_0$ such that the following estimate
\begin{eqnarray}\label{b38}
v\left(t,x\right)\geq C(k,t_0)
\end{eqnarray}
holds for $0\leq t\leq t_0$ and $x\in\overline{\Omega}_k$ and the estimates \eqref{bz38} and \eqref{b38} imply that
\begin{equation}\label{Lower-bound-on-v}
v(t,x)\geq C(k)>0
\end{equation}
holds for $0\leq t\leq T, x\in\overline{\Omega}_k$ and some positive constant $C(k)$ depending only on $k$.

So far, for each $x\in\overline{\Omega}_k$ and $0\leq t\leq T$, we have deduced a uniform positive lower bound and the upper bound of $v\left(t,x\right)$ which depend only on $k$ and the initial data but are independent of the time variable $t$. Such an estimate together with the fact $v\left(t,x\right)-1\in C\left(0,T;H^{1}\left([0,\infty)\right)\right)$ tell us that
\begin{lemma} Under the assumptions listed in Lemma 2.1, there exist positive constants $\underline{V}$, $\overline{V}$, which depend only on the fixed constants $\mu$, $\lambda_{1}$, $\lambda$, $K$, $A$, $d$, $R$, $c_{v}$, $a$, $\kappa_{1}$, $\kappa_{2}$, $n$ and the initial data $(v_0(x), u_0(x), \theta_0(x), z_0(x))$, such that
\begin{eqnarray}\label{b43}
 \underline{V}\leq v\left(t,x\right)\leq \overline{V}
\end{eqnarray}
holds for all $(t,x)\in[0,T]\times[0,\infty)$.
\end{lemma}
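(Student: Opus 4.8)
The plan is to upgrade the family of slab-wise bounds $C(k)^{-1}\le v(t,x)\le C(k)$ on $[0,T]\times\overline{\Omega}_k$ already obtained in \eqref{b42} and \eqref{Lower-bound-on-v} to two-sided bounds whose constants do not depend on $k$; once that is done, \eqref{b43} follows with $\underline{V},\overline{V}$ the ($k$-free) constants so produced. To this end I would revisit the three places where $k$ has entered the construction — the factor $B(t,x)$ of \eqref{b25} and \eqref{b29}, the short-time factors $B_k(t,x)$ and $Q_k(t)$ of \eqref{bz40}--\eqref{bz45}, and the cut-off $\varphi$ itself — and check that in each of them the relevant constant splits as a $k$-free quantity times a factor controlled uniformly in $k$. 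The reason this should work is structural: $r(t,x)\ge1$ by \eqref{a25}, so the weights $r^{1-n},r^{-n}$ appearing in \eqref{b24}--\eqref{b25} only help; $\varphi$ is supported in $[0,k+2]$, whence $\int_x^{\infty}\varphi\,dy\le\tfrac{3}{2}$ for $x\in\overline{\Omega}_k$; and the genuinely global inputs — the energy estimate \eqref{bb13}, the interval and point bounds \eqref{b17}--\eqref{b18}, the rough temperature bound \eqref{b20}, the weighted velocity bound \eqref{bbb47}, and the data assumptions \eqref{aa9} — are all already $k$-free.

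For the upper bound I would observe that in the chain \eqref{b39}--\eqref{b42} the only $k$-dependent ingredient is \eqref{b29}: since \eqref{b33}, the use of \eqref{b20} with $m=\tfrac{1}{2},2$, and the convolution bounds \eqref{b40}--\eqref{b41} carry $k$-free constants, it remains to show $C^{-1}\le B(t,x)\le C$ uniformly in $k$, i.e.\ that $\int_x^{\infty}|u_0|r_0^{1-n}\varphi\,dy$ and $\int_x^{\infty}|u(t,\cdot)|r^{1-n}\varphi\,dy$ are bounded independently of $k$; this gives $v(t,x)\le\overline{V}$ throughout. For the lower bound I keep the two-regime split. For $t_0\le t\le T$, the estimates \eqref{b34}--\eqref{bz38}, together with \eqref{bb54} and \eqref{bbb47} (valid for $n\ge2$), already yield $v(t,x)\ge\underline{V}_1>0$ with $\underline{V}_1$ $k$-free. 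For $0<t\le t_0$, I would re-examine \eqref{bz40}--\eqref{bz43}: the only term threatening a real $k$-dependence is the double integral $\int_0^t\!\int_{a_k(t)}^x r^{1-n}u_t\,dy\,ds$ inside $B_k$, which I would integrate by parts in $s$, using $(r^{1-n})_t=(1-n)r^{-n}u$, to rewrite it as $\int_{a_k(t)}^x r^{1-n}(u(t,\cdot)-u_0)\,dy+(n-1)\int_0^t\!\int_{a_k(t)}^x r^{-n}u^2\,dy\,ds$; both pieces are then bounded uniformly in $k$ by $r\ge1$, \eqref{bb13}, \eqref{bbb47} and \eqref{aa9}. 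Together with the bounds on $v_0$ from \eqref{aa9} and $v(t,a_k(t))\in[a_1,a_2]$ this gives $C^{-1}\le B_k(t,x)\le C$, and then Gronwall's inequality as in \eqref{bz44}--\eqref{bz45} gives $Q_k(t)\le Ce^{Ct_0}$, all with $k$-free constants; hence $v(t,x)\ge\underline{V}_2>0$ on $[0,t_0]\times[0,\infty)$, and $\underline{V}:=\min\{\underline{V}_1,\underline{V}_2\}$.

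The step I expect to be the main obstacle is the $k$-uniform control of the \emph{instantaneous} (fixed-time) spatial tails such as $\int_x^{\infty}|u(t,\cdot)|r^{1-n}\varphi\,dy$ and $\int_{a_k(t)}^x r^{1-n}(u-u_0)\,dy$. Indeed, \eqref{a24} and \eqref{b17} give only the crude growth $r(t,x)\gtrsim x^{1/n}$, so $r^{1-n}\lesssim x^{(1-n)/n}$ is merely borderline non-integrable in the critical dimension $n=2$ and cannot simply be paired with $u(t)\in L^2$; the way around this — which is exactly why the whole scheme survives down to $n=2$ — is to exploit the decay of $u$ at infinity together with the weighted bound \eqref{bbb47} in place of the crude $L^2$ estimate. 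Once this point is settled the remainder is bookkeeping of constants through inequalities already established, and the fact that $v(t,\cdot)-1\in C(0,T;H^1([0,\infty)))$ (so $v(t,\cdot)$ is continuous and tends to $1$ at infinity) ensures that the $k$-uniform bounds on the slabs $\overline{\Omega}_k$ patch into the global bound \eqref{b43}.
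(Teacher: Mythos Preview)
Your plan is correct and, in fact, supplies the detail that the paper's one-sentence argument omits. The paper simply records slab-wise bounds $C(k)^{-1}\le v\le C(k)$ on $[0,T]\times\overline{\Omega}_k$ and then asserts that these, \emph{together with} $v(\cdot)-1\in C(0,T;H^1([0,\infty)))$, give Lemma~3.4. Taken at face value, that route would pick some $k_0$ beyond which $|v-1|<\tfrac12$ and use the finitely many $C(k)$ for $k<k_0$; but $k_0$ then depends on the $C(0,T;H^1)$-norm of the solution, which is not yet controlled in terms of the data alone. Your approach---going back through \eqref{b24}--\eqref{bz45} and showing that every constant is already $k$-free---is the clean way to get $\underline V,\overline V$ depending only on the initial data, and once that is done the $H^1$ remark (and your own final ``patching'' remark) is superfluous, since the slabs $\overline{\Omega}_k$ already cover $[0,\infty)$.

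Two small corrections. First, the ``main obstacle'' you anticipate is not one: because $\varphi$ is supported in $[0,k+2]$ and $x\in[k,k+1]$, the integral $\int_x^\infty |u|\,r^{1-n}\varphi\,dy$ is over an interval of length at most $2$, and likewise $|x-a_k(t)|\le 1$; so a direct Cauchy--Schwarz against $\|u(t)\|_{L^2}$ (bounded by \eqref{bb13}) and $\|u_0\|_{L^2}$ already gives $k$-free bounds, with no need for decay of $r^{1-n}$ or for \eqref{bbb47}. The weighted bound \eqref{bbb47} is needed only for the quadratic remainder $(n-1)\int_0^t\!\int_{a_k(t)}^x r^{-n}u^2\,dy\,ds$ produced by your integration by parts, and there it works exactly as you say. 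Second, in that integration by parts the boundary term should read $\int_{a_k(t)}^x\bigl[r^{1-n}(t,y)u(t,y)-r_0^{1-n}(y)u_0(y)\bigr]\,dy$, not $\int_{a_k(t)}^x r^{1-n}(u(t,\cdot)-u_0)\,dy$, since $r$ itself depends on $t$; the bound is unaffected.
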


Inequalities \eqref{a24} and \eqref{b43} tell us that
\begin{corollary} It holds for all $(t,x)\in[0,T]\times[0,\infty)$ that
\begin{eqnarray}\label{b44}
 1+\frac{x}{C}\leq r^{n}\left(t,x\right)\leq 1+Cx.
\end{eqnarray}
\end{corollary}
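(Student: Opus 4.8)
The plan is to read the two inequalities straight off the identity \eqref{a24} together with the uniform, time-independent bound on the specific volume furnished by \eqref{b43}. Recall that \eqref{a24} asserts
\begin{eqnarray*}
r^{n}(t,x)=1+n\int_{0}^{x}v(t,y)\,dy,\qquad (t,x)\in[0,T]\times[0,\infty),
\end{eqnarray*}
so the corollary is a purely algebraic consequence of the fact that $v$ is bounded above and below by positive constants that do not depend on $t$.

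Next I would integrate the bound $\underline{V}\leq v(t,y)\leq\overline{V}$ (valid for every $y\in[0,\infty)$ and every $t\in[0,T]$ by \eqref{b43}) over $(0,x)$, which gives $n\underline{V}x\leq n\int_{0}^{x}v(t,y)\,dy\leq n\overline{V}x$ and hence
\begin{eqnarray*}
1+n\underline{V}x\leq r^{n}(t,x)\leq 1+n\overline{V}x.
\end{eqnarray*}
Choosing the generic constant $C\geq\max\{(n\underline{V})^{-1},\,n\overline{V}\}$ — which depends only on the fixed physical constants and the initial data through $\underline{V}$, $\overline{V}$, and in particular is independent of $t$ — one arrives at $1+x/C\leq r^{n}(t,x)\leq 1+Cx$, as claimed.

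There is essentially no obstacle in this argument; the only point worth noting is that the uniformity of $C$ in $t$ is inherited directly from the uniformity of $\underline{V}$ and $\overline{V}$ in \eqref{b43}. If one insists on the lower bound appearing literally as $1+x/C$ rather than $1+cx$ with a separate small constant $c$, one simply absorbs $(n\underline{V})^{-1}$ into $C$, which is harmless since the generic constant is taken to satisfy $C\geq 1$ throughout the paper.
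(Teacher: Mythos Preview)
Your proof is correct and is exactly the argument the paper has in mind: the corollary is stated as an immediate consequence of \eqref{a24} and \eqref{b43}, and your derivation spells out precisely that deduction.
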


The next lemma will give a nice bound on $\int_{0}^{t}\left\|\frac{u}{r}\right\|^{2}_{L^{\infty}\left([0,\infty)\right)}ds$, which is essential for our analysis.
\begin{lemma} Under the assumptions listed in Lemma 2.1, we have
\begin{eqnarray}\label{cc16}
 \int_{0}^{t}\left\|\frac{u}{r}\right\|^{2}_{L^{\infty}\left([0,\infty)\right)}ds\leq C
\end{eqnarray}
holds for all $t\in[0,T]$ and $n\geq 2$.
\end{lemma}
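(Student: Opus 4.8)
The plan is to control $\|u/r\|_{L^\infty([0,\infty))}$ pointwise by the $L^1$-norm of $\partial_x(u/r)$ and then to absorb everything into the basic energy estimate \eqref{bb13}. First I would use the boundary condition $u(t,0)=0$ together with $r(t,0)=1$ (see \eqref{a25}) to write, for every $x\ge 0$,
\[
\left|\frac{u}{r}(t,x)\right|=\left|\int_0^x\partial_y\!\left(\frac{u}{r}\right)(t,y)\,dy\right|\le\int_0^\infty\left(\frac{|u_x|}{r}+\frac{v|u|}{r^{n+1}}\right)(t,y)\,dy,
\]
where I have used $\partial_x r=vr^{1-n}$ from \eqref{a23}, which gives $\partial_x(u/r)=u_x/r-uvr^{-n-1}$.

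Next, inserting the weight $r^{-n}\sqrt{v\theta}$ into each of the two integrals and applying the Cauchy--Schwarz inequality yields
\[
\left\|\frac{u}{r}(t)\right\|^2_{L^\infty([0,\infty))}\le C\left(\int_0^\infty\frac{r^{2(n-1)}u_x^2}{v\theta}\,dx+\int_0^\infty\frac{vu^2}{r^2\theta}\,dx\right)\int_0^\infty\frac{v\theta}{r^{2n}}\,dx.
\]
The only nontrivial point is to show that the last factor is uniformly bounded. Changing variables from $x$ to the Eulerian radius via $v\,dx=r^{n-1}\,dr$ (again from \eqref{a23}), one finds, exactly as in \eqref{bb46} but with a more favorable power,
\[
\int_0^\infty\frac{v\theta}{r^{2n}}\,dx=\int_1^\infty\frac{\theta}{r^{n+1}}\,dr\le\sum_{k=1}^\infty\frac{1}{k^{n+1}}\int_k^{k+1}\theta(t,r)\,dr\le C,
\]
where \eqref{bb43} was used and the series converges since $n\ge 2$ (indeed $n\ge 1$ would suffice).

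Finally, I would integrate the displayed bound for $\|(u/r)(t)\|^2_{L^\infty}$ over $s\in[0,t]$ and invoke the basic energy estimate \eqref{bb13}, which precisely controls $\int_0^t\int_0^\infty\big(r^{2(n-1)}u_x^2+vu^2/r^2\big)\theta^{-1}\,dx\,ds$, to obtain \eqref{cc16}. The main obstacle, and the reason the argument succeeds for all $n\ge 2$ rather than only $n\ge 3$, is the third step: one must compute $\int_0^\infty v\theta r^{-2n}\,dx$ directly by passing to the $r$ variable, since $r\ge 1$ makes the naive comparison with the weight $r^{-(3n-2)}$ appearing in \eqref{bb46} go the wrong way when $n\ge 3$; the change of variables turns the $x$-integral into a convergent series governed by $\sum_k k^{-(n+1)}$, and this is what removes the dimensional restriction.
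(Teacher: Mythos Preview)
Your proof is correct and in fact cleaner than the paper's. The paper splits into two cases: for $n=2$ it invokes the previously derived estimate \eqref{bbb47} on $\int_0^t\|u/r^{n/2}\|_{L^\infty}^2\,ds$ (which for $n=2$ is exactly $\|u/r\|_{L^\infty}$), while for $n\ge 3$ it bounds $\|u\|_{L^\infty}$ itself via $u=-\int_x^\infty u_y\,dy$ and Cauchy--Schwarz with weight $v\theta/r^{2n-2}$, whose finiteness requires $\sum_k k^{-(n-1)}<\infty$ and hence $n\ge 3$; the desired bound then follows from $r\ge 1$.

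Your approach avoids this case distinction by differentiating $u/r$ directly rather than $u/r^{n/2}$ or $u$. The resulting Cauchy--Schwarz weight is $v\theta/r^{2n}$, whose $x$-integral becomes $\int_1^\infty \theta r^{-(n+1)}\,dr$ after the change of variables, controlled by $\sum_k k^{-(n+1)}$ and hence convergent for all $n\ge 1$. This is the same mechanism as in \eqref{bb45}--\eqref{bb46}, just with a better choice of the function to which the fundamental theorem of calculus is applied. The paper's $n\ge 3$ branch does yield the stronger conclusion $\int_0^t\|u\|_{L^\infty}^2\,ds\le C$, but that is not needed here, and your unified argument is both shorter and more transparent about why the dimensional restriction $n\ge 3$ of \cite{Nakamura-Nishibata-2008} is unnecessary.

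One minor remark: your final paragraph's comment about comparing with the weight $r^{-(3n-2)}$ from \eqref{bb46} is a bit misleading, since you never need such a comparison; the change of variables handles $\int v\theta r^{-2n}\,dx$ directly, exactly as the paper handles its own weight in \eqref{bb46}. The essential point is simply that your choice of weight produces the summable exponent $n+1$ rather than $n-1$.
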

\begin{proof} For the case when $n=2$, the estimate \eqref{cc16} follows immediately from
\eqref{bbb47}. As for the case of $n\geq 3$, since
\begin{eqnarray}\label{c18}
u^{2}=\left(\int_{x}^{\infty}u_{x}dx\right)^{2}\leq C\int_{0}^{\infty}\frac{r^{2n-2}u_{x}^{2}}{v\theta}dx\int_{0}^{\infty}\frac{v\theta}{r^{2n-2}}dx,\nonumber
\end{eqnarray}
and noticing that \eqref{bb43} tells us that
\begin{eqnarray}\label{c19}
\int_{0}^{\infty}\frac{v\theta}{r^{2n-2}}dx=\sum_{k=1}^{\infty}\int_{k}^{k+1}\frac{\theta}{r^{n-1}}dr
\leq\sum_{k=1}^{\infty}\frac{1}{k^{n-1}}\int_{k}^{k+1}\theta dr\leq C\sum_{k=1}^{\infty}\frac{1}{k^{n-1}}\leq C\nonumber
\end{eqnarray}
holds for $n\geq 3$, we can get by combining the above inequalities with \eqref{bb13} that
\begin{eqnarray}\label{c16}
 \int_{0}^{t}\left\|u\right\|^{2}_{L^{\infty}\left([0,\infty)\right)}ds\leq C
\end{eqnarray}
holds for $n\geq 3$. Consequently, for $n\geq 3$, we can deduce from \eqref{a25} and \eqref{c16} that
\begin{eqnarray*}
 \int_{0}^{t}\left\|\frac{u}{r}\right\|^{2}_{L^{\infty}\left([0,\infty)\right)}ds\leq\int_{0}^{t}\left\|u\right\|^{2}_{L^{\infty}\left([0,\infty)\right)}ds\leq C.
\end{eqnarray*}
This completes the proof of our lemma.
\end{proof}

To deduce the upper bound on $\theta(t,x)$, we first deduce a bound on $\|v_{x}(t)\|$ in terms of $\|\theta\|_{\infty}$, which will be used later on. In fact, we have the following lemma:
\begin{lemma} Under the assumptions listed in Lemma 2.1 , we have for any $0\leq t\leq T$ that
\begin{eqnarray}\label{bz57}
\left\|v_{x}(t)\right\|^2+\int_{0}^{t}\left\|\sqrt{\theta(s)} v_{x}(s)\right\|^2 ds\leq C+C\|\theta\|^{l_{1}}_{\infty},
\end{eqnarray}
where
\begin{eqnarray}\label{bd58}
l_{1}=\max\{1,(7-b)_{+}\}.
\end{eqnarray}
\end{lemma}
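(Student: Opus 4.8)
We outline the argument we would use. The idea is an effective\nobreakdash-viscous\nobreakdash-flux estimate, in the spirit of Kazhikhov--Shelukhin and of the BD\nobreakdash-type entropy, adapted to the spherically symmetric setting. By $(\ref{a20})_1$ we have $\alpha\left(r^{n-1}u\right)_x/v=\alpha v_t/v=\alpha(\log v)_t$, so the momentum equation $(\ref{a20})_2$ reads $r^{1-n}u_t=\left(\alpha(\log v)_t-P\right)_x$. Differentiating in $x$, using $\partial_t\!\left(r^{1-n}u\right)=r^{1-n}u_t-(n-1)r^{-n}u^2$ (which follows from $(\ref{a23})$) and the equations of state $(\ref{a8})$, one obtains the transport identity
\[
\left(\alpha\frac{v_x}{v}-\frac{u}{r^{n-1}}\right)_t+\frac{R\theta}{\alpha v}\left(\alpha\frac{v_x}{v}-\frac{u}{r^{n-1}}\right)
=(n-1)\frac{u^2}{r^{n}}+\frac{R\theta_x}{v}+\frac{4a}{3}\theta^3\theta_x-\frac{R\theta}{\alpha v}\cdot\frac{u}{r^{n-1}} .
\]
Writing $h:=\alpha v_x/v-r^{1-n}u$ and noting $v_x=\frac{v}{\alpha}\bigl(h+r^{1-n}u\bigr)$, the desired bounds on $\|v_x(t)\|$ and $\int_0^t\|\sqrt{\theta}\,v_x\|^2\,ds$ will follow from corresponding bounds on $\|h(t)\|$ and $\int_0^t\int_0^\infty\frac{\theta}{v}h^2\,dx\,ds$, once one also uses $\|u(t)\|\le C$ from \eqref{bb13} and the uniform bounds $\underline V\le v\le\overline V$ from \eqref{b43}.

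To estimate $h$ I would multiply the identity by $h$ and integrate over $x\in[0,\infty)$. The boundary conditions $(\ref{a22})$ (in particular $u(t,0)=0$) make this legitimate and produce no boundary contribution, so that
\[
\frac12\frac{d}{dt}\|h(t)\|^2+\int_0^\infty\frac{R\theta}{\alpha v}h^2\,dx
=\int_0^\infty\!\left[(n-1)\frac{u^2}{r^{n}}+\frac{R\theta_x}{v}+\frac{4a}{3}\theta^3\theta_x-\frac{R\theta}{\alpha v}\cdot\frac{u}{r^{n-1}}\right]h\,dx .
\]
The four terms on the right are then handled by Young's inequality: small multiples of $\int_0^\infty\frac{R\theta}{\alpha v}h^2\,dx$ are absorbed into the left\nobreakdash-hand side; the geometric terms $r^{-n}u^2$ and $\theta r^{1-n}u/v$ are controlled with the help of $r\ge1$, \eqref{b44}, the dissipation integrals in \eqref{bb13}, and, crucially, the time\nobreakdash-integrated bound $\int_0^t\|u/r\|^2_{L^\infty([0,\infty))}\,ds\le C$ from \eqref{cc16}, so that Gronwall's inequality is applied against an integrable weight and the resulting exponential factor stays bounded uniformly in $t$; the term $R\theta_xh/v$ is absorbed into the $\kappa_1$\nobreakdash-part of the heat dissipation in \eqref{bb13}; and the quartic radiative term $\frac{4a}{3}\theta^3\theta_xh$ is absorbed into the $\kappa_2$\nobreakdash-part $\kappa_2\theta^{b-2}r^{2n-2}\theta_x^2/v^2$ of \eqref{bb13}, the exponent mismatch between the $\theta^3$ in the flux and the $\theta^{b-2}$ in the dissipation leaving a residual bounded by $C\|\theta\|_\infty^{(7-b)_+}$. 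Collecting these estimates, using $\|h(0)\|\le C$ from \eqref{aa9}, integrating in time, and converting back from $h$ to $v_x$ through $v_x=\frac{v}{\alpha}\bigl(h+r^{1-n}u\bigr)$ then yields \eqref{bz57} with $l_1=\max\{1,(7-b)_+\}$ as in \eqref{bd58}.

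The principal difficulty is twofold, and it is precisely here that the present problem departs from the bounded\nobreakdash-domain analysis of \cite{Qin-Zhang-Su-Cao-JMFM-2016}. First, because $x$ ranges over the unbounded interval $[0,\infty)$, the spherically symmetric cross terms $r^{-n}u^2$ and $\theta r^{1-n}u/v$ cannot be discarded or estimated crudely; one must exploit both the decay $r\ge1$ and the time\nobreakdash-integrable quantity secured in \eqref{cc16} in order to avoid a non\nobreakdash-uniform\nobreakdash-in\nobreakdash-$t$ growth factor in the Gronwall step. Second --- and this is what pins down the admissible range of $b$ --- the fourth\nobreakdash-power radiative contribution $\theta^3\theta_x$ has to be balanced against the \emph{degenerate} heat dissipation $\theta^{b-2}\theta_x^2$ rather than a nondegenerate one, and it is exactly this balance that produces the exponent $(7-b)_+$ and hence the value of $l_1$ recorded in \eqref{bd58}.
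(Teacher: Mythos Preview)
Your approach is a legitimate alternative to the paper's, and closely related: the paper multiplies the identity $r^{1-n}u_t+P_x=\alpha(v_x/v)_t$ directly by $v_x/v$ and then integrates the resulting term $\int_0^t\!\int r^{1-n}u_t\,(v_x/v)$ by parts \emph{in time}; you instead pass to the modified variable $h=\alpha v_x/v-r^{1-n}u$, derive a transport equation for $h$, and multiply by $h$. Both routes produce the same good dissipation, the same $(7-b)_+$ exponent from the radiative flux, and the same integrable Gronwall weight $\|u/r\|_{L^\infty}^2$ from \eqref{cc16}.

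There is, however, a small but genuine loss in your exponent. When you substitute $v_x=\frac{v}{\alpha}(h+r^{1-n}u)$ into $-R\theta v_x/v^2$, you generate the cross term $-\frac{R}{\alpha}\,\frac{\theta u}{v r^{n-1}}$ on the right. After multiplying by $h$ and applying Young's inequality, the residual source is $\int_0^t\!\int_0^\infty \frac{\theta u^2}{v r^{2n-2}}\,dx\,ds$. The only available control is
\[
\int_0^t\!\!\int_0^\infty \frac{\theta u^2}{v r^{2n-2}}\,dx\,ds
\;\le\; C\|\theta\|_\infty^{\,2}\!\int_0^t\!\!\int_0^\infty \frac{v u^2}{r^{2}\theta}\,dx\,ds
\;\le\; C\|\theta\|_\infty^{\,2},
\]
since the basic energy \eqref{bb13} provides only $\int_0^t\!\int v u^2/(r^2\theta)\le C$, not $\int_0^t\!\int u^2/r^2\le C$. (Trying to avoid the second power of $\|\theta\|_\infty$ by using $\int u^2\le C$ instead produces a factor $Ct$, which is not uniform.) Hence your argument yields $l_1=\max\{2,(7-b)_+\}$ rather than the stated $l_1=\max\{1,(7-b)_+\}$.

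The paper sidesteps this because, working with $v_x/v$ as multiplier, the inertial contribution is $\int_0^t\!\int r^{1-n}u_t\,(v_x/v)$; after integration by parts in $t$ the only ``$\theta$--free'' cross terms remain, and their sources are bounded by $C\|\theta\|_\infty$ via $\int_0^t\!\int v u^2/(r^2\theta)\le C$ (see \eqref{b53}--\eqref{bc55}). Your weaker exponent would still let the downstream bootstrap close for $b>\tfrac{19}{4}$, but it does not prove the lemma exactly as stated; to recover $l_1=\max\{1,(7-b)_+\}$ you should either revert to the multiplier $v_x/v$ with the time integration by parts, or find an additional cancellation for the $\theta u h/(v r^{n-1})$ term.
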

\begin{proof}
Noticing that $\eqref{a20}_{2}$ can be rewritten as
\begin{eqnarray}\label{bd44}
r^{1-n}u_{t}+P_{x}=\alpha\left(\frac{v_{t}}{v}\right)_{x}=\alpha\left(\frac{v_{x}}{v}\right)_{t},
\end{eqnarray}
and
\begin{eqnarray}\label{bbb44}
 \frac{\partial P(v,\theta)}{\partial x}= \left(\frac{R\theta}{v}+\frac{a\theta^{4}}{3}\right)_{x}=\frac{R\theta_{x}}{v}-\frac{R\theta v_{x}}{v^{2}}
 +\frac{4}{3}a\theta^{3}\theta_{x},
\end{eqnarray}
we can get by multiplying \eqref{bd44} by $\frac{v_{x}}{v}$ and integrating the resulting identity with respect to $t$ and $x$ over $\left[0,t\right)\times[0,\infty)$ that
\begin{eqnarray}\label{b45}
&&\frac{\alpha}{2}\int_{0}^{\infty}\frac{v_{x}^{2}}{v^{2}}dx +\int_{0}^{t}\int_{0}^{\infty}\frac{R\theta v_{x}^{2}}{v^{3}}dxds\nonumber\\
&=&\frac{\alpha}{2}\int_{0}^{\infty}\frac{v_{0x}^2}{v_0^2}dx +\underbrace{\int^t_0\int_{0}^{\infty}\frac{R\theta_{x}v_x}{v^{2}}dxds}_{I_1}
+\underbrace{\frac{4a}{3}\int^t_0\int_{0}^{\infty}\frac{\theta^3v_x\theta_x}{v}dxds}_{I_2}\\
&&+R\underbrace{\int^t_0\int_{0}^{\infty}\frac{r^{1-n}v_{x}u_{t}}{v}dxds}_{I_3}.\nonumber
\end{eqnarray}

Now we deal with $I_k (1\leq k\leq 3)$ term by term. To this end, we first get by employing Cauchy's inequality, \eqref{a25}, \eqref{a8} and Lemma 2.1 that
\begin{eqnarray}\label{b46}
I_{1}&&\leq\epsilon\int_{0}^{t}\int_{0}^{\infty}\frac{R\theta v_{x}^{2}}{v^{3}}dxds
+C\left(\epsilon\right)\int_{0}^{t}\int_{0}^{\infty}\frac{\kappa r^{2(n-1)}\theta^{2}_{x}}{v\theta^{2}}\cdot\frac{\theta}{\kappa r^{2(n-1)}} dx\nonumber\\
&&\leq\epsilon\int_{0}^{t}\int_{0}^{\infty}\frac{R\theta v_{x}^{2}}{v^{3}}dxds
+C\left(\epsilon\right)
\end{eqnarray}
and
\begin{eqnarray}\label{b49}
I_{2}&&\leq\epsilon\int_{0}^{t}\int_{0}^{\infty}\frac{R\theta v_{x}^{2}}{v^{3}}dxds
+C\left(\epsilon\right)\int_{0}^{t}\int_{0}^{\infty}\frac{\kappa(v,\theta)r^{2(n-1)}\theta^{2}_{x}} {v\theta^{2}}\cdot\frac{\theta^{7}}{\kappa(v,\theta)r^{2(n-1)}}dxds\nonumber\\
&&\leq\epsilon\int_{0}^{t}\int_{0}^{\infty}\frac{R\theta v_{x}^{2}}{v^{3}}dxds
+C\left(\epsilon\right)\int_{0}^{t}\int_{0}^{\infty}\frac{\kappa(v,\theta)\theta^{2}_{x}}{v\theta^{2}}\cdot\frac{\theta^{7}}{1+\theta^{b}}dxds\\
&&\leq\epsilon\int_{0}^{t}\int_{0}^{\infty}\frac{R\theta v_{x}^{2}}{v^{3}}dxds
+C\left(\epsilon\right)\|\theta\|^{(7-b)_{+}}_{\infty},\nonumber
\end{eqnarray}
where $(7-b)_{+}:=\max\{0, 7-b\}$.

As to the term $I_3$, we get by integration by parts and from the boundary condition \eqref{a22} that
\begin{eqnarray}\label{b51}
I_{3}&=&\int_{0}^{\infty}\frac{r^{1-n}uv_{x}}{v}dx -\int_{0}^{\infty}\frac{r^{1-n}uv_{x}}{v}\left(0,x\right)dx\nonumber\\
&&-\int_{0}^{t}\int_{0}^{\infty}r^{1-n}u\left(\frac{v_{x}}{v}\right)_{t}dxds\\
&&+\left(n-1\right)\int_{0}^{t}\int_{0}^{\infty}r^{-n}u^{2}\frac{v_{x}}{v}dxds.\nonumber
\end{eqnarray}
For the first term in the right hand side of \eqref{b51}, we can deduce from \eqref{a25} that
\begin{eqnarray}\label{b52}
\int_{0}^{\infty}\frac{r^{1-n}uv_{x}}{v}dx &&\leq\epsilon\int_{0}^{\infty}\frac{v^{2}_{x}}{v^{2}}dx +C\left(\epsilon\right)\int_{0}^{\infty} r^{2(1-n)}u^{2}dx\nonumber\\
&&\leq\epsilon\int_{0}^{\infty}\frac{v^{2}_{x}}{v^{2}}dx+C\left(\epsilon\right).
\end{eqnarray}

Moreover, for the third term in the right hand side of \eqref{b51}, we can deduce that
\begin{eqnarray}\label{b53}
&&-\int_{0}^{t}\int_{0}^{\infty}r^{1-n}u\left(\frac{v_{x}}{v}\right)_{t}dxds\nonumber\\
&=&-\int_{0}^{t}\int_{0}^{\infty}r^{1-n}u\left(\frac{v_{t}}{v}\right)_{x}dxds\nonumber\\
&=&\int_{0}^{t}\int_{0}^{\infty}\frac{\left(r^{n-1}u\right)_{x}\left(r^{1-n}u\right)_{x}}{v}dxds\nonumber\\
&=&\int_{0}^{t}\int_{0}^{\infty}\frac{r^{2(1-n)}\left|\left(r^{n-1}u\right)_{x}\right|^{2}}{v}dxds
+2\left(1-n\right)\int_{0}^{t}\int_{0}^{\infty}r^{1-2n}u\left(r^{n-1}u\right)_{x}dxds\\
&\leq& C\int_{0}^{t}\int_{0}^{\infty}\frac{r^{2(1-n)}\left|\left(r^{n-1}u\right)_{x}\right|^{2}}{v\theta}\cdot\theta dxds
+C\int_{0}^{t}\int_{0}^{\infty}\frac{vu^{2}}{r^{2}\theta}\cdot\theta dxds\nonumber\\
&&+\int_{0}^{t}\int_{0}^{\infty}\frac{r^{2(1-n)}\left|\left(r^{n-1}u\right)_{x}\right|^{2}}{v\theta}\cdot\theta dxds\nonumber\\
&\leq& C\|\theta\|_{\infty},\nonumber
\end{eqnarray}
where we have used $\eqref{a20}_{1}$ and the fact that
\begin{eqnarray}\label{bbb54}
\left(r^{1-n}u\right)_{x}=r^{2(1-n)}\left(r^{n-1}u\right)_{x}+2\left(1-n\right)r^{1-2n}vu.
\end{eqnarray}

Furthermore, for the fourth term in the right hand side of \eqref{b51}, we have from \eqref{a25} and \eqref{bb13} that
\begin{eqnarray}\label{bc55}
\int_{0}^{t}\int_{0}^{\infty}r^{-n}u^{2}\frac{v_{x}}{v}dxds &&\leq C\int_{0}^{t}\int_{0}^{\infty}\frac{u^{2}}{r^{2}}dxds +C\int_{0}^{t}\int_{0}^{\infty}\frac{u^{2}}{r^{2n-2}}\cdot\frac{v^{2}_{x}}{v^{2}}dxds\nonumber\\
&&\leq C\left\|\theta\right\|_{\infty}+C \int_{0}^{t}\left\|\frac{u}{r}\right\|^{2}_{L^{\infty}\left([0,\infty)\right)}\int_{0}^{\infty}\frac{v^{2}_{x}}{v^{2}}dxds.
\end{eqnarray}
Thus we can conclude from \eqref{b51}-\eqref{bc55} that
\begin{eqnarray}\label{bcb58}
I_{3}\leq\epsilon\int_{0}^{\infty}\frac{v^{2}_{x}}{v^{2}}dx +C\left(\epsilon\right)+C\left\|\theta\right\|_{\infty}
+C\int_{0}^{t}\left\|\frac{u}{r}\right\|^{2}_{L^{\infty}\left([0,\infty)\right)}\int_{0}^{\infty}\frac{v^{2}_{x}}{v^{2}}dxds.
\end{eqnarray}

Inserting the above estimates on $I_k (k=1,2,3)$ into \eqref{b45} and then using \eqref{cc16} and Gronwall's inequality, we can deduce \eqref{bz57}. This completes the proof of Lemma 3.7.
\end{proof}

The next lemma is concerned with the bound on the term $\int_{0}^{t}\int_{0}^{\infty}\frac{r^{2(n-1)}u^{2}_{xx}}{v}dxds$.
\begin{lemma} Under the assumptions listed in Lemma 2.1, we have for all $0\leq t\leq T$ that
\begin{eqnarray}\label{c1}
\int_{0}^{\infty}u_{x}^{2}dx+\int_{0}^{t}\int_{0}^{\infty}\frac{r^{2(n-1)}u^{2}_{xx}}{v}dxds\leq C+C\left\|\theta\right\|^{l_{2}}_{\infty},
\end{eqnarray}
where
\begin{eqnarray}\label{cc1}
l_{2}=\max\left\{2l_{1}+1, \left(8-b\right)_{+}\right\}.
\end{eqnarray}
\end{lemma}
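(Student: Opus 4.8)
\textit{Plan.} The estimate is of standard ``$\|u_x\|$ plus second--order dissipation'' type, so the plan is to test the momentum equation $\eqref{a20}_2$ against $u_{xx}$. First I would rewrite $\eqref{a20}_2$, using $r_x=r^{1-n}v$ (hence $(r^{n-1})_x=(n-1)r^{-1}v$) and $\eqref{bbb44}$, in the form
\[
u_t=\frac{\alpha r^{2(n-1)}u_{xx}}{v}+2\alpha(n-1)r^{n-2}u_x-\frac{\alpha(n-1)uv}{r^2}-\frac{\alpha r^{2(n-1)}u_xv_x}{v^2}-r^{n-1}\left(\frac{R\theta_x}{v}-\frac{R\theta v_x}{v^2}+\frac{4a}{3}\theta^3\theta_x\right).
\]
Multiplying by $u_{xx}$ and integrating over $[0,\infty)$, one integration by parts gives $\int_0^\infty u_tu_{xx}\,dx=-\tfrac12\tfrac{d}{dt}\|u_x\|^2$, the boundary contributions vanishing because $u(t,0)=0$ forces $u_t(t,0)=0$ and $u_x,u_t\to0$ as $x\to\infty$; hence
\[
\frac12\frac{d}{dt}\|u_x\|^2+\alpha\int_0^\infty\frac{r^{2(n-1)}u_{xx}^2}{v}\,dx=-\int_0^\infty u_{xx}\,\mathcal R\,dx,
\]
where $\mathcal R$ collects the four lower--order terms above. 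Integrating in $t$ and using $\|u_{0x}\|\le C$, it then remains to bound $\int_0^t\!\int_0^\infty|u_{xx}\mathcal R|\,dxds$ by $\tfrac{\alpha}{2}\int_0^t\!\int_0^\infty\frac{r^{2(n-1)}u_{xx}^2}{v}\,dxds+C+C\|\theta\|_\infty^{l_2}$.

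For each piece of $\mathcal R$ I would use $|u_{xx}M|\le\frac{\epsilon r^{2(n-1)}}{v}u_{xx}^2+C(\epsilon)\frac{v}{r^{2(n-1)}}M^2$ and then exploit $r\ge1$ ($\eqref{a25}$), $\underline{V}\le v\le\overline{V}$ (Lemma~3.4), and the dissipation rates in $\eqref{bb13}$. With $M=2\alpha(n-1)r^{n-2}u_x$ one is left with $\int\frac{vu_x^2}{r^2}dx\le C\|\theta\|_\infty\int\frac{r^{2(n-1)}u_x^2}{v\theta}dx$; with $M=\frac{\alpha(n-1)uv}{r^2}$ one is left with $\int\frac{v^3u^2}{r^{2n+2}}dx\le C\|u/r\|_{L^\infty([0,\infty))}^2$, integrable in time by $\eqref{cc16}$; with $M=r^{n-1}R\theta_x/v$ one gets $\int\frac{\theta_x^2}{v}dx\le C\int\frac{\kappa(r^{n-1}\theta_x)^2}{v\theta^2}dx$ since $b>\frac{19}{4}>2$; with $M=\frac{4a}{3}r^{n-1}\theta^3\theta_x$ one gets $\int v\theta^6\theta_x^2\,dx\le C\bigl(1+\|\theta\|_\infty^{(8-b)_+}\bigr)\int\frac{\kappa(r^{n-1}\theta_x)^2}{v\theta^2}dx$ (using $\kappa\ge\max\{\kappa_1,\kappa_2\theta^b/v\}$), which after time integration and $\eqref{bb13}$ is the source of the exponent $(8-b)_+$; and $M=r^{n-1}R\theta v_x/v^2$ leaves $\int\frac{\theta^2v_x^2}{v^3}dx\le C\|\theta\|_\infty\int\theta v_x^2\,dx$, whose time integral is $\le C\|\theta\|_\infty(1+\|\theta\|_\infty^{l_1})$ by Lemma~3.7. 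Altogether these terms contribute at most $C+C\|\theta\|_\infty^{\max\{l_1+1,\,(8-b)_+\}}$.

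The genuinely delicate term is $M=\frac{\alpha r^{2(n-1)}u_xv_x}{v^2}$: here the weight $r^{2(n-1)}$ cannot be discarded since $r(t,x)\to\infty$, and no pointwise--in--$t$ bound on $\|r^{n-1}u_x\|$ is available at this stage. After Cauchy--Schwarz it leaves $C\int r^{2(n-1)}u_x^2v_x^2\,dx\le C\|r^{n-1}u_x\|_{L^\infty}^2\|v_x\|^2$, and I would then invoke the one--dimensional Gagliardo--Nirenberg inequality applied to $r^{n-1}u_x\in H^1([0,\infty))$, namely $\|r^{n-1}u_x\|_{L^\infty}^2\le C\|r^{n-1}u_x\|\,\|\partial_x(r^{n-1}u_x)\|\le C\|r^{n-1}u_x\|^2+C\|r^{n-1}u_x\|\,\|r^{n-1}u_{xx}\|$. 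Young's inequality peels off a term $\delta\|r^{n-1}u_{xx}\|^2\le\delta\overline{V}\int\frac{r^{2(n-1)}u_{xx}^2}{v}dx$, absorbable into the left side because $\delta$ and $\overline{V}$ are fixed and independent of $\|\theta\|_\infty$, leaving $C\|r^{n-1}u_x\|^2\|v_x\|^4$. Finally $\|v_x(s)\|^4\le C+C\|\theta\|_\infty^{2l_1}$ by Lemma~3.7, while $\int_0^t\|r^{n-1}u_x(s)\|^2ds=\int_0^t\!\int_0^\infty\frac{r^{2(n-1)}u_x^2}{v\theta}\,v\theta\,dxds\le C\|\theta\|_\infty$ by $\eqref{bb13}$, so this term contributes $C\|\theta\|_\infty+C\|\theta\|_\infty^{2l_1+1}$. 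Collecting all contributions yields the exponent $\max\{2l_1+1,(8-b)_+\}=l_2$, and hence $\eqref{c1}$. The main obstacle is precisely this $r^{2(n-1)}u_xv_x$ term: arranging the Gagliardo--Nirenberg absorption so that \emph{no} power of $\|\theta\|_\infty$ multiplies the $u_{xx}$--dissipation, while still extracting only the exponent $2l_1+1$ from the $v_x$--bound of Lemma~3.7, is the crux of the argument.
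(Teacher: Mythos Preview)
Your proposal is correct and follows essentially the same route as the paper: multiply $\eqref{a20}_2$ by $-u_{xx}$, integrate, absorb the second-order dissipation, and bound the critical term $\int_0^t\!\int_0^\infty r^{2(n-1)}u_x^2v_x^2\,dx\,ds$ via $\|r^{n-1}u_x\|_{L^\infty}^2\|v_x\|^2$ together with the one-dimensional Sobolev inequality and Lemma~3.7. The only organizational difference is that the paper first splits $\|v_x\|^2\le C+C\|\theta\|_\infty^{l_1}$ into two pieces $K_1,K_2$ and absorbs the $u_{xx}$-dissipation separately for each, whereas you apply Young once to $\|r^{n-1}u_x\|\|r^{n-1}u_{xx}\|\,\|v_x\|^2$ and then invoke $\|v_x\|^4\le C+C\|\theta\|_\infty^{2l_1}$; both routes produce the exponent $2l_1+1$ and are equivalent.
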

\begin{proof} Firstly, we have by multiplying $\eqref{a20}_{2}$ by $-u_{xx}$ that
\begin{eqnarray}\label{c2}
&&\frac{1}{2}\partial_{t}\left(u_{x}^{2}\right)+\frac{\alpha r^{2(n-1)}u^{2}_{xx}}{v}\nonumber\\
&=&\left(u_{x}u_{t}\right)_{x}+\alpha u_{xx}\left(\frac{r^{2(n-1)}u_{x}v_{x}}{v^{2}}+(n-1)\frac{uv}{r^{2}}-2(n-1)r^{n-2}u_{x}\right)\\
&&+Ru_{xx}r^{n-1}\left(\frac{\theta_{x}}{v}-\frac{\theta v_{x}}{v^{2}}\right)+u_{xx}r^{n-1}\cdot\frac{4}{3}a\theta^{3}\theta_{x}.\nonumber
\end{eqnarray}

Integrating \eqref{c2} with respect to $t$ and $x$ over $\left[0,t\right)\times[0,\infty)$, we can get by using the boundary conditions \eqref{a22} and Cauchy's inequality that
\begin{eqnarray}\label{c3}
&&\frac{1}{2}\int_{0}^{\infty}u_{x}^{2}dx+\int_{0}^{t}\int_{0}^{\infty}\frac{\alpha r^{2(n-1)}u^{2}_{xx}}{v}dxds\nonumber\\
&\leq& C+\frac{1}{6}\int_{0}^{t}\int_{0}^{\infty}\frac{\alpha r^{2(n-1)}u^{2}_{xx}}{v}dxds\nonumber\\
&&+C\int_{0}^{t}\int_{0}^{\infty}\left(r^{2(n-1)}u_{x}^{2}v_{x}^{2}+\frac{u^{2}}{r^{2(n+1)}}+\frac{u^{2}_{x}}{r^{2}}+\theta^{2}_{x} +\theta^{2}v_{x}^{2}+\theta^{6}\theta^{2}_{x}\right)dxds\\
&\leq& C+\frac{1}{6}\int_{0}^{t}\int_{0}^{\infty}\frac{\alpha r^{2(n-1)}u^{2}_{xx}}{v}dxds+C\left\|\theta\right\|_{\infty}+C\left\|\theta\right\|^{1+l_{1}}_{\infty}
\nonumber\\
&&+C\left\|\theta\right\|^{(8-b)_{+}}_{\infty} +C\int_{0}^{t}\int_{0}^{\infty}r^{2(n-1)}u_{x}^{2}v_{x}^{2}dxds.\nonumber
\end{eqnarray}
On the other hand, one can infer from \eqref{bz57} that
\begin{eqnarray}\label{c4}
&&\int_{0}^{t}\int_{0}^{\infty}r^{2(n-1)}u_{x}^{2}v_{x}^{2}dxds\nonumber\\
&\leq&\int_{0}^{t}\left\|r^{n-1}u_{x}\right\|^{2}_{L^{\infty}\left([0,\infty)\right)}\left\|v_{x}\right\|^{2}ds\\
&\leq & C\int_{0}^{t}\left\|r^{n-1}u_{x}\right\|^{2}_{L^{\infty}\left([0,\infty)\right)}ds +C\left\|\theta\right\|^{l_{1}}_{\infty}\int_{0}^{t} \left\|r^{n-1}u_{x}\right\|^{2}_{L^{\infty}\left([0,\infty)\right)}ds\nonumber\\
&\leq& \underbrace{C\int_{0}^{t}\left\|r^{n-1}u_{x}\right\| \left\|\left(r^{n-1}u_{x}\right)_{x}\right\|ds}_{K_{1}}
+\underbrace{C\left\|\theta\right\|^{l_{1}}_{\infty} \int_{0}^{t}\left\|r^{n-1}u_{x}\right\|\left\|\left(r^{n-1}u_{x}\right)_{x}\right\|ds}_{K_{2}}.\nonumber
\end{eqnarray}
To control $K_i (i=1,2)$, noticing
\begin{eqnarray}\label{c5}
\left(r^{n-1}u_{x}\right)_{x}=\left(n-1\right)r^{-1}vu_{x}+r^{n-1}u_{xx},
\end{eqnarray}
we can deduce that
\begin{eqnarray}\label{c6}
\int_{0}^{t}\int_{0}^{\infty}\left(r^{n-1}u_{x}\right)_{x}^{2}dxds&&\leq C\int_{0}^{t}\int_{0}^{\infty}\left(\frac{\alpha r^{2(n-1)}u^{2}_{xx}}{v}+r^{2(n-1)}u^{2}_{x}\right)dxds,
\end{eqnarray}
from which one can deduce that
\begin{eqnarray}\label{c7}
K_{1}&&\leq\frac{1}{6}\int_{0}^{t}\int_{0}^{\infty}\frac{\alpha r^{2(n-1)}u^{2}_{xx}}{v}dxds
+C\int_{0}^{t}\int_{0}^{\infty}r^{2(n-1)}u^{2}_{x}dxds\nonumber\\
&&\leq\frac{1}{6}\int_{0}^{t}\int_{0}^{\infty}\frac{\alpha r^{2(n-1)}u^{2}_{xx}}{v}dxds+C\left\|\theta\right\|_{\infty}
\end{eqnarray}
and
\begin{eqnarray}\label{c8}
K_{2}&\leq&\frac{1}{6}\int_{0}^{t}\int_{0}^{\infty}\frac{\alpha r^{2(n-1)}u^{2}_{xx}}{v}dxds +C\left(1+\left\|\theta\right\|^{2l_{1}}_{\infty}\right) \int_{0}^{t}\int_{0}^{\infty}r^{2(n-1)}u^{2}_{x}dxds\nonumber\\
&\leq&\frac{1}{6}\int_{0}^{t}\int_{0}^{\infty}\frac{\alpha r^{2(n-1)}u^{2}_{xx}}{v}dxds +C\left(1+\left\|\theta\right\|^{2l_{1}+1}_{\infty}\right).
\end{eqnarray}
Thus we have from \eqref{c4}-\eqref{c8} that
\begin{eqnarray}\label{c9}
\int_{0}^{t}\int_{0}^{\infty}r^{2(n-1)}u_{x}^{2}v_{x}^{2}dxds \leq\frac{1}{3}\int_{0}^{t}\int_{0}^{\infty}\frac{\alpha r^{2(n-1)}u_{xx}^{2}}{v}dxds+C\left(1+\left\|\theta\right\|^{2l_{1}+1}_{\infty}\right),
\end{eqnarray}
and \eqref{c1} follows from \eqref{c3} and \eqref{c9}. This completes the proof of Lemma 3.8.
\end{proof}

Now we establish the bounds for $\left\|u\right\|_{L^{\infty}\left([0,\infty)\right)}$ and $\int_{0}^{t}\left\|r^{n-1}u_{x}\right\|^{2}_{L^{\infty}\left([0,\infty)\right)}ds$, which will be frequently used later on.
\begin{lemma} Under the assumptions listed in Lemma 2.1, we have for all $0\leq t\leq T$ that
\begin{eqnarray}\label{c15}
\left\|u\right\|_{L^{\infty}\left([0,\infty)\right)}\leq C+C\left\|\theta\right\|^{\frac{l_{2}}{4}}_{\infty}
\end{eqnarray}
and
\begin{eqnarray}\label{c17}
 \int_{0}^{t}\left\|r^{n-1}u_{x}\right\|^{2}_{L^{\infty}\left([0,\infty)\right)}ds\leq C+C\left\|\theta\right\|^{l_{2}}_{\infty}.
\end{eqnarray}
\end{lemma}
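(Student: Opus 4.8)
The plan is to derive \eqref{c15} from the already-established bound \eqref{c1} on $\|u_x(t)\|^2$ together with a Sobolev-type inequality on the half-line, and then to bootstrap to \eqref{c17} by differentiating the momentum equation and estimating the resulting $L^2$-in-time quantity. First I would prove \eqref{c15}. Since $u(t,0)=0$ by the boundary condition \eqref{a22}, I can write $u(t,x)=\int_0^x u_x(t,y)\,dy$, but this is not directly bounded; instead I would use the standard interpolation $\|u(t)\|_{L^\infty([0,\infty))}^2\le C\|u(t)\|_{L^2([0,\infty))}\|u_x(t)\|_{L^2([0,\infty))}$, valid because $u(t)-0$ decays at $x=\infty$ (recall $u\to0$ there). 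The factor $\|u(t)\|_{L^2}$ is controlled by a constant via the basic energy estimate \eqref{bb13} (the $\tfrac12 u^2$ term), while $\|u_x(t)\|_{L^2}^2\le C+C\|\theta\|_\infty^{l_2}$ by \eqref{c1}. Hence $\|u(t)\|_{L^\infty}^2\le C(1+\|\theta\|_\infty^{l_2})^{1/2}\le C+C\|\theta\|_\infty^{l_2/2}$, which upon taking square roots gives $\|u(t)\|_{L^\infty}\le C+C\|\theta\|_\infty^{l_2/4}$, i.e. \eqref{c15}.

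For \eqref{c17}, the aim is to bound $\int_0^t\|r^{n-1}u_x(s)\|_{L^\infty}^2\,ds$. I would again use the interpolation inequality, this time in the form
\begin{eqnarray*}
\left\|r^{n-1}u_x(s)\right\|_{L^\infty([0,\infty))}^2\le C\left\|r^{n-1}u_x(s)\right\|_{L^2([0,\infty))}\left\|\left(r^{n-1}u_x(s)\right)_x\right\|_{L^2([0,\infty))},
\end{eqnarray*}
using that $r^{n-1}u_x$ is in $H^1$ and vanishes at spatial infinity (which follows from the class $X(0,T;\cdots)$). Then by Cauchy--Schwarz in the time integral,
\begin{eqnarray*}
\int_0^t\left\|r^{n-1}u_x(s)\right\|_{L^\infty}^2\,ds\le C\left(\int_0^t\left\|r^{n-1}u_x(s)\right\|^2\,ds\right)^{1/2}\left(\int_0^t\left\|\left(r^{n-1}u_x(s)\right)_x\right\|^2\,ds\right)^{1/2}.
\end{eqnarray*}
The first factor is bounded by a constant via \eqref{bb13} (the $r^{2(n-1)}u_x^2/(v\theta)$ term, combined with the lower bound on $v$ from \eqref{b43} and the crude temperature bound — actually one should be careful: $\int_0^t\|r^{n-1}u_x\|^2\,ds$ needs $\theta$ bounded below, but more simply one uses $\int_0^t\int_0^\infty r^{2(n-1)}u_x^2\,dxds\le C\|\theta\|_\infty$, which already appeared in \eqref{c7}). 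The second factor is controlled by \eqref{c6}, namely $\int_0^t\|(r^{n-1}u_x)_x\|^2\,ds\le C\int_0^t\int_0^\infty(\alpha r^{2(n-1)}u_{xx}^2/v+r^{2(n-1)}u_x^2)\,dxds\le C+C\|\theta\|_\infty^{l_2}$ by \eqref{c1}. Multiplying the bounds $C\|\theta\|_\infty$ and $C+C\|\theta\|_\infty^{l_2}$ and taking square roots gives $\int_0^t\|r^{n-1}u_x(s)\|_{L^\infty}^2\,ds\le C(\|\theta\|_\infty(1+\|\theta\|_\infty^{l_2}))^{1/2}\le C+C\|\theta\|_\infty^{(l_2+1)/2}$. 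Since $l_2=\max\{2l_1+1,(8-b)_+\}\ge 3$ (as $l_1\ge1$), we have $(l_2+1)/2\le l_2$, so the right-hand side is bounded by $C+C\|\theta\|_\infty^{l_2}$, which is \eqref{c17}.

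The main obstacle I anticipate is bookkeeping the exponents rather than any deep analytic difficulty: one must check that the power of $\|\theta\|_\infty$ produced by combining the interpolation estimate with \eqref{c1} and the energy estimates does not exceed $l_2$, and that all the integrals $\int_0^t\int_0^\infty r^{2(n-1)}u_x^2\,dxds$, $\int_0^t\int_0^\infty r^{2(n-1)}u_{xx}^2/v\,dxds$ are handled with the already-proven bounds and the uniform bounds on $v$ from \eqref{b43}. A secondary point requiring care is justifying the half-line interpolation inequality and the vanishing of the relevant quantities as $x\to\infty$, which is licensed by the regularity class $X(0,T;M_1,M_2;N_1,N_2)$ the solution is assumed to lie in; once these are in place the argument is a direct chain of Cauchy--Schwarz applications.
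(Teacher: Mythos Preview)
Your proof is correct and follows essentially the same approach as the paper. For \eqref{c15} your argument is identical to the paper's; for \eqref{c17} the paper uses Young's inequality on $\int_0^t\|r^{n-1}u_x\|\|(r^{n-1}u_x)_x\|\,ds$ (obtaining a sum, as in \eqref{c7}) rather than Cauchy--Schwarz in time (obtaining a product), but this is a cosmetic difference and your exponent bookkeeping $(l_2+1)/2\le l_2$ closes the argument just as well.
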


\begin{proof} Firstly, we have from \eqref{bb13} and \eqref{c1} that
\begin{eqnarray*}
\left\|u\right\|_{L^{\infty}\left([0,\infty)\right)}^{2}\leq C\left\|u\right\|\left\|u_{x}\right\|\leq C\left\|u_{x}\right\|\leq C+C\left\|\theta\right\|^{\frac{l_{2}}{2}}_{\infty},
\end{eqnarray*}
from which \eqref{c15} follows immediately.

As to the proof of \eqref{c17}, one can conclude from \eqref{c1}, \eqref{c4} and \eqref{c7} that
\begin{eqnarray}\label{c22}
\int_{0}^{t}\left\|r^{n-1}u_{x}\right\|^{2}_{L^{\infty}\left([0,\infty)\right)}ds&&\leq C\int_{0}^{t}\left\|r^{n-1}u_{x}\right\| \left\|\left(r^{n-1}u_{x}\right)_{x}\right\|ds\nonumber\\
&&\leq C\int_{0}^{t}\int_{\Omega}\frac{\alpha r^{2(n-1)}u^{2}_{xx}}{v}dxds+C\left\|\theta\right\|_{\infty}\nonumber\\
&&\leq C+C\left\|\theta\right\|_{\infty}^{l_{2}}.\nonumber
\end{eqnarray}
This is \eqref{c17} and thus completes the proof of Lemma 3.9.
\end{proof}
The next lemma aims to derive the bound on the term $\int_{0}^{\infty}\frac{r^{2(n-1)}v^{2}_{x}}{v^{2}}dx$.
\begin{lemma} Under the assumptions listed in Lemma 2.1, we have for all $0\leq t\leq T$ that
\begin{eqnarray}\label{c23}
\int_{0}^{\infty}\frac{r^{2(n-1)}v^{2}_{x}}{v^{2}}dx+\int_{0}^{t}\int_{0}^{\infty}\frac{r^{2(n-1)}\theta v^{2}_{x}}{v^{3}}dxds\leq C+C\left\|\theta\right\|^{l_{3}}_{\infty},
\end{eqnarray}
where
\begin{eqnarray}\label{cb24}
l_{3}=\max\left\{1, \frac{l_{2}}{2}, \left(7-b\right)_{+}\right\}.
\end{eqnarray}
\end{lemma}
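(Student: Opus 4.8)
The plan is to multiply the equation $\eqref{bd44}$, i.e.\ $r^{1-n}u_t+P_x=\alpha(v_x/v)_t$, by the weight $r^{2(n-1)}v_x/v$ and integrate over $[0,t)\times[0,\infty)$, exactly as in the proof of Lemma 3.7 but with the extra geometric weight $r^{2(n-1)}$. The time-derivative term produces $\tfrac{\alpha}{2}\int r^{2(n-1)}v_x^2/v^2\,dx$ up to commutator terms coming from $\partial_t(r^{2(n-1)})=2(n-1)r^{2n-3}u$, which by \eqref{a23} and \eqref{a25} are controlled by $\int r^{n-1}v_x^2/v^2$ times $\|u/r\|_{L^\infty}$-type quantities already estimated in \eqref{cc16}. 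The pressure term $P_x$, expanded via \eqref{bbb44}, gives the good dissipative term $\int\!\int r^{2(n-1)}\theta v_x^2/v^3$ together with error terms $\int\!\int r^{2(n-1)}\theta_x v_x/v^2$ and $\int\!\int r^{2(n-1)}\theta^3\theta_x v_x/v$; these are handled by Cauchy's inequality, absorbing $\epsilon\int\!\int r^{2(n-1)}\theta v_x^2/v^3$ into the left side and bounding the remainders using the basic energy estimate \eqref{bb13} (which controls $\int\!\int \kappa(r^{n-1}\theta_x)^2/(v\theta^2)$) and the crude pointwise bound $\theta^{2m}\le C(1+\theta^{b+1})$ from Lemma 3.2 — this is where the exponents $(7-b)_+$ and the term $l_3=\max\{1,l_2/2,(7-b)_+\}$ enter, with the factor $\theta^{l_2/2}$ arising from $\|u_x\|_\infty^2\le C\|u_x\|\|u_{xx}\|$-type estimates via Lemma 3.8.

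The genuinely new term, compared with Lemma 3.7, is the one coming from $r^{1-n}u_t$: integrating by parts in $t$ turns $\int\!\int r^{2(n-1)}\cdot r^{1-n}u_t\,v_x/v = \int\!\int r^{n-1}u_t v_x/v$ into $\int r^{n-1}u v_x/v\big|_0^t$ minus $\int\!\int r^{n-1}u\,(v_x/v)_t$ plus lower-order terms with $\partial_t r^{n-1}$. The first boundary term is bounded by $\epsilon\int r^{2(n-1)}v_x^2/v^2+C(\epsilon)\int u^2\,dx\le \epsilon\int r^{2(n-1)}v_x^2/v^2+C$ using \eqref{bb13}; the commutator term is treated exactly as in \eqref{b53} using $v_t=(r^{n-1}u)_x$ from $\eqref{a20}_1$, rewriting $(v_x/v)_t=(v_t/v)_x$, integrating by parts in $x$, and invoking \eqref{bb13} to get a bound of order $C\|\theta\|_\infty$; the leftover pieces involve $r^{2(n-1)}u_x^2v_x^2$-type quantities and the $u^2/r^2$ factors, which are estimated by $\int_0^t\|r^{n-1}u_x\|_{L^\infty}^2\,ds\le C+C\|\theta\|_\infty^{l_2}$ from Lemma 3.9 (combined with Lemma 3.7) and by \eqref{cc16}.

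Assembling all the estimates, one arrives at a differential inequality of the form
\begin{eqnarray*}
\int_0^\infty\frac{r^{2(n-1)}v_x^2}{v^2}dx+\int_0^t\!\!\int_0^\infty\frac{r^{2(n-1)}\theta v_x^2}{v^3}dxds
\le C+C\|\theta\|_\infty^{l_3}+C\int_0^t\left\|\frac{u}{r}\right\|_{L^\infty([0,\infty))}^2\int_0^\infty\frac{r^{2(n-1)}v_x^2}{v^2}dx\,ds,
\end{eqnarray*}
and the conclusion \eqref{c23} follows by Gronwall's inequality together with \eqref{cc16}, after using Lemma 3.5 (the bounds $\underline V\le v\le\overline V$) to replace $v_x^2/v^2$ by $v_x^2$ up to constants. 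I expect the main obstacle to be bookkeeping the power of $\|\theta\|_\infty$: one must be careful that every nonlinear term involving high powers of $\theta$ is paired against the dissipation $\kappa(r^{n-1}\theta_x)^2/(v\theta^2)$ from \eqref{bb13} so that only $\theta^{(7-b)_+}$ and $\theta^{l_2/2}$ (not worse) appear, which forces the restriction $b>19/4$ to ultimately close the estimates downstream; within this lemma itself the delicate point is ensuring the $r$-weights in the new $u_t$-term do not generate a term with a strictly higher power of $r$ than the dissipation can absorb, which is exactly why the reduction $r^{2(n-1)}\cdot r^{1-n}=r^{n-1}$ and the bound \eqref{cc16} (valid for all $n\ge2$) are essential.
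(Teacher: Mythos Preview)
Your approach is essentially the same as the paper's: the paper first rewrites $\eqref{a20}_2$ as
\[
\alpha\left(\frac{r^{n-1}v_x}{v}\right)_t+\frac{Rr^{n-1}\theta v_x}{v^2}=u_t+\frac{Rr^{n-1}\theta_x}{v}-\alpha(n-1)r^{n-2}u\frac{v_x}{v}+\frac{4a}{3}r^{n-1}\theta^3\theta_x,
\]
and multiplies by $r^{n-1}v_x/v$, which is algebraically the same as your multiplication of \eqref{bd44} by $r^{2(n-1)}v_x/v$ (the commutator you write out is precisely the $-\alpha(n-1)r^{n-2}u\,v_x/v$ term above). The $u_t$ term is then handled by integration by parts in $t$ exactly as you describe, the $\theta^3\theta_x$ term gives $(7-b)_+$, and Gronwall with \eqref{cc16} closes the estimate.

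One point of imprecision: you attribute the contribution $\|\theta\|_\infty^{l_2/2}$ to ``$r^{2(n-1)}u_x^2v_x^2$-type quantities'' bounded via $\int_0^t\|r^{n-1}u_x\|_{L^\infty}^2\,ds\le C+C\|\theta\|_\infty^{l_2}$. No such term actually arises here, and that route would give the worse exponent $l_2$ rather than $l_2/2$. In the paper (and in your own computation, if carried out carefully) the relevant leftover from the $u_t$ integration by parts is $(n-1)\int_0^t\!\int_0^\infty u^2 r^{n-2}v_x/v\,dxds$; after Cauchy this is controlled by $\sup_t\|u(t)\|_{L^\infty}^2\cdot\int_0^t\!\int_0^\infty \frac{u^2v}{r^2\theta}\,dxds$, and it is the pointwise bound $\|u\|_{L^\infty}\le C+C\|\theta\|_\infty^{l_2/4}$ from Lemma~3.9 (estimate \eqref{c15}) that produces the $l_2/2$ in $l_3$. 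With that correction, your argument matches the paper's.
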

\begin{proof}
Noticing that $\eqref{a20}_{2}$ can be rewritten as
\begin{eqnarray}\label{c24}
\alpha\left(\frac{r^{n-1}v_{x}}{v}\right)_{t}+\frac{Rr^{n-1}\theta v_{x}}{v^{2}}=u_{t}+\frac{Rr^{n-1}\theta_{x}}{v}
-\alpha(n-1)r^{n-2}u\frac{v_{x}}{v}+\frac{4a}{3}r^{n-1}\theta^{3}\theta_{x}.
\end{eqnarray}

Multiplying \eqref{c24} by $\frac{r^{n-1}v_{x}}{v}$ and integrating the resulting identity with respect to $t$ and $x$ over $[0,t)\times[0,\infty)$, we get by using Cauchy's inequality that
\begin{eqnarray}\label{c25}
&&\frac{\alpha}{2}\displaystyle\int_{0}^{\infty}\frac{r^{2(n-1)}v^{2}_{x}}{v^{2}}dx+\int_{0}^{t}\int_{0}^{\infty}\frac{Rr^{2(n-1)}\theta v^{2}_{x}}{v^{3}}dxds\\
&\leq& C+\frac{1}{8}\int_{0}^{t}\int_{0}^{\infty}\frac{Rr^{2(n-1)}\theta v^{2}_{x}}{v^{3}}dxds+C\int_{0}^{t}\int_{0}^{\infty}\frac{r^{2(n-1)}\theta^{2}_{x}}{v\theta}dxds
+\int_{0}^{t}\int_{0}^{\infty}\frac{u_{t}r^{n-1}v_{x}}{v}dxds\nonumber\\
&&-\alpha(n-1)\int_{0}^{t}\int_{0}^{\infty}\frac{r^{2(n-1)-1}uv^{2}_{x}}{v^{2}}dxds
+\int_{0}^{t}\int_{0}^{\infty}\frac{4ar^{2(n-1)}\theta^{3}v_{x}\theta_{x}}{3v}dxds.\nonumber
\end{eqnarray}

The terms in the right hand side of \eqref{c25} will be estimated below. Firstly, for the third term in the right hand side of \eqref{c25}, it is easy to see that
\begin{eqnarray}\label{c26}
\int_{0}^{t}\int_{0}^{\infty}\frac{r^{2(n-1)}\theta^{2}_{x}}{v\theta}dxds\leq C\int_{0}^{t}\int_{0}^{\infty}\frac{\kappa r^{2(n-1)}\theta^{2}_{x}}{v\theta^{2}}\cdot\frac{\theta}{1+\theta^{b}}dxds\leq C,
\end{eqnarray}
As for the fourth term in the right hand side of \eqref{c25}, one can deduce from \eqref{c15} that
\begin{eqnarray}\label{c27}
&&\int_{0}^{t}\int_{0}^{\infty}\frac{u_{t}r^{n-1}v_{x}}{v}dxds\nonumber\\
&=&\int_{0}^{\infty}\frac{ur^{n-1}v_{x}}{v}dx
-\int_{0}^{\infty}\frac{ur^{n-1}v_{x}}{v}(0,x)dx -(n-1)\int_{0}^{t}\int_{0}^{\infty}\frac{u^{2}r^{n-2}v_{x}}{v}dxds\nonumber\\
&&+\int_{0}^{t}\int_{0}^{\infty}\frac{\left|\left(r^{n-1}u\right)_{x}\right|^{2}}{v}dxds\\
&\leq& C+\frac{\alpha}{4}\int_{0}^{\infty}\frac{r^{2(n-1)}v^{2}_{x}}{v^{2}}dx+\frac{1}{8}\int_{0}^{t}\int_{0}^{\infty}\frac{Rr^{2(n-1)}\theta v^{2}_{x}}{v^{3}}dxds+C\left\|\theta\right\|_{\infty}\nonumber\\
&&+C\int_{0}^{t}\left\|u\right\|_{L^{\infty}\left([0,\infty)\right)}^{2}\int_{0}^{\infty}\frac{u^{2}v}{r^{2}\theta}dxds\nonumber\\
&\leq& C+\frac{\alpha}{4}\int\frac{r^{2(n-1)}v^{2}_{x}}{v^{2}}dx+\frac{1}{8}\int_{0}^{t}\int_{0}^{\infty}\frac{Rr^{2(n-1)}\theta v^{2}_{x}}{v^{3}}dxds+C\left\|\theta\right\|_{\infty}+C\left\|\theta\right\|_{\infty}^{\frac{l_{2}}{2}}.\nonumber
\end{eqnarray}

Moreover, by exploiting the Cauchy inequality, the fifth term in the right hand side of \eqref{c25} can be estimates as follows
\begin{eqnarray}\label{cc28}
&&-\alpha(n-1)\int_{0}^{t}\int_{0}^{\infty}\frac{r^{2(n-1)-1}uv^{2}_{x}}{v^{2}}dxds\nonumber\\
&\leq&\frac{1}{8}\int_{0}^{t}\int_{0}^{\infty}\frac{Rr^{2(n-1)}\theta v^{2}_{x}}{v^{^{3}}}dxds
+C\int_{0}^{t}V(s)\int_{0}^{\infty}\frac{r^{2(n-1)}v^{2}_{x}}{v^{2}}dxds\\
&&+C\int_{0}^{t}\left\|\frac{u}{r}\right\|^{2}_{L^{\infty}\left([0,\infty)\right)}\int\frac{r^{2(n-1)}v^{2}_{x}}{v^{2}}dxds.\nonumber
\end{eqnarray}
Here we have used \eqref{b19} with $m=\frac{1}{2}$.

Finally, the last term in the right hand side of \eqref{c25} can be controlled as in the following
\begin{eqnarray}\label{c29}
&&\int_{0}^{t}\int_{0}^{\infty}\frac{4ar^{2(n-1)}\theta^{3}v_{x}\theta_{x}}{3v}dxds\nonumber\\
&\leq&\frac{1}{8}\int_{0}^{t}\int_{0}^{\infty}\frac{Rr^{2(n-1)}\theta v^{2}_{x}}{v^{3}}dxds+C\int_{0}^{t}\int_{0}^{\infty}r^{2(n-1)}\theta^{5}\theta_{x}^{2}dxds\\
&\leq&\frac{1}{8}\int_{0}^{t}\int_{0}^{\infty}\frac{Rr^{2(n-1)}\theta v^{2}_{x}}{v^{3}}dxds +C\int_{0}^{t}\int_{0}^{\infty}\frac{\kappa\left(r^{n-1}\theta_{x}\right)^{2}}{v\theta^{2}}\cdot\frac{\theta^{7}}{1+\theta^{b}}dxds\nonumber\\
&\leq&\frac{1}{8}\int_{0}^{t}\int_{0}^{\infty}\frac{Rr^{2(n-1)}\theta v^{2}_{x}}{v^{3}}dxds+C\left\|\theta\right\|_{\infty}^{(7-b)_{+}}.\nonumber
\end{eqnarray}
Thus we can conclude \eqref{c23} from \eqref{cc16}, \eqref{c25}-\eqref{c29} and Gronwall's inequality. This completes the proof of Lemma 3.10.
\end{proof}

The next lemma aims to estimate the term $\int_{0}^{\infty}\frac{r^{2(n-1)}u^{2}_{x}}{v^{2}}dx$.
\begin{lemma} Under the assumptions listed in Lemma 2.1, we have for all $0\leq t\leq T$ that
\begin{eqnarray}\label{c31}
\int_{0}^{\infty} r^{2(n-1)}u^{2}_{x}dx+\int_{0}^{t}\int_{0}^{\infty}\frac{r^{4(n-1)}u^{2}_{xx}}{v}dxds\leq C+C\left\|\theta\right\|^{l_{2}+l_{3}}_{\infty}.
\end{eqnarray}
\end{lemma}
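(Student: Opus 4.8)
The plan is to establish a weighted analogue of the energy identity behind Lemma 3.8: there the multiplier $-u_{xx}$ was used, and here I would multiply $\eqref{a20}_{2}$ by $-(r^{2(n-1)}u_x)_x$ and integrate over $[0,t)\times[0,\infty)$. On the left-hand side, integration by parts in $x$ together with the boundary conditions $\eqref{a22}$ (which give $u(t,0)=0$, hence $u_t(t,0)=0$) and the decay at infinity built into the function class $X(0,T;M_1,M_2;N_1,N_2)$ produces $\tfrac12\,\partial_t\!\int_0^\infty r^{2(n-1)}u_x^2\,dx$ together with the transport term $(n-1)\int_0^\infty r^{2n-3}u\,u_x^2\,dx$ coming from $\partial_t r^{2(n-1)}=2(n-1)r^{2n-3}u$. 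On the right-hand side I would expand both $r^{n-1}\bigl(\tfrac{\alpha(r^{n-1}u)_x}{v}-P\bigr)_x$ and $(r^{2(n-1)}u_x)_x$ pointwise, using $\eqref{c5}$; the product of the two leading terms is the dissipation $\alpha\int_0^\infty\frac{r^{4(n-1)}u_{xx}^2}{v}\,dx$, and every remaining product is of lower order and falls into four groups: the transport term, a cross term $\propto r^{3n-4}u_xu_{xx}$, pressure-gradient terms carrying $\tfrac{\theta_x}{v}$, $\tfrac{\theta v_x}{v^2}$ and $\theta^3\theta_x$, and curvature terms carrying $\tfrac ur$, $\tfrac{uv}{r^2}$ and $r^{n-2}u_x$.

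I would then estimate the error terms so that the $u_{xx}$-containing pieces are absorbed into $\tfrac16\alpha\int_0^t\!\!\int_0^\infty\frac{r^{4(n-1)}u_{xx}^2}{v}\,dx\,ds$ by Cauchy's inequality, the surviving pieces being controlled in terms of $\|\theta\|_\infty$. The decisive remainder is
\[
\int_0^t\!\!\int_0^\infty r^{4(n-1)}u_x^2 v_x^2\,dx\,ds=\int_0^t\!\!\int_0^\infty\bigl(r^{n-1}u_x\bigr)^2\bigl(r^{n-1}v_x\bigr)^2\,dx\,ds\le\Bigl(\sup_{0\le s\le t}\|r^{n-1}v_x(s)\|^2\Bigr)\int_0^t\bigl\|r^{n-1}u_x(s)\bigr\|^2_{L^\infty([0,\infty))}\,ds,
\]
which, by $\eqref{c23}$ (together with $\underline V\le v\le\overline V$) for the first factor and $\eqref{c17}$ for the second, is bounded by $(C+C\|\theta\|_\infty^{l_3})(C+C\|\theta\|_\infty^{l_2})\le C+C\|\theta\|_\infty^{l_2+l_3}$ -- this is precisely where the exponent $l_2+l_3$ appears. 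All other remainders carry strictly smaller powers of $\|\theta\|_\infty$: terms such as $\int_0^t\!\!\int r^{2n-3}u\,u_x^2$, the cross term's contribution $\int_0^t\!\!\int v\,r^{2n-4}u_x^2$, and $\int_0^t\!\!\int\frac{r^{2(n-1)}\theta_x^2}{v^3}$ are rewritten against the basic-energy dissipation of $\eqref{bb13}$, using $r\ge1$ from $\eqref{a25}$ so that every factor $r^{-j}\le1$, and are then bounded by $\|u\|_\infty\|\theta\|_\infty\le C+C\|\theta\|_\infty^{1+l_2/4}$ via $\eqref{c15}$, by $C\|\theta\|_\infty$, or by a bounded constant; the $\theta^3\theta_x$ term yields $C\|\theta\|_\infty^{(8-b)_+}\le C\|\theta\|_\infty^{l_2}$; and the $\tfrac{\theta v_x}{v^2}$, $\tfrac{uv}{r^2}$ terms yield $C\|\theta\|_\infty^{1+l_3}$ and $C\|\theta\|_\infty$, again via $\eqref{c23}$ and $\eqref{bb13}$. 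What remains on the right-hand side after these reductions has the form $C+C\|\theta\|_\infty^{l_2+l_3}+\int_0^t K(s)\|r^{n-1}u_x(s)\|^2\,ds$ with $\int_0^t K(s)\,ds\le C\int_0^t\bigl(\|u/r\|^2_{L^\infty([0,\infty))}+V(s)\bigr)\,ds\le C$ by $\eqref{cc16}$ and $\eqref{bb13}$; Gronwall's inequality then closes the estimate and gives $\eqref{c31}$.

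I expect the main obstacle to be the weighted term $\int_0^t\!\!\int_0^\infty r^{4(n-1)}u_x^2 v_x^2\,dx\,ds$ and, more generally, every error term whose $r$-weight exceeds what the basic-energy functional alone provides: since $r(t,x)$ is unbounded on $[0,\infty)$, one cannot simply discard the weights as in the bounded-domain argument of Qin \cite{Qin-Zhang-Su-Cao-JMFM-2016}, and the only viable route is to have the weighted estimates of Lemmas 3.7--3.10 already available (most crucially the bounds on $\|r^{n-1}v_x\|$ and on $\int_0^t\|r^{n-1}u_x\|^2_{L^\infty}$ in terms of $\|\theta\|_\infty$) and, in each error term, to distribute the powers of $r$ either onto the good combinations $r^{n-1}u_x$, $r^{n-1}v_x$ or onto factors $r^{-j}\le1$. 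A secondary but essential technical point is keeping the Gronwall kernel integrable uniformly in $t$, which is exactly what $\eqref{cc16}$ -- valid here for all $n\ge2$ -- provides; this is what makes the constant $C$ in $\eqref{c31}$ independent of $T$, with the $T$-dependence entering only through $\|\theta\|_\infty$.
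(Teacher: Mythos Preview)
Your approach is essentially the paper's: multiplying by $-(r^{2(n-1)}u_x)_x$ instead of the paper's $-r^{2n-2}u_{xx}$ merely relocates the term $2(n-1)vr^{n-2}u_xu_t$ (the paper's $I_9$) from the left to the right, and expanding $\sigma_x$ directly rather than writing this piece as $u_t$ reproduces exactly the paper's subterms $I_9^1$--$I_9^5$; the decisive estimate on $\int_0^t\!\int_0^\infty r^{4(n-1)}u_x^2v_x^2\,dx\,ds$ via \eqref{c17} and \eqref{c23}, giving the exponent $l_2+l_3$, is identical. The only discrepancy is that the paper closes without Gronwall---every remainder (including the transport term $I_8$) is bounded outright by a power of $\|\theta\|_\infty$---so your final Gronwall step is unnecessary, though harmless.
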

\begin{proof}
Multiplying $\eqref{a20}_{2}$ by $-r^{2n-2}u_{xx}$, we can obtain
\begin{eqnarray}\label{c32}
&&\partial_{t}\left(\frac{r^{2n-2}u^{2}_{x}}{2}\right)-\left(r^{2n-2}u_{t}u_{x}\right)_{x} +\frac{\alpha r^{4n-4}u^{2}_{xx}}{v}\nonumber\\
&=&r^{3n-3}u_{xx}\left[P_{x}-2\alpha(n-1)r^{-1}u_{x}+\alpha(n-1)r^{-n-1}uv
+\frac{\alpha r^{n-1}u_{x}v_{x}}{v^{2}}\right]\\
&&+(n-1)r^{2n-3}uu_{x}^{2}-2(n-1)vr^{n-2}u_{x}u_{t}.\nonumber
\end{eqnarray}

Integrating the above identity with respect to $t$ and $x$ over $[0,t)\times[0,\infty)$ and by using Cauchy's inequality and boundary condition \eqref{a22}, we can infer that
\begin{eqnarray}\label{c33}
&&\displaystyle\int_{0}^{\infty} r^{2(n-1)}u^{2}_{x}dx+\int_{0}^{t}\int_{0}^{\infty}\frac{r^{4(n-1)}u^{2}_{xx}}{v}dxds\nonumber\\
&\leq & C+\frac{1}{4}\int_{0}^{t}\int_{0}^{\infty}\frac{r^{4(n-1)}u^{2}_{xx}}{v}dxds+\underbrace{C\int_{0}^{t}\int_{0}^{\infty}r^{2(n-1)}P^{2}_{x}dxds}_{I_4}
\nonumber\\
&&+\underbrace{C\int_{0}^{t}\int_{0}^{\infty}r^{2n-4}u_{x}^{2}dxds}_{I_5}
+\underbrace{C\int_{0}^{t}\int_{0}^{\infty}r^{-4}u^{2}dxds}_{I_6}\\
&&+\underbrace{C\int_{0}^{t}\int_{0}^{\infty}r^{4(n-1)}u_{x}^{2}v_{x}^{2}dxds}_{I_7}
+\underbrace{C\int_{0}^{t}\int_{0}^{\infty}r^{2n-3}|u|u^{2}_{x}dxds}_{I_{8}}\nonumber\\
&&+\underbrace{C\left|\int_{0}^{t}\int_{0}^{\infty}vr^{n-2}u_{x}u_{t}dxds\right|}_{I_9}.\nonumber
\end{eqnarray}

Now we turn to estimate the terms $I_{k}$ ($4\leq k\leq 9$) term by term. To this end, we first have from \eqref{bb13}, \eqref{b43}, \eqref{bbb44}, \eqref{c23} that
\begin{eqnarray}\label{c34}
I_{4}&\leq& C\int_{0}^{t}\int_{0}^{\infty}r^{2(n-1)}\left(\theta^{2}_{x}+\theta^{2}v^{2}_{x}+\theta^{6}\theta^{2}_{x}\right)dxds\nonumber\\
&\leq& C\int_{0}^{t}\int_{0}^{\infty}\frac{\kappa r^{2(n-1)}\theta^{2}_{x}}{v\theta^{2}}\cdot\frac{\theta^{2}}{1+\theta^{b}}dxds
+C\left\|\theta\right\|_{\infty}\int_{0}^{t}\int_{0}^{\infty}r^{2(n-1)}\theta v^{2}_{x}dxds\\
&&+C\int_{0}^{t}\int_{0}^{\infty}\frac{\kappa r^{2(n-1)}\theta^{2}_{x}}{v\theta^{2}}\cdot\frac{\theta^{8}}{1+\theta^{b}}dxds\nonumber\\
&\leq& C+C\left\|\theta\right\|^{l_{3}+1}_{\infty}+C\left\|\theta\right\|^{(8-b)_{+}}_{\infty},\nonumber
\end{eqnarray}
\begin{eqnarray}\label{c35}
I_{5}&\leq& C\int_{0}^{t}\int_{0}^{\infty}\frac{r^{2(n-1)}u^{2}_{x}}{r^{2}v\theta}\cdot\theta dxds\leq C\left\|\theta\right\|_{\infty},
\end{eqnarray}
and
\begin{eqnarray}\label{c36}
I_{6}&\leq& C\int_{0}^{t}\int_{0}^{\infty}\frac{u^{2}v}{r^{2}\theta}\cdot\theta dxds\leq C\left\|\theta\right\|_{\infty}.
\end{eqnarray}

Secondly, \eqref{c17} together with \eqref{c23} tell us that
\begin{eqnarray}\label{c37}
I_{7}&\leq& C\int_{0}^{t}\left\|r^{n-1}u_{x}\right\|^{2}_{L^{\infty}\left([0,\infty)\right)}\int_{0}^{\infty} r^{2(n-1)}v^{2}_{x}dxds\nonumber\\
&\leq& C+C\left\|\theta\right\|_{\infty}^{l_{2}+l_{3}},
\end{eqnarray}
and we can conclude from \eqref{c15} that
\begin{eqnarray}\label{c38}
I_{8}&&\leq C\int_{0}^{t}\left\|u\right\|_{L^{\infty}\left([0,\infty)\right)}\int_{0}^{\infty}\frac{r^{2(n-1)}u^{2}_{x}}{v\theta}\cdot\theta dxds\nonumber\\
&&\leq C+C\left\|\theta\right\|_{\infty}^{\frac{l_{2}}{4}+1}.
\end{eqnarray}

Thirdly, for the term $I_{9}$, noticing that
\begin{eqnarray}\label{c39}
I_{9}&\leq&C\left|\int_{0}^{t}\int_{0}^{\infty}vr^{n-2}u_{x}r^{n-1}\left(\alpha\frac{\left(r^{n-1}u\right)_{x}}{v}-P\right)_xdxds\right|\nonumber\\
&\leq&\underbrace{C\left|\int_{0}^{t}\int_{0}^{\infty}\frac{\alpha(n-1)v^{2}uu_{x}r^{n}}{r^{4}}dxds\right|}_{I_9^1}
+\underbrace{C\left|\int_{0}^{t}\int_{0}^{\infty}\frac{2\alpha(n-1)vr^{2n-2}u^{2}_{x}}{r^{2}}dxds\right|}_{I_9^2}\nonumber\\
&&+\underbrace{C\left|\int_{0}^{t}\int_{0}^{\infty}
r^{2n-3}vu_{x}P_{x}dxds\right|}_{I_9^3}
+\underbrace{C\left|\int_{0}^{t}\int_{0}^{\infty}\alpha r^{3n-4}u_{x}u_{xx}dxds\right|}_{I_9^4}\\
&&+\underbrace{C\left|\int_{0}^{t}\int_{0}^{\infty}\frac{\alpha r^{3n-4}v_{x}u^{2}_{x}}{v}dxds\right|}_{I_9^5},\nonumber
\end{eqnarray}
we can deduce from \eqref{a25}, \eqref{bb13}, \eqref{c17} and \eqref{c23} that
\begin{eqnarray}\label{c40}
I_{9}^{1}\leq C\int_{0}^{t}\int_{0}^{\infty}r^{n-4}\left|uu_{x}\right|dxds\leq C\int_{0}^{t}\int_{0}^{\infty}\left(\frac{r^{2(n-1)}u^{2}_{x}}{v\theta}\cdot\theta+\frac{u^{2}}{r^{6}\theta}\cdot\theta\right)dxds\leq C\left\|\theta\right\|_{\infty},
\end{eqnarray}
\begin{eqnarray}\label{c41}
I_{9}^{2}\leq C\int_{0}^{t}\int_{0}^{\infty}\frac{r^{2(n-1)}u^{2}_{x}}{r^{2}\theta}\cdot\theta dxds\leq C\left\|\theta\right\|_{\infty},
\end{eqnarray}
\begin{eqnarray}\label{c43}
I_{9}^{3}&\leq& C\int_{0}^{t}\int_{0}^{\infty}r^{2n-3}\left|u_{x}\right|\left(\left|\theta_{x}\right|+\theta\left|v_{x}\right|
+\theta^{3}\left|\theta_{x}\right|\right)dxds\nonumber\\
&\leq& C\int_{0}^{t}\int_{0}^{\infty}\frac{r^{2n-2}u^{2}_{x}}{v\theta}\cdot\theta dxds
+C\int_{0}^{t}\int_{0}^{\infty}\frac{\kappa r^{2n-2}\theta^{2}_{x}}{v\theta^{2}}\left(\frac{\theta^{2}}{1+\theta^{b}}+\frac{\theta^{8}}{1+\theta^{b}}\right)dxds\\
&&+C\int_{0}^{t}\int_{0}^{\infty}r^{2n-2}\theta^{2}v_{x}^{2}dxds\nonumber\\
&\leq& C+C\left\|\theta\right\|_{\infty}^{l_{3}+1}+C\left\|\theta\right\|_{\infty}^{(8-b)_{+}},\nonumber
\end{eqnarray}
\begin{eqnarray}\label{c44}
I_{9}^{4}&&\leq \frac{1}{4}\int_{0}^{t}\int_{0}^{\infty}\frac{r^{4(n-1)}u^{2}_{xx}}{v}dxds+C\int_{0}^{t}\int_{0}^{\infty}\frac{r^{2n-2}u^{2}_{x}}{r^{2}\theta}\cdot\theta dxds\nonumber\\
&&\leq \frac{1}{4}\int_{0}^{t}\int_{0}^{\infty}\frac{r^{4(n-1)}u^{2}_{xx}}{v}dxds+C\left\|\theta\right\|_{\infty},
\end{eqnarray}
and
\begin{eqnarray}\label{c45}
I_{9}^{5}&&\leq C\int_{0}^{t}\int_{0}^{\infty}\frac{r^{2n-2}u^{2}_{x}}{r^{2}v\theta}\cdot\theta dxds+C\int_{0}^{t}\int_{0}^{\infty}r^{4n-4}u_{x}^{2}v_{x}^{2}dxds\nonumber\\
&&\leq C\left\|\theta\right\|_{\infty}+C\int_{0}^{t}\left\|r^{n-1}u_{x}\right\|^{2}_{L^{\infty}\left([0,\infty)\right)}\int r^{2(n-1)}v^{2}_{x}dxds\\
&&\leq C+C\left\|\theta\right\|_{\infty}^{l_{2}+l_{3}},\nonumber
\end{eqnarray}
Thus it follows from \eqref{c39}-\eqref{c45} that
\begin{eqnarray}\label{c46}
I_{9}\leq C+\frac{1}{4}\int_{0}^{t}\int_{0}^{\infty}\frac{r^{4(n-1)}u^{2}_{xx}}{v}dxds+C\left\|\theta\right\|_{\infty}^{l_{2}+l_{3}}.
\end{eqnarray}

Inserting the above estimates on $I_k (k=4,5,6,7,8,9)$ into \eqref{c33}, we can deduce \eqref{c31}. This completes the proof of Lemma 3.11.
\end{proof}

\section{Upper bound of $\theta\left(t,x\right)$}
The main purpose of this section is to derive an estimate on the upper bound of $\theta\left(t,x\right)$. To this end, we set
\begin{eqnarray}\label{b56}
X(t):&=&\int_{0}^{t}\int_{0}^{\infty}\left(1+\theta^{b+3}(s,x)\right)\theta_{t}^{2}(s,x)dxds,\nonumber\\ Y(t):&=&\max\limits_{s\in(0,t)}\int_{0}^{\infty}r^{2n-2}\left(1+\theta^{2b}(s,x)\right)\theta_{x}^{2}(s,x)dx,\\
Z(t):&=&\max\limits_{s\in(0,t)}\int_{0}^{\infty}u_{xx}^{2}(s,x)dx,\nonumber
\end{eqnarray}
and then try to deduce certain estimates among them by employing the structure of the system  under our consideration.

Firstly, for each $x\in[0,\infty)$, there exists an integer $k\in\mathbb{N}$ such that $x\in\left[k, k+1\right]$ and we can assume without loss of generality that $x\geq b_{k}\left(t\right)$. Observe first that
\begin{eqnarray}\label{b57}
 \left(\theta(t,x)-1\right)^{2b+6}&=&\left(\theta\left(t,b_{k}(t)\right)-1\right)^{2b+6}
 +\int_{b_{k}\left(t\right)}^{x}\left(2b+6\right)\left(\theta(t,y)-1\right)^{2b+5}\theta_{x}(t,y)dy\nonumber\\
 &\leq& C+C\int_{0}^{\infty}\left|\theta(t,x)-1\right|^{2b+5}|\theta_{x}(t,x)|dx\nonumber\\
 &\leq& C+C\|\theta(t)-1\|_{L^{\infty}\left([0,\infty)\right)}^{b+3}\left(\int_{0}^{\infty}\left(\theta(t,x)-1\right)^{4} dx\right)^{\frac{1}{2}}\cdot\\
 &&\times\left(\int_{0}^{\infty}\left(\theta(t,x)-1\right)^{2b}\theta_{x}^{2}(t,x)r^{2n-2}dx\right)^{\frac{1}{2}} \nonumber\\
 &\leq& C+C\|\theta(t)-1\|^{b+3}_{L^{\infty}\left([0,\infty)\right)}Y^{\frac{1}{2}}(t),\nonumber
\end{eqnarray}
which implies
\begin{eqnarray}\label{b58}
\|\theta(t)\|_{L^{\infty}\left([0,\infty)\right)} \leq C+CY(t)^{\frac{1}{2b+6}},
\end{eqnarray}
where we have used the fact that
\begin{eqnarray}\label{bz58}
\left(\theta-1\right)^{4}\leq \left(\theta-1\right)^{2}\left(3\theta^{2}+2\theta+1\right),\quad\left(\theta-1\right)^{2b}\leq C\left(1+\theta^{2b}\right).
\end{eqnarray}

Secondly, by the Gagliardo-Nirenberg inequality, we infer that
\begin{eqnarray*}
\|u_{x}(t)\|\leq C\|u(t)\|^{\frac{1}{2}}\|u_{xx}(t)\|^{\frac{1}{2}}\leq C\|u_{xx}(t)\|^{\frac{1}{2}},
\end{eqnarray*}
which implies that
\begin{eqnarray}\label{b60}
\max\limits_{s\in(0,t)}\int_{0}^{\infty}u_{x}^{2}(s,x)dx\leq C+CZ(t)^{\frac{1}{2}}.
\end{eqnarray}

Furthermore, by the Sobolev inequality, we can get that
\begin{eqnarray}\label{b61}
\|u_{x}(t)\|_{L^{\infty}\left([0,\infty)\right)}&&\leq C\|u_{x}(t)\|^{\frac{1}{2}}\|u_{xx}(t)\|^{\frac{1}{2}}\nonumber\\
&&\leq C\left(1+Z(t)^{\frac{1}{8}}\right)Z(t)^{\frac{1}{4}}\\
&&\leq C+CZ(t)^{\frac{3}{8}}.\nonumber
  \end{eqnarray}

With the above preparations in hand, our next result is to show that $X(t)$ and $Y(t)$ can be controlled by $Z(t)$.
\begin{lemma} Under the assumptions listed in Lemma 2.1, we have for $0\leq t\leq T$ that
\begin{eqnarray}\label{b62}
X(t)+Y(t)\leq C\left(1+Z(t)^{\lambda_{1}}\right).
\end{eqnarray}
Here 
$\lambda_1$ is given by
\begin{eqnarray}\label{bz136}
\lambda_{1}=\max\left\{\frac{b+3}{4(b+4)},\; \frac{3(b+3)}{2(3b+9-2l_{3})}\right\}.
\end{eqnarray}
It is easy to see that $\lambda_1\in(0,1)$ when $b>\frac{19}{4}$.
\end{lemma}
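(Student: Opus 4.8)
The plan is to test the energy equation $\eqref{a20}_{3}$ with the multiplier $\bigl(1+\theta^{b}\bigr)\theta_{t}$ and integrate over $[0,t]\times[0,\infty)$. Since $e=c_{v}\theta+a\theta^{4}/v$ and $v_{t}=(r^{n-1}u)_{x}$ by $\eqref{a8}$ and $\eqref{a20}_{1}$, one has $e_{t}=(c_{v}+4a\theta^{3}/v)\theta_{t}-a\theta^{4}(r^{n-1}u)_{x}v^{-2}$, so the principal term on the left is $\int_{0}^{t}\int_{0}^{\infty}(c_{v}+4a\theta^{3}/v)(1+\theta^{b})\theta_{t}^{2}\,dx\,ds$, which, by the uniform bounds $\eqref{b43}$ on $v$, dominates $c\,X(t)$. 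For the heat-flux term, integration by parts in $x$ kills the boundary contribution by $\eqref{a22}$ and the far-field condition; using $\kappa/v=\kappa_{1}/v+\kappa_{2}\theta^{b}$ (from $\eqref{a7}$, $\eqref{a8}$) together with $\theta_{x}\theta_{xt}=\tfrac12(\theta_{x}^{2})_{t}$ and $\theta^{2b}\theta_{x}\theta_{xt}=\tfrac12(\theta^{2b}\theta_{x}^{2})_{t}-b\,\theta^{2b-1}\theta_{t}\theta_{x}^{2}$, we generate on the left the exact $t$-derivative $\tfrac12\tfrac{d}{dt}\int_{0}^{\infty}r^{2n-2}(\kappa_{1}/v+\kappa_{2}\theta^{b})(1+\theta^{b})\theta_{x}^{2}\,dx$ plus lower-order commutators; integrating in $t$, sending the initial contribution to the right, and maximising over $s\in(0,t)$ gives, again by $\eqref{b43}$, a lower bound by $c\,Y(t)$.

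The bulk of the work is then to control the error terms on the right. The cross term from $e_{v}v_{t}\theta_{t}$ is split by Young's inequality into $\epsilon X(t)$ plus $C\int_{0}^{t}\int_{0}^{\infty}\theta^{b+5}\bigl(r^{2n-2}u_{x}^{2}+u^{2}r^{-2}\bigr)\,dx\,ds\le C\|\theta\|_{\infty}^{b+6}$ by $\eqref{bb13}$; likewise the mechanical terms $\bigl(\alpha(r^{n-1}u)_{x}v^{-1}-P\bigr)(r^{n-1}u)_{x}(1+\theta^{b})\theta_{t}$ and $-2\mu(n-1)(r^{n-2}u^{2})_{x}(1+\theta^{b})\theta_{t}$ are Young-split, shedding a further $\epsilon X(t)$, while the leftover $u_{x}$-quadratics are estimated via $\eqref{bb13}$, $\eqref{c31}$ and the Sobolev bounds $\eqref{b60}$, $\eqref{b61}$, and the genuinely high-order piece $\int_{0}^{t}\int_{0}^{\infty}r^{4n-4}u_{x}^{4}\,dx\,ds$ is bounded, exactly as in the Introduction, by Lemmas 3.9 and 3.11 as $C\|\theta\|_{\infty}^{2l_{2}+l_{3}}$. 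The commutators from the integration by parts split into two families: those carrying $(r^{2n-2})_{t}=(2n-2)(u/r)\,r^{2n-2}$ are kept in differential form, since $\int_{0}^{t}\|u/r\|_{L^{\infty}([0,\infty))}^{2}\,ds\le C$ by Lemma 3.6 (valid for $n\ge2$ by $\eqref{cc16}$), so that a Gronwall argument closes them; the terms $\int_{0}^{\infty}r^{2n-2}\theta^{2b-1}\theta_{t}\theta_{x}^{2}\,dx$ are dominated by Young's inequality against the $X(t)$-weight $\theta^{b+3}\theta_{t}^{2}$, the residue being absorbed with the help of Lemma 3.2, $\eqref{bb13}$ and the definition of $Y(t)$ at the cost of more powers of $\|\theta\|_{\infty}$. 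Finally, since $\phi(\theta)\le C\theta^{\beta}$ and $0\le z\le1$ by $\eqref{a2}$ and Lemma 2.2, the reaction term $\lambda\phi z(1+\theta^{b})\theta_{t}$ splits into $\epsilon X(t)$ plus $C\int_{0}^{t}\int_{0}^{\infty}\theta^{2\beta+b-3}z\,dx\,ds$, controlled by $\eqref{b14}$ and $\eqref{bb13}$; here the hypothesis $0\le\beta<b+9$ enters.

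Collecting everything yields a differential inequality of the schematic form $\tfrac{d}{dt}\mathcal Y(t)+(\text{nonnegative dissipation})\le g(t)\mathcal Y(t)+C\bigl(1+\|\theta\|_{\infty}^{P_{1}}+\|\theta\|_{\infty}^{P_{2}}Z(t)^{Q}\bigr)$ with $\mathcal Y(t)\ge c\,Y(t)$ and $\int_{0}^{t}g(s)\,ds\le C$ by Lemma 3.6; integrating in $t$, applying Gronwall's inequality, and adding the lower bound on $X(t)$ extracted from the left-hand side, we obtain $X(t)+Y(t)\le C\bigl(1+\|\theta\|_{\infty}^{P_{1}}+\|\theta\|_{\infty}^{P_{2}}Z(t)^{Q}\bigr)$. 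Inserting now $\eqref{b58}$, i.e. $\|\theta\|_{\infty}\le C+CY(t)^{1/(2b+6)}$: because $b>\tfrac{19}{4}$ forces every relevant exponent $P_{i}$ to be strictly below the critical value $2b+6$, Young's inequality absorbs the resulting powers of $Y(t)$ back into the left-hand side, and optimising the exponents of $Z(t)$ in the remaining terms produces precisely the number $\lambda_{1}$ of $\eqref{bz136}$; this gives $X(t)+Y(t)\le C\bigl(1+Z(t)^{\lambda_{1}}\bigr)$.

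The hard part, as anticipated in the Introduction, is controlling in the unbounded domain the nonlinear terms carrying large powers of the Eulerian radius $r(t,x)$ — above all $\int_{0}^{t}\int_{0}^{\infty}r^{4n-4}u_{x}^{4}\,dx\,ds$ — since the naive reduction $\eqref{aa14}$ valid in the bounded case is unavailable here. This is overcome exactly as prepared in Section 3: the weighted bounds of Lemmas 3.7--3.11 express $\|v_{x}\|$, $\|r^{n-1}v_{x}\|$, $\|r^{n-1}u_{x}\|$, $\int r^{2n-2}u_{xx}^{2}$ and $\int\|r^{n-1}u_{x}\|_{L^{\infty}}^{2}$ through $\|\theta\|_{\infty}$ alone, and the crucial $L^{2}_{t}L^{\infty}_{x}$ bound on $u/r$ from Lemma 3.6 is what makes the Gronwall step legitimate despite $r$ being unbounded. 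The remaining delicate point is the bookkeeping that keeps every power of $\theta$ appearing on the right strictly below $2b+6$, which is precisely what pins down the threshold $b>\tfrac{19}{4}$ and the value of $\lambda_{1}$ in $\eqref{bz136}$.
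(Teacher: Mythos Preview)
Your overall strategy---test the energy equation with a $\theta_t$--weighted multiplier, integrate by parts on the diffusion term to produce the $Y(t)$ structure, and translate all residual errors into powers of $\|\theta\|_\infty$ via Lemmas~3.7--3.11 and \eqref{b58}---is exactly the paper's. But the paper does \emph{not} multiply by $(1+\theta^b)\theta_t$; it uses the Kawohl function $K(v,\theta)=\int_0^\theta\frac{\kappa(v,\xi)}{v}\,d\xi$ and multiplies by $K_t=K_v(r^{n-1}u)_x+\tfrac{\kappa}{v}\theta_t$. The extra piece $K_v(r^{n-1}u)_x$ is the point: after integration by parts the diffusion term becomes $\int r^{2n-2}\tfrac{\kappa\theta_x}{v}\,K_{xt}$, and $K_{xt}$ contains the \emph{exact} expression $(\tfrac{\kappa\theta_x}{v})_t$ (see \eqref{b65}), so one obtains the clean time derivative $\tfrac12\partial_t\!\int r^{2n-2}(\tfrac{\kappa\theta_x}{v})^2$ with \emph{no} $\theta_t\theta_x^2$ commutator. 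With your multiplier one instead produces $\tfrac{\kappa(1+\theta^b)}{v}\theta_x\theta_{xt}=\tfrac12\partial_t[\tfrac{\kappa(1+\theta^b)}{v}\theta_x^2]-\tfrac12[\tfrac{\kappa(1+\theta^b)}{v}]_t\theta_x^2$, and the $\theta_t$--part of the last bracket yields terms of size $\int_0^t\!\int r^{2n-2}(1+\theta)^{2b-1}|\theta_t|\theta_x^2$.

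This is the genuine gap. After Young against $\theta^{b+3}\theta_t^2$ the residue is $\int_0^t\!\int r^{4n-4}(1+\theta)^{3b-5}\theta_x^4$, and the tools you cite (Lemma~3.2, \eqref{bb13}, the definition of $Y$) only give a bound of the form $CY(t)$ with a constant that is \emph{not} small, so it cannot be absorbed. The paper's analogous hard term is $I_{15}$ in \eqref{b68}, which carries a factor $v_x$ rather than an extra $\theta_x$; it is controlled by the nontrivial device of bounding $\|r^{n-1}\kappa\theta_x/v\|_{L^\infty}$ via Sobolev and then substituting the equation itself for $(\tfrac{r^{n-1}\kappa\theta_x}{v})_x$ (see \eqref{b118}--\eqref{b131}). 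That substitution is what produces the factor $Z(t)^{3/8}$ and, combined with $\|r^{n-1}v_x\|^2\le C\|\theta\|_\infty^{l_3}$ from Lemma~3.10, the exponent $\tfrac{3(b+3)}{2(3b+9-2l_3)}$. You neither invoke this trick nor explain where $l_3$ enters your argument, so the claim that ``optimising the exponents produces precisely $\lambda_1$'' is unsupported. A secondary point: for the $(r^{2n-2})_t$ commutator the paper does \emph{not} Gronwall; it estimates directly in \eqref{bb70} using $\|u\|_{L^\infty}\le C\|u_x\|^{1/2}\le C(1+Z^{1/8})$ together with $\int_0^tV(s)\,ds\le C$, which is what yields the other exponent $\tfrac{b+3}{4(b+4)}$. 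Your Gronwall route would need $\int_0^t\|u/r\|_{L^\infty}\,ds\le C$ uniformly in $t$, whereas Lemma~3.6 supplies only the $L^2_t$ bound \eqref{cc16}.
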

\begin{proof}
In the same manner as in Kawohl \cite{Kawohl-JDE-1985}, if we set
\begin{eqnarray}\label{b63}
K\left(v,\theta\right)=\int_{0}^{\theta}\frac{\kappa\left(v,\xi\right)}{v}d\xi=\frac{\kappa_1\theta}{v}+\frac{\kappa_2\theta^{b+1}}{b+1},
  \end{eqnarray}
then it is easy to verify from the estimate \eqref{b43} obtained in Lemma 3.4 that
\begin{eqnarray}
K_{t}(v,\theta)&=&K_{v}(v,\theta)\left(r^{n-1}u\right)_{x}+\frac{\kappa(v,\theta)\theta_{t}}{v},\label{b64}\\
K_{xt}(v,\theta)&=&\left[\frac{\kappa(v,\theta)\theta_{x}}{v}\right]_{t}+K_{v}(v,\theta)\left(r^{n-1}u\right)_{xx}\nonumber\\
&&+K_{vv}(v,\theta)v_{x}\left(r^{n-1}u\right)_{x}
+\left(\frac{\kappa(v,\theta)}{v}\right)_{v}v_{x}\theta_{t},\label{b65}\\
\left|K_{v}(v,\theta)\right|&+&\left|K_{vv}(v,\theta)\right|\leq C\theta.\label{b66}
\end{eqnarray}

Noticing that $\eqref{a20}_{3}$ can be rewritten as
\begin{eqnarray}\label{b117}
e_{\theta}\theta_{t}+\theta P_{\theta}\left(r^{n-1}u\right)_{x}-\frac{\alpha\left|\left(r^{n-1}u\right)_{x}\right|^{2}}{v}=\left(\frac{r^{2n-2}\kappa\theta_{x}}{v}\right)_{x}
-2\mu\left(n-1\right)\left(r^{n-2}u^{2}\right)_{x}+\lambda\phi z,
\end{eqnarray}
where
\begin{eqnarray}\label{f3}
 e_{\theta}(v,\theta)=C_{v}+4av\theta^{3},\quad P_{\theta}(v,\theta)=\frac{R}{v}+\frac{4}{3}a\theta^{3},
\end{eqnarray}
we can get by multiplying \eqref{b117} by $K_{t}$ and integrating the resulting identity over $\left[0,t\right)\times[0,\infty)$ and by using the boundary conditions \eqref{a22} that
\begin{eqnarray}\label{b67}
&&\int_{0}^{t}\int_{0}^{\infty}\left(e_{\theta}(v,\theta)\theta_{t}+\theta P_{\theta}(v,\theta)\left(r^{n-1}u\right)_{x}-\frac{\alpha\left|\left(r^{n-1}u\right)_{x}\right|^{2}}{v}\right)K_{t}(v,\theta)dxd\tau\nonumber\\ &&+\int_{0}^{t}\int_{0}^{\infty}\frac{r^{2n-2}\kappa\left(v,\theta\right)\theta_{x}}{v}K_{tx}(v,\theta)dxds\\
&&+2\mu\left(n-1\right)\int_{0}^{t}\int_{0}^{\infty}\left(r^{n-2}u^{2}\right)_{x}K_{t}(v,\theta)dxds\nonumber\\
&=&\int_{0}^{t}\int_{0}^{\infty}\lambda\phi zK_{t}(v,\theta)dxds.\nonumber
\end{eqnarray}

Combining (\ref{b63})-(\ref{b67}), we have
\begin{eqnarray}\label{b68}
&&\int_{0}^{t}\int_{0}^{\infty}\frac{e_{\theta}(v,\theta)\kappa\left(v,\theta\right)\theta_{t}^{2}}{v}dxds
+\int_{0}^{t}\int_{0}^{\infty}\frac{\kappa\left(v,\theta\right)\theta_{x}}{v}\left(\frac{\kappa\left(v,\theta\right)\theta_{x}}{v}\right)_{t}dxds\nonumber\\
&\leq& C+\underbrace{\left|\int_{0}^{t}\int_{0}^{\infty}e_{\theta}(v,\theta)\theta_{t}K_{v}(v,\theta)\left(r^{n-1}u\right)_{x}dxds\right|}_{I_{10}}
+\underbrace{\left|\int_{0}^{t}\int_{0}^{\infty}\theta p_{\theta}(v,\theta)\left|\left(r^{n-1}u\right)_{x}\right|^{2}K_{v}(v,\theta)dxds\right|}_{I_{11}}\nonumber\\
&&+\underbrace{\left|\int_{0}^{t}\int_{0}^{\infty}\frac{\theta p_{\theta}(v,\theta)\kappa\left(v,\theta\right)\left(r^{n-1}u\right)_{x}\theta_{t}}{v}dxds\right|}_{I_{12}}
+\underbrace{\left|\int_{0}^{t}\int_{0}^{\infty}\frac{\alpha\left|\left(r^{n-1}u\right)_{x}\right|^{2}K_{t}(v,\theta)}{v}dxds\right|}_{I_{13}}\nonumber\\
&&+\underbrace{\left|\int_{0}^{t}\int_{0}^{\infty}\frac{r^{2n-2}\kappa(v,\theta)\theta_{x}}{v}\left(K_{vv}(v,\theta)v_{x}\left(r^{n-1}u\right)_{x} +K_{v}(v,\theta)\left(r^{n-1}u\right)_{xx}\right)dxds\right|}_{I_{14}}\\
&&+\underbrace{\left|\int_{0}^{t}\int_{0}^{\infty}\frac{r^{2n-2}\kappa(v,\theta)\theta_{x}}{v} \left(\frac{\kappa(v,\theta)}{v}\right)_{v}v_{x}\theta_{t}dxds\right|}_{I_{15}}\nonumber\\
&&+\underbrace{\left|\int_{0}^{t}\int_{0}^{\infty}\lambda\phi zK_{v}(v,\theta)\left(r^{n-1}u\right)_{x}dxds\right|}_{I_{16}}
+\underbrace{\left|\int_{0}^{t}\int_{0}^{\infty}\frac{\lambda\phi z\kappa\left(v,\theta\right)\theta_{t}}{v}dxds\right| }_{I_{17}}\nonumber\\
&&+\underbrace{C\left|\int_{0}^{t}\int_{0}^{\infty}\left(r^{n-2}u^{2}\right)_{x}\left(K_{v}(v,\theta)\left(r^{n-1}u\right)_{x}
+\frac{\kappa(v,\theta)\theta_{t}}{v}\right)dxds\right|}_{I_{18}}.\nonumber
\end{eqnarray}
We now turn to control $I_k (k=10,11,\cdots, 18)$ term by term. To do so, we first have
\begin{eqnarray}\label{b69}
\int_{0}^{t}\int_{0}^{\infty}\frac{e_{\theta}(v,\theta)\kappa\left(v,\theta\right)\theta_{t}^{2}}{v}dxds&&\geq C\int_{0}^{t}\int_{0}^{\infty}\left(1+\theta^{3}\right)\left(1+\theta^{b}\right)\theta_{t}^{2}dxds \nonumber\\
&&\geq CX(t),
\end{eqnarray}
\begin{eqnarray}\label{b70}
&&\int_{0}^{t}\int_{0}^{\infty}\frac{r^{2n-2}\kappa\left(v,\theta\right)\theta_{x}}{v}\left(\frac{\kappa\left(v,\theta\right)\theta_{x}}{v}\right)_{t}dxds\nonumber\\
&=&\frac{1}{2}\int_{0}^{t}\int_{0}^{\infty}r^{2n-2}\left[\left(\frac{\kappa\theta_{x}}{v}\right)^{2}\right]_{t}dxds\nonumber\\
&=&\frac{1}{2}\int_{0}^{t}\int_{0}^{\infty}\left[r^{2n-2}\left(\frac{\kappa\theta_{x}}{v}\right)^{2}\right]_{t}dxds
-\left(n-1\right)\int_{0}^{t}\int_{0}^{\infty}\left(\frac{\kappa\theta_{x}}{v}\right)^{2}r^{2n-3}udxds\\
&=&\frac{1}{2}\int_{0}^{\infty}r^{2n-2}\left(\frac{\kappa\theta_{x}}{v}\right)^{2}dx-C
-\left(n-1\right)\int_{0}^{t}\int_{0}^{\infty}\left(\frac{\kappa\theta_{x}}{v}\right)^{2}r^{2n-3}udxds\nonumber\\
&\geq& CY-C-\left(n-1\right)\int_{0}^{t}\int_{0}^{\infty}\left(\frac{\kappa\theta_{x}}{v}\right)^{2}r^{2n-3}udxds,\nonumber
\end{eqnarray}
and
\begin{eqnarray}\label{bb70}
&&\left|\left(n-1\right)\int_{0}^{t}\int_{0}^{\infty} \left(\frac{\kappa\theta_{x}}{v}\right)^{2}r^{2n-3}udxds\right|\nonumber\\
&\leq& C\int_{0}^{t}\left\|u\right\|_{L^{\infty}\left([0,\infty)\right)}\int_{0}^{\infty}\left(1+\theta^{b}\right)
\frac{\kappa\left(r^{n-1}\theta_{x}\right)^{2}}{v^{2}\theta^{2}}\frac{\theta^{2}}{r}dxds\nonumber\\
&\leq& C\left(1+\left\|\theta\right\|_{\infty}^{b+2}\right)\int_{0}^{t}\left\|u\right\|^{\frac{1}{2}}
\left\|u_{x}\right\|^{\frac{1}{2}}\int_{0}^{\infty}\frac{\kappa\left(r^{n-1}\theta_{x}\right)^{2}}{v^{2}\theta^{2}}dxds\\
&\leq & C\left(1+Y(t)^{\frac{b+2}{2b+6}}\right)\left(1+Z(t)^{\frac{1}{8}}\right)\nonumber\\
&\leq& C\left(\epsilon\right)\left(1+Z(t)^{\frac{b+3}{4(b+4)}}\right)+\epsilon Y(t),\nonumber
\end{eqnarray}
where we have used the fact that
\begin{eqnarray*}
Y(t)^{\frac{b+2}{2b+6}}Z(t)^{\frac{1}{8}}&&\leq\epsilon Y(t)+C\left(\epsilon\right)Z(t)^{\frac{b+3}{4(b+4)}}.
\end{eqnarray*}

On the other hand, it follows from Cauchy's inequality, \eqref{a25} and \eqref{bb13} that
\begin{eqnarray}\label{b105}
I_{10}&&\leq C\int_{0}^{t}\int_{0}^{\infty} \left(1+\theta\right)^{4}\left|\theta_{t}\left(r^{n-1}u\right)_{x}\right|dxds \nonumber\\
&&\leq \epsilon X(t) +C\left(\epsilon\right)\left(1+\|\theta\|_{\infty}^{(6-b)_{+}}\right)\int_{0}^{t}\int_{0}^{\infty}
\frac{\left|\left(r^{n-1}u\right)_{x}\right|^{2}}{v\theta}\cdot vdxds\\
&&\leq \epsilon X(t)+C\left(\epsilon\right)\left(1+Y(t)^{\frac{(6-b)_{+}}{2b+6}}\right)\nonumber\\
&&\leq \epsilon (X(t)+Y(t))+C\left(\epsilon\right),\nonumber
\end{eqnarray}
\begin{eqnarray}\label{b106}
I_{11}&&\leq C\left|\int_{0}^{t}\int_{0}^{\infty}\left(1+\theta\right)^{5}\left|\left(r^{n-1}u\right)_{x}\right|^{2}dxds\right|\nonumber\\
&&\leq C\int_{0}^{t}\int_{0}^{\infty}\left(1+\theta\right)^{6}\cdot\frac{\left|\left(r^{n-1}u\right)_{x}\right|^{2}}{v\theta}dxds\\
&&\leq C+C\|\theta\|^{6}_{\infty}\nonumber\\
&&\leq C\left(\epsilon\right)+\epsilon Y(t),\nonumber
\end{eqnarray}
and
\begin{eqnarray}\label{b107}
I_{12}&&\leq C\int_{0}^{t}\int_{0}^{\infty}\left(1+\theta\right)^{b+4}\left|\left(r^{n-1}u\right)_{x}\theta_{t}\right|dxds\nonumber\\
&&\leq \epsilon X(t)+C\left(\epsilon\right) \left(1+\|\theta\|^{b+6}_{\infty}\right)\int_{0}^{t}\int_{0}^{\infty}\frac{\left|\left(r^{n-1}u\right)_{x}\right|^{2}}{v\theta}dxds\\
&&\leq \epsilon X(t)+CY(t)^{\frac{b+6}{2b+6}} \nonumber\\
&&\leq \epsilon\left(X(t)+Y(t)\right)+C\left(\epsilon\right).\nonumber
\end{eqnarray}
Here and in the rest of this paper, $\epsilon>0$ is any positive constant which can be chosen as small as we wanted.

As for the term $I_{13}$, it is easy to see that
\begin{eqnarray}\label{b108}
I_{13}&&=\left|\int_{0}^{t}\int_{0}^{\infty}\frac{\alpha\left|\left(r^{n-1}u\right)_{x}\right|^{2}}{v}
\left(K_{v}\left(r^{n-1}u\right)_{x}+\frac{\kappa\left(v, \theta\right)\theta_{t}}{v}\right)dxds\right|\nonumber\\
&&\leq C\int_{0}^{t}\int_{0}^{\infty}\left|\left(r^{n-1}u\right)_{x}\right|^{3}\theta dxds
+C\int_{0}^{t}\int_{0}^{\infty}\left|\left(r^{n-1}u\right)_{x}\right|^{2}\left(1+\theta^{b}\right)\left|\theta_{t}\right|dxds\\
&&:=I_{13}^{1}+I_{13}^{2},\nonumber
\end{eqnarray}
we can deduce from \eqref{cc16}, \eqref{c17}, \eqref{c31}, the fact that $2l_{2}+l_{3}<b+9$ (since $b>\frac{19}{4}$) and
\begin{eqnarray}\label{bb126}
\left(r^{n-1}u\right)_{x}=\frac{(n-1)vu}{r}+r^{n-1}u_{x}
\end{eqnarray}
that
\begin{eqnarray}\label{b109}
\left|I_{13}^{1}\right|&&\leq C\int_{0}^{t}\int_{0}^{\infty}\frac{\left|\left(r^{n-1}u\right)_{x}\right|^{2}}{v\theta}\cdot\theta^{3}dxds
+C\int_{0}^{t}\int_{0}^{\infty}\left|\left(r^{n-1}u\right)_{x}\right|^{4}dxds\nonumber\\
&\leq& C\|\theta\|^{3}_{\infty}+\int_{0}^{t}\int_{0}^{\infty}\left(\frac{u^{4}}{r^{4}}+r^{4n-4}u_{x}^{4}\right)dxds\nonumber\\
&\leq& C+CY(t)^{\frac{3}{2b+6}} +\int_{0}^{t}\left\|\left(\frac{u}{r}\right)(s)\right\|^{2}_{L^{\infty}\left([0,\infty)\right)} \left(\int_{0}^{\infty} u^{2}dx\right)ds\nonumber\\
&&+\int_{0}^{t}\left\|\left(r^{n-1}u_{x}\right)(s)\right\|^{2}_{L^{\infty}\left([0,\infty)\right)} \left(\int_{0}^{\infty} r^{2(n-1)}u^{2}_{x}dx\right)ds\nonumber\\
&\leq& C+CY(t)^{\frac{3}{2b+6}}+CY(t)^{\frac{2l_{2}+l_{3}}{2b+6}}\nonumber\\
&\leq& \epsilon Y(t)+C\left(\epsilon\right)
\end{eqnarray}
and
\begin{eqnarray}\label{b110}
\left|I_{13}^{2}\right|&&\leq\epsilon X(t)+C\left(\epsilon\right)\int_{0}^{t}\int_{0}^{\infty}\left(1+\theta^{b-3}\right)\left(r^{n-1}u\right)_{x}^{4}dxds\nonumber\\
&&\leq\epsilon X(t)+C\left(\epsilon\right)\int_{0}^{t}\int_{0}^{\infty}\left(1+\theta^{b-3}\right)
\left(\frac{u^{4}}{r^{4}}+r^{4n-4}u_{x}^{4}\right)dxds\\
&&\leq\epsilon X(t)+C\left(\epsilon\right)\left(1+\left\|\theta\right\|_{\infty}^{b-3}\right)
\left(1+\int_{0}^{t}\left\|\left(r^{n-1}u_{x}\right)(s)\right\|^{2}_{L^{\infty}\left([0,\infty)\right)} \left(\int_{0}^{\infty} r^{2(n-1)}u^{2}_{x}dx\right)ds\right)\nonumber\\
&&\leq\epsilon X(t)+C\left(\epsilon\right)\left(1+\left\|\theta\right\|_{\infty}^{2l_{2}+l_{3}+b-3}\right)\nonumber\\
&&\leq \epsilon \left(X(t)+Y(t)\right)+C\left(\epsilon\right).\nonumber
\end{eqnarray}
Thus the combination of \eqref{b108}, \eqref{b109}, \eqref{b110} and the assumption $b>\frac{19}{4}$ give birth to
\begin{eqnarray}\label{b111}
I_{13}\leq \epsilon \left(X(t)+Y(t)\right)+C\left(\epsilon\right).
\end{eqnarray}

Now for the term $I_{14}$, we have from \eqref{b66} that
\begin{eqnarray}\label{b112}
I_{14}&\leq& C\int_{0}^{t}\int_{0}^{\infty}r^{2n-2}\left(1+\theta^{b+1}\right)\left|\theta_{x}v_{x}\left(r^{n-1}u\right)_{x}\right|dxds\nonumber\\
&&+C\int_{0}^{t}\int_{0}^{\infty}r^{2n-2}\left(1+\theta^{b+1}\right)\left|\theta_{x}\left(r^{n-1}u\right)_{xx}\right|dxds\\
&:=&I_{14}^{1}+I_{14}^{2},\nonumber
\end{eqnarray}
then we can obtain from \eqref{bb13}, \eqref{cc16}, \eqref{c1}, \eqref{c17}, \eqref{c23} and the fact that
\begin{eqnarray}\label{bd9}
\left(r^{n-1}u\right)_{xx}=2(n-1)r^{-1}vu_{x}+(n-1)r^{-1}v_{x}u-(n-1)r^{-1-n}v^{2}u+r^{n-1}u_{xx},
\end{eqnarray}
that
\begin{eqnarray}\label{b113}
I_{14}^{1}&\leq& C\int_{0}^{t}\int_{0}^{\infty}\frac{\kappa r^{2n-2}\theta^{2}_{x}}{v\theta^{2}}\cdot\frac{1+\theta^{2b+4}}{1+\theta^{b}}dxds+C\int_{0}^{t}\int_{0}^{\infty}r^{2n-2} v^{2}_{x}\left|\left(r^{n-1}u\right)_{x}\right|^{2}dxds\nonumber\\
&\leq& C+C\left\|\theta\right\|_{\infty}^{b+4}+C\int_{0}^{t}\int_{0}^{\infty}r^{2n-2} v^{2}_{x}\left(\frac{u^{2}}{r^{2}}+r^{2n-2}u_{x}^{2}\right)dxds\nonumber\\
&\leq& C+CY(t)^{\frac{b+4}{2b+6}}+\int_{0}^{t}\left\|\frac{u}{r}\right\|^{2}_{L^{\infty}\left([0,\infty)\right)}\int_{0}^{\infty} r^{2n-2}v^{2}_{x}dxds\nonumber\\
&&+C\int_{0}^{t}\left\|r^{n-1}u_{x}\right\|^{2}_{L^{\infty}\left([0,\infty)\right)}\int_{0}^{\infty} r^{2(n-1)}v^{2}_{x}dxds\nonumber\\
&\leq& C+CY(t)^{\frac{b+4}{2b+6}}+CY(t)^{\frac{l_{2}+l_{3}}{2b+6}}\nonumber\\
&\leq& \epsilon Y(t)+C\left(\epsilon\right),
\end{eqnarray}
and
\begin{eqnarray}\label{b114}
I_{14}^{2}&\leq& C\int_{0}^{t}\int_{0}^{\infty}\frac{\kappa r^{2n-2}\theta^{2}_{x}}{v\theta^{2}}\cdot\frac{1+\theta^{2b+4}}{1+\theta^{b}}dxds
+C\int_{0}^{t}\int_{0}^{\infty}\frac{r^{2n-2}\left|\left(r^{n-1}u\right)_{xx}\right|^{2}}{v}dxds\nonumber\\
&\leq& C+C\left\|\theta\right\|_{\infty}^{b+4}+C\int_{0}^{t}\int_{0}^{\infty}\left(r^{2n-4}u_{x}^{2}+r^{2n-4}u^{2}v_{x}^{2}
+\frac{u^{2}}{r^{4}}+r^{4n-4}u_{xx}^{2}\right)dxds\nonumber\\
&\leq& C+CY(t)^{\frac{b+4}{2b+6}}+C\int_{0}^{t}\int_{0}^{\infty}\left(\frac{r^{2n-2}u_{x}^{2}}{v\theta}\cdot\frac{v\theta}{r^{2}}
+\frac{vu^{2}}{r^{2}\theta}\cdot\frac{\theta}{vr^{2}}\right)dxds\nonumber\\
&&+\int_{0}^{t}\left\|\frac{u}{r}\right\|^{2}_{L^{\infty}\left([0,\infty)\right)}\int_{0}^{\infty} r^{2n-2}v^{2}_{x}dxds
+C\int_{0}^{t}\int_{0}^{\infty}r^{4n-4}u_{xx}^{2}dxds\nonumber\\
&\leq& C+CY(t)^{\frac{b+4}{2b+6}}+CY(t)^{\frac{l_{2}+l_{3}}{2b+6}}\nonumber\\
&\leq&\epsilon Y(t)+C\left(\epsilon\right).\nonumber
\end{eqnarray}

Putting \eqref{b112}-\eqref{b114} together, we can see that
\begin{eqnarray}\label{b115}
I_{14}\leq \epsilon Y(t)+C\left(\epsilon\right).
\end{eqnarray}


As to the term $I_{15}$, it is easy to see that
\begin{eqnarray}\label{b116}
I_{15}&&\leq C\int_{0}^{t}\int_{0}^{\infty}r^{2n-2}\left(1+\theta^{b}\right)\left|\theta_{x}\theta_{t}v_{x}\right|dxds\nonumber\\
&&\leq\epsilon X(t)+C\left(\epsilon\right)\int_{0}^{t}\int_{0}^{\infty}r^{4n-4} \left(1+\theta^{b-3}\right)\theta^{2}_{x}v^{2}_{x}dxds\\
&&\leq\epsilon X(t)+C\left(\epsilon\right)\int_{0}^{t}\left\|\frac{r^{n-1}\kappa(v,\theta)\theta_{x}}{v}\right\|^{2}_{L^{\infty}\left([0,\infty)\right)}
\left\|r^{n-1}v_{x}\right\|^{2}ds.\nonumber
\end{eqnarray}

On the other hand, we can deduce from \eqref{b117} that
\begin{eqnarray}\label{b118}
\left(\frac{r^{n-1}\kappa\theta_{x}}{v}\right)_{x}&=&\frac{e_{\theta}\theta_{t}}{r^{n-1}}+\frac{\theta P_{\theta}\left(r^{n-1}u\right)_{x}}{r^{n-1}}-\frac{\alpha\left|\left(r^{n-1}u\right)_{x}\right|^{2}}{vr^{n-1}}-(n-1)r^{-1}\kappa\theta_{x}\nonumber\\
&&+\frac{2\mu\left(n-1\right)\left(r^{n-2}u^{2}\right)_{x}}{r^{n-1}}-\frac{\lambda\phi z}{r^{n-1}}.
\end{eqnarray}

Furthermore, we have
\begin{eqnarray}\label{b119}
\max\limits_{x\in[0,\infty)}\left\{\left(\frac{r^{n-1}\kappa(v,\theta)\theta_{x}}{v}\right)^{2}(t,x)\right\}\leq C\int_{0}^{\infty}\left|\frac{r^{n-1}\kappa(v,\theta)\theta_{x}}{v}\right|\left|\left(\frac{r^{n-1}\kappa(v,\theta)\theta_{x}}{v}\right)_{x}\right|dx.
\end{eqnarray}
Thus for the last term in the right hand side of \eqref{b116}, we can conclude from \eqref{b118}, \eqref{b119} that
\begin{eqnarray}\label{b120}
&&\int_{0}^{t}\left\|\frac{r^{n-1}\kappa(v,\theta)\theta_{x}}{v}\right\|^{2}_{L^{\infty}\left([0,\infty)\right)}
\left\|r^{n-1}v_{x}\right\|^{2}ds\nonumber\\
&\leq& C\left(1+\left\|\theta\right\|^{l_{3}}_{\infty}\right)\int_{0}^{t}\int_{0}^{\infty}\left|\frac{r^{n-1}\kappa(v,\theta)\theta_{x}}{v}\right| \left|\left(\frac{r^{n-1}\kappa(v,\theta)\theta_{x}}{v}\right)_{x}\right|dxds\nonumber\\
&\leq& C\left(1+\left\|\theta\right\|^{l_{3}}_{\infty}\right)\left(\int_{0}^{t}\int_{0}^{\infty}\frac{r^{2n-2}\kappa(v,\theta)\theta_{x}^{2}} {v\theta^{2}}dxds\right)^{\frac{1}{2}}
\left(\int_{0}^{t}\int_{0}^{\infty}\left(1+\theta\right)^{b+2}\left|\left(\frac{r^{n-1}\kappa(v,\theta) \theta_{x}}{v}\right)_{x}\right|^{2}dxds\right)^{\frac{1}{2}} \nonumber\\
&\leq& C\left(1+\left\|\theta\right\|^{l_{3}}_{\infty}\right)\left(\int_{0}^{t}\int_{0}^{\infty}\left(1+\theta\right)^{b+2}
\left(e_{\theta}^{2}(v,\theta)\theta_{t}^{2}+\theta^{2}P_{\theta}^{2}(v,\theta)u_{x}^{2}+u_{x}^{4}+\phi^{2}z^{2} \right)dxds\right)^{\frac{1}{2}}\\
&\leq& C\left(1+Y(t)^{\frac{l_{3}}{2b+6}}\right)\left(\int_{0}^{t}\int_{0}^{\infty} \left(1+\theta\right)^{b+2}
\left(\frac{e_{\theta}^{2}(v,\theta)\theta_{t}^{2}}{r^{2(n-1)}}+\frac{\theta^{2} P_{\theta}^{2}\left|\left(r^{n-1}u\right)_{x}\right|^{2}}{r^{2(n-1)}} +\frac{\left|\left(r^{n-1}u\right)_{x}\right|^{4}}{r^{2(n-1)}}\right.\right.\nonumber\\
&&\left.\left.+r^{-2}\kappa^{2}\theta_{x}^{2}+\frac{\left|\left(r^{n-2}u^{2}\right)_{x}\right|^{2}}{r^{2(n-1)}} +\frac{\phi^{2}z^{2}}{r^{2(n-1)}}\right)dxds
\right)^{\frac{1}{2}}.\nonumber
\end{eqnarray}
To control the last term in the right hand side of \eqref{b120}, one can deduce from \eqref{a25}, \eqref{cc16}, \eqref{f3} and \eqref{bb126} that
\begin{eqnarray}\label{b124}
&&\int_{0}^{t}\int_{0}^{\infty}\frac{(1+\theta)^{b+2}e_{\theta}^{2}(v,\theta)\theta_{t}^{2}}{r^{2(n-1)}}dxds\nonumber\\
&\leq& C\int_{0}^{t}\int_{0}^{\infty}(1+\theta)^{b+8}\theta_{t}^{2}dxds\\
&\leq& C\left(1+\|\theta\|_{\infty}^{5}\right)X(t)\nonumber\\
&\leq& C\left(1+Y(t)^{\frac{5}{2b+6}}\right)X(t),\nonumber
\end{eqnarray}
\begin{eqnarray}\label{b125}
&&\int_{0}^{t}\int_{0}^{\infty}\frac{(1+\theta)^{b+2}\theta^{2}P_{\theta}^{2}(v,\theta)\left|\left(r^{n-1}u\right)_{x}\right|^{2}}{r^{2(n-1)}}dxds\nonumber\\
&\leq& C\int_{0}^{t}\int_{0}^{\infty}\frac{\left(1+\theta\right)^{b+11}\left|\left(r^{n-1}u\right)_{x}\right|^{2}}{v\theta}dxds\\
&\leq& C\left(1+\|\theta\|_{\infty}^{b+11}\right)\nonumber\\
&\leq& C+CY(t)^{\frac{b+11}{2b+6}},\nonumber
\end{eqnarray}
\begin{eqnarray}\label{b126}
&&\int_{0}^{t}\int_{0}^{\infty} \frac{(1+\theta)^{b+2}\left|\left(r^{n-1}u\right)_{x}\right|^{4}}{r^{2(n-1)}}dxds\nonumber\\
&\leq&
C\int_{0}^{t}\int_{0}^{\infty}\frac{\left(1+\theta\right)^{b+2}\left(r^{-4}u^{4} +r^{4(n-1)}u^{4}_{x}\right)}{r^{2(n-1)}}dxds\nonumber\\
&\leq& C\left(1+\left\|\theta\right\|_{\infty}^{b+2}\right)\int_{0}^{t}\left\|\frac{u}{r}\right\|^{2}_{L^{\infty}\left([0,\infty)\right)}\int_{0}^{\infty}u^{2}dxds\\
&&+C\left(1+\left\|\theta\right\|_{\infty}^{b+3}\right)\int_{0}^{t}\left\|u_{x}\right\|^{2}_{L^{\infty}\left([0,\infty)\right)}
\int_{0}^{\infty}\frac{r^{2(n-1)}u^{2}_{x}}{v\theta}dxds\nonumber\\
 &\leq&C\left(1+Y(t)^{\frac{1}{2}}\right)\left(1+Z(t)^{\frac{3}{4}}\right),\nonumber
\end{eqnarray}
and
\begin{eqnarray}\label{b127}
&&\int_{0}^{t}\int_{0}^{\infty}\left(1+\theta\right)^{b+2}r^{-2}\kappa^{2}\theta_{x}^{2}dxds\nonumber\\
&\leq&
C\int_{0}^{t}\int_{0}^{\infty}\frac{\kappa\left(r^{n-1}\theta_{x}\right)^{2}}{v\theta^{2}}\cdot\left(1+\theta\right)^{b+2}
\left(1+\theta\right)^{b}\theta^{2}dxds\\
&\leq& C\left(1+\left\|\theta\right\|_{\infty}^{2b+4}\right)\nonumber\\
&\leq& C\left(1+Y(t)^{\frac{2b+4}{2b+6}}\right).\nonumber
\end{eqnarray}

Moreover, from \eqref{b13} and the fact that
\begin{eqnarray}\label{b128}
\left(r^{n-2}u^{2}\right)_{x}=(n-2)r^{n-3}r_{x}u^{2}+2r^{n-2}uu_{x}=(n-2)r^{-2}vu^{2}+2r^{n-2}uu_{x},
\end{eqnarray}
we have
\begin{eqnarray}\label{b129}
&&\int_{0}^{t}\int_{0}^{\infty}\frac{\left(1+\theta\right)^{b+2}\left|\left(r^{n-2}u^{2}\right)_{x}\right|^{2}}{r^{2(n-1)}}dxds\nonumber\\
&\leq& C\int_{0}^{t}\int_{0}^{\infty}\left[\frac{\left(1+\theta\right)^{b+2}u^{4}}{r^{2(n-1)+4}}+
\frac{r^{2(n-2)}\left(1+\theta\right)^{b+2}u^{2}u^{2}_{x}}{r^{2(n-1)}}\right]dxds\nonumber\\
&\leq& C\left(1+\left\|\theta\right\|_{\infty}^{b+2}\right)\int_{0}^{t}\int_{0}^{\infty}\frac{u^{4}}{r^{2}}dxds\\
&&+C\left(1+\left\|\theta\right\|_{\infty}^{b+3}\right)\int_{0}^{t}\left\|u_{x}\right\|^{2}_{L^{\infty}\left([0,\infty)\right)}
\int\frac{u^{2}}{r^{2}\theta}dxds\nonumber\\
 &\leq&C\left(1+Y(t)^{\frac{1}{2}}\right)\left(1+Z(t)^{\frac{3}{4}}\right)\nonumber
\end{eqnarray}
and
\begin{eqnarray}\label{b130}
&&\int_{0}^{t}\int_{0}^{\infty}\frac{\left(1+\theta\right)^{b+2}\phi^{2}z^{2}}{r^{2(n-1)}}dxds\nonumber\\
&\leq& C\left(1+\left\|\theta\right\|_{\infty}^{b+2+\beta}\right)\int_{0}^{t}\int_{0}^{\infty}\phi z^{2}dxds\\
&\leq& C+CY(t)^{\frac{b+2+\beta}{2b+6}}.\nonumber
\end{eqnarray}

Consequently, we obtain by combining the estimates (\ref{b120})-(\ref{b130}) that
\begin{eqnarray}\label{b131}
&&\int_{0}^{t}\left\|\frac{r^{n-1}\kappa(v,\theta)\theta_{x}}{v}\right\|^{2}_{L^{\infty}\left([0,\infty)\right)}
\left\|r^{n-1}v_{x}\right\|^{2}ds\nonumber\\
&\leq& C\left(1+Y(t)^{\frac{l_{3}}{2b+6}}\right)\left[\left(1+Y(t)^{\frac{5}{2b+6}}\right)X(t)\right.\nonumber\\
&&\left.+Y(t)^{\frac{b+11}{2b+6}}+\left(1+Y(t)^{\frac{1}{2}}\right)\left(1+Z(t)^{\frac{3}{4}}\right)+Y(t)^{\frac{2b+4}{2b+6}} +Y(t)^{\frac{b+2+\beta}{2b+6}}\right]^{\frac{1}{2}}\nonumber\\
&\leq& C\left(1+Y(t)^{\frac{l_{3}}{2b+6}}\right)\bigg(1+X(t)^{\frac{1}{2}}+X(t)^{\frac{1}{2}}Y(t)^{\frac{5}{4b+12}}+
Y(t)^{\frac{b+11}{4b+12}}+Z(t)^{\frac{3}{8}}\\
&&+Y(t)^{\frac{1}{4}}Z(t)^{\frac{3}{8}}+Y(t)^{\frac{2b+4}{4b+12}}
+Y(t)^{\frac{b+2+\beta}{4b+12}}\bigg)\nonumber\\
&\leq& C\bigg(1+X(t)^{\frac{1}{2}}+X(t)^{\frac{1}{2}}Y(t)^{\frac{2l_{3}+5}{4b+12}}+Y(t)^{\frac{2l_{3}+b+11}{4b+12}}
+Y(t)^{\frac{2l_{3}+b+3}{4b+12}}Z(t)^{\frac{3}{8}}\nonumber\\
&&+Y(t)^{\frac{2b+2l_{3}+4}{4b+12}}+Y(t)^{\frac{b+2+\beta+2l_{3}}{4b+12}}\bigg)\nonumber\\
&\leq&\epsilon\left(X(t)+Y(t)\right)+C\left(\epsilon\right) \left(1+Z(t)^{\frac{3(b+3)}{2(3b+9-2l_{3})}}\right),\nonumber
\end{eqnarray}
where we have used the facts that
\begin{eqnarray*}
X(t)^{\frac{1}{2}}Y(t)^{\frac{2l_{3}+5}{4b+12}}&&\leq\epsilon X(t)+C\left(\epsilon\right)Y(t)^{\frac{5+2l_{3}}{2b+6}}\\
&&\leq \epsilon \left(X(t)+Y(t)\right)+C\left(\epsilon\right),\\
Y(t)^{\frac{b+11+2l_{3}}{4b+12}}&&\leq \epsilon Y(t)+C\left(\epsilon\right),\\
Y(t)^{\frac{2l_{3}+b+3}{4b+12}}Z(t)^{\frac{3}{8}}&&\leq\epsilon Y(t)+C\left(\epsilon\right)Z(t)^{\frac{3(b+3)}{2(3b+9-2l_{3})}},\\
Y(t)^{\frac{2b+4+2l_{3}}{4b+12}}&&\leq \epsilon Y(t)+C\left(\epsilon\right),\\
Y(t)^{\frac{b+2+\beta+2l_{3}}{4b+12}}&&\leq \epsilon Y(t)+C\left(\epsilon\right)
\end{eqnarray*}
together with the assumption $0\leq\beta<b+9$.

Combining (\ref{b116})-(\ref{b131}), we can get the following estimate on $I_{15}$
\begin{eqnarray}\label{b132}
|I_{15}|&&\leq\epsilon\left(X(t)+Y(t)\right)+C\left(\epsilon\right)\left(1+Z(t)^{\frac{3(b+3)}{2(3b+9-2l_{3})}}\right).
\end{eqnarray}

Now for $I_k (k=16,17)$, we can get from \eqref{b15} and the assumption $b>\frac{19}{4}$ and $0\leq\beta<b+9$ that
\begin{eqnarray}\label{b133}
I_{16}&&=\left|\int_{0}^{t}\int_{0}^{\infty}\lambda\phi zK_{v}(v,\theta)\left(r^{n-1}u\right)_{x}dxds\right| \nonumber\\
&&\leq C\left(1+\|\theta\|_{\infty}^{\frac{\beta+3}{2}}\right)\left(\int_{0}^{t}\int_{0}^{\infty}\phi z^{2}dxds\right)^{\frac{1}{2}}\left(\int_{0}^{t}\int_{0}^{\infty}\frac{\left|\left(r^{n-1}u\right)_{x}\right|^{2}}{v\theta}dxds\right)^{\frac{1}{2}}\\
&&\leq C+CY(t)^{\frac{\beta+3}{4b+12}}\nonumber\\
&&\leq\epsilon Y(t)+C\left(\epsilon\right)\nonumber
\end{eqnarray}
and
\begin{eqnarray}\label{bb134}
I_{17}&&=\left|\int_{0}^{t}\int_{0}^{\infty}\frac{\lambda\phi z\kappa\left(v,\theta\right)\theta_{t}}{v}dxds\right| \nonumber\\
&&\leq \epsilon X(t)+C\left(\epsilon\right)\int_{0}^{t}\int_{0}^{\infty}\left(1+\theta^{b+\beta-3}\right)\phi z^{2}dxds\\
&&\leq \epsilon X(t)+C\left(\epsilon\right)\left(1+\|\theta\|^{b+\beta-3}_{\infty}\right)\int_{0}^{t}\int_{0}^{\infty}\phi z^{2}dxds\nonumber\\
&&\leq \epsilon (X(t)+Y(t))+C\left(\epsilon\right).\nonumber
\end{eqnarray}

Finally, for the term $I_{18}$, one can infer from \eqref{b128} that
\begin{eqnarray}\label{b135}
I_{18}&\leq& C\int_{0}^{t}\int_{0}^{\infty}\left(r^{-2}u^{2}+r^{n-2}\left|uu_{x}\right|\right)\left[\theta\left|\left(r^{n-1}u\right)_{x}\right|+
\left(1+\theta^{b}\right)\left|\theta_{t}\right|\right]dxds\\
&\leq& C\int_{0}^{t}\int_{0}^{\infty}\bigg[r^{-2}u^{2}\theta\left|\left(r^{n-1}u\right)_{x}\right|
+r^{-2}u^{2}\left(1+\theta^{b}\right)\left|\theta_{t}\right|\nonumber\\
&&+r^{n-2}\left(1+\theta^{b}\right)\left|uu_{x}\theta_{t}\right|+r^{n-2}\theta \left|uu_{x}\left(r^{n-1}u\right)_{x}\right|\bigg]dxds\nonumber\\
&:=&I_{18}^{1}+I_{18}^{2}+I_{18}^{3}+I_{18}^{4}.\nonumber
\end{eqnarray}
$I_{18}^j (j=1,2,3,4)$ can be estimated term by term as in the following: Firstly, we have from \eqref{cc16} that
\begin{eqnarray}\label{b136}
I_{18}^{1}&&\leq C\int_{0}^{t}\int_{0}^{\infty}\frac{\left|\left(r^{n-1}u\right)_{x}\right|^{2}}{v\theta}\cdot\theta^{3}dxds
+C\int_{0}^{t}\int_{0}^{\infty}\frac{u^{4}}{r^{4}}dxds\nonumber\\
&&\leq C\left\|\theta\right\|_{\infty}^{3}+\int_{0}^{t}\left\|\frac{u}{r}\right\|^{2}_{L^{\infty}\left([0,\infty)\right)}
\int_{0}^{\infty}u^{2}dxds\nonumber\\
&&\leq C+CY(t)^{\frac{3}{2b+6}}\nonumber\\
&&\leq \epsilon Y(t)+C\left(\epsilon\right),
\end{eqnarray}
and
\begin{eqnarray}\label{bb137}
I_{18}^{2}&&\leq \epsilon X(t)+C\left(\epsilon\right)\int_{0}^{t}\int_{0}^{\infty}r^{-4}u^{4}\left(1+\theta^{b-3}\right)dxds\nonumber\\
&&\leq\epsilon X(t)+C\left(\epsilon\right)\left(1+\left\|\theta\right\|^{b-3}_{\infty}\right)\\
&&\leq\epsilon X(t)+C\left(\epsilon\right)\left(1+Y(t)^{\frac{b-3}{2b+6}}\right)\nonumber\\
&&\leq\epsilon\left(X(t)+Y(t)\right)+C\left(\epsilon\right).\nonumber
\end{eqnarray}

As for the term $I_{18}^{3}$, we have from \eqref{cc16} and \eqref{c31} that
\begin{eqnarray}\label{bbc138}
I_{18}^{3}&&\leq \epsilon X(t)+C\left(\epsilon\right)\int_{0}^{t}\int_{0}^{\infty}\frac{r^{2(n-1)}u^{2}u^{2}_{x}\left(1+\theta^{b-3}\right)}{r^{2}}dxds\nonumber\\
&&\leq\epsilon X(t)+C\left(\epsilon\right)\left(1+\left\|\theta\right\|^{b-3}_{\infty}\right)
\int_{0}^{t}\left\|\frac{u}{r}\right\|^{2}_{L^{\infty}\left([0,\infty)\right)}\int_{0}^{\infty}r^{2(n-1)}u^{2}_{x}dxds\nonumber\\
&&\leq\epsilon X(t)+C\left(\epsilon\right)\left(1+\left\|\theta\right\|^{b-3+l_{2}+l_{3}}_{\infty}\right)\\
&&\leq\epsilon X(t)+C\left(\epsilon\right)\left(1+Y(t)^{\frac{b-3+l_{2}+l_{3}}{2b+6}}\right)\nonumber\\
&&\leq\epsilon\left(X(t)+Y(t)\right)+C\left(\epsilon\right).\nonumber
\end{eqnarray}

For $I^4_{18}$, we can get from \eqref{bb13}, \eqref{cc16} and the fact that $l_{2}<2b+6$ since $b>\frac{19}{4}$ that
\begin{eqnarray}\label{bg139}
I_{18}^{4}&&\leq C\int_{0}^{t}\int_{0}^{\infty}\frac{\left|\left(r^{n-1}u\right)_{x}\right|^{2}}{v\theta}\cdot\theta^{3}dxds
+C\int_{0}^{t}\int_{0}^{\infty}r^{2n-4}u^{2}u^{2}_{x}dxds\nonumber\\
&&\leq C\left\|\theta\right\|^{3}_{\infty}
+C\int_{0}^{t}\left\|r^{n-1}u_{x}\right\|^{2}_{L^{\infty}\left([0,\infty)\right)}\int_{0}^{\infty}u^{2}dxds\\
&&\leq C+CY(t)^{\frac{3}{2b+6}}+CY(t)^{\frac{l_{2}}{2b+6}}\nonumber\\
&&\leq\epsilon Y\left(t\right)+C\left(\epsilon\right).\nonumber
\end{eqnarray}
Thus it follows from \eqref{b135}-\eqref{bg139} that
\begin{eqnarray}\label{bg140}
I_{18}&&\leq \epsilon\left(X(t)+Y(t)\right)+C\left(\epsilon\right).
\end{eqnarray}

With the above estimates in hand, if we define $\lambda_1$ as in \eqref{bz136}, then combining all the above estimates and by choosing $\epsilon>0$ small enough, we can get the estimate \eqref{b62} immediately. Since $b>\frac{19}{4}$, one can deduce from \eqref{bd58}, \eqref{cc1}, \eqref{cb24} and \eqref{bz136} that $0<\lambda_1<1$. This completes the proof of Lemma 4.1.
\end{proof}

Our next result in this section is to show that $Z(t)$ can be bounded by $X(t)$ and $Y(t)$.
\begin{lemma} Under the assumptions listed in Lemma 2.1, we have for all $0\leq t\leq T$ that
\begin{eqnarray}\label{b137}
Z(t)\leq C\left(1+X(t)+Y(t)+Z(t)^{\lambda_{2}}\right).
\end{eqnarray}
Here $\lambda_{2}$ is given by
\begin{eqnarray}\label{b150}
\lambda_{_{2}}=\max\left\{\frac{b+3}{2(2b+5)},\;\frac{3(b+3)}{2(2b+6-l_{1})}\right\}.
\end{eqnarray}
It is easy to see that $\lambda_2\in(0,1)$ provided that $b>\frac{19}{4}$.
\end{lemma}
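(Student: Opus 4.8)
The plan is to control $Z(t)=\max_{s\le t}\|u_{xx}(s)\|^{2}$ by first deriving an estimate for a weighted $L^{2}$-norm of $u_{t}$ and then recovering $u_{xx}$ algebraically from the momentum equation. Setting $\sigma:=\alpha(r^{n-1}u)_{x}/v-P$, equation $\eqref{a20}_{2}$ reads $u_{t}=r^{n-1}\sigma_{x}$; expanding $\sigma_{x}$ gives
\[
u_{xx}=\frac{v}{\alpha r^{2n-2}}\Big(u_{t}-2\alpha(n-1)r^{n-2}u_{x}+\frac{\alpha r^{2n-2}u_{x}v_{x}}{v^{2}}+\frac{\alpha(n-1)uv}{r^{2}}+r^{n-1}P_{x}\Big).
\]
Since $r\ge 1$ by \eqref{a25} and $\underline V\le v\le\overline V$ by \eqref{b43}, squaring and integrating and then using $\|u\|^{2}\le C$ from \eqref{bb13}, $\|u_{x}\|^{2}\le C(1+Z^{1/4})^{2}$ from \eqref{b60}, $\|u_{x}\|_{L^{\infty}}^{2}\le C(1+Z^{3/4})$ from \eqref{b61}, $\int P_{x}^{2}\le C\int(\theta_{x}^{2}+\theta^{2}v_{x}^{2}+\theta^{6}\theta_{x}^{2})dx\le C(1+Y)$ (via $\theta^{6}\le C(1+\theta^{2b})$ since $b>3$, together with \eqref{b58}, \eqref{bz57}, \eqref{c23}), and $\int r^{2n-2}u_{x}^{2}v_{x}^{2}\le\|u_{x}\|_{L^{\infty}}^{2}\|v_{x}\|^{2}\le C(1+Z^{3/4})(1+\|\theta\|_{\infty}^{l_{1}})$, Young's inequality applied to the cross term $Z^{3/4}\|\theta\|_{\infty}^{l_{1}}$ (recall $\|\theta\|_{\infty}\le C(1+Y^{1/(2b+6)})$) gives $\|u_{xx}(t)\|^{2}\le C\int_{0}^{\infty}r^{-4(n-1)}u_{t}^{2}dx+C(1+Y(t)+Z(t)^{\lambda_{2}})$. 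Thus it remains to bound $\sup_{s\le t}\int_{0}^{\infty}r^{-4(n-1)}u_{t}^{2}(s)dx$ by $C(1+X(t)+Y(t)+Z(t)^{\lambda_{2}})$.

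For this I differentiate $\eqref{a20}_{2}$ in $t$. Using $r_{t}=u$ and $\sigma_{x}=r^{1-n}u_{t}$ one finds $u_{tt}=(n-1)r^{-1}uu_{t}+r^{n-1}\sigma_{tx}$. Multiplying by $r^{-4(n-1)}u_{t}$, integrating over $[0,t)\times[0,\infty)$ and integrating by parts in $x$ (boundary terms vanish since $u(\cdot,0)=0$ forces $u_{t}(\cdot,0)=0$ and $r\to\infty$ at $x=\infty$), the principal part $\alpha r^{n-1}u_{tx}/v$ of $\sigma_{t}=\alpha\big((r^{n-1}u)_{x}/v\big)_{t}-P_{t}$ produces the good term $\int_{0}^{t}\int_{0}^{\infty}\alpha v^{-1}r^{-2(n-1)}u_{tx}^{2}dxds$ on the left, while $P_{t}=P_{v}v_{t}+P_{\theta}\theta_{t}$ and the remaining pieces of $\sigma_{t}$ (containing $(r^{n-2}u^{2})_{x}$, $u_{t}$ and $((r^{n-1}u)_{x})^{2}$) form the forcing. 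The weight $r^{-4(n-1)}$ is chosen exactly so that the initial datum $\int r_{0}^{-4(n-1)}u_{t}^{2}(0)dx$, computed from the displayed expression for $u_{xx}$ at $t=0$, reduces to $C\|u_{0xx}\|^{2}$ plus lower-order quantities bounded by the data \eqref{aa9}.

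It then remains to estimate the forcing. The geometric term $(n-1)\int r^{-(4n-3)}uu_{t}^{2}dx$ must be handled \emph{without} Gronwall's inequality, so that the final bound is independent of $t$: writing it as $\int(u/r)(u_{t}/r^{2(n-1)})^{2}dx$, one uses $\|u/r\|_{L^{2}}\le C$ (from \eqref{a25} and $\|u\|^{2}\le C$), the one-dimensional Sobolev inequality for $u_{t}/r^{2(n-1)}$, Young's inequality to absorb $\int_{0}^{t}\int r^{-2(n-1)}u_{tx}^{2}$ into the dissipation, and the bound $\int_{0}^{t}\int r^{-4(n-1)}u_{t}^{2}ds\le C\int_{0}^{t}\int u_{xx}^{2}ds+(\text{l.o.t.})\le C(1+\|\theta\|_{\infty}^{l_{2}})$ from \eqref{c1}. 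Each remaining forcing term is a product of factors among $u,u_{x},u_{t},u_{tx},v_{x},\theta_{x},\theta_{t}$ and a power of $\theta$; using $\|\theta\|_{\infty}\le C(1+Y^{1/(2b+6)})$, $\|u_{x}\|\le C(1+Z^{1/4})$, $\|u_{x}\|_{L^{\infty}}\le C(1+Z^{3/8})$ from \eqref{b58}, \eqref{b60}, \eqref{b61}, the time-integrated bounds of \eqref{c1} and \eqref{c31} on $\int_{0}^{t}\int r^{2(n-1)}u_{xx}^{2}$ and $\int_{0}^{t}\int r^{4(n-1)}u_{xx}^{2}$, \eqref{bz57} and \eqref{c23} on $\|v_{x}\|^{2}$ and $\|r^{n-1}v_{x}\|^{2}$, \eqref{c15} and \eqref{c17} on $\|u\|_{L^{\infty}}$ and $\int_{0}^{t}\|r^{n-1}u_{x}\|_{L^{\infty}}^{2}$, \eqref{cc16}, \eqref{bb13}, and the hypotheses $b>\tfrac{19}{4}$, $0\le\beta<b+9$ (which make $l_{1},l_{2},l_{3}$ small relative to $2b+6$), each such term is dominated by $\epsilon\big(X(t)+Y(t)+\sup_{s\le t}\int r^{-4(n-1)}u_{t}^{2}+\int_{0}^{t}\int v^{-1}r^{-2(n-1)}u_{tx}^{2}\big)+C(\epsilon)(1+Z(t)^{\lambda_{2}})$; the exponent $\lambda_{2}$ of \eqref{b150} arises from Young's inequality applied to the two worst combinations, $\|\theta\|_{\infty}Z^{1/4}$ and $Z^{3/4}\|\theta\|_{\infty}^{l_{1}}$. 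Choosing $\epsilon$ small, absorbing, and feeding the resulting bound back into the reduction above yields \eqref{b137}; that $0<\lambda_{2}<1$ when $b>\tfrac{19}{4}$ follows from \eqref{bd58}, \eqref{cc1}, \eqref{cb24} and \eqref{b150}.

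I expect the main obstacle to be the bookkeeping in the third step: one must track the exact powers of $\|\theta\|_{\infty}$ (equivalently $Y$) and of $Z$ produced by every forcing term and verify that Young's inequality places them at an exponent $\le 1$ in front of $Y$ and $\le\lambda_{2}<1$ in front of $Z$ — it is precisely here that $b>\tfrac{19}{4}$ and $0\le\beta<b+9$ are used. A secondary difficulty is the correct choice of the weight $r^{-4(n-1)}$, forced simultaneously by $\|u_{xx}\|^{2}\lesssim\int r^{-4(n-1)}u_{t}^{2}+(\text{l.o.t.})$, by finiteness of the initial datum, and by the Sobolev argument that disposes of the geometric term $uu_{t}^{2}$ without introducing a time-growing factor.
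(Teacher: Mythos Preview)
Your strategy is the paper's strategy: differentiate the momentum equation in $t$, run an $L^2$-energy estimate on $u_t$, then recover $\|u_{xx}\|^{2}$ from the algebraic relation \eqref{b140}. The final step is identical in both arguments, and your identification of the two ``worst'' combinations $\|\theta\|_{\infty}Z^{1/4}$ and $\|\theta\|_{\infty}^{l_{1}}Z^{3/4}$ (which via Young give precisely the two exponents in \eqref{b150}) is exactly what the paper does in \eqref{bc141} and \eqref{b143}--\eqref{b144}.

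The genuine difference is the choice of test function. The paper multiplies $\partial_t\eqref{a20}_2$ by the \emph{unweighted} $u_t$; after one integration by parts the identity produces, besides the dissipation $\int_0^t\!\int\alpha r^{2(n-1)}u_{tx}^{2}/v$, the good sign-definite term $\alpha(n-1)^{2}\int_0^t\!\int v\,u_t^{2}/r^{2}$ on the left (see \eqref{b138}). All the ``geometric'' contributions $I_{25}$--$I_{29}$ are then absorbed by $\epsilon\int_0^t\!\int v\,u_t^{2}/r^{2}$ via Cauchy's inequality, and no Sobolev/Gagliardo--Nirenberg argument is needed for the $uu_t^{2}$-type terms. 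Your weight $r^{-4(n-1)}$ kills this structural positivity, which is why you must instead run the interpolation argument for $u_t/r^{2(n-1)}$; that argument is correct, but more delicate. On the other hand, your weight has a real advantage: the initial datum $\int r_0^{-4(n-1)}u_t^{2}(0)\,dx$ is controlled by $\|u_{0xx}\|^{2}$ plus lower order terms, whereas the paper's unweighted $\|u_t(0)\|^{2}$ effectively involves $\int r_0^{4(n-1)}u_{0xx}^{2}\,dx$, which is not among the stated hypotheses \eqref{aa9}. So your route trades a cleaner energy structure for a cleaner treatment of the data; both reach \eqref{b137} with the same $\lambda_2$.
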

\begin{proof}
Differentiating $(\ref{a20})_{2}$ with respect to $t$, multiplying it by $u_{t}$, we obtain that
\begin{eqnarray}
\partial_{t}\left(\frac{u_{t}^{2}}{2}\right)&=&\left\{\left[r^{n-1}\left(\alpha\frac{\left(r^{n-1}u\right)_{x}}{v}-P\right)\right]_{t}u_{t}\right\}_{x}
-\left[r^{n-1}\left(\alpha\frac{\left(r^{n-1}u\right)_{x}}{v}-P\right)\right]_{t}u_{tx}\nonumber\\
&&-\left[\left(r^{n-1}\right)_{x}\left(\alpha\frac{\left(r^{n-1}u\right)_{x}}{v}-P\right)\right]_{t}u_{t}.\nonumber
\end{eqnarray}
Then integrating the above identity with respect to $t$ and $x$ over $\left[0,t\right)\times[0,\infty)$ yields
\begin{eqnarray}\label{b138}
&&\frac{\|u_{t}(t)\|^{2}}{2}+\int_{0}^{t}\int_{0}^{\infty}\frac{\alpha r^{2(n-1)}u_{tx}^{2}}{v}dxds+\alpha(n-1)^{2}\int_{0}^{t}\int_{0}^{\infty}\frac{vu_{t}^{2}}{r^{2}}dxds\nonumber\\
&\leq& C\underbrace{-\int_{0}^{t}\int_{0}^{\infty}\alpha(n-1)(n-2)r^{n-3}u^{2}u_{tx}dxds}_{I_{19}} \underbrace{-2\alpha(n-1)\int_{0}^{t}\int_{0}^{\infty}r^{n-2}u_{t}u_{tx}dxds}_{I_{20}}\nonumber\\
&&\underbrace{-2\alpha(n-1)\int_{0}^{t}\int_{0}^{\infty}\frac{r^{2n-3}uu_{x}u_{tx}}{v}dxds}_{I_{21}} +\underbrace{\alpha\int_{0}^{t}\int_{0}^{\infty}\frac{r^{2(n-1)}u_{x}u_{tx}v_{t}}{v^{2}}dxds}_{I_{22}}\nonumber\\
&&+\underbrace{\int_{0}^{t}\int_{0}^{\infty}r^{n-1}P_{t}u_{tx}dxds}_{I_{23}}
+\underbrace{(n-1)\int_{0}^{t}\int_{0}^{\infty}r^{n-2}Puu_{tx}dxds}_{I_{24}}\\
&&+\underbrace{2\alpha(n-1)^{2}\int_{0}^{t} \int_{0}^{\infty}\frac{vu^{2}u_{t}}{r^{3}}dxds}_{I_{25}}
\underbrace{-\alpha(n-1)^{2}\int_{0}^{t}\int_{0}^{\infty}r^{-2}uu_{t}v_{t}dxds}_{I_{26}}\nonumber\\
&&\underbrace{-\alpha(n-1)(n-2)\int_{0}^{t}\int_{0}^{\infty}r^{n-3}uu_{x}u_{t}dxds}_{I_{27}}
+\underbrace{(n-1)\int_{0}^{t}\int_{0}^{\infty}r^{-1}Pu_{t}v_{t}dxds}_{I_{28}}\nonumber\\
&&\underbrace{-(n-1)\int_{0}^{t}\int_{0}^{\infty}r^{-2}vuPu_{t}dxds}_{I_{29}} +\underbrace{(n-1)\int_{0}^{t}\int_{0}^{\infty}vr^{-1}u_{t}P_{t}dxds}_{I_{30}}.\nonumber
\end{eqnarray}
Now we turn to estimate $I_k (k=19, 20, \cdots, 30)$ term by term. To this end, we compute from \eqref{a23}, \eqref{bb13} and \eqref{cc16} that
\begin{eqnarray}\label{bb139}
I_{19}&&\leq\epsilon\int_{0}^{t}\int_{0}^{\infty}\frac{\alpha r^{2(n-1)}u_{tx}^{2}}{v}dxds+C(\epsilon)\int_{0}^{t}\left\|\frac{u}{r}\right\|^{2}_{L^{\infty}\left([0,\infty)\right)}\int_{0}^{\infty}u^{2}dxds\nonumber\\
&&\leq\epsilon\int_{0}^{t}\int_{0}^{\infty}\frac{\alpha r^{2(n-1)}u_{tx}^{2}}{v}dxds+C(\epsilon)
\end{eqnarray}
and
\begin{eqnarray}\label{bb140}
I_{20}&&=-\alpha(n-1)\int_{0}^{t}\int_{0}^{\infty}r^{n-2}\left(u_{t}^{2}\right)_{x}dxds\nonumber\\
&&=\alpha(n-1)\int_{0}^{t}\int_{0}^{\infty}\left(r^{n-2}\right)_{x}u_{t}^{2}dxds\\
&&=\alpha(n-1)(n-2)\int_{0}^{t}\int_{0}^{\infty}r^{n-3}r_{x}u_{t}^{2}dxds\nonumber\\
&&=\alpha(n-1)(n-2)\int_{0}^{t}\int_{0}^{\infty}r^{-2}vu_{t}^{2}dxds.\nonumber
\end{eqnarray}

As for the term $I_{21}$, due to
\begin{eqnarray}\label{bb141}
I_{21}&&\leq\epsilon\int_{0}^{t}\int_{0}^{\infty}\frac{\alpha r^{2(n-1)}u_{tx}^{2}}{v}dxds+C(\epsilon)\int_{0}^{t}\int_{0}^{\infty}r^{2(n-1)-2}u^{2}u_{x}^{2}dxds,
\end{eqnarray}
and noticing
\begin{eqnarray}\label{bc141}
&&\int_{0}^{t}\int_{0}^{\infty}r^{2(n-1)-2}u^{2}u_{x}^{2}dxds\nonumber\\
&\leq& C\int_{0}^{t}\int_{0}^{\infty}\frac{r^{2(n-1)}u_{x}^{2}}{v\theta}\cdot\frac{u^{2}\theta}{r^{2}}dxds\nonumber\\
&\leq& C\left\|\theta\right\|_{\infty}\int_{0}^{t}\left\|u\right\|_{L^{\infty}\left([0,\infty)\right)}^{2}\int_{0}^{\infty}\frac{r^{2(n-1)}u_{x}^{2}}{v\theta}dxds\\
&\leq& C\left\|\theta\right\|_{\infty}\int_{0}^{t}\left\|u\right\|\left\|u_{x}\right\|\int_{0}^{\infty}\frac{r^{2(n-1)}u_{x}^{2}}{v\theta}dxds\nonumber\\
&\leq& C\left(1+Y^{\frac{1}{2b+6}}(t)\right)\left(1+Z^{\frac{1}{4}}(t)\right),\nonumber\\
&\leq& C\left(1+Y(t)+Z^{\frac{b+3}{2(2b+5)}}(t)\right),\nonumber
\end{eqnarray}
we can deduce that
\begin{eqnarray}\label{bb142}
I_{21}\leq\epsilon\int_{0}^{t}\int_{0}^{\infty}\frac{\alpha r^{2(n-1)}u_{tx}^{2}}{v}dxds+C(\epsilon)\left(1+Y(t)+Z^{\frac{b+3}{2(2b+5)}}(t)\right).
\end{eqnarray}

For the term $I_{22}$, since
\begin{eqnarray}\label{bb143}
I_{22}&&\leq\epsilon\int_{0}^{t}\int_{0}^{\infty}\frac{\alpha r^{2(n-1)}u_{tx}^{2}}{v}dxds+C(\epsilon)\int_{0}^{t}\int_{0}^{\infty} \frac{r^{2(n-1)}u_{x}^{2}\left(r^{n-1}u\right)^{2}_{x}}{v^{2}}dxds,
\end{eqnarray}
and due to
\begin{eqnarray}\label{bb145}
\int_{0}^{t}\int_{0}^{\infty}\frac{r^{2(n-1)}u_{x}^{2}\left|\left(r^{n-1}u\right)_{x}\right|^{2}}{v^{2}}dxds&&\leq C\int_{0}^{t}\int_{0}^{\infty}\left(\frac{r^{2(n-1)}u^{2}_{x}u^{2}}{r^{2}} +r^{4(n-1)}u^{4}_{x}\right)dxds
\end{eqnarray}
and
\begin{eqnarray}\label{bb147}
&&\int_{0}^{t}\int_{0}^{\infty}r^{4(n-1)}u_{x}^{4}dxds\nonumber\\
&\leq& C\int_{0}^{t}\left\|r^{n-1}u_{x}\right\|^{2}_{L^{\infty}\left([0,\infty)\right)}\int_{0}^{\infty} r^{2(n-1)}u^{2}_{x}dxds\nonumber\\
&\leq& C+C\left\|\theta\right\|^{2l_{2}+l_{3}}_{\infty}\\
&\leq& C+CY^{\frac{2l_{2}+l_{3}}{2b+6}}\nonumber\\
&\leq& C\left(1+Y(t)\right),\nonumber
\end{eqnarray}
we have by combining \eqref{bc141}, \eqref{bb145}-\eqref{bb147} that
\begin{eqnarray}\label{bb148}
\int_{0}^{t}\int_{0}^{\infty}\frac{r^{2(n-1)}u_{x}^{2}\left|\left(r^{n-1}u\right)_{x}\right|^{2}}{v^{2}}dxds\leq C\left(1+Y(t)+Z^{\frac{b+3}{2(2b+5)}}(t)\right).
\end{eqnarray}
Consequently, we can conclude from \eqref{bb143}-\eqref{bb148} that
\begin{eqnarray}\label{bb149}
I_{22}&&\leq\epsilon\int_{0}^{t}\int_{0}^{\infty}\frac{\alpha r^{2(n-1)}u_{tx}^{2}}{v}dxds+C(\epsilon)\left(1+Y(t)+Z^{\frac{b+3}{2(2b+5)}}(t)\right).
\end{eqnarray}

Finally for $I_k (k=23, 24, \cdots, 30)$, noticing that
\begin{eqnarray}\label{bz138}
\frac{\partial P(v,\theta)} {\partial t} =\left(\frac{R}{v}+\frac{4}{3}a\theta^{3}\right)\theta_{t}-\frac{R\theta\left(r^{n-1}u\right)_{x}}{v^{2}},
\end{eqnarray}
then it follows from \eqref{a8} and \eqref{b43} that
\begin{eqnarray}\label{bb150}
I_{23}&&\leq\epsilon\int_{0}^{t}\int_{0}^{\infty}\frac{\alpha r^{2(n-1)}u_{tx}^{2}}{v}dxds+C(\epsilon)\int_{0}^{t}\int_{0}^{\infty}P_{t}^{2}dxds\nonumber\\
&&\leq\epsilon\int_{0}^{t}\int_{0}^{\infty}\frac{\alpha r^{2(n-1)}u_{tx}^{2}}{v}dxds
+C(\epsilon)\int_{0}^{t}\int_{0}^{\infty}\left[\left(1+\theta^{6}\right)\theta_{t}^{2}
+\theta^{2}\left|\left(r^{n-1}u\right)_{x}\right|^{2}\right]dxds\\
&&\leq\epsilon\int_{0}^{t}\int_{0}^{\infty}\frac{\alpha r^{2(n-1)}u_{tx}^{2}}{v}dxds
+C(\epsilon)\left(X(t)+\int_{0}^{t}\int_{0}^{\infty}\frac{\left|\left(r^{n-1}u\right)_{x}\right|^{2}}{v\theta}\cdot\theta^{3}\right)dxds\nonumber\\
&&\leq\epsilon\int_{0}^{t}\int_{0}^{\infty}\frac{\alpha r^{2(n-1)}u_{tx}^{2}}{v}dxds
+C(\epsilon)\left(1+X(t)+Y(t)\right),\nonumber
\end{eqnarray}
\begin{eqnarray}\label{bb151}
I_{24}&&\leq\epsilon\int_{0}^{t}\int_{0}^{\infty}\frac{\alpha r^{2(n-1)}u_{tx}^{2}}{v}dxds+C(\epsilon)\int_{0}^{t}\int_{0}^{\infty}\frac{P^{2}u^{2}}{r^{2}}dxds\nonumber\\
&&\leq\epsilon\int_{0}^{t}\int_{0}^{\infty}\frac{\alpha r^{2(n-1)}u_{tx}^{2}}{v}dxds
+C(\epsilon)\int_{0}^{t}\int_{0}^{\infty}\frac{u^{2}}{r^{2}\theta} \cdot\theta\left(\theta^{2}+\theta^{8}\right)dxds\\
&&\leq\epsilon\int_{0}^{t}\int_{0}^{\infty}\frac{\alpha r^{2(n-1)}u_{tx}^{2}}{v}dxds
+C(\epsilon)\left(1+Y^{\frac{9}{2b+6}}(t)\right)\nonumber\\
&&\leq\epsilon\int_{0}^{t}\int_{0}^{\infty}\frac{\alpha r^{2(n-1)}u_{tx}^{2}}{v}dxds
+C(\epsilon)\left(1+Y(t)\right),\nonumber
\end{eqnarray}
\begin{eqnarray}\label{bb152}
I_{25}&&\leq\epsilon\int_{0}^{t}\int_{0}^{\infty}\frac{vu_{t}^{2}}{r^{2}}dxds
+C(\epsilon)\int_{0}^{t}\int_{0}^{\infty}\frac{u^{4}}{r^{4}}dxds\nonumber\\
&&\leq\epsilon\int_{0}^{t}\int_{0}^{\infty}\frac{vu_{t}^{2}}{r^{2}}dxds+C(\epsilon),
\end{eqnarray}
\begin{eqnarray}\label{bb153}
I_{26}&&\leq\epsilon\int_{0}^{t}\int_{0}^{\infty}\frac{vu_{t}^{2}}{r^{2}}dxds
+C(\epsilon)\int_{0}^{t}\int_{0}^{\infty}\frac{u^{2}\left|\left(r^{n-1}u\right)_{x}\right|^{2}}{r^{2}}dxds\nonumber\\
&&\leq\epsilon\int_{0}^{t}\int_{0}^{\infty}\frac{vu_{t}^{2}}{r^{2}}dxds
+C(\epsilon)\left\|\theta\right\|_{\infty}\int_{0}^{t}\left\|u\right\|^{2}_{L^{\infty}\left([0,\infty)\right)}
\int_{0}^{\infty}\frac{\left|\left(r^{n-1}u\right)_{x}\right|^{2}}{v\theta}dxds\nonumber\\
&&\leq\epsilon\int_{0}^{t}\int_{0}^{\infty}\frac{vu_{t}^{2}}{r^{2}}dxds
+C(\epsilon)\left\|\theta\right\|_{\infty}\int_{0}^{t}\left\|u\right\|\left\|u_{x}\right\|
\int_{0}^{\infty}\frac{\left|\left(r^{n-1}u\right)_{x}\right|^{2}}{v\theta}dxds\\
&&\leq\epsilon\int_{0}^{t}\int_{0}^{\infty}\frac{vu_{t}^{2}}{r^{2}}dxds
+C(\epsilon)\left(1+Y^{\frac{1}{2b+6}}(t)\right)\left(1+Z^{\frac{1}{4}}(t)\right)\nonumber\\
&&\leq\epsilon\int_{0}^{t}\int_{0}^{\infty}\frac{vu_{t}^{2}}{r^{2}}dxds
+C(\epsilon)\left(1+Y(t)+Z^{\frac{b+3}{2(2b+5)}}(t)\right),\nonumber
\end{eqnarray}
\begin{eqnarray}\label{bb154}
I_{27}&&\leq\epsilon\int_{0}^{t}\int_{0}^{\infty}\frac{vu_{t}^{2}}{r^{2}}dxds
+C(\epsilon)\int_{0}^{t}\int_{0}^{\infty}\frac{r^{2(n-1)}u_{x}^{2}}{v\theta}\cdot\frac{v\theta u^{2}}{r^{2}}dxds\nonumber\\
&&\leq\epsilon\int_{0}^{t}\int_{0}^{\infty}\frac{vu_{t}^{2}}{r^{2}}dxds
+C(\epsilon)\left\|\theta\right\|_{\infty}\int_{0}^{t}\left\|u\right\|^{2}_{L^{\infty}\left([0,\infty)\right)}
\int_{0}^{\infty}\frac{r^{2(n-1)}u_{x}^{2}}{v\theta}dxds\\
&&\leq\epsilon\int_{0}^{t}\int_{0}^{\infty}\frac{vu_{t}^{2}}{r^{2}}dxds
+C(\epsilon)\left(1+Y(t)+Z^{\frac{b+3}{2(2b+5)}}(t)\right),\nonumber
\end{eqnarray}
\begin{eqnarray}\label{bb155}
I_{28}&&\leq\epsilon\int_{0}^{t}\int_{0}^{\infty}\frac{vu_{t}^{2}}{r^{2}}dxds
+C(\epsilon)\int_{0}^{t}\int_{0}^{\infty}\left|\left(r^{n-1}u\right)_{x}\right|^{2}P^{2}dxds\nonumber\\
&&\leq\epsilon\int_{0}^{t}\int_{0}^{\infty}\frac{vu_{t}^{2}}{r^{2}}dxds
+C(\epsilon)\int_{0}^{t}\int_{0}^{\infty}\frac{\left|\left(r^{n-1}u\right)_{x}\right|^{2}}{v\theta}
\cdot\theta\left(\theta^{2}+\theta^{8}\right)dxds\nonumber\\
&&\leq\epsilon\int_{0}^{t}\int_{0}^{\infty}\frac{vu_{t}^{2}}{r^{2}}dxds
+C(\epsilon)\left(1+\left\|\theta\right\|^{9}_{\infty}\right)\\
&&\leq\epsilon\int_{0}^{t}\int_{0}^{\infty}\frac{vu_{t}^{2}}{r^{2}}dxds
+C(\epsilon)\left(1+Y^{\frac{9}{2b+6}}(t)\right)\nonumber\\
&&\leq\epsilon\int_{0}^{t}\int_{0}^{\infty}\frac{vu_{t}^{2}}{r^{2}}dxds
+C(\epsilon)\left(1+Y(t)\right),\nonumber
\end{eqnarray}
\begin{eqnarray}\label{bb156}
I_{29}&&\leq\epsilon\int_{0}^{t}\int_{0}^{\infty}\frac{vu_{t}^{2}}{r^{2}}dxds
+C(\epsilon)\int_{0}^{t}\int_{0}^{\infty}\frac{u^{2}P^{2}}{r^{2}}dxds\nonumber\\
&&\leq\epsilon\int_{0}^{t}\int_{0}^{\infty}\frac{vu_{t}^{2}}{r^{2}}dxds
+C(\epsilon)\int_{0}^{t}\int_{0}^{\infty}\frac{u^{2}}{r^{2}\theta}
\cdot\theta\left(\theta^{2}+\theta^{8}\right)dxds\\
&&\leq\epsilon\int_{0}^{t}\int_{0}^{\infty}\frac{vu_{t}^{2}}{r^{2}}dxds
+C(\epsilon)\left(1+Y(t)\right),\nonumber
\end{eqnarray}
and
\begin{eqnarray}\label{bb157}
I_{30}&&\leq\epsilon\int_{0}^{t}\int_{0}^{\infty}\frac{vu_{t}^{2}}{r^{2}}dxds
+C(\epsilon)\int_{0}^{t}\int_{0}^{\infty}P_{t}^{2}dxds\nonumber\\
&&\leq\epsilon\int_{0}^{t}\int_{0}^{\infty}\frac{vu_{t}^{2}}{r^{2}}dxds
+C(\epsilon)\left(1+X(t)+Y(t)\right).
\end{eqnarray}

Combining the estimates \eqref{b138}-\eqref{bb157} and then by further choosing $\epsilon>0$ small enough, we arrive at
\begin{eqnarray}\label{bb158}
&&\left\|u_{t}(t)\right\|^{2}+\int_{0}^{t}\int_{0}^{\infty}\frac{ r^{2(n-1)}u_{tx}^{2}}{v}dxds+\int_{0}^{t}\int_{0}^{\infty}\frac{vu_{t}^{2}}{r^{2}}dxds\nonumber\\
&\leq& C\left(1+X(t)+Y(t)+Z^{\frac{b+3}{2(2b+5)}}(t)\right).
\end{eqnarray}

To yield an estimate on $\left\|u_{xx}(t)\right\|$, $(\ref{a20})_{2}$ tells us that
\begin{eqnarray}\label{b140}
u_{xx}=\frac{u_{t}v}{\alpha r^{2(n-1)}}+\frac{(n-1)v^{2}u}{r^{2n}}-\frac{2(n-1)vu_{x}}{r^{n}}+v^{-1}u_{x}v_{x}+\frac{P_{x}v}{\alpha r^{n-1}}.
\end{eqnarray}
Thus one gets by combining \eqref{bbb44} and \eqref{b140} that
\begin{eqnarray}\label{b142}
\|u_{xx}(t)\|^{2}&&\leq C\int_{0}^{\infty}\left(\frac{u_{t}^{2}}{r^{4(n-1)}}+\frac{u^{2}}{r^{4n}}+\frac{u_{x}^{2}}{r^{2n}}+u_{x}^{2}v^{2}_{x}+\frac{P^{2}_{x}}{r^{2(n-1)}}        \right)dx\nonumber\\
&&\leq C\left(1+\int_{0}^{\infty}\left( u_{t}^{2}+P_{x}^{2}+u^{2}_{x}+u_{x}^{2}v_{x}^{2}\right)dx\right)\\
&&\leq C\left(1+\|u_{t}(t)\|^{2}+\int_{0}^{\infty}\left(1+\theta^{6}\right)\theta_{x}^{2}dx +\int_{0}^{\infty}\left(\theta^{2}+u_{x}^{2}\right)v_{x}^{2}dx+\int_{0}^{\infty}u_{x}^{2}dx\right)\nonumber\\
&&\leq C\left(1+X(t)+Y(t)+Z^{\frac{b+3}{2(2b+5)}}(t)+\|\theta\|_{\infty}^{2}\|v_{x}(t)\|^{2} +\|u_{x}\|^{2}_{\infty}\|v_{x}(t)\|^{2}+\left\|u_{x}\right\|^{2}\right),\nonumber
\end{eqnarray}
while the last three terms in the right hand side of \eqref{b142} can be bounded as follows:
\begin{eqnarray}\label{b143}
\|\theta\|_{\infty}^{2}\|v_{x}(t)\|^{2}&&\leq C+C\|\theta\|_{\infty}^{2+l_{1}}\nonumber\\
&&\leq C+CY(t)^{\frac{2+l_{1}}{2b+6}}\\
&&\leq C+CY(t),\nonumber
\end{eqnarray}
\begin{eqnarray}\label{b144}
\|u_{x}\|^{2}_{\infty}\|v_{x}(t)\|^{2}&&\leq C\left(1+\|\theta\|_{\infty}^{l_{1}}\right)\left(1+Z(t)^{\frac{3}{4}}\right)\nonumber\\
&&\leq C\left(1+Z(t)^{\frac{3}{4}}+Y(t)^{\frac{l_{1}}{2b+6}}+Y(t)^{\frac{l_{1}}{2b+6}}Z(t)^{\frac{3}{4}}\right)\\
&&\leq C+CY(t)+CZ(t)^{\frac{3(b+3)}{2(2b+6-l_{1})}},\nonumber
\end{eqnarray}
and
\begin{eqnarray}\label{bd145}
\left\|u_{x}\right\|^{2}&&\leq C\left(1+\|\theta\|_{\infty}^{l_{2}}\right)\nonumber\\
&&\leq C\left(1+Y(t)^{\frac{l_{2}}{2b+6}}\right)\\
&&\leq C\left(1+Y(t)\right).\nonumber
\end{eqnarray}
Thus it follows from (\ref{b142})-(\ref{bd145}) that
\begin{eqnarray}\label{b145}
\|u_{xx}(t)\|^{2}\leq C\left(1+X(t)+Y(t)+Z(t)^{\lambda_{_{2}}}\right),
\end{eqnarray}
where $\lambda_2$ is given by \eqref{b150}.

Having obtained the estimate \eqref{b145}, we can get the estimate \eqref{b137} by using the definition of $Z(t)$, thus the proof of Lemma 4.2 is complete.
\end{proof}

Combining Lemma 4.1 with Lemma 4.2, we can deduce that $Y(t)\leq C$, thus we can get the desired upper bounds on $\theta\left(t,x\right)$ from \eqref{b58}. In fact, we can deduce from Lemma 2.1 and Lemma 4.2 that
\begin{lemma} Under the assumptions listed in Lemma 2.1, there exists a positive constant $\overline{\Theta}$ which depends only on the fixed constants $\mu$, $\lambda_{1}$, $\lambda$, $K$, $A$, $d$, $R$, $c_{v}$, $a$, $\kappa_{1}$, $\kappa_{2}$, $n$ and the initial data $(v_0(x), u_0(x),$ $\theta_0(x), z_0(x))$, such that
  \begin{eqnarray}\label{b151}
 \theta\left(t,x\right) \leq\overline{\Theta},\quad \forall
 \left(t,x\right)\in[0,T] \times[0,\infty).
  \end{eqnarray}
Moreover, we have for $0\leq t\leq T$ that
\begin{eqnarray}\label{b152}
 &&\left\|\left(v-1, u, \theta-1, z\right)\left(t\right)\right\|^{2}+\left\|r^{n-1}\left(v_{x}, u_{x}, \theta_{x} \right)\left(t\right)\right\|^{2}
 +\left\|u_{xx}(t)\right\|^{2}+\left\|z(t)\right\|_{L^{1}([0,\infty))}\nonumber\\
 &&+\int_{0}^{t}\left\|\left(r^{2n-2}u_{xx}, r^{n-1}\sqrt{\theta}v_{x},r^{n-1}u_{tx},r^{n-1}u_{x},r^{n-1}\theta_{x},\theta_{t},r^{n-1}z_{x}\right)(s)\right\|^{2}ds\leq C
\end{eqnarray}
and
\begin{eqnarray}\label{b152-2}
\left\|u_{x}(t)\right\|_{L^{\infty}\left([0,\infty)\right)}\leq C,\quad \|u(t)\|_{L^{\infty}\left([0,\infty)\right)}\leq C,\quad  \int_{0}^{t}\left\|r^{n-1}u_{x}\right\|^{2}_{L^{\infty}\left([0,\infty)\right)}ds\leq C.
\end{eqnarray}
Recall that the constants $C$ in \eqref{b151}, \eqref{b152} and \eqref{b152-2} depend only on the fixed constants $\mu$, $\lambda_{1}$, $\lambda$, $K$, $A$, $d$, $R$, $c_{v}$, $a$, $\kappa_{1}$, $\kappa_{2}$, $n$ and the initial data $(v_0(x), u_0(x),$ $ \theta_0(x), z_0(x))$.
\end{lemma}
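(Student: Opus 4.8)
The plan is to combine Lemma 4.1 and Lemma 4.2 to close the bootstrap on $X(t)$, $Y(t)$ and $Z(t)$, then feed the resulting $L^\infty$ bound on $\theta$ back into the estimates of Section 3. First I would substitute the bound $X(t)+Y(t)\le C(1+Z(t)^{\lambda_1})$ from \eqref{b62} into \eqref{b137}, yielding $Z(t)\le C(1+Z(t)^{\lambda_1}+Z(t)^{\lambda_2})$. Since $b>\frac{19}{4}$ guarantees $\lambda_1,\lambda_2\in(0,1)$ (as recorded after \eqref{bz136} and \eqref{b150}), Young's inequality absorbs the nonlinear terms on the right, giving $Z(t)\le C$ uniformly on $[0,T]$. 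Plugging this back into \eqref{b62} gives $X(t)+Y(t)\le C$, and then \eqref{b58} immediately yields $\|\theta(t)\|_{L^\infty([0,\infty))}\le C+CY(t)^{\frac{1}{2b+6}}\le C$, which is the claimed bound \eqref{b151} with $\overline{\Theta}$ depending only on the listed constants and the initial data. I emphasize that the constant is $t$-independent because every estimate entering Lemma 4.1 and Lemma 4.2 (namely \eqref{bb13}, \eqref{b43}, \eqref{cc16}, \eqref{bz57}, \eqref{c1}, \eqref{c17}, \eqref{c23}, \eqref{c31}) has a time-independent right-hand side once $\|\theta\|_\infty$ is controlled.

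Having the uniform bound $\|\theta\|_\infty\le\overline{\Theta}$, I would then revisit the lemmas of Section 3 to upgrade their conclusions. In \eqref{bz57}, \eqref{c1}, \eqref{c17}, \eqref{c23} and \eqref{c31} the right-hand sides are of the form $C+C\|\theta\|_\infty^{l}$ with $l\in\{l_1,l_2,l_3,l_2+l_3,\dots\}$; replacing $\|\theta\|_\infty$ by $\overline{\Theta}$ turns all of them into absolute constants. This gives
$\sup_{0\le t\le T}\big(\|v_x(t)\|^2+\|u_x(t)\|^2+\|r^{n-1}v_x(t)\|^2+\|r^{n-1}u_x(t)\|^2\big)\le C$,
together with $\int_0^t\|\sqrt{\theta}v_x\|^2+\|r^{n-1}u_x\|_{L^\infty}^2+\int_0^t\!\int_0^\infty r^{2(n-1)}u_{xx}^2/v+r^{4(n-1)}u_{xx}^2/v\,dxds\le C$. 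Similarly, from \eqref{b62} with $X(t),Y(t)\le C$ I read off $\int_0^t\!\int_0^\infty(1+\theta^{b+3})\theta_t^2\,dxds\le C$ and $\sup_{s}\int_0^\infty r^{2n-2}(1+\theta^{2b})\theta_x^2\,dx\le C$, which in particular gives $\int_0^t\|\theta_t(s)\|^2\,ds\le C$ and $\sup_t\|r^{n-1}\theta_x(t)\|^2\le C$. The estimate $\int_0^t\|r^{n-1}u_{tx}\|^2\,ds\le C$ comes from \eqref{bb158} now that its right side is bounded, and the basic energy estimate \eqref{bb13} already supplies $\int_0^t\|r^{n-1}u_x\|^2+\|r^{n-1}\theta_x\|^2+\|r^{n-1}z_x\|^2\,ds\le C$ together with $\|(v-1,u,\theta-1,z)(t)\|^2\le C$ (using the coercivity of $\widetilde E$ from \eqref{b3} and the bounds $\underline V\le v\le\overline V$, $\theta\le\overline\Theta$) and $\|z(t)\|_{L^1}\le C$ from \eqref{b14}. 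Finally $\|u_{xx}(t)\|^2\le C$ is exactly \eqref{b145} evaluated at $X,Y,Z\le C$, and $\int_0^t\|r^{2n-2}u_{xx}\|^2\,ds\le C$ follows from \eqref{c31} since $r\ge1$ forces $r^{4(n-1)}\ge r^{2(n-1)}$. This assembles all the quantities in \eqref{b152}.

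For \eqref{b152-2}: $\|u_x(t)\|_{L^\infty}\le C$ follows from \eqref{b61} together with $Z(t)\le C$; $\|u(t)\|_{L^\infty}\le C$ follows from \eqref{c15} with $\|\theta\|_\infty\le\overline\Theta$ (or directly from $\|u\|_{L^\infty}^2\le C\|u\|\,\|u_x\|$ and the bounds just obtained); and $\int_0^t\|r^{n-1}u_x\|_{L^\infty}^2\,ds\le C$ is \eqref{c17} with $\|\theta\|_\infty$ replaced by $\overline\Theta$.

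The only genuinely delicate point is the absorption step in the first paragraph: one must check that the exponents $\lambda_1$ and $\lambda_2$ defined in \eqref{bz136} and \eqref{b150}—which themselves depend on $l_1,l_2,l_3$ through \eqref{bd58}, \eqref{cc1}, \eqref{cb24}—are strictly less than $1$, and this is precisely where the hypothesis $b>\frac{19}{4}$ (rather than a weaker threshold) is needed. Concretely, $\lambda_1<1$ requires $3(b+3)<2(3b+9-2l_3)$, i.e. $3b+9>4l_3$, and since $l_3=\max\{1,l_2/2,(7-b)_+\}$ with $l_2=\max\{2l_1+1,(8-b)_+\}$ and $l_1=\max\{1,(7-b)_+\}$, for $b>\frac{19}{4}$ one has $(7-b)_+<\frac{9}{4}$ so $l_1,l_2,l_3$ stay small enough that the inequality holds; the analogous check for $\lambda_2<1$ is the same kind of bookkeeping. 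Once these strict inequalities are in hand, the rest is routine application of Young's inequality and substitution, so I would present that chain of exponent inequalities carefully and treat everything else as immediate consequences of the lemmas already proved.
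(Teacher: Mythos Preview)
Your proposal is correct and follows exactly the approach the paper intends: the paper's own ``proof'' of this lemma is essentially the single sentence ``Combining Lemma 4.1 with Lemma 4.2, we can deduce that $Y(t)\le C$, thus we can get the desired upper bounds on $\theta(t,x)$ from \eqref{b58},'' and you have simply spelled out the absorption argument and the subsequent back-substitution into the Section~3 lemmas in full. One trivial remark: the estimate $\int_0^t\|r^{2n-2}u_{xx}\|^2\,ds\le C$ requires no comparison of powers of $r$, since $\|r^{2n-2}u_{xx}\|^2=\int r^{4(n-1)}u_{xx}^2\,dx$ is already exactly the integrand in \eqref{c31} (up to the harmless factor $1/v$).
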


The main purpose of the following lemma is to deduce a nice bound on $\int_{0}^{t}\left\|r^{n-1}\theta_{xx}(s)\right\|_{L^{\infty}\left([0,\infty)\right)}^2ds$.
\begin{lemma} Under the assumptions listed in Lemma 2.1, we have for $0\leq t\leq T$ that
\begin{eqnarray}\label{b160}
\left\|\theta_{x}(t)\right\|^{2}+\int_{0}^{t}\left\|r^{n-1}\theta_{xx}(s)\right\|^2ds\leq C
\end{eqnarray}
and
\begin{eqnarray}\label{bb160}
\int_{0}^{t}\left\|r^{n-1}\theta_{x}(s)\right\|^2_{L^{\infty}\left([0,\infty)\right)}ds\leq C.
\end{eqnarray}
\end{lemma}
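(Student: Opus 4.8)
The estimate $\|\theta_x(t)\|^2\le C$ in \eqref{b160} is immediate: since $r(t,x)\ge 1$ by \eqref{a25}, $\|\theta_x(t)\|^2\le\|r^{n-1}\theta_x(t)\|^2$, which is already bounded in \eqref{b152}. Thus the real content of \eqref{b160} is the space-time control of $r^{n-1}\theta_{xx}$, and \eqref{bb160} will follow from it by interpolation; I plan to prove the two estimates \emph{simultaneously} via a short bootstrap, making essential use of the fact that the upper bound $\|\theta\|_\infty\le C$ (Lemma 4.4) is already in hand. The starting point is to rewrite $\eqref{b117}$ by expanding $(r^{2n-2}\kappa(v,\theta)\theta_x/v)_x$ and isolating the second-order term, which gives
\begin{eqnarray*}
\frac{r^{2n-2}\kappa(v,\theta)}{v}\,\theta_{xx}=e_\theta(v,\theta)\theta_t+\mathcal{R}-\left(\frac{r^{2n-2}\kappa(v,\theta)}{v}\right)_x\theta_x,\qquad
\mathcal{R}:=\theta P_\theta(r^{n-1}u)_x-\frac{\alpha|(r^{n-1}u)_x|^2}{v}+2\mu(n-1)(r^{n-2}u^2)_x-\lambda\phi z.
\end{eqnarray*}
Since $\kappa\ge\kappa_1>0$, \eqref{b43} and $r\ge1$ give $r^{2n-2}\kappa/v\ge(\kappa_1/\overline{V})r^{2n-2}$, so multiplying by $\theta_{xx}$, integrating in $x$, and using Cauchy's inequality to absorb a fraction of $(\kappa_1/\overline{V})\|r^{n-1}\theta_{xx}(t)\|^2$ into the left-hand side yields $\|r^{n-1}\theta_{xx}(t)\|^2\le C\int_0^\infty r^{-(2n-2)}\big(e_\theta^2\theta_t^2+\mathcal{R}^2+|(r^{2n-2}\kappa/v)_x|^2\theta_x^2\big)dx$; note that no integration by parts occurs, so the Neumann condition \eqref{a22} produces no boundary terms.

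The next step is to integrate this in $t$ and bound the three groups using Lemma 4.4. Because $\theta\le\overline{\Theta}$ one has $e_\theta^2(v,\theta)\le C(1+\theta^6)\le C(1+\theta^{b+3})$ (recall $b>19/4$), so the first group is $\le C\int_0^t\int(1+\theta^{b+3})\theta_t^2\,dxds=CX(t)\le C$ by \eqref{b152}. For $\mathcal{R}^2$ I would expand $(r^{n-1}u)_x$ and $(r^{n-2}u^2)_x$ through \eqref{bb126} and \eqref{b128}, reducing everything to the quartic quantities $u^4/r^4$, $r^{4n-4}u_x^4$ and $u^2u_x^2/r^2$, which are uniformly integrable in space-time by \eqref{cc16}, \eqref{b152-2}, \eqref{bb147} and the basic dissipation \eqref{bb13}, together with $\int_0^t\int\phi z^2\,dxds\le C$ from \eqref{b15} for the reaction term; all of this is finite precisely because $\|\theta\|_\infty\le C$. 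The delicate group is the last one: after expanding $(r^{2n-2}\kappa/v)_x$, every resulting piece except two is controlled directly by $\theta\le\overline{\Theta}$ and \eqref{bb13}, and the two survivors are the mixed term $r^{2n-2}v_x^2\theta_x^2$ and the quartic term $r^{2n-2}\theta^{2b-2}\theta_x^4$. Using $\|v_x(t)\|\le C$ and $\int_0^\infty r^{2n-2}\theta_x^2\,dx\le C$ (both from Lemma 4.4) these are dominated by $C\|r^{n-1}\theta_x(t)\|^2_{L^\infty([0,\infty))}$, so altogether
\begin{eqnarray*}
\int_0^t\|r^{n-1}\theta_{xx}(s)\|^2ds\le C+C\int_0^t\|r^{n-1}\theta_x(s)\|^2_{L^\infty([0,\infty))}ds=:C+CG(t).
\end{eqnarray*}

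The loop is then closed: by the Sobolev inequality and $(r^{n-1}\theta_x)_x=(n-1)r^{-1}v\theta_x+r^{n-1}\theta_{xx}$ one has $\|r^{n-1}\theta_x(s)\|^2_{L^\infty([0,\infty))}\le C\|r^{n-1}\theta_x(s)\|\big(\|\theta_x(s)\|+\|r^{n-1}\theta_{xx}(s)\|\big)$, so a Cauchy--Schwarz in $s$ combined with $\int_0^t\|r^{n-1}\theta_x\|^2ds\le C$ and $\int_0^t\|\theta_x\|^2ds\le C$ — both consequences of the \emph{time-integrated} dissipation \eqref{bb13} together with $\theta\le\overline{\Theta}$ — gives $G(t)\le C+C\big(\int_0^t\|r^{n-1}\theta_{xx}\|^2ds\big)^{1/2}$. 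Substituting this into the previous display and using Young's inequality absorbs the $r^{n-1}\theta_{xx}$ contribution and produces \eqref{b160}; feeding the resulting bound back then gives $G(t)\le C$, which is \eqref{bb160}. The bound $\|\theta_x(t)\|^2\le C$ was recorded at the outset.

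The step I expect to be the main obstacle is exactly this circularity: the terms $r^{2n-2}v_x^2\theta_x^2$ and $r^{2n-2}\theta_x^4$ appearing in the second-derivative estimate admit no useful pointwise-in-$t$ bound at this stage, so \eqref{b160} and \eqref{bb160} cannot be separated and must be run as a coupled bootstrap in which the only time-integrated quantity that is genuinely uniform in $t$ — the parabolic dissipation of \eqref{bb13}, finite because the upper bound of $\theta$ is already established — does the decisive work. Everything else is routine bookkeeping with the estimates of Sections 2--4.
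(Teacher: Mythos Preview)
Your argument is correct, but it takes a different route from the paper's and is slightly more circuitous than necessary.

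The paper multiplies \eqref{b117} by $-\theta_{xx}/e_\theta$ and integrates in $x$; the factor $1/e_\theta$ is chosen so that the time-derivative term becomes exactly $\frac12(\theta_x^2)_t-(\theta_t\theta_x)_x$. This yields a \emph{pointwise-in-$t$} differential inequality of the form
\[
\frac{d}{dt}\|\theta_x(t)\|^2+\|r^{n-1}\theta_{xx}(t)\|^2\le C\Big(V(t)+\Big\|\tfrac{u}{r}\Big\|_{L^\infty}^2+\int_0^\infty\phi z^2\,dx\Big),
\]
which integrates immediately via \eqref{bb13}, \eqref{cc16} and \eqref{b15}. The crucial point is how the ``bad'' terms you identify, namely $r^{2n-2}v_x^2\theta_x^2$ and $r^{2n-2}\theta_x^4$, are handled: the paper uses the same Sobolev interpolation $\|\theta_x\|_{L^\infty}^2\le C\|\theta_x\|\|\theta_{xx}\|$ you use, but applies it \emph{at each fixed $t$}, writes $C\|\theta_x\|\|\theta_{xx}\|\le \epsilon\|r^{n-1}\theta_{xx}\|^2+C(\epsilon)\|\theta_x\|^2$, absorbs the first piece, and observes $\|\theta_x(t)\|^2\le CV(t)$ (legitimate once $\theta\le\overline\Theta$). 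No bootstrap is needed, and \eqref{bb160} then follows from \eqref{b160} exactly as in your last paragraph.

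Your approach --- isolate $\theta_{xx}$ algebraically, square, integrate in $(t,x)$, and close a coupled estimate between $\int_0^t\|r^{n-1}\theta_{xx}\|^2ds$ and $G(t)=\int_0^t\|r^{n-1}\theta_x\|_{L^\infty}^2ds$ via Cauchy--Schwarz in $s$ and Young --- works and uses the same ingredients, but the ``circularity'' you flag as the main obstacle is an artifact of postponing the Sobolev step until after the time integration. Doing it pointwise, as the paper does, dissolves the loop entirely. A minor slip: where you write ``both from Lemma 4.4'' you mean Lemma~4.3 (equation \eqref{b152}); Lemma~4.4 is the statement you are proving.
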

 \begin{proof}
Multiplying (\ref{b117}) by $-\frac{\theta_{xx}}{e_{\theta}}$, one has
\begin{eqnarray}\label{b161}
&&\frac{1}{2}\left(\theta_{x}^{2}\right)_{t}-\left(\theta_{t}\theta_{x}\right)_{x}+\frac{r^{2n-2}\kappa\theta^{2}_{xx}}{ve_{\theta}}\nonumber\\
&=&\frac{\theta P_{\theta}\left(r^{n-1}u\right)_{x}\theta_{xx}}{e_{\theta}}-\frac{\alpha\left(r^{n-1}u\right)^{2}_{x}\theta_{xx}}{ve_{\theta}}
-\frac{(2n-2)r^{2n-2}\kappa\theta_{x}\theta_{xx}}{e_{\theta}}\\
&&-\frac{r^{2n-2}\kappa_{v}v_{x}\theta_{x}\theta_{xx}}{ve_{\theta}}-\frac{r^{2n-2}\kappa_{\theta}\theta^{2}_{x}\theta_{xx}}{ve_{\theta}}
+\frac{r^{2n-2}\kappa v_{x}\theta_{x}\theta_{xx}}{v^{2}e_{\theta}}\nonumber\\
&&+\frac{2\mu(n-1)\left(r^{n-2}u^{2}\right)_{x}\theta_{xx}}{e_{\theta}}-\frac{\lambda\phi z\theta_{xx}}{e_{\theta}}.\nonumber
\end{eqnarray}
Integrating the above identity with respect to $x$ over $[0,\infty)$ and by using the boundary conditions \eqref{a22}, one has
\begin{eqnarray}\label{b162}
&&\frac{1}{2}\frac{d}{dt}\|\theta_{x}(t)\|^{2}+\int_{0}^{\infty}\frac{r^{2n-2}\kappa(v,\theta)\theta^{2}_{xx}}{ve_{\theta}(v,\theta)}dx\nonumber\\
&\leq& \underbrace{\left|\int_{0}^{\infty}\frac{\theta P_{\theta}(v,\theta)\left(r^{n-1}u\right)_{x}\theta_{xx}}{e_{\theta}(v,\theta)}dx\right|}_{I_{31}}
+\underbrace{\left|\int_{0}^{\infty}\frac{\alpha\left|\left(r^{n-1}u\right)_{x}\right|^{2}\theta_{xx}}{ve_{\theta}(v,\theta)}dx\right|}_{I_{32}}\nonumber\\
&&+\underbrace{\left|\int_{0}^{\infty}\frac{r^{2n-2}\kappa_{v}(v,\theta)v_{x}\theta_{x}\theta_{xx}}{ve_{\theta}(v,\theta)}dx\right|}_{I_{33}}
+\underbrace{\left|\int_{0}^{\infty}\frac{r^{2n-2}\kappa_{\theta}(v,\theta)\theta^{2}_{x}\theta_{xx}}{ve_{\theta}(v,\theta)}dx\right|}_{I_{34}}\\
&&+\underbrace{\left|\int_{0}^{\infty}\frac{r^{2n-2}\kappa(v,\theta)\theta_{x}v_{x}\theta_{xx}}{v^{2}e_{\theta}(v,\theta)}dx\right| }_{I_{35}}
+\underbrace{\left|\int_{0}^{\infty}\frac{\lambda\phi z\theta_{xx}}{e_{\theta}(v,\theta)}dx\right|}_{I_{36}}\nonumber\\
&&+\underbrace{\left|\int_{0}^{\infty}\frac{(2n-2)r^{2n-2}\kappa\theta_{x}\theta_{xx}}{e_{\theta}(v,\theta)}dx\right|}_{I_{37}}
+\underbrace{\left|\int_{0}^{\infty}\frac{2\mu(n-1)\left(r^{n-2}u^{2}\right)_{x}\theta_{xx}} {e_{\theta}(v,\theta)}dx\right|}_{I_{38}}.\nonumber
\end{eqnarray}

Now we are in a position to bound the terms $I_k (31\leq k\leq 38)$. To begin with, we first have from \eqref{f3} that
\begin{eqnarray}\label{b163}
I_{31}
&&\leq\epsilon\int_{0}^{\infty}\frac{r^{2n-2}\kappa(v,\theta)\theta^{2}_{xx}}{ve_{\theta}(v,\theta)}dx +C\left(\epsilon\right)\int_{0}^{\infty}\frac{\left|\left(r^{n-1}u\right)_{x}\right|^{2}}{v\theta}\cdot\frac{\theta^{3}P^{2}_{\theta}(v,\theta)} {\kappa(v,\theta)r^{2n-2}e_{\theta}(v,\theta)}dx\\
&&\leq\epsilon\int_{0}^{\infty}\frac{r^{2n-2}\kappa(v,\theta)\theta^{2}_{xx}}{ve_{\theta}(v,\theta)}dx +C\left(\epsilon\right)V\left(t\right).\nonumber
\end{eqnarray}

Secondly, \eqref{a25}, \eqref{bb126}, \eqref{b151} and \eqref{b152-2} imply that
\begin{eqnarray}\label{b164}
I_{32}
&\leq&\epsilon\int_{0}^{\infty}\frac{r^{2n-2}\kappa(v,\theta)\theta^{2}_{xx}}{ve_{\theta}(v,\theta)}dx +C\left(\epsilon\right)\int_{0}^{\infty}\frac{\left|\left(r^{n-1}u\right)_{x}\right|^{4}}{r^{2n-2}\kappa(v,\theta)e_{\theta}(v,\theta)}dx\nonumber\\
&\leq&\epsilon\int_{0}^{\infty}\frac{r^{2n-2}\kappa(v,\theta)\theta^{2}_{xx}}{ve_{\theta}(v,\theta)}dx
+C\left(\epsilon\right)\int_{0}^{\infty}\frac{u^{4}+r^{4(n-1)}u^{4}_{x}}{r^{2n-2}\kappa(v,\theta)e_{\theta}(v,\theta)}dx\\
&\leq&\epsilon\int_{0}^{\infty}\frac{r^{2n-2}\kappa(v,\theta)\theta^{2}_{xx}}{ve_{\theta}(v,\theta)}dx
+C\left(\epsilon\right)\left(\left\|\frac{u}{r}\right\|^{2}_{L^{\infty}\left([0,\infty)\right)}+\int_{0}^{\infty}\frac{r^{2(n-1)}u^{2}_{x}}{v\theta}\cdot\frac{u^{2}_{x}\theta}
{\left(1+\theta^{b}\right)\left(1+\theta^{3}\right)}dx\right)\nonumber\\
&\leq&\epsilon\int_{0}^{\infty}\frac{r^{2n-2}\kappa(v,\theta)\theta^{2}_{xx}}{ve_{\theta}(v,\theta)}dx
+C\left(\epsilon\right)\left(\left\|\frac{u}{r}\right\|^{2}_{L^{\infty}\left([0,\infty)\right)}+V(t)\right).\nonumber
\end{eqnarray}

Thirdly, by virtue of Sobolev's inequality and Lemma 4.3, we can infer that
\begin{eqnarray}\label{b165}
I_{33}
&&\leq\epsilon\int_{0}^{\infty}\frac{r^{2n-2}\kappa(v,\theta)\theta^{2}_{xx}}{ve_{\theta}(v,\theta)}dx
+C\left(\epsilon\right)\int_{0}^{\infty}r^{2n-2}v^{2}_{x}\cdot\frac{\kappa_{v}^{2}(v,\theta)\theta^{2}_{x}}{e_{\theta}(v,\theta) \kappa(v,\theta)}dx\nonumber\\
&&\leq\epsilon\int_{0}^{\infty}\frac{r^{2n-2}\kappa(v,\theta)\theta^{2}_{xx}}{ve_{\theta}(v,\theta)}dx
+C\left(\epsilon\right)\left\|\theta_{x}(t)\right\|\left\|\theta_{xx}(t)\right\|\\
&&\leq 2\epsilon\int_{0}^{\infty}\frac{r^{2n-2}\kappa(v,\theta)\theta^{2}_{xx}}{ve_{\theta}(v,\theta)}dx
+C\left(\epsilon\right)\int_{0}^{\infty}\frac{r^{2n-2}\kappa(v,\theta)\theta^{2}_{x}}{v\theta^{2}}
\cdot\frac{\theta^{2}e_{\theta}}{r^{4n-4}\kappa^{2}}dx\nonumber\\
&&\leq 2\epsilon\int_{0}^{\infty}\frac{r^{2n-2}\kappa(v,\theta)\theta^{2}_{xx}}{ve_{\theta}(v,\theta)}dx
+C\left(\epsilon\right)V(t),\nonumber
\end{eqnarray}
\begin{eqnarray}\label{b168}
I_{34}
&&\leq\epsilon\int_{0}^{\infty}\frac{r^{2n-2}\kappa(v,\theta)\theta^{2}_{xx}}{ve_{\theta}(v,\theta)}dx +C\left(\epsilon\right)\int_{}\frac{r^{2n-2}\theta^{4}_{x}\kappa^{2}_{\theta}(v,\theta)}{\kappa(v,\theta) e_{\theta}(v,\theta)}dx \nonumber\\
&&\leq\epsilon\int_{0}^{\infty}\frac{r^{2n-2}\kappa(v,\theta)\theta^{2}_{xx}}{ve_{\theta}(v,\theta)}dx
+C\left(\epsilon\right)\left\|\theta_{x}(t)\right\|\left\|\theta_{xx}(t)\right\|\int_{0}^{\infty}r^{2n-2}\theta^{2}_{x}dx\\
&&\leq\epsilon\int_{0}^{\infty}\frac{r^{2n-2}\kappa(v,\theta)\theta^{2}_{xx}}{ve_{\theta}(v,\theta)}dx
+C\left(\epsilon\right)\left\|\theta_{x}(t)\right\|\left\|\theta_{xx}(t)\right\|\nonumber\\
&&\leq2\epsilon\int_{0}^{\infty}\frac{r^{2n-2}\kappa(v,\theta)\theta^{2}_{xx}}{ve_{\theta}(v,\theta)}dx +C\left(\epsilon\right)V\left(t\right),\nonumber
\end{eqnarray}
and
\begin{eqnarray}\label{b169}
I_{35}
&&\leq\epsilon\int_{0}^{\infty}\frac{r^{2n-2}\kappa(v,\theta)\theta^{2}_{xx}}{ve_{\theta}(v,\theta)}dx
+C\left(\epsilon\right)\int_{0}^{\infty}\frac{r^{2n-2}\kappa(v,\theta)\theta^{2}_{x}v^{2}_{x}}{e_{\theta}(v,\theta)}dx\nonumber\\
&&\leq\epsilon\int_{0}^{\infty}\frac{r^{2n-2}\kappa(v,\theta)\theta^{2}_{xx}}{ve_{\theta}(v,\theta)}dx
+C\left(\epsilon\right)\left\|\theta_{x}\right\|^{2}_{L^{\infty}\left([0,\infty)\right)}\int_{0}^{\infty}r^{2n-2}v_{x}^{2}dx\\
&&\leq\epsilon\int_{0}^{\infty}\frac{r^{2n-2}\kappa(v,\theta)\theta^{2}_{xx}}{ve_{\theta}(v,\theta)}dx
+C\left(\epsilon\right)\|\theta_{x}(t)\|\|\theta_{xx}(t)\|\nonumber\\
&&\leq2\epsilon\int_{0}^{\infty}\frac{r^{2n-2}\kappa(v,\theta)\theta^{2}_{xx}}{ve_{\theta}(v,\theta)}dx +C\left(\epsilon\right)V\left(t\right).\nonumber
\end{eqnarray}

Finally, it follows from \eqref{a2}, \eqref{a25}, \eqref{f3}, \eqref{b151} and \eqref{b152} that
\begin{eqnarray}\label{b170}
I_{36}
&&\leq\epsilon\int_{0}^{\infty}\frac{r^{2n-2}\kappa(v,\theta)\theta^{2}_{xx}}{ve_{\theta}(v,\theta)}dx
+C\left(\epsilon\right)\int_{0}^{\infty}\frac{v\phi^{2}z^{2}}{r^{2n-2}\kappa(v,\theta) e_{\theta}(v,\theta)}dx\nonumber\\
&&\leq\epsilon\int_{0}^{\infty}\frac{r^{2n-2}\kappa(v,\theta)\theta^{2}_{xx}}{ve_{\theta}(v,\theta)}dx
+C\left(\epsilon\right)\int_{0}^{\infty}\phi z^{2}dx,
\end{eqnarray}
\begin{eqnarray}\label{bb170}
I_{37}
&&\leq\epsilon\int_{0}^{\infty}\frac{r^{2n-2}\kappa(v,\theta)\theta^{2}_{xx}}{ve_{\theta}(v,\theta)}dx
+C\left(\epsilon\right)\int_{0}^{\infty}\frac{r^{2n-2}\kappa\theta^{2}_{x}}{v\theta^{2}}
\cdot\frac{\theta^{2}}{e_{\theta}(v,\theta)}dx\nonumber\\
&&\leq\epsilon\int_{0}^{\infty}\frac{r^{2n-2}\kappa(v,\theta)\theta^{2}_{xx}}{ve_{\theta}(v,\theta)}dx
+C\left(\epsilon\right)V(t),
\end{eqnarray}
and
\begin{eqnarray}\label{bb171}
I_{38}&&\leq C\int_{0}^{\infty}\frac{\left(r^{-2}u^{2}+r^{n-2}\left|uu_{x}\right|\right) \theta_{xx}}{e_{\theta}(v,\theta)}dx\\
&&\leq\epsilon\int_{0}^{\infty}\frac{r^{2n-2}\kappa(v,\theta)\theta^{2}_{xx}}{ve_{\theta}(v,\theta)}dx
+C\left(\epsilon\right)\int_{0}^{\infty}\left(\frac{u^{4}}{r^{2n+2}\kappa}+\frac{r^{2n-2}u^{2}_{x}}{v\theta}
\cdot\frac{\theta}{r^{2n}e_{\theta}(v,\theta)\kappa(v,\theta)}\right)dx\nonumber\\
&&\leq\epsilon\int_{0}^{\infty}\frac{r^{2n-2}\kappa(v,\theta)\theta^{2}_{xx}}{ve_{\theta}(v,\theta)}dx
+C\left(\epsilon\right)\left(\left\|\frac{u}{r}\right\|^{2}_{L^{\infty}\left([0,\infty)\right)}+V(t)\right).\nonumber
\end{eqnarray}

Combining (\ref{b162})-(\ref{bb171}) and by choosing $\epsilon>0$ small enough, we arrive at
\begin{eqnarray}\label{b171}
\frac{d}{dt}\|\theta_{x}(t)\|^{2}+\left\|r^{n-1}\theta_{xx}(t)\right\|^2\leq C\left(V\left(t\right)+\left\|\frac{u}{r}\right\|^{2}_{L^{\infty}\left([0,\infty)\right)}+\int_{0}^{\infty}\phi(t,x) z^{2}(t,x)dx\right).
\end{eqnarray}

Integrating the inequality (\ref{b171}) with respect to $t$ over $\left(0, t\right)$ and using the estimates \eqref{b15}, \eqref{bb13} and \eqref{cc16}, we can obtain \eqref{b160}.

Moreover, noticing that
\begin{eqnarray}\label{bc171}
\left(r^{n-1}\theta_{x}\right)_{x}=\frac{\left(n-1\right)v\theta_{x}}{r}+r^{n-1}\theta_{xx},
\end{eqnarray}
thus taking advantage of \eqref{bb13}, Lemma 4.3, \eqref{b160} and Sobolev's inequality, we find that
\begin{eqnarray}\label{bc172}
\int_{0}^{t}\left\|r^{n-1}\theta_{x}(s)\right\|^2_{L^{\infty}\left([0,\infty)\right)}ds&&\leq C\int_{0}^{t}\left\|r^{n-1}\theta_{x}\right\|\left\|\left(r^{n-1}\theta_{x}\right)_{x}\right\|ds\nonumber\\
&&\leq C\int_{0}^{t}\left\|r^{n-1}\theta_{x}\right\|^{2}ds+C\int_{0}^{t}\left\|\left(r^{n-1}\theta_{x}\right)_{x}\right\|^{2}ds\nonumber\\
&&\leq C+C\int_{0}^{t}\int_{0}^{\infty} \left(\frac{\theta^{2}_{x}}{r^{2}}+r^{2(n-1)}\theta^{2}_{xx}\right)dxds\\
&&\leq C+C\int_{0}^{t}\int_{0}^{\infty}\frac{\kappa r^{2(n-1)}\theta^{2}_{x}}{v\theta^{2}}\cdot\theta^{2}dxds\nonumber\\
&&\leq C.\nonumber
\end{eqnarray}
This completes the proof of Lemma 4.4.
\end{proof}

The next lemma is concerned with estimates on $\left\|r^{n-1}z_{x}(t)\right\|^{2}$ and $\int_{0}^{t}\left\|r^{n-1}z_{xx}(s)\right\|^{2}ds$.
\begin{lemma} Under the assumptions listed in Lemma 2.1, we can get for any $0\leq t\leq T$ that
\begin{eqnarray}\label{b172}
\left\|r^{n-1}z_{x}(t)\right\|^{2}+\int_{0}^{t}\left\|z_{t}(s)\right\|^2ds\leq C
\end{eqnarray}
and
\begin{eqnarray}\label{bb172}
\left\|z_{x}(t)\right\|^{2}+\int_{0}^{t}\left\|r^{n-1}z_{xx}(s)\right\|^{2}ds\leq C.
\end{eqnarray}

\end{lemma}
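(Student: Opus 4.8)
The plan is to obtain both estimates by the standard energy method applied to the reaction-diffusion equation $(\ref{a20})_{4}$, which we rewrite in the form
\begin{eqnarray*}
z_{t}=\left(\frac{dr^{2n-2}z_{x}}{v^{2}}\right)_{x}-\phi z=\frac{dr^{2n-2}z_{xx}}{v^{2}}+\left(\frac{dr^{2n-2}}{v^{2}}\right)_{x}z_{x}-\phi z,
\end{eqnarray*}
and to control every coefficient using the bounds already in hand: the uniform two-sided bound $\underline{V}\le v\le\overline{V}$ from Lemma 3.4, the upper bound $\theta\le\overline{\Theta}$ from Lemma 4.3, $0\le z\le 1$ from Lemma 2.2, the growth control $r^{n}\sim 1+x$ from Corollary 3.1, and the space-time bounds collected in \eqref{b152}, \eqref{b152-2}, \eqref{b160} and \eqref{bb160}. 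Note that $\left(\frac{dr^{2n-2}}{v^{2}}\right)_{x}=\frac{2d(n-1)r^{n-3}vu}{v^{2}}-\frac{2dr^{2n-2}v_{x}}{v^{3}}$, so this coefficient is $C(r^{n-2}|u|+r^{2n-2}|v_{x}|/r^{n-1})$-type and will be absorbed using $\int_{0}^{t}\|r^{n-1}u_{x}\|^{2}_{L^{\infty}}ds\le C$, $\|u\|_{L^{\infty}}\le C$, $\|r^{n-1}v_{x}(t)\|\le C$.

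First I would prove \eqref{b172}. Multiply $(\ref{a20})_{4}$ by $z_{t}$ and integrate over $[0,\infty)$; the boundary term at $x=0$ vanishes by \eqref{a22} and the far-field term vanishes by \eqref{a22}. The principal term yields, after an integration by parts in $x$ and using $(\ref{a20})_{1}$ to handle the $t$-derivative falling on $r^{2n-2}/v^{2}$,
\begin{eqnarray*}
\frac{d}{dt}\left(\frac{d}{2}\int_{0}^{\infty}\frac{r^{2n-2}z_{x}^{2}}{v^{2}}dx\right)+\int_{0}^{\infty}z_{t}^{2}dx\le \text{(lower order terms)}.
\end{eqnarray*}
The lower-order terms come from $\phi z z_{t}$ (bounded since $0\le z\le1$, $\phi\le C$, absorbed into $\epsilon\|z_{t}\|^{2}$), from the time-derivative of $r^{2n-2}/v^{2}$ acting on $z_{x}^{2}$ (which produces factors $r^{2n-3}u$ and $r^{2n-2}v_{t}/v^{3}=r^{2n-2}(r^{n-1}u)_{x}/(v^{3}r^{n-1})$, estimated via $\|u/r\|^{2}_{L^{\infty}}$, $\|r^{n-1}u_{x}\|^{2}_{L^{\infty}}$, whose time integrals are finite), and from the extra drift coefficient above. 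After absorbing the $\epsilon\|z_{t}\|^{2}$ terms and applying Gronwall's inequality together with \eqref{b152-2}, \eqref{b152}, one obtains $\|r^{n-1}z_{x}(t)\|^{2}+\int_{0}^{t}\|z_{t}(s)\|^{2}ds\le C$, which is \eqref{b172}.

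Next, for \eqref{bb172}, I would multiply $(\ref{a20})_{4}$ by $-z_{xx}$ (or equivalently by $-\left(\frac{dr^{2n-2}z_{x}}{v^{2}}\right)_{xx}$ after rearranging) and integrate over $[0,\infty)$, giving
\begin{eqnarray*}
\frac{1}{2}\frac{d}{dt}\|z_{x}(t)\|^{2}+\int_{0}^{\infty}\frac{dr^{2n-2}z_{xx}^{2}}{v^{2}}dx\le \text{(lower order terms)},
\end{eqnarray*}
where the cross terms $z_{t}z_{xx}$ on the left are handled by integration by parts in $x$ (the boundary terms vanish by \eqref{a22}), producing $\int z_{tx}z_{x}$-type contributions; alternatively one multiplies directly by $-z_{xx}$ and expands, with the worst terms being $\int r^{n-2}|u||z_{x}z_{xx}|$, $\int r^{2n-2}|v_{x}||z_{x}z_{xx}|/r^{n-1}$ and $\int \phi|z||z_{xx}|$. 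Each is split by Cauchy's inequality, absorbing $\epsilon\int r^{2n-2}z_{xx}^{2}/v^{2}$ and leaving terms of the form $\|u\|_{L^{\infty}}^{2}\|z_{x}\|^{2}$ (bounded), $\|r^{n-1}u_{x}\|_{L^{\infty}}^{2}\|r^{n-1}z_{x}/r^{n-1}\|^{2}$, $\|r^{n-1}v_{x}\|^{2}_{L^{\infty}}$-weighted pieces times $\int z_{x}^{2}$, and $\int\phi^{2}z^{2}dx\le C$; here one uses the already-proven bound $\|r^{n-1}z_{x}(t)\|\le C$ from \eqref{b172} and Sobolev's inequality $\|z_{x}\|_{L^{\infty}}^{2}\le C\|z_{x}\|\|z_{xx}\|$ only where needed, absorbing the $\|z_{xx}\|$ factor. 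Gronwall's inequality then yields \eqref{bb172}. The main obstacle, as in the earlier lemmas, is that $r(t,x)$ is unbounded, so one must everywhere carry the correct powers of $r$ and exploit that the $L^{\infty}$-in-$x$ bounds on $u/r$, $r^{n-1}u_{x}$ and $r^{n-1}\theta_{x}$ (rather than on $u_{x}$, $\theta_{x}$ themselves) are the ones with finite time integrals; matching these weights correctly in the cross terms $\left(\frac{dr^{2n-2}}{v^{2}}\right)_{x}z_{x}z_{xx}$ is the delicate bookkeeping step.
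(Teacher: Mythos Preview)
Your approach is essentially the same as the paper's: multiply $(\ref{a20})_4$ by $z_t$ for \eqref{b172} and by $-z_{xx}$ for \eqref{bb172}, then use Cauchy's inequality, the already-established bounds, and Gronwall. One small slip: you wrote $\left(\frac{dr^{2n-2}}{v^{2}}\right)_{x}=\frac{2d(n-1)r^{n-3}vu}{v^{2}}-\cdots$, but $r_{x}=r^{1-n}v$ (not $u$, which is $r_{t}$), so the correct coefficient is $\frac{(2n-2)d\,r^{n-2}}{v}-\frac{2dr^{2n-2}v_{x}}{v^{3}}$; consequently the cross term in the second estimate is $\int r^{n-2}|z_{x}z_{xx}|$ rather than $\int r^{n-2}|u||z_{x}z_{xx}|$, which is in fact easier to handle and matches the paper's $I_{42}$.
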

\begin{proof} To prove \eqref{b172}, multiplying the equation $\eqref{a20}_{4}$ by $z_{t}$, we get
\begin{eqnarray}\label{b173}
\partial_{t}\left(\frac{dr^{2n-2}z^{2}_{x}}{2v^{2}}\right)+z_{t}^{2}=d\left(\frac{r^{2n-2}z_{x}z_{t}}{v^{2}}\right)_{x}
+\frac{d(n-1)r^{2n-3}uz^{2}_{x}}{v^{2}}-\frac{dr^{2n-2}\left(r^{n-1}u\right)_{x}z^{2}_{x}}{v^{3}}-\phi zz_{t}.\nonumber
\end{eqnarray}

Integrating the above equation with respect to $t$ and $x$ over $\left[0,t\right)\times[0,\infty)$ and by using the boundary condition \eqref{a22}, we have
\begin{eqnarray}\label{b174}
&&\frac{d}{2}\int_{0}^{\infty}\frac{dr^{2n-2}z^{2}_{x}}{2v^{2}}dx+\int_{0}^{t}\int_{0}^{\infty}z_{t}^{2}dxds\nonumber\\
&=&\frac{d}{2}\int_{0}^{\infty}\frac{dr^{2n-2}z^{2}_{x}}{2v^{2}}\left(0,x\right)dx
+\underbrace{\int_{0}^{t}\int_{0}^{\infty}\frac{d(n-1)r^{2n-3}uz^{2}_{x}}{v^{2}}dxds}_{I_{39}}\\
&&-\underbrace{\int_{0}^{t}\int_{0}^{\infty}\frac{dr^{2n-2}\left(r^{n-1}u\right)_{x}z^{2}_{x}}{v^{3}}dxds}_{I_{40}}
-\underbrace{\int_{0}^{t}\int_{0}^{\infty}\phi zz_{t}dxds}_{I_{41}}.\nonumber
\end{eqnarray}

To control $I_k (k=39, 40, 41)$, we can get from \eqref{a25}, \eqref{b15} and \eqref{b152-2} that
\begin{eqnarray}\label{b175}
\left |I_{39}\right|&&\leq C\int_{0}^{t}\int_{0}^{\infty}\frac{r^{2n-2}z^{2}_{x}}{v^{2}}dxds\leq C,
\end{eqnarray}
and by employing \eqref{bb126}, we can infer that
\begin{eqnarray}\label{b176}
\left|I_{40}\right|&&\leq C\int_{0}^{t}\int_{0}^{\infty}\frac{r^{2n-2}z^{2}_{x}}{v^{2}}\left(r^{-1}\left|u\right|+r^{n-1}\left|u_{x}\right|\right)dxds\nonumber\\
&&\leq C\int_{0}^{t}\left(1+\left\|r^{n-1}u_{x}\right\|_{L^{\infty}\left([0,\infty)\right)}^{2}\right)
\int_{0}^{\infty}\frac{r^{2n-2}z^{2}_{x}}{v^{2}}dxds\\
&&\leq C+C\int_{0}^{t}\left\|r^{n-1}u_{x}\right\|_{L^{\infty}\left([0,\infty)\right)}^{2}\int_{0}^{\infty} \frac{r^{2n-2}z^{2}_{x}}{v^{2}}dxds.\nonumber
\end{eqnarray}

As to the term $I_{41}$, we get from \eqref{b15} that
\begin{eqnarray}\label{b178}
\left|I_{41}\right|&&\leq\frac{1}{2}\int_{0}^{t}\int_{0}^{\infty}z_{t}^{2}dxds+C\int_{0}^{t}\int_{0}^{\infty}\phi^{2}z^{2}dxds\nonumber\\
&&\leq\frac{1}{2}\int_{0}^{t}\int_{0}^{\infty}z_{t}^{2}dxds+C\int_{0}^{t}\int_{0}^{\infty}\phi z^{2}dxds\\
&&\leq\frac{1}{2}\int_{0}^{t}\int_{0}^{\infty}z_{t}^{2}dxds+C.\nonumber
\end{eqnarray}

Substituting \eqref{b175}, \eqref{b176} and \eqref{b178} into \eqref{b174}  and by using Gronwall's inequality, we can obtain \eqref{b172}.

Now we turn to prove \eqref{bb172}. For this purpose, multiplying $\eqref{a20}_{4}$ by $z_{xx}$, we have
\begin{eqnarray}\label{bc173}
\partial_{t}\left(\frac{z^{2}_{x}}{2}\right)-\left(z_{t}z_{x}\right)_{x}+\frac{dr^{2n-2}z^{2}_{xx}}{v^{2}}=-\frac{(2n-2)dr^{n-2}z_{x}z_{xx}}{v^{2}}
+\frac{2dr^{2n-2}z_{x}v_{x}z_{xx}}{v^{3}}+\phi zz_{xx}.
\end{eqnarray}

Integrating the above equation with respect to $t$ and $x$ over $\left[0,t\right)\times[0,\infty)$ and by using the boundary condition \eqref{a22} and Lemma 3.4, we arrive at
\begin{eqnarray}\label{bc174}
&&\left\|z_{x}(t)\right\|^{2}+\int_{0}^{t}\left\|r^{n-1}z_{xx}(s)\right\|^{2}ds\nonumber\\
&\leq& C+\underbrace{\int_{0}^{t}\int_{0}^{\infty}r^{n-2}\left|z_{x}z_{xx}\right|dxds}_{I_{42}}
+\underbrace{\int_{0}^{t}\int_{0}^{\infty}r^{2n-2}\left|z_{x}v_{x}z_{xx}\right|dxds}_{I_{43}}\\
&&+\underbrace{\int_{0}^{t}\int_{0}^{\infty}\phi z\left|z_{xx}\right|dxds}_{I_{44}}.\nonumber
\end{eqnarray}
The terms $I_k (42\leq k\leq 44)$ can be estimated term by term by employing Cauchy's inequality, Sobolev's inequality, Lemma 2.1 and Lemma 4.3 as follows:
\begin{eqnarray}\label{bc175}
\left|I_{42}\right|&&\leq\frac{1}{8}\int_{0}^{t}\left\|r^{n-1}z_{xx}(s)\right\|^{2}ds+C\int_{0}^{t}\int_{0}^{\infty}r^{-2}z_{x}^{2}dxds\nonumber\\
&&\leq\frac{1}{8}\int_{0}^{t}\left\|r^{n-1}z_{xx}(s)\right\|^{2}ds+C,
\end{eqnarray}
\begin{eqnarray}\label{bc176}
\left|I_{43}\right|&&\leq\frac{1}{8}\int_{0}^{t}\left\|r^{n-1}z_{xx}(s)\right\|^{2}ds+C\int_{0}^{t}\int_{0}^{\infty}r^{2n-2}v_{x}^{2}z_{x}^{2}dxds\nonumber\\
&&\leq\frac{1}{8}\int_{0}^{t}\left\|r^{n-1}z_{xx}(s)\right\|^{2}ds+C\int_{0}^{t}\left\|z_{x}\right\|^{2}_{L^{\infty}\left([0,\infty)\right)}ds\nonumber\\
&&\leq\frac{1}{8}\int_{0}^{t}\left\|r^{n-1}z_{xx}(s)\right\|^{2}ds+C\int_{0}^{t}\left\|z_{x}\right\|\left\|z_{xx}\right\|ds\\
&&\leq\frac{1}{4}\int_{0}^{t}\left\|r^{n-1}z_{xx}(s)\right\|^{2}ds+C,\nonumber
\end{eqnarray}
and
\begin{eqnarray}\label{bc177}
\left|I_{44}\right|&&\leq\frac{1}{8}\int_{0}^{t}\left\|r^{n-1}z_{xx}(s)\right\|^{2}ds+C\int_{0}^{t}\int_{0}^{\infty}\phi^{2}z^{2}dxds\nonumber\\
&&\leq\frac{1}{8}\int_{0}^{t}\left\|r^{n-1}z_{xx}(s)\right\|^{2}ds+C\left\|\theta\right\|^{\beta}_{\infty}\int_{0}^{t}\int_{0}^{\infty}\phi z^{2}ds\\
&&\leq\frac{1}{8}\int_{0}^{t}\left\|r^{n-1}z_{xx}(s)\right\|^{2}ds+C.\nonumber
\end{eqnarray}
Then \eqref{bb172} follows from \eqref{bc174}-\eqref{bc177} and this completes the proof of Lemma 4.5.
\end{proof}

Finally, the following lemma gives a nice bound on the term $\int_{0}^{t}\left\|r^{n-1}v_{x}(s)\right\|^2_{L^{\infty}\left([0,\infty)\right)}ds$, which will be used in Section 5.
\begin{lemma} Under the assumptions listed in Lemma 2.1, we can get for any $0\leq t\leq T$ that
\begin{eqnarray}\label{bd172}
\int_{0}^{\infty}\frac{r^{2n-2}v^{2}_{xx}}{v^{2}}dx+\int_{0}^{t}\int_{0}^{\infty}\frac{Rr^{2n-2}\theta v^{2}_{xx}}{v^{3}}dxds\leq C
\end{eqnarray}
and
\begin{eqnarray}\label{bd173}
\int_{0}^{t}\left\|r^{n-1}v_{x}(s)\right\|^2_{L^{\infty}\left([0,\infty)\right)}ds\leq C.
\end{eqnarray}

\end{lemma}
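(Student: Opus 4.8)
The plan is to obtain \eqref{bd172} from a once--more differentiated form of the momentum equation, in the same spirit as \eqref{c23} was derived in Lemma 3.10 but one derivative higher. Rewriting \eqref{c24} schematically as $\alpha\Phi_{t}+\frac{R\theta}{v}\Phi=\mathcal{R}$ with $\Phi:=\frac{r^{n-1}v_{x}}{v}$ and $\mathcal{R}:=u_{t}+\frac{Rr^{n-1}\theta_{x}}{v}-\alpha(n-1)r^{n-2}u\frac{v_{x}}{v}+\frac{4a}{3}r^{n-1}\theta^{3}\theta_{x}$, I would differentiate in $x$ to get, with $W:=\Phi_{x}=\bigl(\frac{r^{n-1}v_{x}}{v}\bigr)_{x}$, the identity $\alpha W_{t}+\frac{R\theta}{v}W=\mathcal{R}_{x}-\bigl(\frac{R\theta}{v}\bigr)_{x}\Phi$, then multiply by $W$, integrate over $[0,\infty)$ and use the boundary conditions \eqref{a22} to arrive at
\[
\frac{\alpha}{2}\frac{d}{dt}\int_{0}^{\infty}W^{2}\,dx+\int_{0}^{\infty}\frac{R\theta}{v}W^{2}\,dx=\int_{0}^{\infty}\Bigl(\mathcal{R}_{x}-\Bigl(\tfrac{R\theta}{v}\Bigr)_{x}\Phi\Bigr)W\,dx .
\]
Since $W=\frac{r^{n-1}v_{xx}}{v}-\frac{r^{n-1}v_{x}^{2}}{v^{2}}+(n-1)r^{-1}v_{x}$, the quantity $\int W^{2}dx$ agrees with $\int\frac{r^{2n-2}v_{xx}^{2}}{v^{2}}dx$ up to $\int r^{-2}v_{x}^{2}dx\le\|r^{n-1}v_{x}\|^{2}\le C$ (Lemma 3.4, \eqref{b43}) and $\int\frac{r^{2n-2}v_{x}^{4}}{v^{4}}dx\le C\|r^{n-1}v_{x}\|_{L^{\infty}}^{2}\int\frac{r^{2n-2}v_{x}^{2}}{v^{2}}dx$, the latter being absorbable because $\|r^{n-1}v_{x}\|_{L^{\infty}}^{2}\le C\|r^{n-1}v_{x}\|\,\|(r^{n-1}v_{x})_{x}\|\le C+C\|W\|$; similarly the second term on the left is equivalent to the dissipation appearing in \eqref{bd172}.

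Next I would bound the right--hand side term by term. Expanding $\mathcal{R}_{x}$ produces $u_{tx}$ (controlled by $\int_{0}^{t}\|r^{n-1}u_{tx}\|^{2}ds\le C$ from \eqref{b152}), the terms $\frac{Rr^{n-1}\theta_{xx}}{v}$ and $\frac{4a}{3}r^{n-1}\theta^{3}\theta_{xx}$ (controlled, after using $\theta\le\overline{\Theta}$ from Lemma 4.3, by $\int_{0}^{t}\|r^{n-1}\theta_{xx}\|^{2}ds\le C$ from \eqref{b160}), the quadratic--gradient terms $r^{n-1}\theta^{2}\theta_{x}^{2}$, $\frac{r^{n-1}\theta_{x}v_{x}}{v^{2}}$ and $\bigl(\frac{R\theta}{v}\bigr)_{x}\Phi$, and the terms coming from $\bigl(r^{n-2}u\frac{v_{x}}{v}\bigr)_{x}$, among which $r^{n-2}u\frac{v_{xx}}{v}$ is the delicate one since it carries a second derivative of $v$. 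Each contribution is split by Cauchy's inequality, pulling a factor $\sqrt{R\theta/v}$ out of $W$ so that part is absorbed into $\int\frac{R\theta}{v}W^{2}dx$; the remaining factors are estimated via $\theta\le\overline{\Theta}$, \eqref{b43}, \eqref{bb13}, \eqref{bbb44}, \eqref{cc16}, \eqref{c23}, and Lemmas 4.3--4.5, so that everything is dominated by an $L^{1}$--in--time coefficient times $\|W\|^{2}$ plus an $L^{1}$--in--time source. For instance $\int r^{n-2}u\frac{v_{xx}}{v}W\,dx\le C\|u/r\|_{L^{\infty}}\|W\|^{2}$ with $\|u/r\|_{L^{\infty}}^{2}\le CV(t)$ (by \eqref{bb45}, \eqref{bb46}, valid for $n\ge2$), the $\theta_{x}$--terms yield $C\bigl(\|r^{n-1}\theta_{x}\|_{L^{\infty}}^{2}+V(t)\bigr)\|W\|^{2}$ with $\int_{0}^{t}\|r^{n-1}\theta_{x}\|_{L^{\infty}}^{2}ds\le C$ by \eqref{bb160}, the $u_{x}$--terms are handled with $\int_{0}^{t}\|r^{n-1}u_{x}\|_{L^{\infty}}^{2}ds\le C$ from \eqref{b152-2}, and the cubic term $\int\frac{R\theta r^{2n-2}v_{x}^{4}}{v^{5}}dx$ is treated as in the first paragraph. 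A Gronwall argument then gives $\|W(t)\|^{2}+\int_{0}^{t}\!\int_{0}^{\infty}\frac{R\theta}{v}W^{2}dxds\le C$, which is \eqref{bd172} after translating $W$ back to $v_{xx}$ and absorbing the lower--order remainders.

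Once \eqref{bd172} is available, estimate \eqref{bd173} is immediate: by the Sobolev inequality $\|r^{n-1}v_{x}(t)\|_{L^{\infty}}^{2}\le C\|r^{n-1}v_{x}(t)\|\,\|(r^{n-1}v_{x})_{x}(t)\|$, together with $\|r^{n-1}v_{x}(t)\|\le C$ from Lemma 3.4, the identity $(r^{n-1}v_{x})_{x}=(n-1)r^{-1}vv_{x}+r^{n-1}v_{xx}$, and $\|(r^{n-1}v_{x})_{x}(t)\|^{2}\le C\|v_{x}(t)\|^{2}+C\|r^{n-1}v_{xx}(t)\|^{2}\le C$ by \eqref{b43} and the pointwise--in--$t$ part of \eqref{bd172}; integrating over $[0,t]$ gives the claim.

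The main obstacle is the top--order term $r^{n-2}u\,v_{xx}/v$ (and, similarly, the $\theta v_{x}^{2}/v^{3}$ contribution hidden in $P_{xx}$ through the damping term), which involves the highest derivative $W$ itself and therefore cannot be absorbed into the $\theta$--weighted dissipation at the cost of a mere constant; the estimate has to produce a coefficient that is integrable in time, which is precisely why the basic energy estimate \eqref{bb13} (supplying $\int_{0}^{t}V(s)ds\le C$ and $\|u/r\|_{L^{\infty}}^{2}\le CV(t)$), together with the time--integrated $L^{\infty}$ bounds of Lemmas 3.6, 3.9 and 4.4, are essential. Keeping careful track of the powers of $\theta$ produced by the radiative parts of $P$ and $e$ — all of which are finally tamed by the upper bound $\theta\le\overline{\Theta}$ of Lemma 4.3 — so that no genuinely growing factor survives, is where the bulk of the (routine but lengthy) computation lies.
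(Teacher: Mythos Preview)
Your overall strategy for \eqref{bd172} --- differentiate the momentum equation once more in $x$ and run an energy estimate --- is exactly what the paper does (the paper multiplies directly by $\frac{r^{n-1}v_{xx}}{v}$ rather than by $W=\Phi_{x}$, but these differ only by lower-order terms you have already identified). There is, however, a genuine gap in how you close the Gronwall step, and it propagates into both parts of the lemma.

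For \eqref{bd172}: your example bound $\int r^{n-2}u\frac{v_{xx}}{v}W\,dx\le C\|u/r\|_{L^{\infty}}\|W\|^{2}$ is correct, and you correctly observe $\|u/r\|_{L^{\infty}}^{2}\le CV(t)$. But the Gronwall coefficient is then $\|u/r\|_{L^{\infty}}\le CV(t)^{1/2}$, and $V\in L^{1}(0,\infty)$ does \emph{not} force $V^{1/2}\in L^{1}(0,\infty)$; so you cannot conclude a $t$-uniform bound this way. The paper's device is the pointwise splitting $1\le C\theta+CV(t)$ coming from \eqref{b19} with $m=\tfrac12$ (equivalently \eqref{b35}): one first writes $\frac{|u|}{r}\le \epsilon'+C(\epsilon')\frac{u^{2}}{r^{2}}$, then splits the resulting $\epsilon' W^{2}$ as $\epsilon' W^{2}\le C\epsilon'\tfrac{R\theta}{v}W^{2}+C\epsilon' V(t)W^{2}$. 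With $\epsilon'$ small the $\theta$-weighted piece is absorbed into the dissipation on the left, and the surviving Gronwall coefficient is $V(t)+\|u/r\|_{L^{\infty}}^{2}$, which \emph{is} uniformly $L^{1}$ in time. The same trick is needed for every term that produces a bare $\|W\|^{2}$ (these are the analogues of $I_{45}$--$I_{48}$ in the paper); your phrase ``pulling a factor $\sqrt{R\theta/v}$ out of $W$'' instead introduces a $1/\theta$, which you cannot control at this stage.

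For \eqref{bd173}: your argument yields the pointwise bound $\|r^{n-1}v_{x}(t)\|_{L^{\infty}}^{2}\le C$, whence only $\int_{0}^{t}\|r^{n-1}v_{x}\|_{L^{\infty}}^{2}ds\le Ct$ --- but the lemma requires a bound independent of $t$ (it feeds into the time-uniform constants of Section~5). The paper instead bounds $\int_{0}^{t}\|(r^{n-1}v_{x})_{x}\|^{2}ds$ through the \emph{time-integrated} dissipations $\int_{0}^{t}\!\int\theta r^{2n-2}v_{x}^{2}\,dxds$ and $\int_{0}^{t}\!\int\theta r^{2n-2}v_{xx}^{2}\,dxds$ (the second halves of \eqref{c23} and \eqref{bd172}), once again using $1\le C\theta+CV(t)$ to trade $\int_{0}^{t}\!\int r^{2n-2}v_{xx}^{2}$ for its $\theta$-weighted counterpart plus a term with coefficient $V(s)$; see \eqref{bd13} and \eqref{bd23}. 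Your route via the pointwise-in-$t$ half of \eqref{bd172} cannot produce a uniform-in-$t$ estimate.
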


\begin{proof}
Differentiating $\eqref{bd44}$ with respect to $x$ once and multiplying it by $\frac{r^{n-1}v_{xx}}{v}$, we obtain that
\begin{eqnarray}\label{bd1}
&&\frac{\alpha}{2}\partial_{t}\left[\frac{r^{2n-2}v^{2}_{xx}}{v^{2}}\right]+\frac{Rr^{2n-2}\theta v^{2}_{xx}}{v^{3}}\nonumber\\
&=&\frac{r^{n-1}u_{tx}v_{xx}}{v}-\alpha(n-1)r^{n-2}v_{xx}\left(\frac{\left(r^{n-1}u\right)_{x}}{v}\right)_{x}\nonumber\\
&&+\frac{\alpha(n-1)r^{2n-3}uv^{2}_{xx}}{v^{2}}
+\frac{\alpha r^{2n-2}v_{xx}}{v}\left(\frac{v^{2}_{x}}{v^{2}}\right)_{t}\\
&&+(n-1)r^{n-2}v_{xx}\left(\frac{R\theta_{x}}{v}-\frac{R\theta v_{x}}{v^{2}}+\frac{4}{3}a\theta^{3}\theta_{x}\right)\nonumber\\
&&+\frac{Rr^{2n-2}v_{xx}\theta_{xx}}{v^{2}}
-\frac{2Rr^{2n-2}v_{xx}v_{x}\theta_{x}}{v^{3}}\nonumber\\
&&+\frac{2Rr^{2n-2}\theta v_{xx}v^{2}_{x}}{v^{4}}+\frac{4ar^{2n-2}\theta^{2}v_{xx}\theta^{2}_{x}}{v}+\frac{4a\theta^{3}r^{2n-2}v_{xx}\theta_{xx}}{3v},\nonumber
\end{eqnarray}
where we have used \eqref{bbb44} and the fact that
\begin{eqnarray*}
 \frac{\partial^{2} P(v,\theta)}{\partial x^{2}}=\frac{R\theta_{xx}}{v}-\frac{2R\theta_{x}v_{x}}{v^{2}}-\frac{R\theta v_{xx}}{v^{2}}+\frac{2R\theta v^{2}_{x}}{v^{3}}+4a\theta^{2}\theta^{2}_{x}
+\frac{4}{3}a\theta^{3}\theta_{xx}.
\end{eqnarray*}

Integrating \eqref{bd1} with respect to $t$ and $x$ over $\left[0,t\right)\times[0, +\infty)$, we arrive at
\begin{eqnarray}\label{bd2}
&&\int_{0}^{\infty}\frac{r^{2n-2}v^{2}_{xx}}{v^{2}}dx+\int_{0}^{t}\int_{0}^{\infty}\frac{Rr^{2n-2}\theta v^{2}_{xx}}{v^{3}}dxds\nonumber\\
&\leq&\int_{0}^{\infty}\frac{r^{2n-2}v^{2}_{xx}}{v^{2}}\left(0,x\right)dx
+\underbrace{C\int_{0}^{t}\int_{0}^{\infty}\frac{r^{n-1}\left|u_{tx}v_{xx}\right|}{v}dxds}_{I_{45}}\nonumber\\
&&+\underbrace{C\int_{0}^{t}\int_{0}^{\infty}r^{n-2}\left|v_{xx}\left(\frac{\left(r^{n-1}u\right)_{x}}{v}\right)_{x}\right|dxds}_{I_{46}}
+\underbrace{C\int_{0}^{t}\int_{0}^{\infty}\frac{r^{2n-3}\left|u\right|v^{2}_{xx}}{v^{2}}dxds}_{I_{47}}\nonumber\\
&&+\underbrace{C\int_{0}^{t}\int_{0}^{\infty}\left|\frac{r^{2n-2}v_{xx}}{v}\left(\frac{v^{2}_{x}}{v^{2}}\right)_{t}\right|dxds}_{I_{48}}
+\underbrace{C\int_{0}^{t}\int_{0}^{\infty}\frac{r^{2n-2}\left|v_{xx}\theta_{xx}\right|}{v^{2}}dxds}_{I_{49}}\\
&&+\underbrace{C\int_{0}^{t}\int_{0}^{\infty}r^{n-2}\left|v_{xx}\left(\frac{R\theta_{x}}{v}-\frac{R\theta v_{x}}{v^{2}}+\frac{4}{3}a\theta^{3}\theta_{x}\right)\right|dxds}_{I_{50}}\nonumber\\
&&+\underbrace{C\int_{0}^{t}\int_{0}^{\infty}\frac{r^{2n-2}\left|v_{xx}\theta_{x}v_{x}\right|}{v^{3}}dxds}_{I_{51}}
+\underbrace{C\int_{0}^{t}\int_{0}^{\infty}\frac{r^{2n-2}\theta v^{2}_{x}\left|v_{xx}\right|}{v^{4}}dxds}_{I_{52}}\nonumber\\
&&+\underbrace{C\int_{0}^{t}\int_{0}^{\infty}\frac{r^{2n-2}\theta^{2}\theta^{2}_{x}\left|v_{xx}\right|}{v}dxds}_{I_{53}}
+\underbrace{C\int_{0}^{t}\int_{0}^{\infty}\frac{r^{2n-2}\theta^{3}\left|v_{xx}\theta_{xx}\right|}{v}dxds}_{I_{54}}.\nonumber
\end{eqnarray}

Now we turn to estimate the terms $I_{45}$-$I_{54}$ term by term. For this purpose, we can get first from \eqref{bb158}, Lemma 4.3 and \eqref{b19} with $m=\frac{1}{2}$ that
\begin{eqnarray}\label{bd3}
I_{45}&&\leq\frac{1}{20}\int_{0}^{t}\int_{0}^{\infty}\frac{Rr^{2(n-1)}\theta v^{2}_{xx}}{v^{^{3}}}dxds
+C\int_{0}^{t}V(s)\int_{0}^{\infty}\frac{r^{2n-2}v^{2}_{xx}}{v^{2}}dxds+C\int_{0}^{t}\int_{0}^{\infty}\frac{r^{2n-2}u_{tx}^{2}}{v}\nonumber\\
&&\leq\frac{1}{20}\int_{0}^{t}\int_{0}^{\infty}\frac{Rr^{2(n-1)}\theta v^{2}_{xx}}{v^{^{3}}}dxds
+C\int_{0}^{t}V(s)\int_{0}^{\infty}\frac{r^{2n-2}v^{2}_{xx}}{v^{2}}dxds+C.
\end{eqnarray}

Next, due to the fact that
\begin{eqnarray*}
\left(\frac{\left(r^{n-1}u\right)_{x}}{v}\right)_{x}=\frac{2(n-1)u_{x}}{r}-\frac{(n-1)uv}{r^{n+1}}-\frac{r^{n-1}u_{x}v_{x}}{v^{2}}+\frac{r^{n-1}u_{xx}}{v},
\end{eqnarray*}
we can obtain from \eqref{bb13}, \eqref{b19}, \eqref{bz57}, \eqref{c1}, \eqref{c17} and \eqref{b151} that
\begin{eqnarray}\label{bd4}
I_{46}&\leq& C\int_{0}^{t}\int_{0}^{\infty}\left(\frac{v\left|uv_{xx}\right|}{r^{3}}+r^{n-3}\left|u_{x}v_{xx}\right|
+\frac{r^{2n-3}\left|v_{xx}v_{x}u_{x}\right|}{v^{2}}+\frac{r^{2n-3}\left|v_{xx}u_{xx}\right|}{v}\right)dxds\nonumber\\
&\leq&\frac{1}{20}\int_{0}^{t}\int_{0}^{\infty}\frac{Rr^{2(n-1)}\theta v^{2}_{xx}}{v^{^{3}}}dxds
+C\int_{0}^{t}V(s)\int_{0}^{\infty}\frac{r^{2n-2}v^{2}_{xx}}{v^{2}}dxds\nonumber\\
&&+C\int_{0}^{t}\int_{0}^{\infty}\left(\frac{vu^{2}}{r^{2}\theta}+\frac{r^{2(n-1)}u^{2}_{x}}{v\theta}+r^{2n-2}u^{2}_{x}v^{2}_{x}
+r^{2n-2}u^{2}_{xx}\right)dxds\\
&\leq&\frac{1}{20}\int_{0}^{t}\int_{0}^{\infty}\frac{Rr^{2(n-1)}\theta v^{2}_{xx}}{v^{^{3}}}dxds
+C\int_{0}^{t}V(s)\int_{0}^{\infty}\frac{r^{2n-2}v^{2}_{xx}}{v^{2}}dxds+C\nonumber\\
&&+C\int_{0}^{t}\left\|r^{n-1}u_{x}\right\|^{2}_{L^{\infty}\left([0,\infty)\right)}\int_{0}^{\infty}v_{x}^{2}dxds\nonumber\\
&\leq&\frac{1}{20}\int_{0}^{t}\int_{0}^{\infty}\frac{Rr^{2(n-1)}\theta v^{2}_{xx}}{v^{^{3}}}dxds
+C\int_{0}^{t}V(s)\int_{0}^{\infty}\frac{r^{2n-2}v^{2}_{xx}}{v^{2}}dxds+C.\nonumber
\end{eqnarray}

Now, it is apparent that
\begin{eqnarray}\label{bd5}
I_{47}&\leq& \frac{1}{20}\int_{0}^{t}\int_{0}^{\infty}\frac{Rr^{2(n-1)}\theta v^{2}_{xx}}{v^{^{3}}}dxds
+C\int_{0}^{t}V(s)\int_{0}^{\infty}\frac{r^{2n-2}v^{2}_{xx}}{v^{2}}dxds
+C\int_{0}^{t}\int_{0}^{\infty}\frac{r^{2n-4}u^{2}v^{2}_{xx}}{v^{2}}\nonumber\\
&\leq& \frac{1}{20}\int_{0}^{t}\int_{0}^{\infty}\frac{Rr^{2(n-1)}\theta v^{2}_{xx}}{v^{^{3}}}dxds
+C\int_{0}^{t}\left(V(s)+\left\|\frac{u}{r}\right\|^{2}_{L^{\infty}\left([0,\infty)\right)}\right)\int_{0}^{\infty}\frac{r^{2n-2}v^{2}_{xx}}{v^{2}}dxds,
\end{eqnarray}
where we have also used \eqref{b19} with $m=\frac{1}{2}$.

For the term $I_{48}$, since
\begin{eqnarray*}
\left(\frac{v^{2}_{x}}{v^{2}}\right)_{t}=\frac{2v_{x}}{v}\cdot\frac{v_{tx}v-v_{x}v_{t}}{v^{2}}
=\frac{2v_{x}\left(r^{n-1}u\right)_{xx}}{v^{2}}-\frac{2v_{x}^{2}\left(r^{n-1}u\right)_{x}}{v^{^{2}}},
\end{eqnarray*}
then we have
\begin{eqnarray}\label{bd6}
I_{48}\leq \underbrace{C\int_{0}^{t}\int_{0}^{\infty}\frac{r^{2n-2}\left|v^{2}_{x}v_{xx}\left(r^{n-1}u\right)_{x}\right|}{v^{4}}dxds}_{I_{48}^{1}}
+\underbrace{C\int_{0}^{t}\int_{0}^{\infty}\frac{r^{2n-2}\left|v_{x}v_{xx}\left(r^{n-1}u\right)_{xx}\right|}{v^{3}}dxds}_{I_{48}^{2}}.
\end{eqnarray}
As for the term $I_{48}^{1}$, we can deduce that
\begin{eqnarray}\label{bd7}
I_{48}^{1}&\leq& \frac{1}{80}\int_{0}^{t}\int_{0}^{\infty}\frac{Rr^{2(n-1)}\theta v^{2}_{xx}}{v^{^{3}}}dxds
+C\int_{0}^{t}V(s)\int_{0}^{\infty}\frac{r^{2n-2}v^{2}_{xx}}{v^{2}}dxds\nonumber\\
&&+C\int_{0}^{t}\int_{0}^{\infty}r^{2n-2}v_{x}^{4}\left[\left(r^{n-1}u\right)_{x}\right]^{2}dxds\nonumber\\
&\leq& \frac{1}{80}\int_{0}^{t}\int_{0}^{\infty}\frac{Rr^{2(n-1)}\theta v^{2}_{xx}}{v^{^{3}}}dxds
+C\int_{0}^{t}V(s)\int_{0}^{\infty}\frac{r^{2n-2}v^{2}_{xx}}{v^{2}}dxds\nonumber\\
&&+C\int_{0}^{t}\left\|v_{x}\right\|^2_{L^{\infty}\left([0,\infty)\right)} \left\|\left(r^{n-1}u\right)_{x}\right\|^2_{L^{\infty}\left([0,\infty)\right)}ds\\
&\leq& \frac{1}{80}\int_{0}^{t}\int_{0}^{\infty}\frac{Rr^{2(n-1)}\theta v^{2}_{xx}}{v^{^{3}}}dxds
+C\int_{0}^{t}V(s)\int_{0}^{\infty}\frac{r^{2n-2}v^{2}_{xx}}{v^{2}}dxds\nonumber\\
&&+C\int_{0}^{t}\left\|v_{x}\right\|\left\|v_{xx}\right\|\left\|\left(r^{n-1}u\right)_{x}\right\|\left\|\left(r^{n-1}u_{x}\right)_{xx}\right\|ds\nonumber\\
&\leq& \frac{1}{40}\int_{0}^{t}\int_{0}^{\infty}\frac{Rr^{2(n-1)}\theta v^{2}_{xx}}{v^{^{3}}}dxds
+C\int_{0}^{t}V(s)\int_{0}^{\infty}\frac{r^{2n-2}v^{2}_{xx}}{v^{2}}dxds\nonumber\\
&&+C\int_{0}^{t}\int_{0}^{\infty}\left[\left(r^{n-1}u\right)_{xx}\right]^{2}dxds.\nonumber
\end{eqnarray}
Here we have used \eqref{b152} and the fact that
\begin{eqnarray}\label{bd8}
\left\|\left(r^{n-1}u\right)_{x}\right\|^{2}\leq C\int_{0}^{\infty}\left(\frac{u^{2}}{r^{2}}+r^{2n-2}u_{x}^{2}\right)dx\leq C.
\end{eqnarray}
On the other hand, we can infer from \eqref{a25}, \eqref{bb13}, \eqref{cc16}, \eqref{bd9}, \eqref{b151} and \eqref{b152} that
\begin{eqnarray}\label{bd10}
&&\int_{0}^{t}\int_{0}^{\infty}\left[\left(r^{n-1}u_{x}\right)_{xx}\right]^{2}dxds\nonumber\\
&\leq& C\int_{0}^{t}\int_{0}^{\infty}\left(\frac{u^{2}}{r^{2n+2}}+\frac{u^{2}v_{x}^{2}}{r^{2}}+\frac{u_{x}^{2}}{r^{2}}+r^{2n-2}u^{2}_{xx}\right)dxds\nonumber\\
&\leq& C+C\int_{0}^{t}\int_{0}^{\infty}\frac{u^{2}}{r^{2}\theta}\cdot\theta dxds+C\int_{0}^{t}\left\|\frac{u}{r}\right\|^2_{L^{\infty}\left([0,\infty)\right)}\int_{0}^{\infty}v_{x}^{2}dxds\\
&&+C\int_{0}^{t}\int_{0}^{\infty}\frac{r^{2n-2}u_{x}^{2}}{v\theta}\cdot\theta dxds\nonumber\\
&\leq& C.\nonumber
\end{eqnarray}

The combination of \eqref{bd7} and \eqref{bd10} shows that
\begin{eqnarray}\label{bd11}
I_{48}^{1}
&\leq& \frac{1}{40}\int_{0}^{t}\int_{0}^{\infty}\frac{Rr^{2(n-1)}\theta v^{2}_{xx}}{v^{^{3}}}dxds
+C\int_{0}^{t}V(s)\int_{0}^{\infty}\frac{r^{2n-2}v^{2}_{xx}}{v^{2}}dxds+C.
\end{eqnarray}

Moreover, it follows from \eqref{b19}, \eqref{cc16}, \eqref{b152}, \eqref{b152-2} and \eqref{bd9} that
\begin{eqnarray}\label{bd12}
I_{48}^{2}
&\leq& \frac{1}{80}\int_{0}^{t}\int_{0}^{\infty}\frac{Rr^{2(n-1)}\theta v^{2}_{xx}}{v^{^{3}}}dxds
+C\int_{0}^{t}V(s)\int_{0}^{\infty}\frac{r^{2n-2}v^{2}_{xx}}{v^{2}}dxds\nonumber\\
&&+C\int_{0}^{t}\int_{0}^{\infty}r^{2n-2}v^{2}_{x}\left[\left(r^{n-1}u\right)_{xx}\right]^{2}dxds\nonumber\\
&\leq& \frac{1}{80}\int_{0}^{t}\int_{0}^{\infty}\frac{Rr^{2(n-1)}\theta v^{2}_{xx}}{v^{^{3}}}dxds
+C\int_{0}^{t}V(s)\int_{0}^{\infty}\frac{r^{2n-2}v^{2}_{xx}}{v^{2}}dxds\nonumber\\
&&+C\int_{0}^{t}\int_{0}^{\infty}\left(\frac{v^{2}_{x}u^{2}}{r^{4}}+\frac{r^{2n-2}u^{2}v_{x}^{4}}{r^{2}}+\frac{r^{2n-2}u^{2}_{x}v_{x}^{2}}{r^{2}}
+r^{4n-4}v^{2}_{x}u_{xx}^{2}\right)\nonumber\\
&\leq& \frac{1}{80}\int_{0}^{t}\int_{0}^{\infty}\frac{Rr^{2(n-1)}\theta v^{2}_{xx}}{v^{^{3}}}dxds
+C\int_{0}^{t}V(s)\int_{0}^{\infty}\frac{r^{2n-2}v^{2}_{xx}}{v^{2}}dxds\nonumber\\
&&+C\int_{0}^{t}\left\|\frac{u}{r}\right\|^2_{L^{\infty}\left([0,\infty)\right)}\int_{0}^{\infty}v_{x}^{2}dxds
+C\int_{0}^{t}\left\|v_{x}\right\|^2_{L^{\infty}\left([0,\infty)\right)} \int_{0}^{\infty}r^{2n-2}v_{x}^{2}dxds\\
&&+C\int_{0}^{t}\left\|v_{x}\right\|^2_{L^{\infty}\left([0,\infty)\right)}\int_{0}^{\infty}r^{2n-2}u_{x}^{2}dxds
+C\int_{0}^{t}\left\|v_{x}\right\|^2_{L^{\infty}\left([0,\infty)\right)}\int_{0}^{\infty}r^{4n-4}u_{xx}^{2}dxds\nonumber\\
&\leq& \frac{1}{80}\int_{0}^{t}\int_{0}^{\infty}\frac{Rr^{2(n-1)}\theta v^{2}_{xx}}{v^{^{3}}}dxds
+C\int_{0}^{t}V(s)\int_{0}^{\infty}\frac{r^{2n-2}v^{2}_{xx}}{v^{2}}dxds\nonumber\\
&&+C+C\int_{0}^{t}\left\|v_{x}\right\|\left\|v_{xx}\right\|ds+C\int_{0}^{t}\left\|v_{x}\right\|\left\|v_{xx}\right\|\int_{0}^{\infty}r^{4n-4}u_{xx}^{2}dxds\nonumber\\
&\leq& \frac{1}{40}\int_{0}^{t}\int_{0}^{\infty}\frac{Rr^{2(n-1)}\theta v^{2}_{xx}}{v^{^{3}}}dxds
+C\int_{0}^{t}V(s)\int_{0}^{\infty}\frac{r^{2n-2}v^{2}_{xx}}{v^{2}}dxds\nonumber\\
&&+C+C\int_{0}^{t}\int_{0}^{\infty}v_{x}^{2}dxds
+C\int_{0}^{t}\left(\int_{0}^{\infty}r^{4n-4}u_{xx}^{2}dx\right) \left(\int_{0}^{\infty}\frac{r^{2n-2}v^{2}_{xx}}{v^{2}}dx\right)ds.\nonumber
\end{eqnarray}
Furthermore, \eqref{b19} and \eqref{b152} tell us that
\begin{eqnarray}\label{bd13}
\int_{0}^{t}\int_{0}^{\infty}v_{x}^{2}dxds&&\leq C\int_{0}^{t}\int_{0}^{\infty}\theta v_{x}^{2}dxds+C\int_{0}^{t}V(s)\int_{0}^{\infty}v_{x}^{2}dxds
\leq C,
\end{eqnarray}
and we can get from \eqref{bd12} and \eqref{bd13} that
\begin{eqnarray}\label{bd14}
I_{48}^{2}
&\leq& \frac{1}{40}\int_{0}^{t}\int_{0}^{\infty}\frac{Rr^{2(n-1)}\theta v^{2}_{xx}}{v^{^{3}}}dxds
+C\int_{0}^{t}V(s)\int_{0}^{\infty}\frac{r^{2n-2}v^{2}_{xx}}{v^{2}}dxds\nonumber\\
&&+C+C\int_{0}^{t}\left(\int_{0}^{\infty}r^{4n-4}u_{xx}^{2}dx\right)\left(\int_{0}^{\infty}\frac{r^{2n-2}v^{2}_{xx}}{v^{2}}dx\right)ds.
\end{eqnarray}

Substituting \eqref{bd7} and \eqref{bd14} into \eqref{bd6}, gives
\begin{eqnarray}\label{bd15}
I_{48}
&\leq& \frac{1}{20}\int_{0}^{t}\int_{0}^{\infty}\frac{Rr^{2(n-1)}\theta v^{2}_{xx}}{v^{^{3}}}dxds
+C\int_{0}^{t}V(s)\int_{0}^{\infty}\frac{r^{2n-2}v^{2}_{xx}}{v^{2}}dxds\nonumber\\
&&+C+C\int_{0}^{t}\left(\int_{0}^{\infty}r^{4n-4}u_{xx}^{2}dx\right)\left(\int_{0}^{\infty}\frac{r^{2n-2}v^{2}_{xx}}{v^{2}}dx\right)ds.
\end{eqnarray}

For $I_{49}$, by making use of \eqref{b19} and \eqref{b160}, we find that
\begin{eqnarray}\label{bd16}
I_{49}
&\leq& \frac{1}{20}\int_{0}^{t}\int_{0}^{\infty}\frac{Rr^{2(n-1)}\theta v^{2}_{xx}}{v^{^{3}}}dxds
+C\int_{0}^{t}V(s)\int_{0}^{\infty}\frac{r^{2n-2}v^{2}_{xx}}{v^{2}}dxds\nonumber\\
&&+C\int_{0}^{t}\int_{0}^{\infty}r^{2n-2}\theta_{xx}^{2}dxds\\
&\leq& \frac{1}{20}\int_{0}^{t}\int_{0}^{\infty}\frac{Rr^{2(n-1)}\theta v^{2}_{xx}}{v^{^{3}}}dxds
+C\int_{0}^{t}V(s)\int_{0}^{\infty}\frac{r^{2n-2}v^{2}_{xx}}{v^{2}}dxds+C.\nonumber
\end{eqnarray}

As for the term $I_{50}$, we have from \eqref{bb13}, \eqref{b151} and \eqref{b152} that
\begin{eqnarray}\label{bd17}
I_{50}
&\leq& C\int_{0}^{t}\int_{0}^{\infty}\left(\frac{r^{n-2}\left|v_{xx}\theta_{x}\right|}{v}+\frac{r^{n-2}\theta\left|v_{xx}v_{x}\right|}{v^{2}}
+\theta^{3}\left|v_{xx}\theta_{x}\right|\right)\nonumber\\
&\leq& \frac{1}{20}\int_{0}^{t}\int_{0}^{\infty}\frac{Rr^{2(n-1)}\theta v^{2}_{xx}}{v^{^{3}}}dxds
+C\int_{0}^{t}\int_{0}^{\infty}\left[\frac{r^{2n-2}\theta^{2}_{x}}{v\theta^{2}}\cdot\left(\theta+\theta^{7}\right)+\theta v^{2}_{x}\right]dxds\\
&\leq& \frac{1}{20}\int_{0}^{t}\int_{0}^{\infty}\frac{Rr^{2(n-1)}\theta v^{2}_{xx}}{v^{^{3}}}dxds+C.\nonumber
\end{eqnarray}

Now for $I_{51}$ and $I_{52}$, in view of \eqref{b152} and \eqref{bd13} and similar to that of \eqref{bd12}, we can conclude that
\begin{eqnarray}\label{bd18}
I_{51}
&\leq& \frac{1}{40}\int_{0}^{t}\int_{0}^{\infty}\frac{Rr^{2(n-1)}\theta v^{2}_{xx}}{v^{^{3}}}dxds
+C\int_{0}^{t}V(s)\int_{0}^{\infty}\frac{r^{2n-2}v^{2}_{xx}}{v^{2}}dxds\nonumber\\
&&+C\int_{0}^{t}\left\|v_{x}\right\|^2_{L^{\infty}\left([0,\infty)\right)}\int_{0}^{\infty}r^{2n-2}\theta_{x}^{2}dxds\nonumber\\
&\leq& \frac{1}{40}\int_{0}^{t}\int_{0}^{\infty}\frac{Rr^{2(n-1)}\theta v^{2}_{xx}}{v^{^{3}}}dxds
+C\int_{0}^{t}V(s)\int_{0}^{\infty}\frac{r^{2n-2}v^{2}_{xx}}{v^{2}}dxds\\
&&+C\int_{0}^{t}\left\|v_{x}\right\|\left\|v_{xx}\right\|ds\nonumber\\
&\leq& \frac{1}{20}\int_{0}^{t}\int_{0}^{\infty}\frac{Rr^{2(n-1)}\theta v^{2}_{xx}}{v^{^{3}}}dxds
+C\int_{0}^{t}V(s)\int_{0}^{\infty}\frac{r^{2n-2}v^{2}_{xx}}{v^{2}}dxds+C\nonumber
\end{eqnarray}
and
\begin{eqnarray}\label{bd19}
I_{52}
&\leq& \frac{1}{40}\int_{0}^{t}\int_{0}^{\infty}\frac{Rr^{2(n-1)}\theta v^{2}_{xx}}{v^{^{3}}}dxds
+C\int_{0}^{t}\left\|v_{x}\right\|^2_{L^{\infty}\left([0,\infty)\right)}\int_{0}^{\infty}r^{2n-2}v_{x}^{2}dxds\nonumber\\
&\leq& \frac{1}{40}\int_{0}^{t}\int_{0}^{\infty}\frac{Rr^{2(n-1)}\theta v^{2}_{xx}}{v^{^{3}}}dxds
+C\int_{0}^{t}\left\|v_{x}\right\|\left\|v_{xx}\right\|ds\\
&\leq& \frac{1}{20}\int_{0}^{t}\int_{0}^{\infty}\frac{Rr^{2(n-1)}\theta v^{2}_{xx}}{v^{^{3}}}dxds
+C\int_{0}^{t}V(s)\int_{0}^{\infty}\frac{r^{2n-2}v^{2}_{xx}}{v^{2}}dxds+C.\nonumber
\end{eqnarray}

For the term $I_{53}$, one can get from \eqref{b152} and \eqref{bb160} that
\begin{eqnarray}\label{bd20}
I_{53}
&\leq& \frac{1}{20}\int_{0}^{t}\int_{0}^{\infty}\frac{Rr^{2(n-1)}\theta v^{2}_{xx}}{v^{^{3}}}dxds
+C\int_{0}^{t}\int_{0}^{\infty}r^{2n-2}\theta^{4}_{x}dxds\nonumber\\
&\leq& \frac{1}{20}\int_{0}^{t}\int_{0}^{\infty}\frac{Rr^{2(n-1)}\theta v^{2}_{xx}}{v^{^{3}}}dxds
+C\int_{0}^{t}\left\|r^{n-1}\theta_{x}(s)\right\|^2_{L^{\infty}\left([0,\infty)\right)}\int_{0}^{\infty}\theta_{x}^{2}dxds\\
&\leq& \frac{1}{20}\int_{0}^{t}\int_{0}^{\infty}\frac{Rr^{2(n-1)}\theta v^{2}_{xx}}{v^{^{3}}}dxds
+C.\nonumber
\end{eqnarray}

Furthermore, we can conclude from \eqref{b160} that
\begin{eqnarray}\label{bd21}
I_{54}
&\leq& \frac{1}{20}\int_{0}^{t}\int_{0}^{\infty}\frac{Rr^{2(n-1)}\theta v^{2}_{xx}}{v^{^{3}}}dxds
+C\int_{0}^{t}\int_{0}^{\infty}r^{2n-2}\theta^{2}_{xx}dxds\nonumber\\
&\leq& \frac{1}{20}\int_{0}^{t}\int_{0}^{\infty}\frac{Rr^{2(n-1)}\theta v^{2}_{xx}}{v^{^{3}}}dxds
+C.
\end{eqnarray}

Combining \eqref{bd2}-\eqref{bd21} and taking full advantage of \eqref{cc16}, \eqref{b152} and Gronwall's inequality, we can obtain \eqref{bd172}.

It remains to estimate the term $\int_{0}^{t}\left\|r^{n-1}v_{x}(s)\right\|^2_{L^{\infty}\left([0,\infty)\right)}ds$. To this end, noticing that
\begin{eqnarray*}
\left(r^{n-1}v_{x}\right)_{x}=(n-1)r^{-1}vv_{x}+r^{n-1}v_{xx},
\end{eqnarray*}
then we have
\begin{eqnarray}\label{bd23}
\int_{0}^{t}\left\|r^{n-1}v_{x}\right\|^2_{L^{\infty}\left([0,\infty)\right)}ds
&\leq& C\int_{0}^{t}\left\|r^{n-1}v_{x}\right\|\left\|\left(r^{n-1}v_{x}\right)_{x}\right\|ds\nonumber\\
&\leq& C\int_{0}^{t}\int_{0}^{\infty}\left(r^{2n-2}v_{x}^{2}+v_{x}^{2}+r^{2n-2}v_{xx}^{2}\right)dxds\nonumber\\
&\leq& C+C\int_{0}^{t}\left(\int_{0}^{\infty}\theta r^{2n-2}v_{x}^{2}+\theta r^{2n-2}v_{xx}^{2}\right)dxds\\
&&+C\int_{0}^{t}V\left(s\right)\left(\int_{0}^{\infty}r^{2n-2}v_{x}^{2}dx+\int_{0}^{\infty}r^{2n-2}v_{xx}^{2}dx\right)ds\nonumber\\
&\leq& C.\nonumber
\end{eqnarray}
Here we have used \eqref{b19}, \eqref{b43}, \eqref{b152}, \eqref{bd172} and \eqref{bd13}. This completes the proof of Lemma 4.6.
\end{proof}

\section{Lower bound of $\theta\left(t,x\right)$}
This main purpose of this section is to derive a local in time lower positive bound on the absolute temperature $\theta\left(t, x\right)$, which is the main content of the  following lemma
\begin{lemma} Under the assumptions stated in Lemma 2.1, for each $0\leq s\leq t\leq T$ and $x\in[0,\infty)$, the following estimate
  \begin{eqnarray}\label{b180}
  \theta\left(t,x\right)\geq \frac{\min\limits_{x\in[0,\infty)}\{\theta(s,x)\}}{C+C\left(t-s+1\right)\min\limits_{x\in[0,\infty)}\{\theta(s,x)\}}
  \end{eqnarray}
holds for some positive constant $C$ which depends only on the fixed constants $\mu$, $\lambda_{1}$, $\lambda$, $K$, $A$, $d$, $R$, $c_{v}$, $a$, $\kappa_{1}$, $\kappa_{2}$, $n$ and the initial data $(v_0(x), u_0(x), \theta_0(x), z_0(x))$.
\end{lemma}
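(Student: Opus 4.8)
The plan is to pass to the reciprocal $w:=1/\theta$, derive a one-sided parabolic differential inequality for $w$, and extract from it a crude (time-growing) bound $\|w(t)\|_{L^\infty([0,\infty))}\le C\|w(s)\|_{L^\infty([0,\infty))}+C(t-s)+C$, which is exactly \eqref{b180} after rearranging. First I would start from the form \eqref{b117} of the temperature equation, i.e.
\begin{equation*}
e_\theta(v,\theta)\theta_t=\left(\frac{r^{2n-2}\kappa(v,\theta)\theta_x}{v}\right)_x-\theta P_\theta(v,\theta)(r^{n-1}u)_x+\left[\frac{\alpha|(r^{n-1}u)_x|^2}{v}-2\mu(n-1)(r^{n-2}u^2)_x\right]+\lambda\phi z.
\end{equation*}
By the algebraic identity \eqref{b7} the bracketed quantity is nonnegative, and $\lambda\phi z\ge 0$; discarding both gives the pointwise lower bound $e_\theta(v,\theta)\theta_t\ge\big(\frac{r^{2n-2}\kappa(v,\theta)\theta_x}{v}\big)_x-\theta P_\theta(v,\theta)(r^{n-1}u)_x$, with $e_\theta=c_v+4av\theta^3$ and $P_\theta=R/v+\frac43 a\theta^3$ as in \eqref{f3}.

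Next, multiplying this inequality by $-\theta^{-2}<0$ and using $\theta_x=-\theta^2 w_x$, a short computation turns the diffusion term into $\big(\frac{r^{2n-2}\kappa(v,\theta)w_x}{v}\big)_x-\frac{2r^{2n-2}\kappa(v,\theta)\theta w_x^2}{v}$. Dropping the manifestly nonpositive second piece and invoking $c_v\le e_\theta$, the bounds $\underline V\le v\le\overline V$ (Lemma 3.4), $\theta\le\overline\Theta$ (Lemma 4.3) and $\kappa(v,\theta)\ge\kappa_1$, one obtains
\begin{equation*}
e_\theta(v,\theta)w_t\le\left(\frac{r^{2n-2}\kappa(v,\theta)w_x}{v}\right)_x+C(1+w)\,|(r^{n-1}u)_x|.
\end{equation*}
For each fixed $t$ one has $w-1\in H^1([0,\infty))$ (from the far-field condition in \eqref{a22} together with Lemmas 3.4, 4.3, 4.4 and the a priori bound $\theta\ge N_1>0$ built into the class $X$), and $w_x(t,0)=-\theta^{-2}\theta_x(t,0)=0$ by the boundary condition in \eqref{a22}, so all boundary terms occurring below vanish.

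Finally, for $p\ge 1$ I would multiply the last inequality by $w^{p-1}$ and integrate over $[s,t]\times[0,\infty)$: after integration by parts the diffusion contributes a nonnegative, hence discardable, quantity, while the source is estimated by Cauchy's inequality together with the a priori spacetime bound $\int_0^t\int_0^\infty|(r^{n-1}u)_x|^2\,dxds\le\overline V\,\overline\Theta\int_0^t\int_0^\infty\frac{|(r^{n-1}u)_x|^2}{v\theta}\,dxds\le C$, which follows from Lemma 3.4, Lemma 4.3 and \eqref{bb13}. Using also $\int_s^t\|(r^{n-1}u)_x(\tau)\|_{L^2}\,d\tau\le C(t-s)^{1/2}$ and handling the $t$-dependence of $e_\theta$ routinely (since $c_v\le e_\theta\le c_v+4a\overline V\,\overline\Theta^3$), one arrives at a recursive family of inequalities relating $\sup_{s\le\tau\le t}\|w(\tau)\|_{L^{2p}}$ to $\sup_{s\le\tau\le t}\|w(\tau)\|_{L^{p}}$, $\|w(s)\|_{L^{2p}}$ and $(t-s)$; a standard Moser iteration ($p=2^k\to\infty$) then yields $\|w(t)\|_{L^\infty([0,\infty))}\le C\|w(s)\|_{L^\infty([0,\infty))}+C(t-s)+C$ with $C$ depending only on the fixed constants of the problem — crucially not on $N_1$, which is used only to guarantee finiteness of the integrals. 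Since $\|w(s)\|_{L^\infty([0,\infty))}=\big(\min_{x\in[0,\infty)}\theta(s,x)\big)^{-1}$, this estimate rearranges at once to \eqref{b180}.

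The main obstacle is the source $-\theta P_\theta(r^{n-1}u)_x$: by \eqref{bb126} one has $(r^{n-1}u)_x=\frac{(n-1)vu}{r}+r^{n-1}u_x$, and although $\|u(t)\|_{L^\infty}\le C$, the factor $\|r^{n-1}u_x(t)\|_{L^\infty}$ is controlled only in $L^2_t$ (Lemma 4.3), so this coefficient is not pointwise bounded; a naive maximum principle or comparison argument for $w$ therefore fails, and one must exploit the spacetime $L^2$ control of $(r^{n-1}u)_x$. The delicate point in the iteration is precisely to keep the final constant linear in $(t-s)$ and in $\|w(s)\|_{L^\infty}$, so that the sharp inverse-linear form \eqref{b180} — rather than an exponential-in-time bound — comes out.
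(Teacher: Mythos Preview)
Your proposal has a genuine gap at the very first step. By invoking \eqref{b7} to discard the entire bracket $\frac{\alpha|(r^{n-1}u)_x|^2}{v}-2\mu(n-1)(r^{n-2}u^2)_x$, you give away precisely the quadratic dissipation that is needed to absorb the indefinite term $-\theta P_\theta(r^{n-1}u)_x$. After your multiplication by $-\theta^{-2}$, that term becomes $\theta^{-1}P_\theta(r^{n-1}u)_x=\bigl(\tfrac{R}{v}w+\tfrac43 a\theta^2\bigr)(r^{n-1}u)_x$, whose leading piece $\tfrac{R}{v}w\,(r^{n-1}u)_x$ is linear in $w$ with a sign-indefinite coefficient. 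Multiplying by $w^{p-1}$ and integrating therefore produces a term of order $\int w^p\,|(r^{n-1}u)_x|\,dx$, which forces a Gronwall argument with coefficient $\|(r^{n-1}u)_x(\tau)\|_{L^\infty}$; since this is only $L^2$ in time (Lemma~4.3), the best one obtains uniformly in $p$ is the \emph{multiplicative} estimate $\|w(t)\|_{L^\infty}\le e^{C\sqrt{t-s}}\|w(s)\|_{L^\infty}$, not the \emph{additive} form $\|w(t)\|_{L^\infty}\le C\|w(s)\|_{L^\infty}+C(t-s)+C$ that \eqref{b180} asserts. Your closing paragraph correctly flags this as the delicate point, but the Moser iteration you sketch does not overcome it.

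The paper's remedy is precisely to \emph{retain} (a $(1-\delta)$-fraction of) the term $\frac{\alpha h^2}{v}|(r^{n-1}u)_x|^2$ and complete the square with $hP_\theta(r^{n-1}u)_x$ (see \eqref{f1}--\eqref{f2}), leaving only the bounded remainder $\frac{vP_\theta^2}{4\alpha(1-\delta)}$; after dividing by $e_\theta$ this yields $h_t\le\frac{1}{e_\theta}(\cdot)_x+C$ with $C$ a genuine constant, and it is this constant source that produces growth linear in $t-s$. Two further issues you treat as routine are not: (i) since $w\to1$ at infinity, $\int_0^\infty w^p\,dx=\infty$ and one must test against $(1/\theta-1)_+^{2p-1}$ instead; (ii) dividing by $e_\theta$ before integrating by parts creates a cross term in $(e_\theta)_x$ (see \eqref{f7}--\eqref{f8}), and its control relies on the $L^2_tL^\infty_x$ bounds on $r^{n-1}v_x$ and $r^{n-1}\theta_x$ from Lemmas~4.4 and~4.6 together with the $1/(2p-1)$ prefactor, which makes that contribution vanish as $p\to\infty$ in \eqref{f12}--\eqref{f16}.
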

\begin{proof}
To start with,  we can deduce from \eqref{a5} and the fact $\alpha=2\mu+\lambda_{1}>0$ that there exists a positive constant $\delta$, such that
\begin{eqnarray}\label{f1}
0\leq\frac{2(n-2)\mu}{(n-1)\alpha}<\frac{2(n-1)\mu}{n\alpha}<\delta<1.
\end{eqnarray}

If we set $h\left(t, x\right)=\frac{1}{\theta(t, x)}$, then multiplying \eqref{b117} by $-\frac{1}{\theta}$, we find that
\begin{eqnarray}\label{f2}
e_{\theta}h_{t}&=&\left(\frac{r^{2n-2}\kappa h_{x}}{v}\right)_{x}-\bigg\{\frac{2r^{2n-2}\kappa h^{2}_{x}}{vh}+\frac{h^{2}}{v}\left((n-1)\delta\alpha-2(n-2)\mu\right)\nonumber\\
&&\times\left(\frac{uv}{r}
+\frac{\left(\alpha\delta-2\mu\right)r^{n-1}u_{x}}{(n-1)\delta\alpha-2(n-2)\mu}\right)^{2}
+\frac{2\mu\left(n\delta\alpha-2(n-1)\mu\right)r^{2n-2}u^{2}_{x}h^{2}}{v\left((n-1)\delta\alpha-2(n-2)\mu\right)}\\
&&+\frac{\alpha\left(1-\delta\right)h^{2}}{v}\left(\left(r^{n-1}u\right)_{x}
-\frac{vP_{\theta}}{2\alpha\left(1-\delta\right)h}\right)^{2}+\lambda\phi zh^{2}\bigg\}+\frac{vP^{2}_{\theta}}{4\alpha\left(1-\delta\right)}.\nonumber
\end{eqnarray}

Due to \eqref{b151} and \eqref{f1}, one can infer from \eqref{f2} that
\begin{eqnarray}\label{f4}
h_{t}&&\leq \frac{1}{e_{\theta}}\left(\frac{r^{2n-2}\kappa\left(v,\theta\right)h_{x}}{v}\right)_{x}
+\frac{vP^{2}_{\theta}}{4\alpha\left(1-\delta\right)e_\theta}\nonumber\\
&&\leq\frac{1}{e_{\theta}}\left(\frac{r^{2n-2}\kappa\left(v,\theta\right)h_{x}}{v}\right)_{x} +C\left(1+\theta^{3}\right)\\
&&\leq\frac{1}{e_{\theta}}\left(\frac{r^{2n-2}\kappa\left(v,\theta\right)h_{x}}{v}\right)_{x} +C.\nonumber
\end{eqnarray}

Setting
\begin{eqnarray*}
g(t, x)&=&\frac{1}{\theta(t,x)}-1,
  \end{eqnarray*}
then \eqref{f4} becomes
  \begin{eqnarray}\label{f5}
 g_{t}&&\leq\frac{1}{e_{\theta}}\left(\frac{r^{2n-2}\kappa\left(v,\theta\right)g_{x}}{v}\right)_{x} +C,
  \end{eqnarray}
and if we define $g_{+}=\max\left\{\frac{1}{\theta(t,x)}-1, 0\right\}$, then one can get by multiplying (\ref{f5}) by $\left(g_{+}\right)^{2p-1}$ and by integrating the result with respect to $x$ over $[0,\infty)$ that
\begin{eqnarray}\label{f6}
&& \left\|g_{+}\right\|_{L^{2p}\left([0,\infty)\right)}^{2p-1}\left(\|g_{+}\|_{L^{2p}\left([0,\infty)\right)}\right)_{t}\nonumber\\
&\leq& C\int_{0}^{\infty}\left(g_{+}\right)^{2p-1}dx
 +C\int_{0}^{\infty}\frac{\left(g_{+}\right)^{2p-1}}{e_{\theta}}\left(\frac{r^{2n-2}\kappa\left(v,\theta\right)g_{x}}{v}\right)_{x}dx.
  \end{eqnarray}

Having obtained \eqref{f6} and noticing
\begin{eqnarray*}
\left(g_{+}\right)^{2p-1}g_{x}=\left(g_{+}\right)^{2p-1}\frac{1+\text{sgn}\left(g\right)}{2}g_{x}=\left(g_{+}\right)^{2p-1}\left(g_{+}\right)_{x},
\end{eqnarray*}
where
\begin{eqnarray*}
\text{sgn}\left(g\right)=
\begin{cases}
 1, &  \text{when}\quad g\left(t,x\right)>0,\\
 0, &  \text{when}\quad g\left(t,x\right)=0,\\
-1, & \text{when}\quad g\left(t,x\right)<0,
 \end{cases}
  \end{eqnarray*}
we can get that
\begin{eqnarray}\label{f7}
&&\frac{\left(g_{+}\right)^{2p-1}}{e_{\theta}}\left(\frac{r^{2n-2}\kappa\left(v,\theta\right)g_{x}}{v}\right)_{x}\nonumber\\
&=&\left(\frac{r^{2n-2}\kappa\left(v,\theta\right)\left(g_{+}\right)^{2p-1}g_{x}}{ve_{\theta}}\right)_{x}
+\frac{r^{2n-2}\kappa\left(v,\theta\right)\left(g_{+}\right)^{2p-1}g_{x}\left(e_{\theta}\right)_{x}}{v e_{\theta}^{2}}\nonumber\\
&&-\frac{\left(2p-1\right)r^{2n-2}\kappa\left(v,\theta\right)\left(g_{+}\right)^{2p-2}g_{x}\left(g_{+}\right)_{x}}{ve_{\theta}}\\
&=&\left(\frac{r^{2n-2}\kappa\left(v,\theta\right)\left(g_{+}\right)^{2p-1}g_{x}}{ve_{\theta}}\right)_{x}
+\frac{r^{2n-2}\kappa\left(v,\theta\right)\left(g_{+}\right)^{2p-1}\left(g_{+}\right)_{x}\left(e_{\theta}\right)_{x}}{v e_{\theta}^{2}}\nonumber\\
&&-\frac{\left(2p-1\right)r^{2n-2}\kappa\left(v,\theta\right)\left(g_{+}\right)^{2p-2}\left[\left(g_{+}\right)_{x}\right]^{2}}{ve_{\theta}}.\nonumber
\end{eqnarray}

On the other hand, it is easy to see that
\begin{eqnarray}\label{f8}
&&\frac{r^{2n-2}\kappa\left(v,\theta\right)\left(g_{+}\right)^{2p-1}\left(g_{+}\right)_{x}\left(e_{\theta}\right)_{x}}{v e_{\theta}^{2}}\nonumber\\
&\leq&\frac{\left(2p-1\right)r^{2n-2}\kappa\left(v,\theta\right)\left(g_{+}\right)^{2p-2}\left[\left(g_{+}\right)_{x}\right]^{2}}{ve_{\theta}}\\
&&+\frac{C}{2p-1}\frac{r^{2n-2}\kappa\left(v,\theta\right)\left(g_{+}\right)^{2p}\left(\left(e_{\theta}\right)_{x}\right)^{2}}{ve_{\theta}^{3}}.\nonumber
  \end{eqnarray}

Combining \eqref{f6}-\eqref{f8} and by making use of H\"{o}lder's inequality yield
\begin{eqnarray}
&&\left\|g_{+}(t)\right\|_{L^{2p}\left([0,\infty)\right)}^{2p-1} \left(\|g_{+}(t)\|_{L^{2p}\left([0,\infty)\right)}\right)_{t}\nonumber\\
&\leq&
 C\left\|g_{+}(t)\right\|_{L^{\frac{2p}{3}}\left([0,\infty)\right)}^{\frac{1}{2}} \left\|g_{+}(t)\right\|^{\frac{4p-3}{2}}_{L^{2p}\left([0,\infty)\right)}\nonumber\\
&&+\frac{C}{2p-1}\left\|g_{+}(t)\right\|_{L^{2p}\left([0,\infty)\right)}^{2p-1}
\left\|\frac{r^{2n-2}\kappa\left(v,\theta\right)g_{+}\left(\left(e_{\theta}\right)_{x}\right)^{2}}
{ve_{\theta}^{3}}\right\|_{L^{2p}\left([0,\infty)\right)}.\nonumber
  \end{eqnarray}
That is,
\begin{eqnarray}\label{f9}
\left(\|g_{+}(t)\|_{L^{2p}\left([0,\infty)\right)}\right)_{t}
\leq C\frac{\|g_{+}(t)\|_{L^{\frac{2p}{3}}\left([0,\infty)\right)}^{\frac{1}{2}}} {\|g_{+}(t)\|_{L^{2p}\left([0,\infty)\right)}^{\frac{1}{2}}}
+\frac{C}{2p-1}\left\|\frac{r^{2n-2}\kappa\left(v,\theta\right)g_{+}\left(\left(e_{\theta}\right)_{x}\right)^{2}}
{ve_{\theta}^{3}}\right\|_{L^{2p}\left([0,\infty)\right)}.
  \end{eqnarray}

Integrating \eqref{f9} with respect to $t$ over $\left(s,t\right)$, we arrive at
\begin{eqnarray}\label{f12}
\|g_{+}\left(t\right)\|_{L^{2p}\left([0,\infty)\right)}&\leq&\|g_{+}\left(s\right)\|_{L^{2p} \left([0,\infty)\right)}+
 C\int_{s}^{t}\frac{\|g_{+}(\tau)\|_{L^{\frac{2p}{3}}\left([0,\infty)\right)}^{\frac{1}{2}}} {\|g_{+}(\tau)\|_{L^{2p}\left([0,\infty)\right)}^{\frac{1}{2}}}d\tau\nonumber\\
&&+\frac{C}{2p-1}\int_{s}^{t}\left\|\frac{r^{2n-2}\kappa\left(v,\theta\right)g_{+}\left(\left(e_{\theta}\right)_{x}\right)^{2}}
{ve_{\theta}^{3}}\right\|_{L^{2p}\left([0,\infty)\right)}d\tau.
  \end{eqnarray}

 Moreover, one can conclude from \eqref{b43} and \eqref{f3} that
\begin{eqnarray}\label{f15}
\left(\left(e_{\theta}\right)_{x}\right)^{2}&=&\left(4a\theta^{3}v_{x}+12av\theta^{2}\theta_{x}\right)^{2}\nonumber\\
&\leq& C\left(\theta^{6}v_{x}^{2}+\theta^{4}\theta_{x}^{2}\right).
  \end{eqnarray}
If we define $\Omega_{1}(t)=\{x\in[0,\infty)|0<\theta\left(t,x\right)<1\}$, then it follows from \eqref{bb13}, \eqref{b43}, \eqref{b151}, \eqref{bb160}, \eqref{bd173} and \eqref{f15} that
\begin{eqnarray}\label{f16}
&&\int_{s}^{t}\left\|\frac{r^{2n-2}\kappa\left(v,\theta\right)g_{+} \left(\left(e_{\theta}\right)_{x}\right)^{2}}
{ve_{\theta}^{3}}\right\|_{L^{2p}\left([0,\infty)\right)}d\tau\nonumber\\
&=&
\int_{s}^{t}\left\|\frac{r^{2n-2}\kappa\left(v,\theta\right)g\left(\left(e_{\theta}\right)_{x}\right)^{2}}
{ve_{\theta}^{3}}\right\|_{L^{2p}\left(\Omega_{1}(\tau)\right)}d\tau\nonumber\\
&\leq&
C\int_{s}^{t}\left\|\frac{r^{2n-2}\left(1+\theta^{b}\right)\left(1-\theta\right) \left(\theta^{6}v_{x}^{2}+\theta^{4}\theta_{x}^{2}\right)}
{\theta}\right\|_{L^{2p}\left(\Omega_{1}(\tau)\right)}d\tau\\
&\leq&C\int_{s}^{t}\left\|r^{2n-2}\left(1-\theta\right)\left(v_{x}^{2} +\theta_{x}^{2}\right)\right\|_{L^{2p}\left(\Omega_{1}(\tau)\right)}d\tau\nonumber\\
&\leq&C\int_{0}^{t}\left(\left\|r^{n-1}v_{x}(\tau)\right\|^2_{L^{\infty}\left([0,\infty)\right)}
+\left\|r^{n-1}\theta_{x}(\tau)\right\|^2_{L^{\infty}\left([0,\infty)\right)}\right) \left(\int_{0}^{\infty}\left(1-\theta(\tau,x)\right)^{2p}dx\right)^{\frac{1}{2p}}d\tau\nonumber\\
&\leq& C.\nonumber
  \end{eqnarray}

Hence \eqref{f12} and \eqref{f16} give birth to
\begin{eqnarray}\label{f17}
\|g_{+}\left(t\right)\|_{L^{2p}\left([0,\infty)\right)} \leq\|g_{+}\left(s\right)\|_{L^{2p}\left([0,\infty)\right)}+
 C\int_{s}^{t}\frac{\|g_{+}(\tau)\|_{L^{\frac{2p}{3}}\left([0,\infty)\right)}^{\frac{1}{2}}} {\|g_{+}(\tau)\|_{L^{2p}\left([0,\infty)\right)}^{\frac{1}{2}}}d\tau+\frac{C}{2p-1}.
  \end{eqnarray}

Furthermore, we can deduce from the fact $g_{+}\left(t,x\right)\in L^{\infty}\left([0,\infty)\right)\cap L^{2}\left([0,\infty)\right)$ and is sufficiently smooth that
 \begin{eqnarray}\label{f18}
\lim_{p\rightarrow+\infty}\|g_{+}\left(t\right)\|_{L^{p}\left([0,\infty)\right)} =\|g_{+}\left(t\right)\|_{L^{\infty}\left([0,\infty)\right)}.
  \end{eqnarray}

Letting $p\rightarrow+\infty$ in \eqref{f17} and taking advantage of \eqref{f18}, we have
 \begin{eqnarray}\label{f19}
\|g_{+}\left(t, \cdot\right)\|_{L^{\infty}\left([0,\infty)\right)}\leq \|g_{+}\left(s, \cdot\right)\|_{L^{\infty}\left([0,\infty)\right)}+C\left(t-s\right).
  \end{eqnarray}
Then \eqref{b180} follows from \eqref{f19} and the definition of $g_{+}\left(t, x\right)$ immediately. This completes the proof of Lemma 5.1.

\end{proof}

With Lemma 2.1-Lemma 5.1 in hand,  we can deduce Theorem 1.1 by the continuation argument designed in Liao and Zhao \cite{Liao-Zhao-arXive-2017} and we thus omit the details for brevity.

\begin{center}
{\bf Acknowledgement}
\end{center}
Research of the authors was supported by the Fundamental Research Funds for the Central Universities, the Project funded by China Postdoctoral Science Foundation, and three grants from National Natural Science Foundation of China under contracts 11601398,  11671309, and 11731008, respectively.

\end{document}